\DeclareMathOperator\Gedom{\,{\underline{\kern-.1ex{\blacktriangleright}\kern-0.1ex}}\,}
\newtheorem{theorem}{Theorem}[section]
\newtheorem{lemma}[theorem]{Lemma}
\newtheorem{proposition}[theorem]{Proposition}
\newtheorem{corollary}[theorem]{Corollary}
\theoremstyle{definition}
\newtheorem{definition}[theorem]{Definition}
\newtheorem{remark}[theorem]{Remark}
\definecolor{A}{rgb}{.75,1,.75}
\numberwithin{equation}{section}
\begin{document}
\title[Yokonuma-Schur algebras]{Yokonuma-Schur algebras}
\author[Weideng Cui]{Weideng Cui}
\address{School of Mathematics, Shandong University, Jinan, Shandong 250100, P.R. China.}
\email{cwdeng@amss.ac.cn}

\begin{abstract}
In this paper, we define the Yokonuma-Schur algebra $\text{YS}_{q}(r,n)$ as the endomorphism algebra of a permutation module for the Yokonuma-Hecke algebra $\text{Y}_{r,n}(q).$ We prove that $\text{YS}_{q}(r,n)$ is cellular by constructing an explicit cellular basis following the approach in [DJM], and we further show that it is a quasi-hereditary cover of $\text{Y}_{r,n}(q)$ in the sense of Rouquier following [HM2]. We also introduce the tilting modules for $\text{YS}_{q}(r,n).$ In the appendix, we define and study the cyclotomic Yokonuma-Schur algebra in a similar way.
\end{abstract}



\maketitle
\medskip

\section{Introduction}
\subsection{}
The Yokonuma-Hecke algebra was first introduced by Yokonuma [Yo] as a centralizer algebra associated to the permutation representation of a Chevalley group $G$ with respect to a maximal unipotent subgroup of $G.$ Juyumaya [Ju1] gave a new presentation of the Yokonuma-Hecke algebra, which is commonly used for studying this algebra.

The Yokonuma-Hecke algebra $\text{Y}_{r,n}(q)$ is a quotient of the group algebra of the modular framed braid group $(\mathbb{Z}/r\mathbb{Z})\wr B_{n},$ where $B_{n}$ is the braid group of type $A$ on $n$ strands. It can also be regraded as a deformation of the group algebra of the complex reflection group $G(r,1,n),$ which is isomorphic to the wreath product $(\mathbb{Z}/r\mathbb{Z})\wr \mathfrak{S}_{n},$ where $\mathfrak{S}_{n}$ is the symmetric group on $n$ letters. It is well-known that there exists another deformation of the group algebra of $G(r,1,n),$ the Ariki-Koike algebra $H_{r,n}$ [AK]. The Yokonuma-Hecke algebra $\text{Y}_{r,n}(q)$ is quite different from $H_{r,n}.$ For example, the Iwahori-Hecke algebra of type $A$ is canonically a subalgebra of $H_{r,n},$ whereas it is an obvious quotient of $\text{Y}_{r,n}(q),$ but not an obvious subalgebra of it.

In the past few years, many people are largely motivated to study $\text{Y}_{r,n}(q)$ in order to construct its associated knot invariant; see the papers [Ju2], [JuL] and [ChL]. In particular, Juyumaya and Kannan [Ju2, JuK] found a basis of $\text{Y}_{r,n}(q),$ and then defined a Markov trace on it.

Some other people are particularly interested in the representation theory of $\text{Y}_{r,n}(q),$ and also its application to knot theory. Chlouveraki and Poulain d'Andecy [ChPA1] gave explicit formulas for all irreducible representations of $\text{Y}_{r,n}(q)$ over $\mathbb{C}(q)$, and obtained a semisimplicity criterion for it. In their subsequent paper [ChPA2], they defined and studied the affine Yokonuma-Hecke algebra $\widehat{Y}_{r,n}(q)$ and the cyclotomic Yokonuma-Hecke algebra $Y_{r,n}^{d}(q),$ and constructed several bases for them, and then showed how to define Markov traces on these algebras. Moreover, they gave the classification of irreducible representations of $Y_{r,n}^{d}(q)$ in the generic semisimple case, defined the canonical symmetrizing form on it and computed the associated Schur elements directly.

\subsection{}
Recently, Jacon and Poulain d'Andecy [JaPA] constructed an explicit algebraic isomorphism between the Yokonuma-Hecke algebra $\text{Y}_{r,n}(q)$ and a direct sum of matrix algebras over tensor products of Iwahori-Hecke algebras of type $A,$ which is in fact a special case of the results by G. Lusztig [Lu, Section 34].
This allows them to give a description of the modular representation theory of $\text{Y}_{r,n}(q)$ and a complete classification of all Markov traces for it. Chlouveraki and S\'{e}cherre [ChS, Theorem 4.3] proved that the affine Yokonuma-Hecke algebra is a particular case of the pro-$p$-Iwahori-Hecke algebra defined by Vign\'eras in [Vi].

Espinoza and Ryom-Hansen [ER] gave a new proof of Jacon and Poulain d'Andecy's isomorphism theorem by giving a concrete isomorphism between $\text{Y}_{r,n}(q)$ and Shoji's modified Ariki-Koike algebra $\mathcal{H}_{r,n}.$ Moreover, they showed that $\text{Y}_{r,n}(q)$ is a cellular algebra by giving an explicit cellular basis. Combining the results of [DJM] with those of [ER], we [C1] proved that the cyclotomic Yokonuma-Hecke algebra $Y_{r,n}^{d}(q)$ is cellular by constructing an explicit cellular basis, and showed that the Jucys-Murphy elements for $Y_{r,n}^{d}(q)$ are JM-elements in the abstract sense introduced by Mathas [Ma3].

We [CW] have established an equivalence between a module category of the affine (resp. cyclotomic) Yokonuma-Hecke algebra $\widehat{Y}_{r,n}(q)$ (resp. $Y_{r,n}^{d}(q)$) and its suitable counterpart for a direct sum of tensor products of affine Hecke algebras of type $A$ (resp. cyclotomic Hecke algebras), which allows us to give the classification of simple modules of affine Yokonuma-Hecke algebras and of the associated cyclotomic Yokonuma-Hecke algebras over an algebraically closed field of characteristic $p$ when $p$ does not divide $r,$ and also describe the classification of blocks for these algebras. In addition, the modular branching rules for cyclotomic (resp. affine) Yokonuma-Hecke algebras are obtained, and they are further identified with crystal graphs of integrable modules for affine lie algebras of type $A.$ In a subsequent paper, we [C2] have established an explicit algebra isomorphism between the affine Yokonuma-Hecke algebra $\widehat{Y}_{r,n}(q)$ and a direct sum of matrix algebras over tensor products of affine Hecke algebras of type $A$. As an application, we proved that $\widehat{Y}_{r,n}(q)$ is affine cellular in the sense of Koenig and Xi, and studied its homological properties.

\subsection{}
In [DJM], they constructed a cellular basis for the cyclotomic $q$-Schur algebra $\mathcal{S}(\Lambda)$ by firstly constructing a cellular basis for the Ariki-Koike algebra $H_{r,n}.$ They further obtained a complete set of non-isomorphic irreducible $\mathcal{S}(\Lambda)$-modules and showed that it is quasi-hereditary. Now, there exists a cellular basis on $\text{Y}_{r,n}(q)$ by [ER], it is natural to try to define and study the corresponding Schur algebra for the Yokonuma-Hecke algebra $\text{Y}_{r,n}(q)$ by using this cellular basis.

In this paper, we will define the Yokonuma-Schur algebra $\text{YS}_{q}(r,n)$ as the endomorphism algebra of a permutation module associated to the Yokonuma-Hecke algebra $\text{Y}_{r,n}(q).$ Combining the results of [DJM] with those of [SS], we prove that $\text{YS}_{q}(r,n)$ is cellular by constructing an explicit cellular basis, and further prove that it is quasi-hereditary. We also investigate the indecomposable tilting modules for $\text{YS}_{q}(r,n)$ and prove that they are self-dual.

This paper is organized as follows. In Section 2, we recall the definition of the Yokonuma-Hecke algebra $\text{Y}_{r,n}(q)$ and the construction of a cellular basis of $\text{Y}_{r,n}(q)$ following [ER]. In Section 3, we will define the Yokonuma-Schur algebra $\text{YS}_{q}(r,n)$ as the endomorphism algebra of a permutation module associated to the Yokonuma-Hecke algebra $\text{Y}_{r,n}(q).$ We prove that $\text{YS}_{q}(r,n)$ is cellular by constructing an explicit cellular basis, and further prove that it is quasi-hereditary by combining the results of [DJM] with those of [SS]. In Section 4, following the approach in [HM2], we will construct an exact functor from the category of $\text{YS}_{q}(r,n)$-modules to the category of $\text{Y}_{r,n}(q)$-modules. In Section 5, we introduce the tilting modules for $\text{YS}_{q}(r,n)$ and the closely related Young modules for $\text{Y}_{r,n}(q)$ following [Ma2]. In the appendix, we will generalize these results to define and study the cyclotomic Yokonuma-Schur algebra by using the cellular basis of $Y_{r,n}^{d}(q)$ constructed in [C1]. Since this approach is very similar, we only mention the main results and skip all the details.

Many ideas of this paper originate from the references [DJM, Ma2, SS], although it should be noted that the basic set-up here is different from theirs; anyhow, we expect that the Yokonuma-Schur algebra and its cyclotomic analog defined here deserve further study.

\section{Cellular Bases for Yokonuma-Hecke algebras}

In this section, we recall the definition of the Yokonuma-Hecke algebra $\text{Y}_{r,n}(q)$ and the construction of a cellular basis of $\text{Y}_{r,n}(q)$ presented in [ER, Section 4].

Let $r, n\in \mathbb{N},$ $r\geq1,$ and let $\zeta=e^{2\pi i/r}.$ Let $q$ be an indeterminate. Let $\mathfrak{S}_{n}$ be the symmetric group on $n$ letters, which acts on the set $\{1,2,\ldots,n\}$ on the right by convention.

Let $\mathcal{R}=\mathbb{Z}[\frac{1}{r}][q,q^{-1},\zeta].$ The Yokonuma-Hecke algebra $\text{Y}_{r,n}=\text{Y}_{r,n}(q)$ is an $\mathcal{R}$-associative algebra generated by the elements $t_{1},\ldots,t_{n},g_{1},\ldots,g_{n-1}$ satisfying the following relations:
\begin{equation}\label{rel-def-Y1}\begin{array}{rclcl}
g_ig_j\hspace*{-7pt}&=&\hspace*{-7pt}g_jg_i && \mbox{for all $i,j=1,\ldots,n-1$ such that $\vert i-j\vert \geq 2$;}\\[0.1em]
g_ig_{i+1}g_i\hspace*{-7pt}&=&\hspace*{-7pt}g_{i+1}g_ig_{i+1} && \mbox{for all $i=1,\ldots,n-2$;}\\[0.1em]
t_it_j\hspace*{-7pt}&=&\hspace*{-7pt}t_jt_i &&  \mbox{for all $i,j=1,\ldots,n$;}\\[0.1em]
g_it_j\hspace*{-7pt}&=&\hspace*{-7pt}t_{js_i}g_i && \mbox{for all $i=1,\ldots,n-1$ and $j=1,\ldots,n$;}\\[0.1em]
t_i^r\hspace*{-7pt}&=&\hspace*{-7pt}1 && \mbox{for all $i=1,\ldots,n$;}\\[0.2em]
g_{i}^{2}\hspace*{-7pt}&=&\hspace*{-7pt}1+(q-q^{-1})e_{i}g_{i} && \mbox{for all $i=1,\ldots,n-1$,}
\end{array}
\end{equation}
where $s_{i}$ is the transposition $(i,i+1)$, and for each $1\leq i\leq n-1$,
$$e_{i} :=\frac{1}{r}\sum\limits_{s=0}^{r-1}t_{i}^{s}t_{i+1}^{-s}.$$

Note that the elements $e_{i}$ are idempotents in $\text{Y}_{r,n}.$ The elements $g_{i}$ are invertible, with the inverse given by
\begin{equation}\label{inverse}
g_{i}^{-1}=g_{i}-(q-q^{-1})e_{i}\quad\mbox{for~all}~i=1,\ldots,n-1.
\end{equation}

Let $w\in \mathfrak{S}_{n},$ and let $w=s_{i_1}\cdots s_{i_{r}}$ be a reduced expression of $w.$ By Matsumoto's lemma, the element $g_{w} :=g_{i_1}g_{i_2}\cdots g_{i_{r}}$ does not depend on the choice of the reduced expression of $w,$ that is, it is well-defined. Let $l$ denote the length function on $\mathfrak{S}_{n}.$ Then we have
\begin{align}\label{multiplication-formula}
g_{i}g_{w}=\begin{cases}g_{s_{i}w}& \hbox {if } l(s_{i}w)>l(w); \\
g_{s_{i}w}+(q-q^{-1})e_{i}g_{w}& \hbox {if } l(s_{i}w)<l(w).
\end{cases}
\end{align}

Using the multiplication formulae in \eqref{multiplication-formula}, Juyumaya [Ju2] has proved that the following set is an $\mathcal{R}$-basis of $\text{Y}_{r,n}$:
\begin{equation}\label{basis-theorem-2}
\mathcal{B}_{r,n}=\{t_{1}^{k_1}\cdots t_{n}^{k_n}g_{w}\:|\: 0\leq k_1,\ldots,k_{n}\leq r-1 \text{ and }w\in \mathfrak{S}_{n}\}.
\end{equation}
Thus, $\text{Y}_{r,n}$ is a free $\mathcal{R}$-module of rank $r^{n}n!.$

Set $\mathbf{s} :=\{1,2,\ldots,n\}$. Let $i, k\in \mathbf{s}$ and set \begin{equation}\label{idempotents}e_{i,k} :=\frac{1}{r}\sum\limits_{s=0}^{r-1}t_{i}^{s}t_{k}^{-s}.\end{equation} Note that $e_{i,i}=1,$ $e_{i,k}=e_{k,i},$ and that $e_{i,i+1}=e_{i}.$ It can be easily checked that
\begin{equation}\label{relations}\begin{array}{rclcl}
e_{i,k}^{2}\hspace*{-7pt}&=&\hspace*{-7pt}e_{i,k} && \mbox{for all $i,k=1,\ldots,n$,}\\[0.1em]
t_{i}e_{j,k}\hspace*{-7pt}&=&\hspace*{-7pt}e_{j,k}t_{i} && \mbox{for all $i,j,k=1,\ldots,n$,}\\[0.1em]
e_{i,j}e_{k,l}\hspace*{-7pt}&=&\hspace*{-7pt}e_{k,l}e_{i,j} &&  \mbox{for all $i,j,k,l=1,\ldots,n$,}\\[0.1em]
e_ie_{k,l}\hspace*{-7pt}&=&\hspace*{-7pt}e_{s_i(k), s_i(l)}e_i && \mbox{for all $i=1,\ldots,n-1$ and $k,l=1,\ldots,n$,}\\[0.1em]
e_{j,k}g_{i}\hspace*{-7pt}&=&\hspace*{-7pt}g_{i}e_{js_{i},ks_{i}} && \mbox{for all $i=1,\ldots,n-1$ and $j,k=1,\ldots,n$.}
\end{array}
\end{equation}
In particular, we have $e_{i}g_{i}=g_{i}e_{i}$ for all $i=1,2,\ldots,n-1.$

For any nonempty subset $I\subseteq \mathbf{s}$ we define the following element $E_{I}$ by $$E_{I} :=\prod_{i, j\in I; i<j}e_{i,j},$$where by convention $E_{I}=1$ if $|I|=1.$

We also need a further generalization of this. We say that the set $A=\{I_{1}, I_{2},\ldots,I_{k}\}$ is a set partition of $\mathbf{s}$ if the $I_{j}$'s are nonempty and disjoint subsets of $\mathbf{s},$ and their union is $\mathbf{s}.$ We refer to them as the blocks of $A.$ We denote by $\mathcal{SP}_{n}$ the set of all set partitions of $\mathbf{s}.$ For $A=\{I_{1}, I_{2},\ldots,I_{k}\}\in \mathcal{SP}_{n}$ we then define $E_{A} :=\prod_{j}E_{I_{j}}.$

We extend the right action of $\mathfrak{S}_{n}$ on $\mathbf{s}$ to a right action on $\mathcal{SP}_{n}$ by defining $Aw :=\{I_{1}w,\ldots,I_{k}w\}\in \mathcal{SP}_{n}$ for $w\in \mathfrak{S}_{n}.$ Then we can easily get the following lemma.
\begin{lemma}\label{2-1-lemmaaa}
For $A\in \mathcal{SP}_{n}$ and $w\in \mathfrak{S}_{n},$ we have $$g_{w}E_{A}=E_{Aw^{-1}}g_{w}.$$
In particular, if $w$ leaves invariant every block of $A,$ or more generally permutes some of the blocks of $A,$ then $g_{w}$ commutes with $E_{A}.$
\end{lemma}

$\mu=(\mu_{1},\ldots,\mu_{k})$ is called a composition of $n$ if it is a finite sequence of nonnegative integers whose sum is $n.$ A composition $\mu$ is a partition of $n$ if its parts are non-increasing. We write $\mu \models n$ (resp. $\lambda\vdash n$) if $\mu$ is a composition (resp. partition) of $n,$ and we define $|\mu| :=n$ (resp. $|\lambda| :=n$).

We associate a Young diagram to a composition $\mu,$ which is the set $$[\mu] :=\{(i,j)\:|\:i\geq 1~\mathrm{and}~1\leq j\leq \mu_{i}\}.$$ We will regard $[\mu]$ as an array of boxes, or nodes, in the plane. For $\mu \models n,$ we define a $\mu$-tableau by replacing each node of $[\mu]$ by one of the integers $1,2,\ldots,n,$ allowing no repeats.

For $\mu \models n,$ we say that a $\mu$-tableau $\mathfrak{t}$ is row standard if the entries in each row of $\mathfrak{t}$ increase from left to right. A $\mu$-tableau $\mathfrak{t}$ is standard if $\mu$ is a partition, $\mathfrak{t}$ is row standard and the entries in each column increase from top to bottom. For a composition $\mu$ of $n,$ we denote by $\mathfrak{t}^{\mu}$ the $\mu$-tableau in which $1,2,\ldots,n$ appear in increasing order from left to right along the rows of $[\mu].$

The symmetric group $\mathfrak{S}_{n}$ acts from the right on the set of $\mu$-tableaux by permuting the entries in each tableau. For any composition $\mu=(\mu_{1},\ldots,\mu_{k})$ of $n$ we define the Young subgroup $\mathfrak{S}_{\mu} :=\mathfrak{S}_{\mu_{1}}\times\cdots\times\mathfrak{S}_{\mu_{k}},$ which is the row stabilizer of $\mathfrak{t}^{\mu}.$

Let $\lambda=(\lambda_{1},\ldots,\lambda_{k})$ and $\mu=(\mu_{1},\ldots,\mu_{l})$ be two compositions of $n.$ We say that $\lambda\unrhd\mu$ if $$\sum_{i=1}^{j}\lambda_{i}\geq \sum_{i=1}^{j}\mu_{i}~~~~~~\mathrm{for~all~}j\geq 1.$$ If $\lambda\unrhd\mu$ and $\lambda\neq\mu,$ we write $\lambda\rhd\mu.$

We extend the partial order above to tableaux as follows. If $\mathfrak{v}$ is a row standard $\lambda$-tableau and $1\leq k\leq n,$ then the entries $1,2,\ldots,k$ in $\mathfrak{v}$ occupy the diagram of a composition; let $\mathfrak{v}_{\downarrow k}$ denote this composition. Let $\lambda$ and $\mu$ be two compositions of $n.$ Suppose that $\mathfrak{s}$ is a row standard $\lambda$-tableau and that $\mathfrak{t}$ is a row standard $\mu$-tableau. We say that $\mathfrak{s}$ dominates $\mathfrak{t},$ and we write $\mathfrak{s}\unrhd\mathfrak{t}$ if $\mathfrak{s}_{\downarrow k}\unrhd\mathfrak{t}_{\downarrow k}$ for all $k.$ If $\mathfrak{s}\unrhd\mathfrak{t}$ and $\mathfrak{s}\neq\mathfrak{t},$ then we write $\mathfrak{s}\rhd\mathfrak{t}.$

Following [ChPA1, Section 4.3], the combinatorial objects appearing in the representation theory of the Yokonuma-Hecke algebra $\text{Y}_{r,n}$ will be $r$-compositions (resp. $r$-partitions). By definition, an $r$-composition (resp. $r$-partition) of $n$ is an ordered $r$-tuple $\bm{\mu}=(\mu^{(1)},\mu^{(2)},\ldots,\mu^{(r)})$ of compositions (resp. partitions) $\mu^{(k)}$ such that $\sum_{k=1}^{r}|\mu^{(k)}|=n.$ We denote by $\mathcal{C}_{r,n}$ (resp. $\mathcal{P}_{r,n}$) the set of $r$-compositions (resp. $r$-partitions) of $n.$ The Young diagram $[\bm{\mu}]$ of an $r$-composition $\bm{\mu}$ is the ordered $r$-tuple of the Young diagram of its components.

Let $\bm{\mu}=(\mu^{(1)},\mu^{(2)},\ldots,\mu^{(r)})$ be an $r$-composition of $n.$ A $\bm{\mu}$-tableau $\mathfrak{t}=(\mathfrak{t}^{(1)},\ldots,\mathfrak{t}^{(r)})$ is obtained by placing each node of $[\bm{\mu}]$ by one of the integers $1,2,\ldots,n,$ allowing no repeats. We will call the number $n$ the size of $\mathfrak{t}$ and the $\mathfrak{t}^{(k)}$'s the components of $\mathfrak{t}.$

For each $\bm{\mu}\in \mathcal{C}_{r,n},$ a $\bm{\mu}$-tableau is called row standard if the numbers increase along any row (from left to right) of each diagram in $[\bm{\mu}].$ For each $\bm{\lambda}\in \mathcal{P}_{r,n},$ a $\bm{\lambda}$-tableau is called standard if the numbers increase along any row (from left to right) and down any column (from top to bottom) of each diagram in $[\bm{\lambda}].$ For $\bm{\mu}\in \mathcal{C}_{r,n},$ we denote by $\text{r-Std}(\bm{\mu})$ the set of row standard $\bm{\mu}$-tableaux of size $n$, which is endowed with an action of $\mathfrak{S}_{n}$ from the right by permuting the entries in each $\bm{\mu}$-tableau. For $\bm{\lambda}\in \mathcal{P}_{r,n},$ let $\text{Std}(\bm{\lambda})$ denote the set of standard $\lambda$-tableaux of size $n$.

For each $\bm{\mu}\in \mathcal{C}_{r,n},$ we denote by $\mathfrak{t}^{\bm{\mu}}$ the standard $\bm{\mu}$-tableau in which $1,2,\ldots,n$ appear in increasing order from left to right along the rows of the first diagram, and then along the rows of the second diagram, and so on. For each $\bm{\mu}=(\mu^{(1)},\mu^{(2)},\ldots,\mu^{(r)})\in \mathcal{C}_{r,n},$ we have a Young subgroup
$$\mathfrak{S}_{\bm{\mu}} :=\mathfrak{S}_{\mu^{(1)}}\times\mathfrak{S}_{\mu^{(2)}}\cdots\times\mathfrak{S}_{\mu^{(r)}},$$
which is exactly the row stabilizer of $\mathfrak{t}^{\bm{\mu}}.$

For each $\bm{\mu}\in \mathcal{C}_{r,n}$ and a row standard $\bm{\mu}$-tableau $\mathfrak{s}$, let $d(\mathfrak{s})$ be the unique element of $\mathfrak{S}_{n}$ such that $\mathfrak{s}=\mathfrak{t}^{\bm{\mu}}d(\mathfrak{s}).$ Then $d(\mathfrak{s})$ is a distinguished right coset representative of $\mathfrak{S}_{\bm{\mu}}$ in $\mathfrak{S}_{n},$ that is, $l(wd(\mathfrak{s}))=l(w)+l(d(\mathfrak{s}))$ for any $w\in \mathfrak{S}_{\bm{\mu}}.$ In this way, we obtain a bijection between the set $\text{r-Std}(\bm{\mu})$ of row standard $\bm{\mu}$-tableaux and the set $\mathcal{D}_{\bm{\mu}}$ of distinguished right coset representatives of $\mathfrak{S}_{\bm{\mu}}$ in $\mathfrak{S}_{n}.$

Let $\bm{\mu}\in \mathcal{C}_{r,n}$ and $\mathfrak{t}$ be a $\bm{\mu}$-tableau. For $j=1,\ldots,n$, we define $\text{p}_{\mathfrak{t}}(j)=k$ if $j$ appears in the $k$-th component $\mathfrak{t}^{(k)}$ of $\mathfrak{t}.$ When $\mathfrak{t}=\mathfrak{t}^{\bm{\mu}},$ we write $\text{p}_{\bm{\mu}}(j)$ instead of $\text{p}_{\mathfrak{t}^{\bm{\mu}}}(j).$

We now define a partial order on the set of $r$-compositions, which is similar to the case of compositions.

\begin{definition}\label{Def-def}
Let $\bm{\lambda}=(\lambda^{(1)},\lambda^{(2)},\ldots,\lambda^{(r)})$ and $\bm{\mu}=(\mu^{(1)},\mu^{(2)},\ldots,\mu^{(r)})$ be two $r$-compositions of $n$. We say that $\bm{\lambda}$ dominates $\bm{\mu},$ and we write $\bm{\lambda}\unrhd \bm{\mu}$ if and only if
$$\sum_{i=1}^{k-1}|\lambda^{(i)}|+\sum_{j=1}^{l}\lambda_{l}^{(k)}\geq \sum_{i=1}^{k-1}|\mu^{(i)}|+\sum_{j=1}^{l}\mu_{l}^{(k)}$$
for all $k$ and $l$ with $1\leq k\leq r$ and $l\geq 0.$ If $\bm{\lambda}\unrhd \bm{\mu}$ and $\bm{\lambda}\neq \bm{\mu},$ we write $\bm{\lambda}\rhd \bm{\mu}.$
\end{definition}

We now fix once and for all a total order on the set of $r$-th roots of unity via setting $\zeta_{k} :=\zeta^{k-1}$ for $1\leq k\leq r.$ Set $S :=\{\zeta_{1},\zeta_{2},\ldots,\zeta_{r}\}.$ Then we define a set partition $A_{\bm{\lambda}}\in \mathcal{SP}_{n}$ for any $r$-composition $\bm{\lambda}.$

\begin{definition}\label{definition2-3-ulambda}
Let $\bm{\lambda}=(\lambda^{(1)},\ldots,\lambda^{(r)})\in \mathcal{C}_{r,n}.$ Suppose that we choose all $1\leq i_{1}< i_{2}<\cdots < i_{p}\leq r$ such that $\lambda^{(i_{1})},$ $\lambda^{(i_{2})},\ldots,$ $\lambda^{(i_{p})}$ are the nonempty components of $[\bm{\lambda}]$. Define $a_{k} :=\sum_{j=1}^{k}|\lambda^{(i_{j})}|$ for $1\leq k\leq p.$ Then the set partition $A_{\bm{\lambda}}$ associated with $\bm{\lambda}$ is defined as $$A_{\bm{\lambda}} :=\{\{1,\ldots,a_{1}\},\{a_{1}+1,\ldots,a_{2}\},\ldots,\{a_{p-1}+1,\ldots,n\}\},$$ which may be written as $A_{\bm{\lambda}}=\{I_{1},I_{2},\ldots,I_{p}\},$ and is referred to the blocks of $A_{\bm{\lambda}}$ in the order given above.
\end{definition}

\begin{definition}\label{definition2-4-ulambda}
Let $\bm{\lambda}=(\lambda^{(1)},\ldots,\lambda^{(r)})\in \mathcal{C}_{r,n},$ and let $a_{k} :=\sum_{j=1}^{k}|\lambda^{(i_{j})}|$ $(1\leq k\leq p)$ be defined as above. Then we define $$u_{\bm{\lambda}} :=u_{a_{1},i_{1}}u_{a_{2},i_{2}}\cdots u_{a_{p},i_{p}},$$
where $u_{i,k}=\Pi_{l=1;l\neq k}^{r}(t_{i}-\zeta_{l})$ for $1\leq i\leq n$ and $1\leq k\leq r.$
\end{definition}

\begin{definition}
Let $\bm{\lambda}\in \mathcal{C}_{r,n}.$ We set $U_{\bm{\lambda}} :=u_{\bm{\lambda}}E_{A_{\bm{\lambda}}},$ and define $x_{\bm{\lambda}}=\sum_{w\in \mathfrak{S}_{\bm{\lambda}}}q^{l(w)}g_{w}.$ Then we define the element $m_{\bm{\lambda}}$ of $\text{Y}_{r,n}$ as follows: $$m_{\bm{\lambda}} :=U_{\bm{\lambda}}x_{\bm{\lambda}}=u_{\bm{\lambda}}E_{A_{\bm{\lambda}}}x_{\bm{\lambda}}.$$
\end{definition}

Let $\ast$ denote the $\mathcal{R}$-linear anti-automorphism of $\text{Y}_{r,n}$, which is determined by $g_{i}^{\ast}=g_{i}$ and $t_{j}^{\ast}=t_{j}$ for $1\leq i\leq n-1$ and $1\leq j\leq n.$

\begin{definition}
Let $\bm{\lambda}\in \mathcal{C}_{r,n},$ and let $\mathfrak{s}$ and $\mathfrak{t}$ be two row standard $\bm{\lambda}$-tableaux. We then define $m_{\mathfrak{s}\mathfrak{t}}=g_{d(\mathfrak{s})}^{\ast}m_{\bm{\lambda}}g_{d(\mathfrak{t})}.$
\end{definition}

For each $\bm{\lambda}\in \mathcal{P}_{r,n},$ let $\text{Y}_{r,n}^{\rhd \bm{\lambda}}$ be the $\mathcal{R}$-submodule of $\text{Y}_{r,n}$ spanned by $m_{\mathfrak{u}\mathfrak{v}}$ with $\mathfrak{u}, \mathfrak{v}\in \mathrm{Std}(\bm{\mu})$ for various $\bm{\mu}\in \mathcal{P}_{r,n}$ such that $\bm{\mu}\rhd\bm{\lambda}.$

\begin{theorem}\label{cellula-bases-2}
{\rm (See [ER, Theorem 20].)} The algebra $\emph{Y}_{r,n}$ is a free $\mathcal{R}$-module with a cellular basis $$\mathcal{B}_{r,n}=\{m_{\mathfrak{s}\mathfrak{t}}\:|\:\mathfrak{s}, \mathfrak{t}\in \mathrm{Std}(\lambda)~for~some~r\mathrm{-}partition~\lambda~of~n\},$$ that is, the following properties hold$:$

$\mathrm{(i)}$ The $\mathcal{R}$-linear map determined by $m_{\mathfrak{s}\mathfrak{t}}\mapsto m_{\mathfrak{t}\mathfrak{s}}$ $(m_{\mathfrak{s}\mathfrak{t}}\in \mathcal{B}_{r,n})$ is an anti-automorphism on $\emph{Y}_{r,n}$.

$\mathrm{(ii)}$ For a given $h\in \emph{Y}_{r,n},$ $\bm{\mu}\in \mathcal{P}_{r,n}$ and $\mathfrak{t}\in \mathrm{Std}(\bm{\mu}),$ there exist $r_{\mathfrak{v}\mathfrak{t}}(h)\in \mathcal{R}$ such that for all $\mathfrak{s}\in \mathrm{Std}(\bm{\mu})$, we have $$m_{\mathfrak{s}\mathfrak{t}}h\equiv\sum_{\mathfrak{v}\in \mathrm{Std}(\bm{\mu})}r_{\mathfrak{v}\mathfrak{t}}(h)m_{\mathfrak{s}\mathfrak{v}}~~~~\mathrm{mod}~\emph{Y}_{r,n}^{\rhd \bm{\mu}},$$
where $r_{\mathfrak{v}\mathfrak{t}}(h)$ may depend on $\mathfrak{v}, \mathfrak{t}$ and $h,$ but not on $\mathfrak{s}.$
\end{theorem}

For each $\bm{\lambda}\in \mathcal{P}_{r,n},$ let $\overline{m}_{\bm{\lambda}}$ be the image of $m_{\bm{\lambda}}$ under the algebra homomorphism $\text{Y}_{r,n}\rightarrow \text{Y}_{r,n}/\text{Y}_{r,n}^{\rhd \bm{\lambda}}.$ We denote by $S^{\bm{\lambda}}$ the right $\text{Y}_{r,n}$-submodule of $\text{Y}_{r,n}/\text{Y}_{r,n}^{\rhd \bm{\lambda}}$ generated by $\overline{m}_{\bm{\lambda}},$ which is called the Specht module associated to $\bm{\lambda}.$ By Theorem \ref{cellula-bases-2}, $S^{\bm{\lambda}}$ is a free $\mathcal{R}$-module with basis $\{\overline{m}_{\bm{\lambda}}g_{d(\mathfrak{t})}\:|\:\mathfrak{t}\in \mathrm{Std}(\bm{\lambda})\}.$ We can define an associative symmetric bilinear form on $S^{\bm{\lambda}}$ by $$m_{\bm{\lambda}}g_{d(\mathfrak{s})}g_{d(\mathfrak{t})}^{\ast}m_{\bm{\lambda}}\equiv \langle \overline{m}_{\bm{\lambda}}g_{d(\mathfrak{s})}, \overline{m}_{\bm{\lambda}}g_{d(\mathfrak{t})}\rangle m_{\bm{\lambda}}\quad \mathrm{mod}~\text{Y}_{r,n}^{\rhd \bm{\lambda}}.$$

Let $\text{rad}\hspace{0.5mm}S^{\bm{\lambda}}=\{u\in S^{\bm{\lambda}}\:|\: \langle u, v\rangle=0~\mathrm{for~all}~v\in S^{\bm{\lambda}}\}.$ Consequently, $\text{rad}\hspace{0.5mm}S^{\bm{\lambda}}$ is a $\text{Y}_{r,n}$-submodule of $S^{\bm{\lambda}}.$ Let $D^{\bm{\lambda}}=S^{\bm{\lambda}}/\text{rad}\hspace{0.5mm}S^{\bm{\lambda}}$ for each $\bm{\lambda}\in \mathcal{P}_{r,n}.$ By a general theory of cellular basis, if $\mathcal{R}=\mathbb{K}$ is an algebraically closed field of characteristic $p\geq 0$ such that $p$ does not divide $r,$ the set $\{D^{\bm{\lambda}}\neq 0\:|\: \bm{\lambda}\in \mathcal{P}_{r,n}\}$ gives a complete set of non-isomorphic irreducible $\text{Y}_{r,n}$-modules. In fact, by [ER, Theorem 7 and (46)] (see also [JaPA, $\S$4.1] and [CW, Theorem 6.3]), $\{\bm{\lambda}\in \mathcal{P}_{r,n}\:|\: D^{\bm{\lambda}}\neq 0\}$ is just the set $\mathcal{K}_{r,n},$ where $$\mathcal{K}_{r,n}=\big\{\bm{\lambda}\in \mathcal{P}_{r,n}\:|\:\bm{\lambda}=(\lambda^{(1)},\ldots,\lambda^{(r)})~\mathrm{with ~each}~ \lambda^{(i)} ~\mathrm{being~ an~}e\mathrm{-restricted~ partition}\big\}.$$

\section{Yokonuma-Schur algebra and its cellular basis}

For an $r$-composition $\bm{\lambda}$ of $n,$ a $\bm{\lambda}$-tableau $\mathrm{S}=(S^{(1)},\ldots, S^{(r)})$ is a map $\mathrm{S}: [\bm{\lambda}]\rightarrow \{1,\ldots,n\}\times \{1,\ldots,r\},$ which can be regarded as the diagram $[\bm{\lambda}],$ together with an ordered pair $(i,k)$ ($1\leq i\leq n, 1\leq k\leq r$) attached to each node. Given $\bm{\lambda}\in \mathcal{P}_{r,n}$ and $\bm{\mu}\in \mathcal{C}_{r,n}$, a $\bm{\lambda}$-tableau $\text{S}$ is said to be of type $\bm{\mu}$ if the number of $(i,k)$ in the entry of $\text{S}$ is equal to $\mu_{i}^{(k)}.$ Given $\mathfrak{s}\in \mathrm{Std}(\bm{\lambda})$, $\bm{\mu}(\mathfrak{s})$, a $\bm{\lambda}$-tableau of type $\bm{\mu},$ is defined by replacing each entry $m$ in $\mathfrak{s}$ by $(i,k)$ if $m$ is in the $i$-th row of the $k$-th component of $\mathfrak{t}^{\bm{\mu}}.$

We define a total order on the set of pairs $(i,k)$ by $(i_{1}, k_{1})< (i_{2}, k_{2})$ if $k_{1}< k_{2},$ or $k_{1}=k_{2}$ and $i_{1}<i_{2}.$ Let $\bm{\lambda}\in \mathcal{P}_{r,n}$ and $\bm{\mu}\in \mathcal{C}_{r,n}$. Suppose that $\text{S}=(S^{(1)},\ldots, S^{(r)})$ is a $\bm{\lambda}$-tableau of type $\bm{\mu}.$ $\text{S}$ is said to be semistandard if each component $S^{(k)}$ is non-decreasing in rows, strictly increasing in columns, and all entries of $S^{(k)}$ are of the form $(i,l)$ with $l\geq k.$ We denote by $\mathcal{T}_{0}(\bm{\lambda}, \bm{\mu})$ the set of semistandard $\bm{\lambda}$-tableaux of type $\bm{\mu}.$

Let us consider the special case when $r=1.$ Suppose that $\bm{\lambda}, \bm{\mu}\in \mathcal{C}_{1,n}.$ A $\bm{\lambda}$-tableau $\text{S}$ of type $\bm{\mu}$ is said to be row semistandard if the entries in each row of $\text{S}$ are non-decreasing. $\text{S}$ is said to be semistandard if $\bm{\lambda}\in \mathcal{P}_{1,n},$ $\text{S}$ is row semistandard and the entries in each column are strictly increasing. Assume that $\bm{\lambda}, \bm{\mu}\in \mathcal{C}_{1,n}$ and put $\mathcal{D}_{\bm{\lambda}\bm{\mu}}=\mathcal{D}_{\bm{\lambda}}\cap\mathcal{D}_{\bm{\mu}}^{-1}.$ Then $\mathcal{D}_{\bm{\lambda}\bm{\mu}}$ is the set of minimal length elements in the double cosets $\mathfrak{S}_{\bm{\lambda}}\backslash\mathfrak{S}_{n}/\mathfrak{S}_{\bm{\mu}}$, and the map $d\mapsto \bm{\mu}(\mathfrak{t}^{\bm{\lambda}}d)$ gives a bijection between the set $\mathcal{D}_{\bm{\lambda}\bm{\mu}}$ and the set of row semistandard $\bm{\lambda}$-tableaux of type $\bm{\mu}.$

Let $d\in \mathcal{D}_{\bm{\lambda}\bm{\mu}},$ and put $\text{S}=\bm{\mu}(\mathfrak{t}^{\bm{\lambda}}d),$ $\text{T}=\bm{\lambda}(\mathfrak{t}^{\bm{\mu}}d^{-1}).$ Then $\text{S}$ and $\text{T}$ are both row semistandard, and we have
\begin{equation}\label{Yoko-Schur-3-1}
\sum_{\substack{y\in \mathcal{D}_{\bm{\mu}}\\\bm{\lambda}(\mathfrak{t}^{\bm{\mu}}y)=\text{T}}}q^{l(y)}g_{y}^{\ast}x_{\bm{\mu}}=\sum_{w\in \mathfrak{S}_{\bm{\lambda}}d\mathfrak{S}_{\bm{\mu}}}q^{l(w)}g_{w}=\sum_{\substack{x\in \mathcal{D}_{\bm{\lambda}}\\\bm{\mu}(\mathfrak{t}^{\bm{\lambda}}x)=\text{S}}}q^{l(x)}x_{\bm{\lambda}}g_{x}.
\end{equation}

For any $\bm{\kappa}\in \mathcal{C}_{r,n},$ we define its type $\alpha(\bm{\kappa})$ by $\alpha(\bm{\kappa})=(n_{1},\ldots, n_{r})$ with $n_{i}=|\kappa^{(i)}|.$ Assume that $\bm{\lambda}\in \mathcal{P}_{r,n}$ and $\bm{\mu}\in \mathcal{C}_{r,n}.$ We define a subset $\mathcal{T}_{0}^{+}(\bm{\lambda}, \bm{\mu})$ of $\mathcal{T}_{0}(\bm{\lambda}, \bm{\mu})$ by $$\mathcal{T}_{0}^{+}(\bm{\lambda}, \bm{\mu})=\{\text{S}\in \mathcal{T}_{0}(\bm{\lambda}, \bm{\mu})\:|\:\alpha(\bm{\lambda})=\alpha(\bm{\mu})\}.$$

Take $\text{S}\in \mathcal{T}_{0}(\bm{\lambda}, \mu)$. One can check that $\mathrm{S}\in \mathcal{T}_{0}^{+}(\bm{\lambda}, \bm{\mu})$ if and only if each entry of $S^{(k)}$ is of the form $(i,k)$ for some $i.$ Moreover, if $\mathfrak{s}\in \mathrm{Std}(\bm{\lambda})$ is such that $\bm{\mu}(\mathfrak{s})=\text{S}$ with $\mathrm{S}\in \mathcal{T}_{0}^{+}(\bm{\lambda}, \bm{\mu}),$ then the entries of the $i$-th component of $\mathfrak{s}$ consist of numbers $a_{i}+1,\ldots, a_{i+1},$ where $a_{i}=\sum_{k=1}^{i-1}n_{k}.$ In particular, $d(\mathfrak{s})\in \mathfrak{S}_{\alpha}$ for $\alpha=\alpha(\bm{\lambda}).$

Take $\text{S}\in \mathcal{T}_{0}^{+}(\bm{\lambda}, \bm{\mu})$. Let $\mathfrak{s}_{1}=$ first$(\mathrm{S}),$ which is the unique element of Std$(\bm{\lambda})$ satisfying the property that $\bm{\mu}(\mathfrak{s}_{1})=\text{S}$ and that $\mathfrak{s}_{1}\unrhd \mathfrak{s}$ for any $\mathfrak{s}\in \mathrm{Std}(\bm{\lambda})$ such that $\bm{\mu}(\mathfrak{s})=\mathrm{S}.$ Let $\alpha=\alpha(\bm{\lambda})=\alpha(\bm{\mu})$. Then $d=d(\mathfrak{s}_{1})\in \mathfrak{S}_{\alpha},$ which is given as $d=(d_{1},\ldots,d_{r})$ with $d_{k}$ a distinguished double coset representative in $\mathfrak{S}_{\lambda^{(k)}}\backslash\mathfrak{S}_{n_{k}}/\mathfrak{S}_{\mu^{(k)}}.$ From \eqref{Yoko-Schur-3-1} we have
\begin{equation}\label{Yoko-Schur-3-2}
\sum_{\substack{\mathfrak{s}\in \mathrm{Std}(\bm{\lambda})\\\bm{\mu}(\mathfrak{s})=\text{S}}}q^{l(d(\mathfrak{s}))}x_{\bm{\lambda}}g_{d(\mathfrak{s})}=\sum_{w\in \mathfrak{S}_{\bm{\lambda}}d\mathfrak{S}_{\bm{\mu}}}q^{l(w)}g_{w}=hg_{d}x_{\bm{\mu}},\end{equation}
where $h=\sum g_{v},$ the sum running over certain elements $v\in \mathfrak{S}_{\bm{\lambda}}.$

For each $\bm{\mu}\in \mathcal{C}_{r,n},$ let $M^{\bm{\mu}}=m_{\bm{\mu}}\text{Y}_{r,n}.$ The following lemma gives a basis of $M^{\bm{\mu}}$ as an $\mathcal{R}$-module.
\begin{lemma}\label{Yoko-Schur-lemma-3-1}
For each $\bm{\mu}\in \mathcal{C}_{r,n},$ $\{m_{\bm{\mu}}g_{d}\:|\:d\in \mathcal{D}_{\bm{\mu}}\}$ is an $\mathcal{R}$-basis of $M^{\bm{\mu}}$.
\end{lemma}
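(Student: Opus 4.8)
The plan is to show that $\{m_\mu g_d \mid d \in \mathcal{D}_\mu\}$ is both a spanning set and a linearly independent set for $M^\mu = m_\mu Y_{r,n}$ over $\mathcal{R}$. The spanning part should follow from the multiplication rules for the $g_i$ combined with the structure of $m_\mu = U_\mu x_\mu$. The linear independence should follow from a cardinality/freeness argument, matching the count $|\mathcal{D}_\mu|$ against the rank of $M^\mu$, or by exhibiting a triangularity with respect to the basis $\mathcal{B}_{r,n}$ of $Y_{r,n}$.

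**First I would** establish spanning. Since $M^\mu$ is generated as a right $Y_{r,n}$-module by $m_\mu$, it is spanned by elements $m_\mu \cdot b$ for $b$ in Juyumaya's basis, i.e. $m_\mu t_1^{k_1}\cdots t_n^{k_n} g_w$. The factor $U_\mu = u_\mu E_{A_\mu}$ absorbs the $t$-part: because $u_\mu$ is (up to scalar) an idempotent-type element built from the $t_i$ and $E_{A_\mu}$ symmetrizes the $t_i$ within blocks, one checks that $m_\mu t_1^{k_1}\cdots t_n^{k_n}$ is an $\mathcal{R}$-linear combination of $m_\mu$ itself (this is the standard computation that $U_\mu$ kills the "extra" torus directions — analogous to how $u_\lambda$ behaves in [ER] and [C1]). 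Then one is left with spanning $M^\mu$ by $\{m_\mu g_w \mid w \in \mathfrak{S}_n\}$. Now write $w = w_1 d$ with $w_1 \in \mathfrak{S}_\mu$ and $d \in \mathcal{D}_\mu$ the distinguished coset representative, so $l(w) = l(w_1) + l(d)$ and hence $g_w = g_{w_1} g_d$. Since $x_\mu = \sum_{v \in \mathfrak{S}_\mu} q^{l(v)} g_v$ and $g_{w_1} x_\mu$ is well known (from the Hecke-algebra-of-type-$A$ relations inside $Y_{r,n}$) to equal $q^{l(w_1)} x_\mu$ — equivalently $g_i x_\mu = q x_\mu$ for $s_i \in \mathfrak{S}_\mu$, using $g_i^2 = 1 + (q - q^{-1})e_i g_i$ and $e_i x_\mu = x_\mu$ on the relevant block — we get $m_\mu g_{w_1} g_d = U_\mu g_{w_1} x_\mu g_d = q^{l(w_1)} U_\mu x_\mu g_d = q^{l(w_1)} m_\mu g_d$. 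Hence every $m_\mu g_w$ is a scalar multiple of some $m_\mu g_d$ with $d \in \mathcal{D}_\mu$, proving spanning.

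**Next I would** prove linear independence. The cleanest route is triangularity: expanding $m_\mu g_d = U_\mu x_\mu g_d = U_\mu \sum_{v \in \mathfrak{S}_\mu} q^{l(v)} g_{vd}$, and noting $l(vd) = l(v) + l(d)$, each $g_{vd}$ is a genuine basis element $g_{vd}$ (not a sum), so $m_\mu g_d = U_\mu \sum_{v} q^{l(v)} g_{vd}$. The torus part $U_\mu$ expands in the $t$-monomials, and because the sets $\mathfrak{S}_\mu d$ for distinct $d \in \mathcal{D}_\mu$ are disjoint subsets of $\mathfrak{S}_n$, the elements $m_\mu g_d$ involve disjoint families of Juyumaya basis elements $t_1^{k_1}\cdots t_n^{k_n} g_{vd}$ (distinguished by the $\mathfrak{S}_n$-component $vd$). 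The coefficient of $g_{vd}$ alone (the term where all $t$-exponents are zero — or more safely, the full $t$-packet attached to $g_{vd}$) is a nonzero element of $\mathcal{R}$, in fact $q^{l(v)}$ times a factor coming from $U_\mu$ which is a unit-adjacent scalar in $\mathcal{R}[\frac1r]$. Therefore a nontrivial $\mathcal{R}$-linear relation $\sum_d r_d \, m_\mu g_d = 0$ would force $r_d = 0$ for each $d$ by comparing coefficients of $g_{vd}$ across the disjoint union $\bigsqcup_d \mathfrak{S}_\mu d = \mathfrak{S}_n$.

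**The hard part will be** the bookkeeping in the torus factor: verifying that $U_\mu = u_\mu E_{A_\mu}$ genuinely absorbs all $t$-multiplications $m_\mu \mapsto m_\mu t_j^k$ back into $\mathcal{R} m_\mu$, and that when $U_\mu$ is expanded in the Juyumaya basis the coefficient attached to each $g_{vd}$ is nonzero in $\mathcal{R}$ (equivalently, that $U_\mu \neq 0$ and that its expansion does not accidentally cancel against contributions from other $d$). This requires understanding precisely how $e_{i,j}$ and the $u_{i,k}$ interact with the $g_i$ via the braid relations $g_i t_j = t_{s_i(j)} g_i$ — but this is exactly the content already worked out in [ER, Section 4] and [C1] for the cellular basis of $Y_{r,n}$, so I would cite those computations rather than redo them. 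Given that input, both spanning and independence are routine, and the lemma follows; the rank of $M^\mu$ as an $\mathcal{R}$-module is then $|\mathcal{D}_\mu| = [\mathfrak{S}_n : \mathfrak{S}_\mu]$.
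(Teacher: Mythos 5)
Your proof is correct and follows essentially the same route as the paper: spanning via the absorption identities $m_{\mu}t_{i}=\zeta_{p_{\mu}(i)}m_{\mu}$ and $m_{\mu}g_{w}=q^{l(w)}m_{\mu}$ for $w\in\mathfrak{S}_{\mu}$ applied to Juyumaya's basis, and linear independence from the disjointness of the supports $\mathfrak{S}_{\mu}d$ in the expansion $m_{\mu}g_{d}=\sum_{w\in\mathfrak{S}_{\mu}}q^{l(w)}U_{\mu}g_{wd}$. The paper simply cites [ER, Lemma 11] for the absorption identities rather than re-deriving them, as you also propose to do.
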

\begin{proof}
Since $m_{\bm{\mu}}g_{d}=\sum_{w\in \mathfrak{S}_{\bm{\mu}}}U_{\bm{\mu}}g_{wd}$ for each $d\in \mathcal{D}_{\bm{\mu}},$ and hence $\{m_{\bm{\mu}}g_{d}\}$ is linearly independent. Since
$m_{\bm{\mu}}t_{i}=\zeta_{\text{p}_{\bm{\mu}}(i)}m_{\bm{\mu}}$ by [ER, Lemma 11(4)] and $m_{\bm{\mu}}g_{w}=q^{l(w)}m_{\bm{\mu}}$ for $w\in \mathfrak{S}_{\bm{\mu}},$ then the set $\{m_{\bm{\mu}}g_{d}\:|\:d\in \mathcal{D}_{\bm{\mu}}\}$ spans $M^{\bm{\mu}}$ by \eqref{basis-theorem-2}. Thus, $\{m_{\bm{\mu}}g_{d}\:|\:d\in \mathcal{D}_{\bm{\mu}}\}$, or equivalently, $\{m_{\bm{\mu}}g_{d(\mathfrak{t})}\:|\:\mathfrak{t}\in \mathrm{r}\mathrm{-Std}(\bm{\mu})\}$ is an $\mathcal{R}$-basis of $M^{\bm{\mu}}$.
\end{proof}

We now construct a basis of $M^{\bm{\mu}}$ related to the cellular basis $\{m_{\mathfrak{s}\mathfrak{t}}\}.$ For $\text{S}\in \mathcal{T}_{0}^{+}(\bm{\lambda}, \bm{\mu})$ and $\mathfrak{t}\in \mathrm{Std}(\bm{\lambda}),$ we define $$m_{\text{S}\mathfrak{t}}=\sum_{\substack{\mathfrak{s}\in \mathrm{Std}(\bm{\lambda})\\\bm{\mu}(\mathfrak{s})=\text{S}}}q^{l(d(\mathfrak{s}))+l(d(\mathfrak{t}))}m_{\mathfrak{s}\mathfrak{t}}.$$
We have
\begin{lemma}\label{Yoko-Schur-lemma-3-2}
Let $\emph{S}\in \mathcal{T}_{0}^{+}(\bm{\lambda}, \bm{\mu})$ and $\mathfrak{t}\in \mathrm{Std}(\bm{\lambda}).$ Then $m_{\emph{S}\mathfrak{t}}\in M^{\bm{\mu}}.$
\end{lemma}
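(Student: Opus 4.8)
The plan is to expand $m_{S\mathfrak{t}}$ using the definition of $m_{\mathfrak{s}\mathfrak{t}}$ and to recognize the resulting sum as a left multiple of $m_\mu$. Recall $m_{\mathfrak{s}\mathfrak{t}}=g_{d(\mathfrak{s})}^{\ast}m_{\lambda}g_{d(\mathfrak{t})}$ and $m_{\lambda}=U_{\lambda}x_{\lambda}$. Since $S\in\mathcal{T}_{0}^{+}(\lambda,\mu)$, the analysis preceding the lemma shows that every $\mathfrak{s}\in\mathrm{Std}(\lambda)$ with $\mu(\mathfrak{s})=S$ has $d(\mathfrak{s})\in\mathfrak{S}_{\alpha}$ with $\alpha=\alpha(\lambda)=\alpha(\mu)$, and formula (3.2) gives
$$\sum_{\substack{\mathfrak{s}\in\mathrm{Std}(\lambda)\\ \mu(\mathfrak{s})=S}}q^{l(d(\mathfrak{s}))}x_{\lambda}g_{d(\mathfrak{s})}=hg_{d}x_{\mu},$$
where $d=d(\mathrm{first}(S))\in\mathfrak{S}_{\alpha}$ and $h=\sum g_{v}$ with $v$ running over certain elements of $\mathfrak{S}_{\lambda}$. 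Applying the anti-automorphism $\ast$ to this identity yields
$$\sum_{\substack{\mathfrak{s}\in\mathrm{Std}(\lambda)\\ \mu(\mathfrak{s})=S}}q^{l(d(\mathfrak{s}))}g_{d(\mathfrak{s})}^{\ast}x_{\lambda}^{\ast}=x_{\mu}^{\ast}g_{d}^{\ast}h^{\ast},$$
and since $x_{\lambda}$ and $x_{\mu}$ are $\ast$-invariant (as $g_{i}^{\ast}=g_{i}$), this becomes $\sum q^{l(d(\mathfrak{s}))}g_{d(\mathfrak{s})}^{\ast}x_{\lambda}=x_{\mu}g_{d}^{\ast}h^{\ast}$.

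The remaining issue is the presence of $U_{\lambda}$ sitting between $g_{d(\mathfrak{s})}^{\ast}$ and $x_{\lambda}$, and the fact that I want $m_\mu=U_\mu x_\mu$ on the left rather than $U_\lambda x_\lambda$. Here I would use that $S\in\mathcal{T}_{0}^{+}(\lambda,\mu)$ forces $\alpha(\lambda)=\alpha(\mu)$, so the set partitions $A_\lambda$ and $A_\mu$ coincide and the relevant $u$-factors match up; consequently $U_\lambda=U_\mu$, and moreover $U_\mu$ commutes appropriately with the $g_v$ and $g_d$ that lie in $\mathfrak{S}_\alpha$ (using the relations $g_i t_j = t_{s_i(j)} g_i$ together with $d, v \in \mathfrak S_\alpha$ respecting the blocks of $A_\lambda$, as recorded via $m_\mu t_i = \zeta_{p_\mu(i)} m_\mu$ and the displayed computations after Definition 3.1). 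Then
$$m_{S\mathfrak{t}}=\Big(\sum_{\substack{\mathfrak{s}\in\mathrm{Std}(\lambda)\\ \mu(\mathfrak{s})=S}}q^{l(d(\mathfrak{s}))}g_{d(\mathfrak{s})}^{\ast}\Big)U_\lambda x_\lambda\, g_{d(\mathfrak t)}\cdot q^{l(d(\mathfrak t))}=U_\mu\Big(\sum q^{l(d(\mathfrak s))}g_{d(\mathfrak s)}^{\ast}\Big)x_\lambda\,g_{d(\mathfrak t)}q^{l(d(\mathfrak t))}=U_\mu x_\mu g_d^{\ast}h^{\ast}g_{d(\mathfrak t)}q^{l(d(\mathfrak t))},$$
which exhibits $m_{S\mathfrak{t}}=m_\mu\cdot\big(g_d^{\ast}h^{\ast}g_{d(\mathfrak t)}q^{l(d(\mathfrak t))}\big)\in m_\mu Y_{r,n}=M^\mu$, as required.

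The main obstacle is the bookkeeping in the middle paragraph: justifying that $U_\lambda=U_\mu$ and that this idempotent-type element can be pulled to the far left past the sum $\sum q^{l(d(\mathfrak s))}g_{d(\mathfrak s)}^{\ast}$. This rests on the combinatorial fact that $\mathcal{T}_{0}^{+}$ is precisely the locus where $\alpha(\lambda)=\alpha(\mu)$, so that $A_\lambda=A_\mu$ and all the permutations $d(\mathfrak s)$ occurring are block-respecting elements of $\mathfrak{S}_\alpha$; once this is in hand the commutation is a direct consequence of the defining relations of $Y_{r,n}$ and [ER, Lemma 11]. Everything else is a formal manipulation with the anti-automorphism $\ast$ and formula (3.2).
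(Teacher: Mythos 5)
Your proof is correct and follows essentially the same route as the paper: expand $m_{S\mathfrak{t}}$, apply $\ast$ to (3.2), use $\alpha(\lambda)=\alpha(\mu)$ to get $U_\lambda=U_\mu$, and commute $U_\mu$ past elements of $\mathfrak{S}_\alpha$ via [ER, Lemma 11(3)]. The only cosmetic difference is that you pull $U_\lambda$ to the far left past the $g_{d(\mathfrak{s})}^{\ast}$ before invoking the starred form of (3.2), whereas the paper moves it to the right of $x_\lambda$ first and commutes it back afterwards; both rest on exactly the same facts.
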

\begin{proof}
By \eqref{Yoko-Schur-3-2} we have
\begin{align*}
m_{\text{S}\mathfrak{t}}&=\sum_{\substack{\mathfrak{s}\in \mathrm{Std}(\bm{\lambda})\\\bm{\mu}(\mathfrak{s})=\text{S}}}q^{l(d(\mathfrak{s}))+l(d(\mathfrak{t}))}g_{d(\mathfrak{s})}^{\ast}x_{\bm{\lambda}}U_{\bm{\lambda}}g_{d(\mathfrak{t})}\\
&=q^{l(d(\mathfrak{t}))}x_{\bm{\mu}}g_{d}^{\ast}h^{\ast}U_{\bm{\lambda}}g_{d(\mathfrak{t})}.
\end{align*}
Since $h=\sum g_{v},$ where $v\in \mathfrak{S}_{\bm{\lambda}},$ hence $U_{\bm{\lambda}}$ commutes with $h^{\ast}$ by [ER, Lemma 11(3)]. Since $d\in \mathfrak{S}_{\alpha}$ with $\alpha=\alpha(\bm{\lambda}),$ $U_{\bm{\lambda}}$ commutes with $g_{d}^{\ast}$ by the same reason. Noting that $\alpha=\alpha(\bm{\lambda})=\alpha(\bm{\mu}),$ we have $U_{\bm{\lambda}}=U_{\bm{\mu}}.$ Thus, we see that $$x_{\bm{\mu}}g_{d}^{\ast}h^{\ast}U_{\bm{\lambda}}=x_{\bm{\mu}}U_{\bm{\mu}}g_{d}^{\ast}h^{\ast}\in m_{\bm{\mu}}\text{Y}_{r,n}=M^{\bm{\mu}},$$
and $m_{\text{S}\mathfrak{t}}\in M^{\bm{\mu}}$ as required.
\end{proof}

\begin{proposition}\label{Yoko-Schur-propo-3-3}
For each $\bm{\mu}\in \mathcal{C}_{r,n},$ $M^{\bm{\mu}}$ is free with an $\mathcal{R}$-basis
$$\big\{m_{\mathrm{S}\mathfrak{t}}\:|\:\mathrm{S}\in \mathcal{T}_{0}^{+}(\bm{\lambda}, \bm{\mu})~and~\mathfrak{t}\in \mathrm{Std}(\bm{\lambda})~for~some~\bm{\lambda}\in \mathcal{P}_{r,n}\big\}.$$
\end{proposition}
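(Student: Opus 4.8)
The plan is to verify the three conditions that turn a list of elements of a free $\mathcal{R}$-module into a basis — membership in $M^\mu$, $\mathcal{R}$-linear independence, and having exactly $\mathrm{rank}_{\mathcal{R}}M^\mu$ members — and then to promote ``linearly independent of the right size'' to ``$\mathcal{R}$-basis'' by a descent to residue fields. Membership is Lemma~3.2, and Lemma~3.1 gives $\mathrm{rank}_{\mathcal{R}}M^\mu=|\mathcal{D}_\mu|=|\mathrm{r\text{-}Std}(\mu)|$, so only the linear independence and the count need work.

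For linear independence I would expand each $m_{S\mathfrak t}$ in the cellular basis of Theorem~2.5 using $m_{S\mathfrak t}=\sum_{\mathfrak s\in\mathrm{Std}(\lambda),\,\mu(\mathfrak s)=S}q^{l(d(\mathfrak s))+l(d(\mathfrak t))}m_{\mathfrak s\mathfrak t}$. A fixed cellular basis element $m_{\mathfrak s\mathfrak t}$ of shape $\lambda$ occurs in $m_{S'\mathfrak t'}$ only when $S'$ has shape $\lambda$, $\mathfrak t'=\mathfrak t$, and $S'=\mu(\mathfrak s)$, and then with coefficient a power of $q$, hence a unit of $\mathcal{R}$. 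Since $\mathrm{first}(S)$ certifies that $\{\mathfrak s\in\mathrm{Std}(\lambda):\mu(\mathfrak s)=S\}$ is nonempty for each $S\in\mathcal{T}_0^+(\lambda,\mu)$, a relation $\sum_{S,\mathfrak t}c_{S\mathfrak t}m_{S\mathfrak t}=0$, read off at the basis vector $m_{\mathrm{first}(S),\mathfrak t}$, forces $c_{S\mathfrak t}=0$ for all $S,\mathfrak t$. This same computation goes through over any field $\mathbb K$ carrying a ring homomorphism $\mathcal{R}\to\mathbb K$, because $q$ remains invertible and the cellular basis remains a basis under base change; I will reuse this below.

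For the count I claim $\sum_{\lambda\in\mathcal{P}_{r,n}}|\mathcal{T}_0^+(\lambda,\mu)|\cdot|\mathrm{Std}(\lambda)|=|\mathrm{r\text{-}Std}(\mu)|$. Only $\lambda$ with $\alpha(\lambda)=\alpha(\mu)=(n_1,\dots,n_r)$ contribute; for such $\lambda$, an element of $\mathcal{T}_0^+(\lambda,\mu)$ is, component by component (the second coordinate of each entry being forced), an ordinary semistandard $\lambda^{(k)}$-tableau of type $\mu^{(k)}$, while a standard $\lambda$-multitableau is an ordered partition of $\{1,\dots,n\}$ into blocks of sizes $n_1,\dots,n_r$ together with a standard filling of each $\lambda^{(k)}$. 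Hence the left-hand side equals $\binom{n}{n_1,\dots,n_r}\prod_k\bigl(\sum_{\nu\vdash n_k}K_{\nu\mu^{(k)}}f^\nu\bigr)$, and the Robinson--Schensted--Knuth identity $\sum_{\nu\vdash m}K_{\nu\sigma}f^\nu=m!/\prod_i\sigma_i!$ applied in each component collapses this to $n!/\prod_{k,i}\mu_i^{(k)}!=|\mathcal{D}_\mu|$.

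Finally, let $N^\mu$ be the $\mathcal{R}$-span of the $m_{S\mathfrak t}$; by the above it is a free submodule of $M^\mu$ of rank $|\mathcal{D}_\mu|$. For any field $\mathbb K$ with a ring map $\mathcal{R}\to\mathbb K$, the vectors $m_{S\mathfrak t}\otimes1$ are linearly independent in $Y_{r,n}\otimes_{\mathcal{R}}\mathbb K$ by the second paragraph, hence linearly independent in $M^\mu\otimes_{\mathcal{R}}\mathbb K$, which has dimension $|\mathcal{D}_\mu|$; so they form a $\mathbb K$-basis of it and in particular $(M^\mu/N^\mu)\otimes_{\mathcal{R}}\mathbb K=0$. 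A finitely generated $\mathcal{R}$-module annihilated by every base change to a field must vanish, so $N^\mu=M^\mu$. I expect the only genuinely substantive point to be the counting identity — keeping the type constraint $\alpha(\lambda)=\alpha(\mu)$ (which is the trace left by the idempotents $E_{A_\mu}$, already packaged into the identity $U_\lambda=U_\mu$ used for Lemma~3.2) and the multinomial coming from multitableaux correctly balanced against the classical one-partition RSK identity; everything else is cellular-basis bookkeeping plus a routine descent to residue fields.
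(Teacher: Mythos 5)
Your proof is correct, and its skeleton coincides with the paper's: membership via Lemma~3.2, linear independence from the disjointness of the supports of the $m_{S\mathfrak t}$ in the cellular basis, and a rank count against Lemma~3.1. You diverge from the paper in the two sub-steps. For the count, the paper simply cites [SS, Lemma 2.5(ii) and Corollary 4.5(ii)], whereas you prove the identity $\sum_{\lambda}|\mathcal{T}_0^+(\lambda,\mu)|\,|\mathrm{Std}(\lambda)|=|\mathcal{D}_\mu|$ directly, factoring through $\alpha(\lambda)=\alpha(\mu)$ and applying the classical Kostka--RSK identity componentwise; this makes the argument self-contained at the cost of a combinatorial computation the paper outsources. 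For the passage from ``independent set of the right cardinality'' to ``basis,'' the paper works over the quotient field of $\mathcal{R}$ and then observes that, because each $m_{S\mathfrak t}$ is a sum of \emph{distinct} elements of the unit-rescaled cellular basis $\{q^{l(d(\mathfrak s))+l(d(\mathfrak t))}m_{\mathfrak s\mathfrak t}\}$ with coefficient $1$, the fraction-field coefficients of any $h\in M^\mu$ are forced to lie in $\mathcal{R}$; you instead run a descent to all residue fields and invoke Nakayama to kill $M^\mu/N^\mu$. Both mechanisms are sound (your use of the surjection $M^\mu\otimes\mathbb{K}\to\mathrm{im}\subseteq Y_{r,n}\otimes\mathbb{K}$ to transport independence back to $M^\mu\otimes\mathbb{K}$ is handled correctly, and $\dim_{\mathbb K}M^\mu\otimes\mathbb{K}=|\mathcal{D}_\mu|$ follows from the freeness in Lemma~3.1); the paper's version is slightly more economical because it exploits the explicit unit coefficients, while yours is more generic and would survive in settings where the expansion coefficients were not visibly units.
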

\begin{proof}
The basis elements $m_{\mathfrak{s}\mathfrak{t}}$ contained in the expression of $m_{\mathrm{S}\mathfrak{t}}$ are disjoint for different $m_{\mathrm{S}\mathfrak{t}}.$ It follows that $m_{\mathrm{S}\mathfrak{t}}$ are linearly independent. By Lemma \ref{Yoko-Schur-lemma-3-1}, $M^{\bm{\mu}}$ is a free $\mathcal{R}$-module, and its rank is equal to the number of $\{m_{\mathrm{S}\mathfrak{t}}\}$ given in the proposition by [SS, Lemma 2.5(ii) and Corollary 4.5(ii)]. Hence, any element in $M^{\bm{\mu}}$ can be written as a linear combination of various $m_{\mathrm{S}\mathfrak{t}}$ with coefficients in the quotient field of $\mathcal{R}$. But since the set $\{q^{l(d(\mathfrak{s}))+l(d(\mathfrak{t}))}m_{\mathfrak{s}\mathfrak{t}}\}$ is an $\mathcal{R}$-basis of $\text{Y}_{r,n},$ these coefficients are actually in $\mathcal{R}.$ This proves the proposition.
\end{proof}

Let $\bm{\mu}, \bm{\nu}\in \mathcal{C}_{r,n}$ be such that $\alpha(\bm{\mu})=\alpha(\bm{\nu})=\alpha.$ Put $\alpha=(n_{1},\ldots,n_{r}).$ Let us take $d\in \mathcal{D}_{\bm{\mu}\bm{\nu}}\cap \mathfrak{S}_{\alpha}.$ We have $d=(d_{1},\ldots,d_{r})$ with $d_{k}\in \mathcal{D}_{\mu^{(k)}\nu^{(k)}}$ with respect to $\mathfrak{S}_{n_{k}}.$ Then we can define a map $\varphi_{\bm{\mu}\bm{\nu}}^{d}: M^{\bm{\nu}}\rightarrow M^{\bm{\mu}}$ by $$\varphi_{\bm{\mu}\bm{\nu}}^{d}(m_{\bm{\nu}}h)=\sum_{w\in \mathfrak{S}_{\bm{\mu}}d\mathfrak{S}_{\bm{\nu}}}q^{l(w)}U_{\bm{\mu}}g_{w}h$$ for all $h\in \text{Y}_{r,n}.$ In fact, by \eqref{Yoko-Schur-3-1}, we have
\begin{equation}\label{Yoko-Schur-3-3}
\sum_{\substack{y\in \mathcal{D}_{\bm{\nu}}\cap \mathfrak{S}_{\alpha}\\\bm{\mu}(\mathfrak{t}^{\bm{\nu}}y)=\mathrm{T}}}q^{l(y)}U_{\bm{\mu}}g_{y}^{\ast}x_{\bm{\nu}}=\sum_{w\in \mathfrak{S}_{\bm{\mu}}d\mathfrak{S}_{\bm{\nu}}}q^{l(w)}U_{\bm{\mu}}g_{w}=\sum_{\substack{x\in \mathcal{D}_{\bm{\mu}}\cap \mathfrak{S}_{\alpha}\\\bm{\nu}(\mathfrak{t}^{\bm{\mu}}x)=\mathrm{S}}}q^{l(x)}m_{\bm{\mu}}g_{x},
\end{equation}
where $\mathrm{S}=\bm{\mu}(\mathfrak{t}^{\bm{\nu}}d)$ and $\mathrm{T}=\bm{\nu}(\mathfrak{t}^{\bm{\mu}}d^{-1})$ are row semistandard tableaux. Noting that $U_{\bm{\mu}}=U_{\bm{\nu}}$ and $y\in \mathfrak{S}_{\alpha},$ we have $U_{\bm{\mu}}g_{y}^{\ast}x_{\bm{\nu}}=g_{y}^{\ast}U_{\bm{\nu}}x_{\bm{\nu}}=g_{y}^{\ast}m_{\bm{\nu}},$ and $\varphi_{\bm{\mu}\bm{\nu}}^{d}$ is well-defined.

The proof of the next proposition is inspired by that of [SS, Proposition 5.2], although it should be noted that the basic set-up there is different from ours. It allows us to restrict ourselves to considering the subset $\mathcal{T}_{0}^{+}(\bm{\lambda}, \bm{\mu}).$

\begin{proposition}\label{Yoko-Schur-propo-3-4}
Let $\bm{\mu}, \bm{\nu}\in \mathcal{C}_{r,n}$. Then

$\mathrm{(i)}$ Assume that $\alpha(\bm{\mu})\neq\alpha(\bm{\nu}).$ Then $\mathrm{Hom}_{\emph{Y}_{r,n}}(M^{\bm{\nu}}, M^{\bm{\mu}})=0.$

$\mathrm{(ii)}$ Assume that $\alpha(\bm{\mu})=\alpha(\bm{\nu}).$ Then $\mathrm{Hom}_{\emph{Y}_{r,n}}(M^{\bm{\nu}}, M^{\bm{\mu}})$ is a free $\mathcal{R}$-module with basis $\{\varphi_{\bm{\mu}\bm{\nu}}^{d}\:|\:d\in \mathcal{D}_{\bm{\mu}\bm{\nu}}\cap \mathfrak{S}_{\alpha}\}.$
\end{proposition}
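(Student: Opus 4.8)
The plan is to reduce everything to the ordinary $q$-Schur algebra situation of [SS] via the ``type'' decomposition, using the idempotents $U_{\mu}$ and the fact that $M^{\mu}=m_{\mu}Y_{r,n}$ carries the eigenvalue data recorded by $\alpha(\mu)$. First I would prove (i). The key observation is that $m_{\mu}t_{i}=\zeta_{p_{\mu}(i)}m_{\mu}$ (by [ER, Lemma 11(4)]), so the subalgebra generated by $t_{1},\dots,t_{n}$ acts on $M^{\mu}$ through a character whose multiset of eigenvalues on the various $t_i$ is determined precisely by $\alpha(\mu)$; equivalently, $U_{\mu}$ acts as the identity on $M^{\mu}$ while $U_{\nu}$ acts as $0$ on $M^{\mu}$ whenever $\alpha(\mu)\neq\alpha(\nu)$ (this follows from the orthogonality of the $u_{i,k}$, i.e.\ $u_{i,k}u_{i,l}=0$ for $k\neq l$ up to a unit, together with the fact that distinct $\alpha$-types force incompatible $t$-eigenvalues on some strand). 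Any $\varphi\in\mathrm{Hom}_{Y_{r,n}}(M^{\nu},M^{\mu})$ is determined by $\varphi(m_{\nu})$, which lies in $M^{\mu}$; but $m_{\nu}=m_{\nu}U_{\nu}$, so $\varphi(m_{\nu})=\varphi(m_{\nu})U_{\nu}\in M^{\mu}U_{\nu}=0$. Hence $\mathrm{Hom}_{Y_{r,n}}(M^{\nu},M^{\mu})=0$, proving (i).

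For (ii), assume $\alpha(\mu)=\alpha(\nu)=\alpha=(n_1,\dots,n_r)$. I would first check that each $\varphi_{\mu\nu}^{d}$ is well-defined (already essentially done in the text, using $U_{\mu}=U_{\nu}$ and $d\in\mathfrak{S}_{\alpha}$ so that $U_{\mu}$ commutes with $g_d$) and is a right $Y_{r,n}$-module homomorphism (immediate from the defining formula $\varphi_{\mu\nu}^{d}(m_{\nu}h)=\bigl(\sum_{w\in\mathfrak{S}_{\mu}d\mathfrak{S}_{\nu}}q^{l(w)}U_{\mu}g_{w}\bigr)h$, once well-definedness is known, because the operator is left multiplication by a fixed element). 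Linear independence of $\{\varphi_{\mu\nu}^{d}\mid d\in\mathcal{D}_{\mu\nu}\cap\mathfrak{S}_{\alpha}\}$ follows from (3.3): the images $\varphi_{\mu\nu}^{d}(m_{\nu})=\sum_{x\in\mathcal{D}_{\mu}\cap\mathfrak{S}_{\alpha},\,\nu(\mathfrak{t}^{\mu}x)=S}q^{l(x)}m_{\mu}g_{x}$ are supported, in the basis $\{m_{\mu}g_{e}\mid e\in\mathcal{D}_{\mu}\}$ of Lemma 3.1, on pairwise disjoint sets of coset representatives as $d$ ranges over $\mathcal{D}_{\mu\nu}\cap\mathfrak{S}_{\alpha}$ (since $S=\mu(\mathfrak{t}^{\nu}d)$ determines $d$ and the double cosets are disjoint). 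So it remains to show these maps \emph{span} $\mathrm{Hom}_{Y_{r,n}}(M^{\nu},M^{\mu})$.

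For the spanning statement I would argue as follows. Since everything is free over $\mathcal{R}$ and the construction is compatible with base change, it suffices to compute the rank of $\mathrm{Hom}_{Y_{r,n}}(M^{\nu},M^{\mu})$ and match it with $|\mathcal{D}_{\mu\nu}\cap\mathfrak{S}_{\alpha}|$. Given any $\varphi$, write $\varphi(m_{\nu})\in M^{\mu}$; because $m_{\nu}t_i=\zeta_{p_\nu(i)}m_{\nu}$ and $m_\nu g_w=q^{l(w)}m_\nu$ for $w\in\mathfrak{S}_\nu$, the element $\varphi(m_\nu)$ must be $t$-homogeneous with the $\nu$-character and $\mathfrak{S}_\nu$-invariant (up to the $q$-weighting) on the right, i.e.\ $\varphi(m_\nu)$ lies in the subspace $(m_\mu Y_{r,n})^{\star}$ cut out by these conditions. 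Using the basis of Lemma 3.1 and the straightening relations one identifies this subspace with the span of the sums $\sum_{x\in\mathcal{D}_\mu\cap\mathfrak{S}_\alpha,\ \nu(\mathfrak{t}^\mu x)=S}q^{l(x)}m_\mu g_x$ indexed by row semistandard tableaux $S$ of shape coming from $\mu$-rows within each block of $\alpha$, equivalently indexed by $\mathcal{D}_{\mu\nu}\cap\mathfrak{S}_\alpha$ via (3.1)/(3.3); this is exactly the image space of the $\varphi_{\mu\nu}^{d}$, so the rank equals $|\mathcal{D}_{\mu\nu}\cap\mathfrak{S}_\alpha|$ and the $\varphi_{\mu\nu}^{d}$ form a basis. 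The cited combinatorial facts [SS, Lemma 2.5(ii), Corollary 4.5(ii)] package the block-by-block reduction, where inside each block $k$ one is simply in the ordinary $q$-Schur setting for $\mathfrak{S}_{n_k}$ with parameters $\mu^{(k)},\nu^{(k)}$ (all $t$-eigenvalues on that block equal $\zeta_k$, so the $e_{i}$-idempotents disappear and $m_\mu$ restricts to the usual $x_{\mu^{(k)}}$ up to the scalar $U_\mu$).

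\textbf{Main obstacle.} The delicate point is the spanning argument in (ii): one must show that the ``homogeneity plus $\mathfrak{S}_\nu$-invariance'' subspace of $M^\mu$ in which $\varphi(m_\nu)$ necessarily lies is \emph{no larger} than the span of the $\varphi_{\mu\nu}^{d}(m_\nu)$. This is where the intertwining identities (3.1) and (3.3) together with the double-coset combinatorics of [SS] do the real work — essentially reducing the Yokonuma problem, block by block according to $\alpha$, to Schur's classical description of $\mathrm{Hom}$ between permutation modules for the Iwahori--Hecke algebra of $\mathfrak{S}_{n_k}$. Checking that this reduction is faithful (no cross-block interactions, thanks to $U_\mu=U_\nu$ being central-acting on $M^\mu=M^\nu$-type spaces and $d\in\mathfrak{S}_\alpha$) is the crux; once it is in place, the rank count and the disjoint-support linear independence finish the proof.
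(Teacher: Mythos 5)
Your argument is correct and follows essentially the same route as the paper: part (i) rests on the $t$-eigenvalue data of the basis $\{m_{\mu}g_{x}\:|\:x\in\mathcal{D}_{\mu}\}$ (your annihilation statement $M^{\mu}U_{\nu}=0$ is a repackaging of the paper's direct comparison of the scalars $\zeta_{p_{\nu}(k)}$ and $\zeta_{p_{\mathfrak{s}}(k)}$), and for part (ii) the paper likewise confines $\varphi(m_{\nu})$ to the right-$\mathfrak{S}_{\nu}$-invariant, $t$-homogeneous part of $M^{\mu}$ supported on $\mathfrak{S}_{\alpha}$ and then spans it by the double-coset sums, carried out by inductively subtracting $c_{d}\varphi_{\mu\nu}^{d}$ and citing the argument of [Ma1, Theorem 4.8] for exactly the coefficient relations you call ``straightening relations,'' with linear independence from disjoint supports in both treatments. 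One caveat: your parenthetical claim that $U_{\mu}$ acts as the identity on all of $M^{\mu}$ is false as stated (right multiplication by $U_{\mu}$ annihilates $m_{\mu}g_{x}$ for every $x\in\mathcal{D}_{\mu}\setminus\mathfrak{S}_{\alpha(\mu)}$), but your proof only uses $m_{\nu}U_{\nu}=(\mathrm{unit})\cdot m_{\nu}$ and $M^{\mu}U_{\nu}=0$, which are correct, so no gap results.
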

\begin{proof}
Suppose that $\varphi\in \mathrm{Hom}_{\text{Y}_{r,n}}(M^{\bm{\nu}}, M^{\bm{\mu}}).$ Then, for all $h\in \text{Y}_{r,n},$ we have $\varphi(m_{\bm{\nu}}h)=\varphi(m_{\bm{\nu}})h;$ hence $\varphi$ is completely determined by $\varphi(m_{\bm{\nu}}).$ Since $\varphi(m_{\bm{\nu}})\in M^{\bm{\mu}},$ by Lemma \ref{Yoko-Schur-lemma-3-1}, there exist some $c_{x}\in \mathcal{R}$ such that $\varphi(m_{\bm{\nu}})=\sum_{x\in \mathcal{D}_{\bm{\mu}}}c_{x}m_{\bm{\mu}}g_{x}$. By [ER, Lemma 10(49)], for each $k$, we have
\begin{equation}\label{Yoko-Schur-3-4}
\varphi(m_{\bm{\nu}}t_{k})=\varphi(\zeta_{\text{p}_{\bm{\nu}}(k)}m_{\bm{\nu}})=\sum_{x\in \mathcal{D}_{\bm{\mu}}}\zeta_{\text{p}_{\bm{\nu}}(k)}c_{x}m_{\bm{\mu}}g_{x}.
\end{equation}
Now assume that $c_{y}\neq 0$ for some $y\in \mathcal{D}_{\bm{\mu}},$ which is equal to some $d(\mathfrak{s})$ for some row standard $\bm{\mu}$-tableau $\mathfrak{s}.$ Then we have $$(c_{y}m_{\bm{\mu}}g_{y})t_{k}=c_{y}m_{\bm{\mu}}t_{kd(\mathfrak{s})^{-1}}g_{y}=\zeta_{\text{p}_{\bm{\mu}}(kd(\mathfrak{s})^{-1})}c_{y}m_{\bm{\mu}}g_{y}.$$ Since $\mathfrak{s}=\mathfrak{t}^{\bm{\mu}}d(\mathfrak{s}),$ we have that $\text{p}_{\bm{\mu}}(kd(\mathfrak{s})^{-1})=\text{p}_{\mathfrak{s}}(k),$ and hence
\begin{equation}\label{Yoko-Schur-3-5}
(c_{y}m_{\bm{\mu}}g_{y})t_{k}=\zeta_{\text{p}_{\mathfrak{s}}(k)}c_{y}m_{\bm{\mu}}g_{y}.
\end{equation}
By comparing \eqref{Yoko-Schur-3-4} and \eqref{Yoko-Schur-3-5}, we have $\text{p}_{\mathfrak{t}^{\bm{\nu}}}(k)=\text{p}_{\mathfrak{s}}(k)$ for all $k=1,\ldots, n.$ This implies that $\alpha(\bm{\mu})=\alpha(\bm{\nu}).$ Thus, (i) is proved.

Now assume that $\alpha(\bm{\mu})=\alpha(\bm{\nu})=\alpha.$ Since $\text{p}_{\mathfrak{t}^{\bm{\nu}}}(k)=\text{p}_{\mathfrak{t}^{\bm{\mu}}}(kd(\mathfrak{s})^{-1})$ for all $k=1,\ldots, n,$ we must have $y=d(\mathfrak{s})\in \mathfrak{S}_{\alpha}.$ Let $d$ be the unique minimal length element in $\mathfrak{S}_{\bm{\mu}}y\mathfrak{S}_{\bm{\nu}}.$ Then $d\in \mathcal{D}_{\bm{\mu}\bm{\nu}}\cap \mathfrak{S}_{\alpha},$ and a similar argument as in the proof of [Ma1, Theorem 4.8] implies that $c_{d}\neq 0.$ Set $\varphi'=\varphi-c_{d}\varphi_{\bm{\mu}\bm{\nu}}^{d}.$ Then $\varphi'\in \mathrm{Hom}_{\text{Y}_{r,n}}(M^{\bm{\nu}}, M^{\bm{\mu}}),$ and $\varphi'(m_{\bm{\nu}})$ can be written as $\varphi'(m_{\bm{\nu}})=\sum_{x\in \mathcal{D}_{\bm{\mu}}}a_{x}m_{\bm{\mu}}g_{x},$ where $a_{x}=c_{x}$ if $\mathfrak{S}_{\bm{\mu}}x\mathfrak{S}_{\bm{\nu}}\neq\mathfrak{S}_{\bm{\mu}}d\mathfrak{S}_{\bm{\nu}},$ and $a_{x}=0$ for $x\in \mathfrak{S}_{\bm{\mu}}d\mathfrak{S}_{\bm{\nu}}$ by the argument as in the proof of [Ma1, Theorem 4.8]. Hence, by induction we can write $\varphi$ as a linear combination of $\varphi_{\bm{\mu}\bm{\nu}}^{d}$ with $d\in \mathcal{D}_{\bm{\mu}\bm{\nu}}\cap \mathfrak{S}_{\alpha}$ as required.

Finally, we have to show that $\{\varphi_{\bm{\mu}\bm{\nu}}^{d}\:|\:d\in \mathcal{D}_{\bm{\mu}\bm{\nu}}\cap \mathfrak{S}_{\alpha}\}$ is linearly independent. This follows from the fact that $\varphi_{\bm{\mu}\bm{\nu}}^{d}(m_{\bm{\nu}})$ is a linearly independent subset of $M^{\bm{\mu}},$ since the set $\{U_{\bm{\mu}}g_{w}\}$ is linearly independent by the basis theorem for $\text{Y}_{r,n}.$
\end{proof}

We write $M^{\bm{\nu}\ast}=(M^{\bm{\nu}})^{\ast}=\text{Y}_{r,n}m_{\bm{\nu}}.$ As a corollary to Proposition \ref{Yoko-Schur-propo-3-4}, we have the next result.
\begin{corollary}\label{Yoko-Schur-corolla-3-5}
Let $\bm{\mu}, \bm{\nu}\in \mathcal{C}_{r,n}$. Then $\mathrm{Hom}_{\emph{Y}_{r,n}}(M^{\bm{\nu}}, M^{\bm{\mu}})$ and $M^{\bm{\nu}\ast}\cap M^{\bm{\mu}}$ are canonically isomorphic as $\mathcal{R}$-modules.
\end{corollary}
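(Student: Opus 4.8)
The plan is to realise the claimed isomorphism concretely via the evaluation-at-$m_{\nu}$ map. First I would define $\Theta\colon\mathrm{Hom}_{Y_{r,n}}(M^{\nu},M^{\mu})\to M^{\mu}$ by $\Theta(\varphi)=\varphi(m_{\nu})$. Since $M^{\nu}=m_{\nu}Y_{r,n}$ is cyclic, generated over $Y_{r,n}$ by $m_{\nu}$, any $\varphi$ is determined by $\varphi(m_{\nu})$, so $\Theta$ is an injective homomorphism of $\mathcal{R}$-modules, and it involves no choice, hence is canonical. The whole problem then reduces to identifying $\mathrm{Im}\,\Theta$ with $M^{\nu\ast}\cap M^{\mu}=Y_{r,n}m_{\nu}\cap m_{\mu}Y_{r,n}$.

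For one containment I would check $\mathrm{Im}\,\Theta\subseteq M^{\nu\ast}\cap M^{\mu}$. The inclusion $\varphi(m_{\nu})\in M^{\mu}$ being obvious, the real point is $\varphi(m_{\nu})\in Y_{r,n}m_{\nu}$, and here I would invoke Proposition 3.4: when $\alpha(\mu)\neq\alpha(\nu)$ there is nothing to prove, and when $\alpha(\mu)=\alpha(\nu)=\alpha$ it is enough, by $\mathcal{R}$-linearity, to treat the basis maps $\varphi=\varphi_{\mu\nu}^{d}$ with $d\in\mathcal{D}_{\mu\nu}\cap\mathfrak{S}_{\alpha}$. For these, evaluating the defining formula of $\varphi_{\mu\nu}^{d}$ at $h=1$ gives $\varphi_{\mu\nu}^{d}(m_{\nu})=\sum_{w\in\mathfrak{S}_{\mu}d\mathfrak{S}_{\nu}}q^{l(w)}U_{\mu}g_{w}$, which by (3.3) equals $\sum_{y}q^{l(y)}U_{\mu}g_{y}^{\ast}x_{\nu}$ over the appropriate $y\in\mathcal{D}_{\nu}\cap\mathfrak{S}_{\alpha}$; since $U_{\mu}=U_{\nu}$ and $U_{\nu}$ commutes with $g_{y}^{\ast}$ for $y\in\mathfrak{S}_{\alpha}$ (the computation already recorded just after (3.3)), this rewrites as $\sum_{y}q^{l(y)}g_{y}^{\ast}U_{\nu}x_{\nu}=\sum_{y}q^{l(y)}g_{y}^{\ast}m_{\nu}\in Y_{r,n}m_{\nu}$, as wanted.

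For the reverse containment $\mathrm{Im}\,\Theta\supseteq M^{\nu\ast}\cap M^{\mu}$ I would argue directly: given $z\in Y_{r,n}m_{\nu}\cap m_{\mu}Y_{r,n}$, write $z=ym_{\nu}$ and define $\psi\colon M^{\nu}\to Y_{r,n}$ by $\psi(m_{\nu}h)=zh$. This is well defined because $m_{\nu}h=0$ forces $zh=ym_{\nu}h=0$, it is visibly $Y_{r,n}$-linear, and $\psi(M^{\nu})=zY_{r,n}\subseteq m_{\mu}Y_{r,n}=M^{\mu}$ since $z\in m_{\mu}Y_{r,n}$; hence $\psi\in\mathrm{Hom}_{Y_{r,n}}(M^{\nu},M^{\mu})$ with $\Theta(\psi)=z$. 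Together with the first containment this yields the $\mathcal{R}$-module isomorphism $\mathrm{Hom}_{Y_{r,n}}(M^{\nu},M^{\mu})\cong M^{\nu\ast}\cap M^{\mu}$. As a by-product, in the case $\alpha(\mu)\neq\alpha(\nu)$ this same construction recovers $M^{\nu\ast}\cap M^{\mu}=0$, consistent with Proposition 3.4(i).

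The hard part will be the single non-formal step, namely $\varphi(m_{\nu})\in Y_{r,n}m_{\nu}$: this is where one must genuinely use the explicit description of the Hom-space from Proposition 3.4 and the double-coset identity (3.3), in particular the commutation of $U_{\mu}$ past $g_{y}^{\ast}$ for $y\in\mathfrak{S}_{\alpha}$ coming from [ER, Lemma 11(3)]. Everything else — $\mathcal{R}$-linearity, injectivity, surjectivity onto the intersection, and canonicity — is routine module-theoretic bookkeeping.
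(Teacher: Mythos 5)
Your proof is correct and follows essentially the same route as the paper: the evaluation map $\varphi\mapsto\varphi(m_{\nu})$, injectivity from cyclicity of $M^{\nu}$, and the containment $\varphi(m_{\nu})\in Y_{r,n}m_{\nu}$ extracted from Proposition 3.4 together with the computation recorded after (3.3). Your explicit surjectivity argument (realising any $z=ym_{\nu}\in M^{\nu\ast}\cap M^{\mu}$ as the image of the map given by left multiplication by $y$) is a welcome addition, since the paper simply asserts that the evaluation map is an isomorphism without spelling out that step.
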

\begin{proof}
Every homomorphism $\varphi$ in $\mathrm{Hom}_{\text{Y}_{r,n}}(M^{\bm{\nu}}, M^{\bm{\mu}})$ is determined by $\varphi(m_{\bm{\nu}}),$ and moreover, $\varphi(m_{\bm{\nu}})\in M^{\bm{\nu}\ast}\cap M^{\bm{\mu}}$ by Proposition \ref{Yoko-Schur-propo-3-4}. As a result, the map $\mathrm{Hom}_{\text{Y}_{r,n}}(M^{\bm{\nu}}, M^{\bm{\mu}})$\\$\rightarrow M^{\bm{\nu}\ast}\cap M^{\bm{\mu}}$ given by $\varphi\mapsto \varphi(m_{\bm{\nu}})$ is an isomorphism of $\mathcal{R}$-modules.
\end{proof}

\begin{remark} It is shown in $[$CR, 61.2$]$ that whenever $A$ is a quasi-hereditary algebra, $a\in A$ and $J$ is an ideal of $A$ then $\mathrm{Hom}_{A}(aA, J)\cong Aa\cap J.$ By $\mathrm{[ChPA1, Proposition \:10]}$ $($see also $\mathrm{[C1, Corollary \:4.5]}$$),$ $\text{Y}_{r,n}$ is quasi-Frobenius, so this gives another proof of $\mathrm{Corollary \:3.5}$.
\end{remark}

Let $\bm{\mu}, \bm{\nu}\in \mathcal{C}_{r,n}$ and $\bm{\lambda}\in \mathcal{P}_{r,n}.$ We assume that $\alpha(\bm{\mu})=\alpha(\bm{\nu})=\alpha(\bm{\lambda}).$ For $\mathrm{S}\in \mathcal{T}_{0}^{+}(\bm{\lambda}, \bm{\mu})$, $\mathrm{T}\in \mathcal{T}_{0}^{+}(\bm{\lambda}, \bm{\nu}),$ put $$m_{\mathrm{S}\mathrm{T}}=\sum_{\mathfrak{s}, \mathfrak{t}}q^{l(d(\mathfrak{s}))+l(d(\mathfrak{t}))}m_{\mathfrak{s}\mathfrak{t}},$$ where the sum is taken over all $\mathfrak{s}, \mathfrak{t}\in \mathrm{Std}(\bm{\lambda})$ such that $\bm{\mu}(\mathfrak{s})=\mathrm{S}$ and $\bm{\nu}(\mathfrak{t})=\mathrm{T}.$

\begin{proposition}\label{Yoko-Schur-propo-3-7}
Suppose that $\bm{\mu}, \bm{\nu}\in \mathcal{C}_{r,n}$ with $\alpha(\bm{\mu})=\alpha(\bm{\nu})$. Then the set $$\{m_{\mathrm{S}\mathrm{T}}\:|\:\mathrm{S}\in \mathcal{T}_{0}^{+}(\bm{\lambda}, \bm{\mu})~and~\mathrm{T}\in \mathcal{T}_{0}^{+}(\bm{\lambda}, \bm{\nu})~for~some~\bm{\lambda}\in \mathcal{P}_{r,n}\}$$ is an $\mathcal{R}$-basis of $M^{\bm{\nu}\ast}\cap M^{\bm{\mu}}.$
\end{proposition}
\begin{proof}
Since $$m_{\mathrm{S}\mathrm{T}}=\sum_{\substack{\mathfrak{s}\in \mathrm{Std}(\bm{\lambda})\\\bm{\mu}(\mathfrak{s})=\mathrm{S}}}m_{\mathrm{T}\mathfrak{s}}^{\ast}=\sum_{\substack{\mathfrak{t}\in \mathrm{Std}(\bm{\lambda})\\\bm{\nu}(\mathfrak{t})=\mathrm{T}}}m_{\mathrm{S}\mathfrak{t}},$$ we see that $m_{\mathrm{S}\mathrm{T}}\in M^{\bm{\nu}\ast}\cap M^{\bm{\mu}}$ by Lemma \ref{Yoko-Schur-lemma-3-2}. Moreover, the elements $m_{\mathrm{S}\mathrm{T}}$ are linearly independent since the basis elements $m_{\mathfrak{s}\mathfrak{t}}$ involved in the $m_{ST}$ are distinct. Now suppose that $h\in M^{\bm{\nu}\ast}\cap M^{\bm{\mu}}.$ Since $h\in \text{Y}_{r,n},$ we may express $h$ with respect to the standard basis, that is, we may write $h=\sum r_{\mathfrak{s}\mathfrak{t}}m_{\mathfrak{s}\mathfrak{t}}$ for some $r_{\mathfrak{s}\mathfrak{t}}\in \mathcal{R}$. Since $h\in M^{\bm{\mu}},$ by Proposition \ref{Yoko-Schur-propo-3-3} if $r_{\mathfrak{s}\mathfrak{t}}\neq 0$ then $\bm{\mu}(\mathfrak{s})\in \mathcal{T}_{0}^{+}(\bm{\lambda}, \bm{\mu})$ for some $\bm{\lambda}\in \mathcal{P}_{r,n}$ and $r_{\mathfrak{s}\mathfrak{t}}=r_{\mathfrak{s}'\mathfrak{t}}$ whenever $\bm{\mu}(\mathfrak{s})=\bm{\mu}(\mathfrak{s}')$. Similarly, since $h\in M^{\bm{\nu}\ast},$ if $r_{\mathfrak{s}\mathfrak{t}}\neq 0$ then $\bm{\nu}(\mathfrak{t})\in \mathcal{T}_{0}^{+}(\bm{\lambda}, \bm{\nu})$ for some $\bm{\lambda}\in \mathcal{P}_{r,n}$ and $r_{\mathfrak{s}\mathfrak{t}}=r_{\mathfrak{s}\mathfrak{t}'}$ whenever $\bm{\nu}(\mathfrak{t})=\bm{\nu}(\mathfrak{t}')$. Consequently, if $\bm{\mu}(\mathfrak{s})=\bm{\mu}(\mathfrak{s}')\in \mathcal{T}_{0}^{+}(\bm{\lambda}, \bm{\mu})$ and $\bm{\nu}(\mathfrak{t})=\bm{\nu}(\mathfrak{t}')\in \mathcal{T}_{0}^{+}(\bm{\lambda}, \bm{\nu}),$ then $r_{\mathfrak{s}\mathfrak{t}}=r_{\mathfrak{s}'\mathfrak{t}}=r_{\mathfrak{s}'\mathfrak{t}'}=r_{\mathfrak{s}\mathfrak{t}'}.$ This proves the proposition. \end{proof}

\begin{definition}
Suppose that $M_{n}^{r}=\bigoplus_{\bm{\mu}\in \mathcal{C}_{r,n}}M^{\bm{\mu}}.$ We define the Yokonuma-Schur algebra YS$_{n}^{r}=\text{YS}_{q}(r,n)$ as the endomorphism algebra $$\mathrm{YS}_{n}^{r}=\mathrm{End}_{\text{Y}_{r,n}}(M_{n}^{r}),$$ which is isomorphic to $\bigoplus_{\bm{\mu}, \bm{\nu}\in \mathcal{C}_{r,n}}\mathrm{Hom}_{\text{Y}_{r,n}}(M^{\bm{\nu}}, M^{\bm{\mu}}).$
\end{definition}

Let $\mathrm{S}\in \mathcal{T}_{0}^{+}(\bm{\lambda}, \bm{\mu})$ and $\mathrm{T}\in \mathcal{T}_{0}^{+}(\bm{\lambda}, \bm{\nu}).$ In view of Proposition \ref{Yoko-Schur-propo-3-7}, we can define $\varphi_{\mathrm{S}\mathrm{T}}\in \mathrm{Hom}_{\text{Y}_{r,n}}(M^{\bm{\nu}}, M^{\bm{\mu}})$ by
\begin{equation}\label{yokonuma-schur-3-6}
\varphi_{\mathrm{S}\mathrm{T}}(m_{\bm{\nu}}h)=m_{\mathrm{S}\mathrm{T}}h
\end{equation}
for all $h\in \text{Y}_{r,n}.$ We extend $\varphi_{\mathrm{S}\mathrm{T}}$ to an element of YS$_{n}^{r}$ by defining $\varphi_{\mathrm{S}\mathrm{T}}$ to be zero on $M^{\bm{\kappa}}$ for $\bm{\nu}\neq \bm{\kappa}\in \mathcal{C}_{r,n}.$ For any $\bm{\lambda}\in \mathcal{P}_{r,n}$, let $\mathcal{T}_{0}^{+}(\bm{\lambda})=\bigcup_{\bm{\mu}\in \mathcal{C}_{r,n}}\mathcal{T}_{0}^{+}(\bm{\lambda}, \bm{\mu}).$ We denote by YS$_{r,n}^{\rhd \bm{\lambda}}$ the $\mathcal{R}$-submodule of YS$_{n}^{r}$ spanned by $\varphi_{\mathrm{S}\mathrm{T}}$ such that $\mathrm{S}, \mathrm{T}\in \mathcal{T}_{0}^{+}(\bm{\nu})$ with $\bm{\nu}\rhd \bm{\lambda}.$ Then we have the next theorem.

\begin{theorem}\label{yokonu-cellu-basis-3-9}
The Yokonuma-Schur algebra $\mathrm{YS}_{n}^{r}$ is free as an $\mathcal{R}$-module with a basis $$\big\{\varphi_{\mathrm{S}\mathrm{T}}\:|\:\mathrm{S}, \mathrm{T}\in \mathcal{T}_{0}^{+}(\bm{\lambda})~for~some~\bm{\lambda}\in \mathcal{P}_{r,n}\big\}.$$ Moreover, this basis satisfies the following properties$:$

$\mathrm{(i)}$ The $\mathcal{R}$-linear map $\ast: \mathrm{YS}_{n}^{r}\rightarrow \mathrm{YS}_{n}^{r}$ determined by $\varphi_{\mathrm{S}\mathrm{T}}^{\ast}=\varphi_{\mathrm{T}\mathrm{S}},$ for all $\mathrm{S}, \mathrm{T}\in \mathcal{T}_{0}^{+}(\bm{\lambda})$ and all $\bm{\lambda}\in \mathcal{P}_{r,n},$ is an anti-automorphism of $\mathrm{YS}_{n}^{r}.$

$\mathrm{(ii)}$ Let $\mathrm{T}\in \mathcal{T}_{0}^{+}(\bm{\lambda})$ and $\varphi\in \mathrm{YS}_{n}^{r}.$ Then for each $\mathrm{V}\in \mathcal{T}_{0}^{+}(\bm{\lambda})$, there exists $r_{\mathrm{V}}=r_{\mathrm{V}, \mathrm{T},\varphi}\in \mathcal{R}$ such that for all $\mathrm{S}\in \mathcal{T}_{0}^{+}(\bm{\lambda}),$ we have $$\varphi_{\mathrm{S}\mathrm{T}}\varphi\equiv\sum_{\mathrm{V}\in \mathcal{T}_{0}^{+}(\bm{\lambda})}r_{\mathrm{V}}\varphi_{\mathrm{S}\mathrm{V}}\quad \mathrm{mod}~\mathrm{YS}_{r,n}^{\rhd \bm{\lambda}}.$$
In particular, this basis $\{\varphi_{\mathrm{S}\mathrm{T}}\}$ is a cellular basis of $\mathrm{YS}_{n}^{r}.$
\end{theorem}
\begin{proof}
The proof is similar to that of [Ma1, Theorem 4.14] and [DJM, Theorem 6.6]. By Corollary \ref{Yoko-Schur-corolla-3-5} and Proposition \ref{Yoko-Schur-propo-3-7}, the set $\{\varphi_{\mathrm{S}\mathrm{T}}\}$ is an $\mathcal{R}$-basis of $\mathrm{YS}_{n}^{r}$. Next we need to verify (i) and (ii).

(i) Let $\bm{\lambda}\in \mathcal{P}_{r,n}$ and $\bm{\mu}, \bm{\nu}\in \mathcal{C}_{r,n},$ and take $\mathrm{S}\in \mathcal{T}_{0}^{+}(\bm{\lambda}, \bm{\mu}), \mathrm{T}\in \mathcal{T}_{0}^{+}(\bm{\lambda}, \bm{\nu}).$ Then $\varphi_{\mathrm{S}\mathrm{T}}^{\ast}(m_{\bm{\mu}})=m_{\mathrm{T}\mathrm{S}}=(m_{\mathrm{S}\mathrm{T}})^{\ast}=(\varphi_{\mathrm{S}\mathrm{T}}(m_{\bm{\nu}}))^{\ast}.$ By the $\mathcal{R}$-linearity, we have $\varphi^{\ast}(m_{\bm{\mu}})=(\varphi(m_{\bm{\nu}}))^{\ast}$ for any $\varphi\in \mathrm{Hom}_{\text{Y}_{r,n}}(M^{\bm{\nu}}, M^{\bm{\mu}}).$ Given $\varphi\in \mathrm{Hom}_{\text{Y}_{r,n}}(M^{\bm{\nu}}, M^{\bm{\mu}})$ and $\psi\in \mathrm{Hom}_{\text{Y}_{r,n}}(M^{\bm{\kappa}}, M^{\bm{\lambda}}),$ we may assume that $\bm{\mu}=\bm{\kappa}$ since otherwise $\psi\varphi=0.$ Write $\varphi(m_{\bm{\nu}})=x_{1}m_{\bm{\nu}}$ and $\psi(m_{\bm{\mu}})=x_{2}m_{\bm{\mu}}$ for some $x_{1}, x_{2}\in \text{Y}_{r,n}.$ We have
\begin{align*}
(\psi\varphi)^{\ast}(m_{\bm{\lambda}})&=(\psi\varphi(m_{\bm{\nu}}))^{\ast}=(x_{2}x_{1}m_{\bm{\nu}})^{\ast}=m_{\bm{\nu}}x_{1}^{\ast}x_{2}^{\ast}\\
&=\varphi^{\ast}(m_{\bm{\mu}})x_{2}^{\ast}=\varphi^{\ast}(m_{\bm{\mu}}x_{2}^{\ast})=\varphi^{\ast}\psi^{\ast}(m_{\bm{\lambda}}).
\end{align*}
Hence, $(\psi\varphi)^{\ast}=\varphi^{\ast}\psi^{\ast}$ and $\ast$ is an anti-automorphism.

(ii) Take $\mathrm{S}\in \mathcal{T}_{0}^{+}(\bm{\lambda}, \bm{\mu})$, $\mathrm{T}\in \mathcal{T}_{0}^{+}(\bm{\lambda}, \bm{\nu}).$ We may assume that $\varphi\in \mathrm{Hom}_{\text{Y}_{r,n}}(M^{\bm{\kappa}}, M^{\bm{\nu}})$ for some $\bm{\kappa}\in \mathcal{C}_{r,n}$ with $\alpha(\bm{\kappa})=\alpha(\bm{\nu}).$ We have $\varphi(m_{\bm{\kappa}})=m_{\bm{\nu}}h$ for some $h\in \text{Y}_{r,n}.$ Then $\varphi_{\mathrm{S}\mathrm{T}}\varphi(m_{\bm{\kappa}})=m_{\mathrm{S}\mathrm{T}}h.$ By Corollary \ref{Yoko-Schur-corolla-3-5}, we see that $m_{\mathrm{S}\mathrm{T}}h\in M^{\bm{\kappa}\ast}\cap M^{\bm{\mu}}.$ Hence by Proposition \ref{Yoko-Schur-propo-3-7}, we may write $m_{\mathrm{S}\mathrm{T}}h=\sum_{\mathrm{U},\mathrm{V}}r_{\mathrm{U}\mathrm{V}}m_{\mathrm{U}\mathrm{V}},$ where $r_{\mathrm{U}\mathrm{V}}\in \mathcal{R},$ and the sum is over $\mathrm{U}\in \mathcal{T}_{0}^{+}(\bm{\alpha}, \bm{\mu})$ and $\mathrm{V}\in \mathcal{T}_{0}^{+}(\bm{\alpha}, \bm{\kappa})$ for some $\bm{\alpha}\in \mathcal{P}_{r,n}.$ By applying Theorem \ref{cellula-bases-2}(ii), we can write $m_{\mathrm{S}\mathrm{T}}h$ as $$m_{\mathrm{S}\mathrm{T}}h=\sum_{\mathrm{V}\in \mathcal{T}_{0}^{+}(\bm{\lambda}, \bm{\kappa})}r_{\mathrm{V}}m_{\mathrm{S}\mathrm{V}}+\sum_{\substack{\bm{\alpha}\in \mathcal{P}_{r,n}\\\bm{\alpha}\rhd \bm{\lambda}}}\sum_{\substack{\mathrm{U}'\in \mathcal{T}_{0}^{+}(\bm{\alpha}, \bm{\mu})\\\mathrm{V}'\in \mathcal{T}_{0}^{+}(\bm{\alpha}, \bm{\kappa})}}r_{\mathrm{U}'\mathrm{V}'}m_{\mathrm{U}'\mathrm{V}'},$$ where $r_{\mathrm{V}}, r_{\mathrm{U}'\mathrm{V}'}\in \mathcal{R}.$ Therefore, we have
$$\varphi_{\mathrm{S}\mathrm{T}}\varphi\equiv\sum_{\mathrm{V}\in \mathcal{T}_{0}^{+}(\bm{\lambda}, \bm{\kappa})}r_{\mathrm{V}}\varphi_{\mathrm{S}\mathrm{V}}\quad \mathrm{mod}~\mathrm{YS}_{r,n}^{\rhd \bm{\lambda}}.$$ We are done.
\end{proof}

For each $\bm{\lambda}\in \mathcal{P}_{r,n},$ let $\mathrm{T}^{\bm{\lambda}}=\bm{\lambda}(\mathfrak{t}^{\bm{\lambda}}).$ Then $\mathrm{T}^{\bm{\lambda}}\in \mathcal{T}_{0}^{+}(\bm{\lambda}, \bm{\lambda})$ and $\mathrm{T}^{\bm{\lambda}}$ is the unique semistandard $\bm{\lambda}$-tableau of type $\bm{\lambda}$. Moreover $\mathfrak{t}=\mathfrak{t}^{\bm{\lambda}}$ is the unique element in Std$(\bm{\lambda})$ such that $\bm{\lambda}(\mathfrak{t})=\mathrm{T}^{\bm{\lambda}}.$ Thus, $m_{\mathrm{T}^{\bm{\lambda}}\mathrm{T}^{\bm{\lambda}}}=m_{\mathfrak{t}^{\bm{\lambda}}\mathfrak{t}^{\bm{\lambda}}}=m_{\bm{\lambda}},$ and $\varphi_{\bm{\lambda}}=\varphi_{\mathrm{T}^{\bm{\lambda}}\mathrm{T}^{\bm{\lambda}}}$ is the identity map on $M^{\bm{\lambda}}.$

The Weyl module $W^{\bm{\lambda}}$ is defined as the right $\mathrm{YS}_{n}^{r}$-submodule of $\mathrm{YS}_{n}^{r}/\mathrm{YS}_{r,n}^{\rhd \bm{\lambda}}$ spanned by the image of $\varphi_{\bm{\lambda}}.$ For each $\mathrm{S}\in \mathcal{T}_{0}^{+}(\bm{\lambda}, \bm{\mu}),$ we denote by $\varphi_{\mathrm{S}}$ the image of $\varphi_{\mathrm{T}^{\bm{\lambda}}\mathrm{S}}$ in $\mathrm{YS}_{n}^{r}/\mathrm{YS}_{r,n}^{\rhd \bm{\lambda}}.$ Then by Theorem \ref{yokonu-cellu-basis-3-9}, we see that $W^{\bm{\lambda}},$ as an $\mathcal{R}$-module, is free with basis $\{\varphi_{\mathrm{S}}\:|\:\mathrm{S}\in \mathcal{T}_{0}^{+}(\bm{\lambda})\}.$

The Weyl module $W^{\bm{\lambda}}$ possesses an associative symmetric bilinear form, which is completely determined by the equation $$\varphi_{\mathrm{T}^{\bm{\lambda}}\mathrm{S}}\varphi_{\mathrm{T}\mathrm{T}^{\bm{\lambda}}}\equiv \langle \varphi_{\mathrm{S}}, \varphi_{\mathrm{T}}\rangle \varphi_{\bm{\lambda}}\quad \mathrm{mod}~\mathrm{YS}_{r,n}^{\rhd \bm{\lambda}}$$ for all $\mathrm{S}, \mathrm{T}\in \mathcal{T}_{0}^{+}(\bm{\lambda}).$ Note that $\langle \varphi_{\mathrm{S}}, \varphi_{\mathrm{T}}\rangle=0$ unless $\mathrm{S}$ and $\mathrm{T}$ are semistandard tableaux of the same type. Let $L^{\bm{\lambda}}=W^{\bm{\lambda}}/\mathrm{rad}\hspace{0.3mm}W^{\bm{\lambda}},$ where $\mathrm{rad}\hspace{0.3mm}W^{\bm{\lambda}}=\{x\in W^{\bm{\lambda}}\:|\:\langle x, y\rangle=0\mathrm{~for~all~}y\in W^{\bm{\lambda}}\}.$

\begin{proposition}\label{yokonu-cellubasis-propo-3-10}
Suppose that $\mathcal{R}=\mathbb{K}$ is a field. Then for each $\bm{\lambda}\in \mathcal{P}_{r,n},$ $L^{\bm{\lambda}}$ is an absolutely irreducible $\mathrm{YS}_{n}^{r}$-module. Moreover, $\{L^{\bm{\lambda}}\:|\:\bm{\lambda}\in \mathcal{P}_{r,n}\}$ is a complete set of non-isomorphic irreducible $\mathrm{YS}_{n}^{r}$-modules.
\end{proposition}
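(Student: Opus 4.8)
The plan is to read the whole statement off the cellular structure of $\mathrm{YS}_{n}^{r}$ established in Theorem 3.9, and then invoke the general representation theory of cellular algebras over a field in exactly the form in which it was applied to $Y_{r,n}$ in the discussion following Theorem 2.5. The only point that is genuinely special to the Schur algebra — and the step I expect to be the real work — is that the invariant bilinear form on \emph{every} Weyl module $W^{\lambda}$ is nonzero, so that no index in $\mathcal{P}_{r,n}$ is lost.

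First I would observe that $\varphi_{\lambda}=\varphi_{\mathrm{T}^{\lambda}\mathrm{T}^{\lambda}}$ is an idempotent of $\mathrm{YS}_{n}^{r}$: by its very definition it acts as the identity on $M^{\lambda}$ and as $0$ on every other $M^{\kappa}$, so $\varphi_{\lambda}^{2}=\varphi_{\lambda}$. Feeding this into the equation defining the form on $W^{\lambda}$ gives
$$\langle\varphi_{\mathrm{T}^{\lambda}},\varphi_{\mathrm{T}^{\lambda}}\rangle\,\varphi_{\lambda}\equiv\varphi_{\mathrm{T}^{\lambda}\mathrm{T}^{\lambda}}\,\varphi_{\mathrm{T}^{\lambda}\mathrm{T}^{\lambda}}=\varphi_{\lambda}^{2}=\varphi_{\lambda}\quad\mathrm{mod}~\mathrm{YS}_{r,n}^{\rhd\lambda},$$
and since $\varphi_{\lambda}$ is the cellular basis element indexed by $(\mathrm{T}^{\lambda},\mathrm{T}^{\lambda})$ and $\lambda\not\rhd\lambda$, it does not lie in $\mathrm{YS}_{r,n}^{\rhd\lambda}$; hence $\langle\varphi_{\mathrm{T}^{\lambda}},\varphi_{\mathrm{T}^{\lambda}}\rangle=1$. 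In particular the form on $W^{\lambda}$ is nonzero and $\varphi_{\mathrm{T}^{\lambda}}\notin\mathrm{rad}\,W^{\lambda}$, so $L^{\lambda}\neq 0$ for every $\lambda\in\mathcal{P}_{r,n}$.

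With this in hand I would quote the standard machinery for cellular algebras over a field ([GL]; compare the argument used for $Y_{r,n}$ after Theorem 2.5, and [DJM, \S3]): since $\mathrm{YS}_{n}^{r}$ is cellular with Weyl modules $\{W^{\lambda}\}_{\lambda\in\mathcal{P}_{r,n}}$ and all cell forms are nonzero, each $L^{\lambda}$ is absolutely irreducible (indeed $\mathrm{End}(L^{\lambda})=\mathbb{K}$), $\mathrm{rad}\,W^{\lambda}$ is the unique maximal submodule of $W^{\lambda}$, the decomposition numbers $[W^{\lambda}:L^{\mu}]$ are unitriangular with respect to $\rhd$ (equal to $1$ when $\mu=\lambda$ and zero unless $\mu\unrhd\lambda$), whence the $L^{\lambda}$ are pairwise non-isomorphic, and every irreducible $\mathrm{YS}_{n}^{r}$-module is isomorphic to exactly one $L^{\lambda}$. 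This is precisely the cyclotomic $q$-Schur argument of [DJM] transported to the present setting.

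The only step that needs more than a citation is the nonvanishing of the forms in the second paragraph; once that is available the rest of the proposition is formal. (Alternatively, one could deduce that nonvanishing a posteriori from the fact that $\mathrm{YS}_{n}^{r}$ is quasi-hereditary, but the idempotent computation above keeps the argument self-contained and independent of the quasi-heredity statement.)
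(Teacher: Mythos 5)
Your proposal is correct and follows essentially the same route as the paper: the paper likewise observes that $\varphi_{\mathrm{T}^{\lambda}\mathrm{T}^{\lambda}}\varphi_{\mathrm{T}^{\lambda}\mathrm{T}^{\lambda}}=\varphi_{\lambda}$ is the identity on $M^{\lambda}$, deduces $\langle\varphi_{\mathrm{T}^{\lambda}},\varphi_{\mathrm{T}^{\lambda}}\rangle=1$ so that $L^{\lambda}\neq 0$ for every $\lambda\in\mathcal{P}_{r,n}$, and then appeals to the general theory of cellular algebras in [GL, (3.4)]. Your additional remark that $\varphi_{\lambda}\notin\mathrm{YS}_{r,n}^{\rhd\lambda}$ just makes explicit a step the paper leaves implicit.
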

\begin{proof}
For each $\bm{\lambda}\in \mathcal{P}_{r,n},$ we have $$\varphi_{\mathrm{T}^{\bm{\lambda}}\mathrm{T}^{\bm{\lambda}}}\varphi_{\mathrm{T}^{\bm{\lambda}}\mathrm{T}^{\bm{\lambda}}}\equiv \langle \varphi_{\mathrm{T}^{\bm{\lambda}}}, \varphi_{\mathrm{T}^{\bm{\lambda}}}\rangle \varphi_{\bm{\lambda}}~~~~\mathrm{mod}~\mathrm{YS}_{r,n}^{\rhd \bm{\lambda}}.$$ But since $\varphi_{\mathrm{T}^{\bm{\lambda}}\mathrm{T}^{\bm{\lambda}}}\varphi_{\mathrm{T}^{\bm{\lambda}}\mathrm{T}^{\bm{\lambda}}}=\varphi_{\bm{\lambda}}$ is the identity map on $M^{\bm{\lambda}},$ we see that $\langle \varphi_{\mathrm{T}^{\bm{\lambda}}}, \varphi_{\mathrm{T}^{\bm{\lambda}}}\rangle=1,$ and so $L^{\bm{\lambda}}$ is nonzero. Then the assertions follow from [GL, (3.4)].
\end{proof}

If $\bm{\lambda}, \bm{\mu}\in \mathcal{P}_{r,n},$ let $d_{\bm{\lambda}\bm{\mu}}$ denote the composition multiplicity of $L^{\bm{\mu}}$ as a composition factor of $W^{\bm{\lambda}}.$ Then $(d_{\bm{\lambda}\bm{\mu}})_{\bm{\lambda}, \bm{\mu}\in \mathcal{P}_{r,n}}$ is the decomposition matrix of $\mathrm{YS}_{n}^{r}$. The theory of cellular algebras [GL, (3.6)] yields the following result.
\begin{corollary}\label{yokonu-cellubasis-corollar-3-11}
Suppose that $\mathcal{R}=\mathbb{K}$ is a field. $(d_{\bm{\lambda}\bm{\mu}})_{\bm{\lambda}, \bm{\mu}\in \mathcal{P}_{r,n}}$ is unitriangular. That is, for $\bm{\lambda}, \bm{\mu}\in \mathcal{P}_{r,n},$ we have $d_{\bm{\mu}\bm{\mu}}=1$ and $d_{\bm{\lambda}\bm{\mu}}\neq 0$ only if $\bm{\lambda} \unrhd\bm{\mu}.$
\end{corollary}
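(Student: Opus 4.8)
The plan is to read this off from the general theory of cellular algebras applied to the cellular basis $\{\varphi_{ST}\}$ of $\mathrm{YS}_{n}^{r}$ furnished by Theorem 3.9, whose poset of cell labels is $(\mathcal{P}_{r,n},\unrhd)$ with cell modules the Weyl modules $W^{\lambda}$. In other words there is essentially nothing new to prove here; the work is in assembling the correct inputs.

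First I would record that every $L^{\lambda}$ is nonzero, which is exactly Proposition 3.11: since $\varphi_{\mathrm{T}^{\lambda}\mathrm{T}^{\lambda}}=\varphi_{\lambda}$ is the identity on $M^{\lambda}$, the defining relation of the form gives $\langle\varphi_{\mathrm{T}^{\lambda}},\varphi_{\mathrm{T}^{\lambda}}\rangle=1$, so the bilinear form on $W^{\lambda}$ is non-degenerate at its generator and $L^{\lambda}=W^{\lambda}/\mathrm{rad}\,W^{\lambda}\neq 0$. Hence the index set of nonzero simples is all of $\mathcal{P}_{r,n}$, so the decomposition matrix $(d_{\lambda\mu})_{\lambda,\mu\in\mathcal{P}_{r,n}}$ is square. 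Then I would invoke [GL, (3.4) and (3.6)] (equivalently the cellular-algebra formalism of [Ma1]): for a cellular algebra over a field, $W^{\lambda}$ has simple head $L^{\lambda}$ occurring with multiplicity one whenever $L^{\lambda}\neq 0$, which gives $d_{\mu\mu}=1$; and any other composition factor $L^{\mu}$ of $W^{\lambda}$ satisfies $\mu\lhd\lambda$ in the cell order, which gives $d_{\lambda\mu}\neq 0\Rightarrow\lambda\unrhd\mu$. The underlying mechanism is the standard one: the filtration of $\mathrm{YS}_{n}^{r}$ by the ideals $\mathrm{YS}_{r,n}^{\rhd\alpha}$ has subquotients that are direct sums of copies of the $W^{\alpha}$, the multiplication rule of Theorem 3.9(ii) controls how a given $L^{\mu}$ propagates through this filtration, and the non-degeneracy of the forms isolates $L^{\lambda}$ as the unique "new" composition factor appearing at the step indexed by $\lambda$.

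The only point that needs care — more a bookkeeping check than a genuine obstacle — is the orientation of the dominance order relative to the cell chain. In our setup $\mathrm{YS}_{r,n}^{\rhd\lambda}$ is spanned by the $\varphi_{ST}$ whose common shape is $\rhd\lambda$, so $W^{\lambda}$ is a submodule of the quotient $\mathrm{YS}_{n}^{r}/\mathrm{YS}_{r,n}^{\rhd\lambda}$ and its extra composition factors are therefore labelled by partitions strictly below $\lambda$; this is precisely the direction $\lambda\unrhd\mu$ asserted in the statement. Beyond verifying this, no computation is needed that is not already contained in Theorem 3.9 and Proposition 3.11.
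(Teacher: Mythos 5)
Your proposal is correct and matches the paper's own argument, which simply invokes the general theory of cellular algebras [GL, (3.6)] applied to the cellular basis of Theorem 3.9, with the nonvanishing of the $L^{\lambda}$ supplied by Proposition 3.10 (note your citation of ``Proposition 3.11'' for this fact should be Proposition 3.10; 3.11 is the corollary itself). Your additional remarks on the orientation of the dominance order are a sound sanity check but add nothing beyond what the cited general theory already gives.
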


Combining Proposition \ref{yokonu-cellubasis-propo-3-10} with [GL, (3.10)], we have the next result.
\begin{corollary}\label{yokonu-cellubasis-corollar-3-12}
Suppose that $\mathcal{R}=\mathbb{K}$ is a field. The Yokonuma-Schur algebra $\mathrm{YS}_{n}^{r}$ is quasi-hereditary.
\end{corollary}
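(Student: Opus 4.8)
The plan is to invoke the general theory of cellular algebras, specifically the quasi-heredity criterion of Graham and Lehrer [GL, (3.10)]. By Theorem 3.9, $\mathrm{YS}_{n}^{r}$ is a cellular algebra whose cell datum is indexed by the poset $(\mathcal{P}_{r,n}, \unrhd)$, with cell modules the Weyl modules $W^{\lambda}$. The criterion [GL, (3.10)] states that a cellular algebra over a field is quasi-hereditary precisely when every cell module is non-degenerate, that is, when the associated cellular bilinear form on $W^{\lambda}$ is non-zero for each $\lambda$ in the indexing poset --- equivalently, when $L^{\lambda}=W^{\lambda}/\mathrm{rad}\,W^{\lambda}\neq 0$ for all $\lambda$.

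Thus the whole content reduces to verifying this non-degeneracy, which is exactly Proposition 3.10. First I would recall why that proposition holds: $\varphi_{\lambda}=\varphi_{\mathrm{T}^{\lambda}\mathrm{T}^{\lambda}}$ is the identity endomorphism of $M^{\lambda}$, hence an idempotent in $\mathrm{YS}_{n}^{r}$; substituting $\varphi_{\mathrm{T}^{\lambda}\mathrm{T}^{\lambda}}\varphi_{\mathrm{T}^{\lambda}\mathrm{T}^{\lambda}}=\varphi_{\lambda}$ into the defining equation of the bilinear form on $W^{\lambda}$ gives $\langle \varphi_{\mathrm{T}^{\lambda}}, \varphi_{\mathrm{T}^{\lambda}}\rangle \varphi_{\lambda}\equiv \varphi_{\lambda}\pmod{\mathrm{YS}_{r,n}^{\rhd \lambda}}$, and since $\varphi_{\lambda}$ is a member of the cellular basis of Theorem 3.9 it does not lie in $\mathrm{YS}_{r,n}^{\rhd \lambda}$; hence $\langle \varphi_{\mathrm{T}^{\lambda}}, \varphi_{\mathrm{T}^{\lambda}}\rangle=1\neq 0$. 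Consequently $\mathrm{rad}\,W^{\lambda}\neq W^{\lambda}$ and $L^{\lambda}\neq 0$ for every $\lambda\in \mathcal{P}_{r,n}$.

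Having verified that every cell module of $\mathrm{YS}_{n}^{r}$ is non-degenerate, I would then simply apply [GL, (3.10)] to conclude that $\mathrm{YS}_{n}^{r}$ is quasi-hereditary. There is essentially no obstacle: the only non-routine point is the identification of $\varphi_{\lambda}$ with the identity on $M^{\lambda}$, hence with an idempotent, which is the observation recorded just before Proposition 3.10 and which itself rests on the fact (visible from Theorem 3.9) that $\mathrm{T}^{\lambda}$ is the unique semistandard $\lambda$-tableau of type $\lambda$ and $\mathfrak{t}^{\lambda}$ the unique standard $\lambda$-tableau with $\lambda(\mathfrak{t}^{\lambda})=\mathrm{T}^{\lambda}$. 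One could alternatively argue module-theoretically --- the cellular structure yields a chain of two-sided ideals whose sections are, via the cellular pairing, essentially of the form $W^{\lambda}\otimes (W^{\lambda})^{\ast}$, and non-degeneracy of each pairing makes the corresponding section a heredity ideal in the relevant quotient --- but the cleanest route is the one-line appeal to [GL, (3.10)] combined with Proposition 3.10.
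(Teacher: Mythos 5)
Your proposal is correct and follows exactly the paper's route: the paper derives this corollary by combining Proposition 3.10 (each $L^{\lambda}\neq 0$, i.e.\ the cellular form on every Weyl module is non-zero, shown via $\langle\varphi_{\mathrm{T}^{\lambda}},\varphi_{\mathrm{T}^{\lambda}}\rangle=1$ since $\varphi_{\lambda}$ is an idempotent) with the Graham--Lehrer criterion [GL, (3.10)]. The extra detail you supply in re-deriving Proposition 3.10 is accurate and consistent with the paper's own argument.
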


\begin{remark} For each $\bm{\lambda} \in \mathcal{P}_{r,n}$ and for each $\mathfrak{t}\in$ Std$(\bm{\lambda}),$ let $m_{\mathfrak{t}}\in S^{\bm{\lambda}}$ be the image of $m_{\mathfrak{t}^{\bm{\lambda}}\mathfrak{t}}$ under the map $\text{Y}_{r,n}/\text{Y}_{r,n}^{\rhd \bm{\lambda}}.$ Then $\{m_{\mathfrak{t}}\}=\{\overline{m}_{\bm{\lambda}}g_{d(\mathfrak{t})}\}$ gives an $\mathcal{R}$-basis of $S^{\bm{\lambda}}.$ For $\mathrm{T}\in \mathcal{T}_{0}^{+}(\bm{\lambda}, \bm{\mu}),$ put $m_{\mathrm{T}}=\sum_{\mathfrak{t}}q^{l(d(\mathfrak{t}))+l(d(\mathfrak{t}^{\bm{\lambda}}))}m_{\mathfrak{t}}\in S^{\bm{\lambda}},$ where the sum is taken over all $\mathfrak{t}$ such that $\bm{\mu}(\mathfrak{t})=\mathrm{T}.$ Since $m_{\mathrm{T}}$ is the image of $m_{\mathrm{T}^{\bm{\lambda}}\mathrm{T}},$ one obtains a well-defined map $\varphi_{\mathrm{T}}\in \mathrm{Hom}_{\text{Y}_{r,n}}(M^{\bm{\mu}}, S^{\bm{\lambda}})$ by $\varphi_{\mathrm{T}}(m_{\bm{\mu}})=m_{\mathrm{T}},$ which is regarded as an element of $\mathrm{Hom}_{\text{Y}_{r,n}}(M_{n}^{r}, S^{\bm{\lambda}})$ by extending by 0 outside. In a similar way as in [Ma1, Proposition 4.15], we see that $W^{\bm{\lambda}}$ is isomorphic to the $\mathrm{YS}_{n}^{r}$-submodule of $\mathrm{Hom}_{\text{Y}_{r,n}}(M_{n}^{r}, S^{\bm{\lambda}})$ with basis $\{\varphi_{\mathrm{T}}\:|\:\mathrm{T}\in \mathcal{T}_{0}^{+}(\bm{\lambda})\}.$
\end{remark}

\section{Schur functors}

In this section, we will follow the approach in [HM2, $\S$4.3] to define an exact functor from the category of $\mathrm{YS}_{n}^{r}$-modules to the category of $\text{Y}_{r,n}$-modules. For an algebra $A,$ let $A\mathrm{-mod}$ be the category of finite dimensional right $A$-modules.

Let $\dot{\mathcal{C}}_{r,n}=\mathcal{C}_{r,n}\cup \{\omega\},$ where $\omega$ is a dummy symbol. Set $M^{\omega}=\text{Y}_{r,n}$ and $\dot{M}_{n}^{r}=M_{n}^{r}\oplus M^{\omega}.$ The extended Yokonuma-Schur algebra is the algebra $$\dot{\mathrm{YS}_{n}^{r}}=\mathrm{End}_{\text{Y}_{r,n}}(\dot{M}_{n}^{r}).$$
Suppose that $\bm{\lambda}\in \mathcal{P}_{r,n},$ and set $\mathcal{T}_{0}^{+}(\bm{\lambda}, \omega) :=\text{Std}(\bm{\lambda}).$ Let $m_{\omega}=1$ so that $M^{\omega}=m_{\omega}\text{Y}_{r,n}$. Let $\mathfrak{t}^{\omega}=1$ and $m_{\mathfrak{t}^{\omega}\mathfrak{t}^{\omega}}=1.$ We regard $\mathrm{YS}_{n}^{r}$ as a subalgebra of $\dot{\mathrm{YS}_{n}^{r}}$ in the obvious way.

Extending \eqref{yokonuma-schur-3-6}, if $\bm{\lambda}\in \mathcal{P}_{r,n},$ $\bm{\mu}, \bm{\nu}\in \dot{\mathcal{C}}_{r,n},$ and $\mathrm{S}\in \mathcal{T}_{0}^{+}(\bm{\lambda}, \bm{\mu}), \mathrm{T}\in \mathcal{T}_{0}^{+}(\bm{\lambda}, \bm{\nu}),$ we define $$\varphi_{\mathrm{S}\mathrm{T}}(m_{\bm{\nu}}h)=m_{\mathrm{S}\mathrm{T}}h$$ for all $h\in \text{Y}_{r,n}.$ Then $\varphi_{\mathrm{S}\mathrm{T}}\in \dot{\mathrm{YS}_{n}^{r}}.$ For each $\bm{\lambda}\in \mathcal{P}_{r,n},$ set $\dot{\mathcal{T}}_{0}^{+}(\bm{\lambda})=\mathcal{T}_{0}^{+}(\bm{\lambda})\cup\mathcal{T}_{0}^{+}(\bm{\lambda}, \omega)=\mathcal{T}_{0}^{+}(\bm{\lambda})\cup \mathrm{Std}(\bm{\lambda}).$

\begin{proposition}\label{Schur-functor-prop4-1}
The algebra $\dot{\mathrm{YS}_{n}^{r}}$ is a cellular algebra with a cellular basis $$\{\varphi_{\mathrm{S}\mathrm{T}}\:|\:S, T\in \dot{\mathcal{T}}_{0}^{+}(\bm{\lambda})~for~some~\bm{\lambda}\in \mathcal{P}_{r,n}\}.$$
Moreover, if $\mathcal{R}=\mathbb{K}$ is a field, then $\dot{\mathrm{YS}_{n}^{r}}$ is a quasi-hereditary algebra with Weyl modules $\{\dot{W}^{\bm{\lambda}}\:|\:\bm{\lambda}\in \mathcal{P}_{r,n}\}$ and simple modules $\{\dot{L}^{\bm{\lambda}}\:|\:\bm{\lambda}\in \mathcal{P}_{r,n}\}.$
\end{proposition}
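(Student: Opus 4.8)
The plan is to mimic, essentially verbatim, the proof of Theorem 3.9 together with the quasi-heredity argument of Corollaries 3.11--3.13, now carried out for the enlarged index set $\dot{\mathcal{C}}_{r,n}$. First I would observe that all the preparatory results of Section 3 go through with $\mu$ or $\nu$ allowed to equal $\omega$, provided one checks the two degenerate cases: $M^{\omega}=Y_{r,n}=m_{\omega}Y_{r,n}$ has $\mathcal{R}$-basis $\{m_{\omega}g_{d}\mid d\in\mathfrak{S}_{n}\}=\{g_{w}\mid w\in\mathfrak{S}_{n}\}$ times the $t$-monomials (this is just the Juyumaya basis $\mathcal{B}_{r,n}$), and $\mathrm{Hom}_{Y_{r,n}}(M^{\omega},M^{\mu})\cong \mathrm{Hom}_{Y_{r,n}}(Y_{r,n},M^{\mu})\cong M^{\mu}$ canonically, while $\mathrm{Hom}_{Y_{r,n}}(M^{\nu},M^{\omega})=\mathrm{Hom}_{Y_{r,n}}(M^{\nu},Y_{r,n})\cong M^{\nu\ast}=Y_{r,n}m_{\nu}$ by the $\ast$-duality. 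In particular $\mathrm{Hom}_{Y_{r,n}}(M^{\omega},M^{\omega})\cong Y_{r,n}$ itself, and setting $\mathcal{T}_0^+(\lambda,\omega)=\mathrm{Std}(\lambda)$ is exactly the bookkeeping needed so that the cellular basis $\mathcal{B}_{r,n}=\{m_{\mathfrak{s}\mathfrak{t}}\}$ of Theorem 2.5 reappears as the $(\omega,\omega)$-block of the alleged basis of $\dot{\mathrm{YS}}_n^r$.

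Next I would establish the basis statement. Exactly as for Proposition 3.7 and Corollary 3.5, one shows that for $\mu,\nu\in\dot{\mathcal{C}}_{r,n}$ the elements $\{m_{ST}\mid S\in\mathcal{T}_0^+(\lambda,\mu),\,T\in\mathcal{T}_0^+(\lambda,\nu),\,\lambda\in\mathcal{P}_{r,n}\}$ form an $\mathcal{R}$-basis of $M^{\nu\ast}\cap M^{\mu}$ (the case where one or both of $\mu,\nu$ equals $\omega$ follows because $M^{\omega\ast}=M^{\omega}=Y_{r,n}$, so the intersection is just $M^{\mu}$, resp. $M^{\nu\ast}$, resp. $Y_{r,n}$, and the three cases are covered respectively by Proposition 3.3, its $\ast$-image, and Theorem 2.5). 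Via the canonical isomorphism $\mathrm{Hom}_{Y_{r,n}}(M^{\nu},M^{\mu})\cong M^{\nu\ast}\cap M^{\mu}$ of Corollary 3.5, the elements $\varphi_{ST}$ defined by $\varphi_{ST}(m_\nu h)=m_{ST}h$ (extended by zero on the other summands of $\dot{M}_n^r$) then form an $\mathcal{R}$-basis of $\dot{\mathrm{YS}}_n^r$. The anti-automorphism property $\varphi_{ST}^{\ast}=\varphi_{TS}$ and the cellularity axiom $\varphi_{ST}\varphi\equiv\sum_V r_V\varphi_{SV}\pmod{\dot{\mathrm{YS}}_{r,n}^{\rhd\lambda}}$ are proved word for word as in Theorem 3.9(i),(ii), the only new point being that the multiplication $m_{ST}h$ is expanded using Theorem 2.5(ii) in the full algebra $Y_{r,n}$, which now also accommodates $S$ or $T$ of type $\omega$ since $\mathcal{T}_0^+(\lambda,\omega)=\mathrm{Std}(\lambda)$ indexes precisely the standard basis of $S^\lambda$.

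For the quasi-hereditary assertion over a field $\mathbb{K}$, I would invoke [GL, (3.10)] exactly as in Corollary 3.13: the cell module attached to $\lambda$, namely the Weyl module $\dot{W}^{\lambda}$ spanned by the image of $\varphi_{\mathrm{T}^\lambda\mathrm{T}^\lambda}=\varphi_\lambda$, carries the usual bilinear form, and one must check it is nonzero, i.e. that some $\langle\varphi_S,\varphi_T\rangle\neq0$. This is immediate from $\varphi_\lambda\varphi_\lambda=\varphi_\lambda$ (the identity on $M^\lambda$), which gives $\langle\varphi_{\mathrm{T}^\lambda},\varphi_{\mathrm{T}^\lambda}\rangle=1$, just as in the proof of Proposition 3.10; hence every cell module has nonvanishing form and $\dot{\mathrm{YS}}_n^r$ is quasi-hereditary, with Weyl modules $\dot{W}^{\lambda}$ and simple heads $\dot{L}^{\lambda}=\dot{W}^{\lambda}/\mathrm{rad}\,\dot{W}^{\lambda}$ for $\lambda\in\mathcal{P}_{r,n}$. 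I expect the main obstacle to be purely organizational rather than mathematical: one has to verify carefully that the combinatorial identities (3.1)--(3.3) and Lemmas 3.1--3.2 remain valid, or are trivially true, when a composition is replaced by the dummy $\omega$, and in particular that $m_\omega t_k = t_k m_\omega$ (with $m_\omega=1$) and $m_\omega g_w = g_w m_\omega$ do not disturb the arguments in Proposition 3.4 that forced $\alpha(\mu)=\alpha(\nu)$ — indeed they do not, because $M^\omega$ is a free $Y_{r,n}$-module and imposes no type restriction, so $\mathrm{Hom}(M^\nu,M^\omega)$ and $\mathrm{Hom}(M^\omega,M^\mu)$ are always nonzero and are handled by the duality instead of by Proposition 3.4. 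Once this is checked, the proof is a routine transcription.
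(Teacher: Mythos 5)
Your proposal is correct and follows essentially the same route as the paper: decompose $\dot{\mathrm{YS}_{n}^{r}}$ into the blocks $\mathrm{Hom}_{Y_{r,n}}(M^{\nu},M^{\mu})$ with $\mu,\nu\in\dot{\mathcal{C}}_{r,n}$, identify the new blocks with $M^{\mu}$, $M^{\nu\ast}$ and $Y_{r,n}$ so that Proposition 3.3, its $\ast$-image and Theorem 2.5 supply the bases (with $\mathcal{T}_{0}^{+}(\lambda,\omega)=\mathrm{Std}(\lambda)$ as the bookkeeping device), and then repeat the arguments of Theorem 3.9 and Proposition 3.10 for cellularity and quasi-heredity. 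Your extra remarks on why the type restriction $\alpha(\mu)=\alpha(\nu)$ from Proposition 3.4 causes no trouble when $\omega$ is involved are a sound, slightly more explicit rendering of what the paper leaves implicit.
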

\begin{proof}
By definition, $\mathrm{YS}_{n}^{r}$ is a subalgebra of $\dot{\mathrm{YS}_{n}^{r}}$ and, as an $\mathcal{R}$-module, $$\dot{\mathrm{YS}_{n}^{r}}=\mathrm{YS}_{n}^{r}\oplus \mathrm{Hom}_{\text{Y}_{r,n}}(M^{\omega}, M_{n}^{r})\oplus \mathrm{Hom}_{\text{Y}_{r,n}}(M_{n}^{r}, M^{\omega})\oplus \mathrm{End}_{\text{Y}_{r,n}}(M^{\omega}, M^{\omega}).$$ For $\bm{\mu}\in \dot{\mathcal{C}}_{r,n},$ there are isomorphisms of $\mathcal{R}$-modules $M^{\bm{\mu}}\cong \mathrm{Hom}_{\text{Y}_{r,n}}(M^{\omega}, M^{\bm{\mu}})$ given by $m_{\mathrm{S}\mathfrak{t}}\mapsto \varphi_{\mathrm{S}\mathfrak{t}},$ for $\mathrm{S}\in \mathcal{T}_{0}^{+}(\bm{\lambda}, \bm{\mu})$ and $\mathfrak{t}\in$ Std$(\bm{\lambda})$ with some $\bm{\lambda} \in \mathcal{P}_{r,n}.$ For $\bm{\nu}\in \dot{\mathcal{C}}_{r,n},$ there are isomorphisms of $\mathcal{R}$-modules $M^{\bm{\nu}\ast}\cong \mathrm{Hom}_{\text{Y}_{r,n}}(M^{\bm{\nu}}, M^{\omega})$ given by $m_{\mathfrak{s}\mathrm{T}}\mapsto \varphi_{\mathfrak{s}\mathrm{T}},$ for $\mathfrak{s}\in$ Std$(\bm{\lambda})$ and $\mathrm{T}\in \mathcal{T}_{0}^{+}(\bm{\lambda}, \bm{\nu})$ with some $\bm{\lambda} \in \mathcal{P}_{r,n},$ where $m_{\mathfrak{s}\mathrm{T}}=m_{\mathrm{T}\mathfrak{s}}^{\ast}.$ Therefore, the elements in the statement of this proposition give a basis of $\dot{\mathrm{YS}_{n}^{r}}$ by Proposition \ref{Yoko-Schur-propo-3-3} and Theorem \ref{yokonu-cellu-basis-3-9}.

Now suppose that $\mathcal{R}=\mathbb{K}$ is a field. Repeating the arguments from Theorem \ref{yokonu-cellu-basis-3-9} and Proposition \ref{yokonu-cellubasis-propo-3-10} shows that $\dot{\mathrm{YS}_{n}^{r}}$ is a quasi-hereditary cellular algebra.
\end{proof}

By Proposition \ref{Schur-functor-prop4-1}, there exist Weyl modules $\dot{W}^{\bm{\lambda}}$ and simple modules $\dot{L}^{\bm{\lambda}}=\dot{W}^{\bm{\lambda}}/\mathrm{rad}\hspace{0.3mm}\dot{W}^{\bm{\lambda}}$ for $\dot{\mathrm{YS}_{n}^{r}}$, for each $\bm{\lambda}\in \mathcal{P}_{r,n}.$ Let $\{\varphi_{\mathrm{S}}\:|\:\mathrm{S}\in \dot{\mathcal{T}}_{0}^{+}(\bm{\lambda})\}$ be the basis of $\dot{W}^{\bm{\lambda}}.$ For each $\bm{\mu}\in \mathcal{C}_{r,n},$ let $\varphi_{\bm{\mu}}$ be the identity map on $M^{\bm{\mu}}.$ We extend $\varphi_{\bm{\mu}}$ to an element of YS$_{n}^{r}$ by defining $\varphi_{\bm{\mu}}$ to be zero on $M^{\bm{\kappa}}$ for $\bm{\mu}\neq \bm{\kappa}\in \mathcal{C}_{r,n}.$ In particular, $\varphi_{\bm{\mu}}=\varphi_{\mathrm{T}^{\bm{\mu}}\mathrm{T}^{\bm{\mu}}}$ if $\bm{\mu}\in \mathcal{P}_{r,n}.$ As an $\mathcal{R}$-module, every $\mathrm{YS}_{n}^{r}$-module $M$ has a weight space decomposition
\begin{equation}\label{schur-func-4-1}
M=\bigoplus_{\bm{\mu}\in \mathcal{C}_{r,n}}M_{\bm{\mu}},\quad \mathrm{where}~M_{\bm{\mu}}=M\varphi_{\bm{\mu}}.
\end{equation}

Set $\varphi_{n}^{r}=\sum_{\bm{\mu}\in \mathcal{C}_{r,n}}\varphi_{\bm{\mu}}$ and let $\varphi_{\omega}$ be the identity map on $M^{\omega}=\text{Y}_{r,n}.$ Then $\varphi_{n}^{r}$ is the identity element of $\mathrm{YS}_{n}^{r}$ and $\varphi_{n}^{r}+\varphi_{\omega}$ is the identity element of $\dot{\mathrm{YS}_{n}^{r}}.$ By definition, $\varphi_{n}^{r}$ and $\varphi_{\omega}$ are both idempotents in $\dot{\mathrm{YS}_{n}^{r}}$ and $\varphi_{n}^{r}\dot{\mathrm{YS}_{n}^{r}}\varphi_{n}^{r}\cong \mathrm{YS}_{n}^{r}.$ Therefore, by [HM2, (2.10)], there are exact functors $$\dot{\mathrm{F}}_{n}^{\omega}: \dot{\mathrm{YS}_{n}^{r}}\mathrm{-mod}\rightarrow \mathrm{YS}_{n}^{r}\mathrm{-mod},\quad \dot{\mathrm{G}}_{n}^{\omega}: \mathrm{YS}_{n}^{r}\mathrm{-mod}\rightarrow \dot{\mathrm{YS}_{n}^{r}}\mathrm{-mod}$$ given by $\dot{\mathrm{F}}_{n}^{\omega}(M)=M\varphi_{n}^{r}$ and $\dot{\mathrm{G}}_{n}^{\omega}(N)=N\otimes_{\mathrm{YS}_{n}^{r}}\varphi_{n}^{r}\dot{\mathrm{YS}_{n}^{r}}.$ By [HM, $\S$2.4], there are functors $\mathrm{H}_{n}^{\omega} :=\mathrm{H}_{\varphi_{n}^{r}},$ $\mathrm{O}_{n}^{\omega} :=\mathrm{O}_{\varphi_{n}^{r}},$ $\mathrm{O}_{\omega}^{n} :=\mathrm{O}^{\varphi_{n}^{r}}$ from $\dot{\mathrm{YS}_{n}^{r}}\mathrm{-mod}$ to $\mathrm{YS}_{n}^{r}\mathrm{-mod}$ such that $\mathrm{H}_{n}^{\omega}(M)=M/\mathrm{O}_{n}^{\omega}(M).$

\begin{lemma}\label{Schur-functor-lemma4-2}
Suppose that $\mathcal{R}=\mathbb{K}$ is a field. Then the functors $\dot{\mathrm{F}}_{n}^{\omega}$ and $\dot{\mathrm{G}}_{n}^{\omega}$ induce mutually inverse equivalences of categories between $\dot{\mathrm{YS}_{n}^{r}}\mathrm{-mod}$ and $\mathrm{YS}_{n}^{r}\mathrm{-mod}.$ Moreover, $\dot{\mathrm{F}}_{n}^{\omega}(\dot{W}^{\bm{\lambda}})\cong W^{\bm{\lambda}}$ and $\dot{\mathrm{F}}_{n}^{\omega}(\dot{L}^{\bm{\lambda}})\cong L^{\bm{\lambda}}$ for all $\bm{\lambda}\in \mathcal{P}_{r,n}.$
\end{lemma}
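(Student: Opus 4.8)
The plan is to deduce both assertions from the single fact that $\varphi_n^r$ is a \emph{full} idempotent in $\dot{\mathrm{YS}_n^r}$, i.e. that $\dot{\mathrm{YS}_n^r}\varphi_n^r\dot{\mathrm{YS}_n^r}=\dot{\mathrm{YS}_n^r}$. First I would invoke the standard idempotent-truncation machinery behind [HM2, (2.10)]: since $\varphi_n^r\dot{\mathrm{YS}_n^r}\varphi_n^r\cong\mathrm{YS}_n^r$, the composite $\dot{F}_n^\omega\dot{G}_n^\omega$ is isomorphic to the identity unconditionally, while the counit $\dot{G}_n^\omega\dot{F}_n^\omega\to\mathrm{id}$ is an isomorphism exactly when $\varphi_n^r$ is full. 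Writing $\dot{M}_n^r=M_n^r\oplus M^\omega$ with $\varphi_n^r$ the projection onto $M_n^r$, fullness of $\varphi_n^r$ is equivalent to $M^\omega=Y_{r,n}$ lying in $\mathrm{add}(M_n^r)$, equivalently to $1\in\sum_{\mu\in\mathcal{C}_{r,n}}Y_{r,n}m_\mu Y_{r,n}$.

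The crux is therefore to prove $Y_{r,n}\in\mathrm{add}(M_n^r)$, and this I expect to be the real obstacle. Unlike the Ariki--Koike/cyclotomic $q$-Schur situation of [DJM, HM2], where the regular module literally occurs as one of the $M^\mu$, here every $M^\mu$ carries the extra idempotent factor $E_{A_\mu}$ coming from the $e_i$, so $Y_{r,n}$ is only a \emph{summand} of $M_n^r$ and must be assembled from weight pieces. The argument I would give: for an $r$-composition $\mu$ all of whose parts equal $1$ one has $\mathfrak{S}_\mu=1$, hence $x_\mu=1$ and $m_\mu=u_\mu E_{A_\mu}$; using $t_i^r=1$, the relation $u_{i,k}^2=\bigl(\prod_{l\neq k}(\zeta_k-\zeta_l)\bigr)u_{i,k}$, and the commuting idempotents $e_{i,j}$, one checks that $m_\mu$ is a nonzero scalar multiple of an idempotent $e_\mu$ projecting $Y_{r,n}$ onto the summand on which every $t_i$ acts by the scalar prescribed by the weakly increasing (``sorted'') colouring attached to $\mu$; so $M^\mu=e_\mu Y_{r,n}$ is a direct summand of $Y_{r,n}$ with trace ideal $Y_{r,n}e_\mu Y_{r,n}$. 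The Lagrange interpolation identity $\sum_{k=1}^r\bigl(\prod_{l\neq k}(\zeta_k-\zeta_l)\bigr)^{-1}u_{i,k}=1$ (both sides polynomials in $t_i$ of degree $<r$ agreeing at all $r$-th roots of unity) then gives $1=\sum_c\tilde e_c$, the sum over all colourings $c\colon\{1,\dots,n\}\to\{\zeta_1,\dots,\zeta_r\}$ of the corresponding diagonal idempotent $\tilde e_c$. Each $\tilde e_c$ with $c$ sorted is one of the $e_\mu$; and since conjugation by $g_w$ sends $t_i$ to $t_{w(i)}$, an arbitrary $\tilde e_{c'}$ equals $g_w\tilde e_c g_w^{-1}\in Y_{r,n}e_\mu Y_{r,n}$ for a suitable sorted $c$ and $w\in\mathfrak{S}_n$. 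Summing over $c$ yields $1\in\sum_{\mu\in\mathcal{C}_{r,n}}Y_{r,n}m_\mu Y_{r,n}$, so the trace ideal of $M_n^r$ is all of $Y_{r,n}$; thus $Y_{r,n}\in\mathrm{add}(M_n^r)$ and $\dot{F}_n^\omega,\dot{G}_n^\omega$ are mutually inverse equivalences.

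It remains to identify the images of the Weyl and simple modules. By Proposition 4.1, $\dot{W}^\lambda$ has $\mathcal{R}$-basis $\{\varphi_S\mid S\in\dot{\mathcal{T}}_0^+(\lambda)\}=\{\varphi_S\mid S\in\mathcal{T}_0^+(\lambda)\}\cup\{\varphi_{\mathfrak t}\mid\mathfrak t\in\mathrm{Std}(\lambda)\}$; right multiplication by $\varphi_n^r=\sum_{\mu\in\mathcal{C}_{r,n}}\varphi_\mu$ fixes the $\varphi_S$ with $S$ of type lying in $\mathcal{C}_{r,n}$ and annihilates each $\varphi_{\mathfrak t}$ (which is of type $\omega$), so $\dot{F}_n^\omega(\dot{W}^\lambda)=\dot{W}^\lambda\varphi_n^r$ is free with basis $\{\varphi_S\mid S\in\mathcal{T}_0^+(\lambda)\}$; comparing the multiplication rule of Theorem 3.9(ii) with the one defining $W^\lambda$ shows $\dot{F}_n^\omega(\dot{W}^\lambda)\cong W^\lambda$ as right $\mathrm{YS}_n^r$-modules. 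Finally, an equivalence carries the simple head $\dot{L}^\lambda$ of $\dot{W}^\lambda$ to a simple quotient of $\dot{F}_n^\omega(\dot{W}^\lambda)\cong W^\lambda$; since $L^\lambda\neq0$ by Proposition 3.10, $W^\lambda$ has simple head $L^\lambda$, whence $\dot{F}_n^\omega(\dot{L}^\lambda)\cong L^\lambda$, completing the plan.
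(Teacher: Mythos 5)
Your argument is correct, but it reaches the equivalence by a genuinely different route from the paper. The paper never looks at the trace ideal $\sum_{\mu}Y_{r,n}m_{\mu}Y_{r,n}$: it observes that $\dot{F}_{n}^{\omega}$ is simply removal of the $\omega$-weight space, reads off $\dot{F}_{n}^{\omega}(\dot{W}^{\lambda})\cong W^{\lambda}$ and $\dot{F}_{n}^{\omega}(\dot{L}^{\lambda})\cong L^{\lambda}$ directly from the type-decomposition of the cellular basis (the Gram form is block-diagonal with respect to type, so its radical is too), and then concludes via [HM2, Theorem 2.11]: since the simples of $\dot{\mathrm{YS}_{n}^{r}}$ and of $\mathrm{YS}_{n}^{r}$ are both indexed by $\mathcal{P}_{r,n}$ and no $\dot{L}^{\lambda}$ is annihilated by the truncation (because $L^{\lambda}\neq 0$ by Proposition 3.10), the functors $O_{n}^{\omega}$ vanish and the truncation is an equivalence. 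You instead verify the equivalent ring-theoretic criterion $\dot{\mathrm{YS}_{n}^{r}}\varphi_{n}^{r}\dot{\mathrm{YS}_{n}^{r}}=\dot{\mathrm{YS}_{n}^{r}}$ by showing $1\in\sum_{\mu}Y_{r,n}m_{\mu}Y_{r,n}$: your computation that for $\mu$ with $\mathfrak{S}_{\mu}=1$ the element $m_{\mu}=u_{\mu}E_{A_{\mu}}$ equals $\bigl(\prod_{k}\prod_{l\neq i_{k}}(\zeta_{i_{k}}-\zeta_{l})\bigr)\tilde{e}_{c}$ for the sorted colouring $c$ is right, the Lagrange decomposition $1=\sum_{c}\tilde{e}_{c}$ is right, and conjugation by $g_{w}$ does permute the $\tilde{e}_{c}$ since $g_{w}t_{j}g_{w}^{-1}=t_{w(j)}$. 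This buys a concrete fact that the paper only recovers a posteriori in Section 5 (Proposition 5.7, $Y_{r,n}\cong\bigoplus_{\lambda}(\dim D^{\lambda})Y^{\lambda}$), namely that the regular module lies in $\mathrm{add}(M_{n}^{r})$; the cost is that your identification of $\dot{F}_{n}^{\omega}(\dot{L}^{\lambda})$ has to be routed back through the equivalence and the simple-head property of cell modules with nonvanishing form, whereas the paper gets it immediately and then uses it as the input to the equivalence. Both proofs are complete; yours is more computational and self-contained, the paper's is shorter and purely formal given Propositions 3.10 and 4.1.
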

\begin{proof}
Let $M$ be a $\dot{\mathrm{YS}_{n}^{r}}$-module. Then, extending \eqref{schur-func-4-1}, $M$ has a weight space decomposition $$M=\bigoplus_{\bm{\mu}\in \dot{\mathcal{C}}_{r,n}}M_{\bm{\mu}},~~~~\mathrm{where}~M_{\bm{\mu}}=M\varphi_{\bm{\mu}}.$$ Then, essentially by definition, $\dot{\mathrm{F}}_{n}^{\omega}(M)=\bigoplus_{\bm{\lambda}\in \mathcal{P}_{r,n}}M_{\bm{\lambda}}.$ That is, $\dot{\mathrm{F}}_{n}^{\omega}$ removes the $\omega$-weight space of $M.$ In particular, $\dot{\mathrm{F}}_{n}^{\omega}(\dot{W}^{\bm{\lambda}})=W^{\bm{\lambda}}$ and $\dot{\mathrm{F}}_{n}^{\omega}(\dot{L}^{\bm{\lambda}})=L^{\bm{\lambda}}$ for all $\bm{\lambda}\in \mathcal{P}_{r,n}.$ The fact that $\dot{\mathrm{F}}_{n}^{\omega}(\dot{L}^{\bm{\mu}})=L^{\bm{\mu}}$ for all $\bm{\mu}\in \mathcal{P}_{r,n}$ implies that $\mathrm{O}_{\omega}^{n}(M)=M,$ $\mathrm{O}_{n}^{\omega}(M)=0$ for all $M\in \dot{\mathrm{YS}_{n}^{r}}\mathrm{-mod}.$ Therefore, $\mathrm{H}_{n}^{\omega}$ is the identity functor and $\dot{\mathrm{G}}_{n}^{\omega}\cong \mathrm{H}_{n}^{\omega}\circ\dot{\mathrm{G}}_{n}^{\omega}.$ Hence, this lemma is an application of the theory of quotient functors given in [HM2, Theorem 2.11].
\end{proof}

The identity map $\varphi_{\omega}$ on $\text{Y}_{r,n}=M^{\omega}$ is idempotent in $\dot{\mathrm{YS}_{n}^{r}}$ and there is an isomorphism of $\mathcal{R}$-algebras $\varphi_{\omega}\dot{\mathrm{YS}_{n}^{r}}\varphi_{\omega}\cong \text{Y}_{r,n}.$ Therefore, by [HM2, (2.10)], there are functors $$\dot{\mathrm{F}}_{n}^{r}: \dot{\mathrm{YS}_{n}^{r}}\mathrm{-mod}\rightarrow \text{Y}_{r,n}\mathrm{-mod},\quad \dot{\mathrm{G}}_{n}^{r}: \text{Y}_{r,n}\mathrm{-mod}\rightarrow \dot{\mathrm{YS}_{n}^{r}}\mathrm{-mod}$$ given by $\dot{\mathrm{F}}_{n}^{r}(M)=M\varphi_{\omega}=M_{\omega}$ and $\dot{\mathrm{G}}_{n}^{r}(N)=N\otimes_{\text{Y}_{r,n}}\varphi_{\omega}\dot{\mathrm{YS}_{n}^{r}}.$

\begin{proposition}\label{Schur-functor-propo4-3}
Suppose that $\mathcal{R}=\mathbb{K}$ is a field. Then there is an exact functor $\mathrm{F}_{n}^{r}: \mathrm{YS}_{n}^{r}\mathrm{-mod}\rightarrow \mathrm{Y}_{r,n}\mathrm{-mod}$ given by $\mathrm{F}_{n}^{r}(M)=(M\otimes_{\mathrm{YS}_{n}^{r}}\varphi_{n}^{r}\dot{\mathrm{YS}_{n}^{r}})\varphi_{\omega},$ for $M\in \mathrm{YS}_{n}^{r}\mathrm{-mod},$ such that if $\bm{\lambda}, \bm{\mu}\in \mathcal{P}_{r,n},$ then $\mathrm{F}_{n}^{r}(W^{\bm{\lambda}})\cong S^{\bm{\lambda}},$ and
\begin{align}\label{schur-fun-4-2}
\mathrm{F}_{n}^{r}(L^{\bm{\mu}})\cong\begin{cases}D^{\bm{\mu}}& \hbox {if } \bm{\mu}\in \mathcal{K}_{r,n}; \\0& \hbox {if } \bm{\mu} \notin \mathcal{K}_{r,n}.\end{cases}
\end{align}
\end{proposition}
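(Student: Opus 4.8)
The plan is to realise $F_n^r$ as the composite $\dot{F}_n^r\circ\dot{G}_n^\omega$ of the two functors already constructed and then to push the Weyl and simple modules through each. The first step is to see that this composite is exactly the functor in the statement: since $\dot{G}_n^\omega(M)=M\otimes_{\mathrm{YS}_n^r}\varphi_n^r\dot{\mathrm{YS}_n^r}$ and $\dot{F}_n^r(N)=N\varphi_\omega$, we get $\dot{F}_n^r\dot{G}_n^\omega(M)=(M\otimes_{\mathrm{YS}_n^r}\varphi_n^r\dot{\mathrm{YS}_n^r})\varphi_\omega=F_n^r(M)$. Exactness is then immediate, $F_n^r$ being a composite of exact functors: $\dot{G}_n^\omega$ is one half of the equivalence of Lemma 4.2, and $\dot{F}_n^r=-\,\varphi_\omega$ is the truncation functor for the idempotent $\varphi_\omega\in\dot{\mathrm{YS}_n^r}$, which is exact because it coincides both with $-\otimes_{\dot{\mathrm{YS}_n^r}}\dot{\mathrm{YS}_n^r}\varphi_\omega$ and with $\mathrm{Hom}_{\dot{\mathrm{YS}_n^r}}(\varphi_\omega\dot{\mathrm{YS}_n^r},-)$.

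By Lemma 4.2 we have $\dot{G}_n^\omega(W^\lambda)\cong\dot{W}^\lambda$ and $\dot{G}_n^\omega(L^\mu)\cong\dot{L}^\mu$, so $F_n^r(W^\lambda)\cong\dot{W}^\lambda\varphi_\omega$ and $F_n^r(L^\mu)\cong\dot{L}^\mu\varphi_\omega$, and it remains to compute these idempotent truncations. The heart of the matter is that conjugation by $\varphi_\omega$ carries the cellular data of $\dot{\mathrm{YS}_n^r}$ from Proposition 4.1 onto those of $Y_{r,n}\cong\varphi_\omega\dot{\mathrm{YS}_n^r}\varphi_\omega$ from Theorem 2.5, after the standard rescaling by powers of $q$: one checks that $\varphi_\omega\varphi_{ST}\varphi_\omega=0$ unless $S,T\in\mathcal{T}_0^+(\lambda,\omega)=\mathrm{Std}(\lambda)$, and that for $\mathfrak{s},\mathfrak{t}\in\mathrm{Std}(\lambda)$ the element $\varphi_\omega\varphi_{\mathfrak{s}\mathfrak{t}}\varphi_\omega$ corresponds under $\varphi_\omega\dot{\mathrm{YS}_n^r}\varphi_\omega\cong Y_{r,n}$ to $q^{l(d(\mathfrak{s}))+l(d(\mathfrak{t}))}m_{\mathfrak{s}\mathfrak{t}}$. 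Since this rescaling is symmetric in $\mathfrak{s}$ and $\mathfrak{t}$, it preserves the cellular structure and the cell modules up to isomorphism, so the cell module of $Y_{r,n}$ labelled by $\lambda$ is $S^\lambda$. A parallel comparison of the Weyl-module form $\varphi_{\mathrm{T}^\lambda\mathfrak{s}}\varphi_{\mathfrak{t}\mathrm{T}^\lambda}\equiv\langle\varphi_\mathfrak{s},\varphi_\mathfrak{t}\rangle\varphi_\lambda$ with the Specht-module form $m_\lambda g_{d(\mathfrak{s})}g_{d(\mathfrak{t})}^\ast m_\lambda\equiv\langle\overline{m}_\lambda g_{d(\mathfrak{s})},\overline{m}_\lambda g_{d(\mathfrak{t})}\rangle m_\lambda$ — obtained by unwinding $\varphi_{\mathrm{T}^\lambda\mathfrak{s}}$, $\varphi_{\mathfrak{t}\mathrm{T}^\lambda}$ as maps between $M^\omega$ and $M^\lambda$ and evaluating at $m_\lambda$ — shows that $\varphi_\mathfrak{s}\mapsto q^{l(d(\mathfrak{s}))}\overline{m}_\lambda g_{d(\mathfrak{s})}$ is an isometric $Y_{r,n}$-isomorphism from $\dot{W}^\lambda\varphi_\omega$ onto $S^\lambda$.

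Granting this, $F_n^r(W^\lambda)\cong\dot{W}^\lambda\varphi_\omega\cong S^\lambda$. Since the form on $\dot{W}^\mu$ vanishes between tableaux of different type, $(\mathrm{rad}\,\dot{W}^\mu)\varphi_\omega$ is precisely the radical of the restricted form on $\dot{W}^\mu\varphi_\omega$, which the isometry above identifies with $\mathrm{rad}\,S^\mu$; applying the exact functor $-\,\varphi_\omega$ to $0\to\mathrm{rad}\,\dot{W}^\mu\to\dot{W}^\mu\to\dot{L}^\mu\to0$ then gives $F_n^r(L^\mu)\cong\dot{L}^\mu\varphi_\omega\cong S^\mu/\mathrm{rad}\,S^\mu=D^\mu$, which by the discussion following Theorem 2.5 is nonzero exactly when $\mu\in\mathcal{K}_{r,n}$ and is $0$ when $\mu\notin\mathcal{K}_{r,n}$. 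This yields the stated formula. I expect the main obstacle to be the second paragraph: checking carefully that $\varphi_\omega(-)\varphi_\omega$ sends the cellular basis and the Weyl-module forms of $\dot{\mathrm{YS}_n^r}$ to those of $Y_{r,n}$ after the $q$-rescaling, and that the congruences ``modulo $\dot{\mathrm{YS}}_{r,n}^{\rhd\lambda}$'' of Theorem 3.9 restrict to the congruences ``modulo $Y_{r,n}^{\rhd\lambda}$'' used to define the Specht-module form; the rest is formal manipulation of the idempotent-truncation functors and the equivalence of Lemma 4.2, following [HM2, $\S$4.3].
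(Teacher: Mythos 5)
Your proposal is correct in outline and coincides with the paper's proof for the first two thirds: the paper likewise defines $F_{n}^{r}=\dot{F}_{n}^{r}\circ \dot{G}_{n}^{\omega}$, gets exactness for free, and obtains $F_{n}^{r}(W^{\lambda})\cong S^{\lambda}$ by observing that $\dot{F}_{n}^{r}(\dot{W}^{\lambda})$ is spanned by $\{\varphi_{\mathfrak{t}}\:|\:\mathfrak{t}\in\mathrm{Std}(\lambda)\}$ and sending $\varphi_{\mathfrak{t}}\mapsto m_{\mathfrak{t}}$ (the paper does not even record the $q$-power rescaling, which is harmless since $q$ is invertible). Where you genuinely diverge is the computation of $F_{n}^{r}(L^{\mu})$: the paper invokes the quotient-functor machinery of [HM2, Theorem 2.11] to conclude that $F_{n}^{r}(L^{\mu})$ is irreducible whenever nonzero, and then pins it down as $D^{\mu}$ by induction on the dominance order using $F_{n}^{r}(W^{\lambda})\cong S^{\lambda}$ and the unitriangularity of the decomposition matrix (Corollary 3.11). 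You instead compute the idempotent truncation $\dot{L}^{\mu}\varphi_{\omega}$ directly, by showing that the Weyl-module form on $\dot{W}^{\mu}$ vanishes between weight spaces and restricts on the $\omega$-weight space to the Specht-module form up to an invertible diagonal rescaling, so that $(\mathrm{rad}\,\dot{W}^{\mu})\varphi_{\omega}=\mathrm{rad}\,S^{\mu}$ and $\dot{L}^{\mu}\varphi_{\omega}\cong D^{\mu}$. This is a legitimate and in fact more self-contained argument — it avoids [HM2, Theorem 2.11] and Corollary 3.11 entirely and gives the stronger statement $F_{n}^{r}(L^{\mu})\cong D^{\mu}$ uniformly in $\mu$ — at the cost of the verification you yourself flag: that the congruence defining $\langle\varphi_{\mathfrak{s}},\varphi_{\mathfrak{t}}\rangle$ modulo $\dot{\mathrm{YS}}_{r,n}^{\rhd\lambda}$ is compatible with the congruence defining $\langle\overline{m}_{\lambda}g_{d(\mathfrak{s})},\overline{m}_{\lambda}g_{d(\mathfrak{t})}\rangle$ modulo $Y_{r,n}^{\rhd\lambda}$. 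That compatibility does hold — it is exactly what the proof of Theorem 3.9(ii) establishes, since the higher terms of $m_{\mathfrak{t}^{\lambda}\mathfrak{s}}m_{\mathfrak{t}\mathfrak{t}^{\lambda}}$ in $M^{\lambda\ast}\cap M^{\lambda}$ are combinations of $m_{U'V'}$ with $U',V'$ of shape $\alpha\rhd\lambda$ by Proposition 3.7, hence land in $\dot{\mathrm{YS}}_{r,n}^{\rhd\lambda}$ — so your route closes, but you should make that reference explicit rather than leaving it as an expected obstacle.
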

\begin{proof}
By definition, $\mathrm{F}_{n}^{r}=\dot{\mathrm{F}}_{n}^{r}\circ \dot{\mathrm{G}}_{n}^{\omega},$ so $\mathrm{F}_{n}^{r}$ is an exact functor from $\mathrm{YS}_{n}^{r}\mathrm{-mod}$ to $\text{Y}_{r,n}\mathrm{-mod}.$ The functor $\dot{\mathrm{F}}_{n}^{r}$ is nothing more than projection onto the $\omega$-weight space. Hence, if $\bm{\lambda}\in \mathcal{P}_{r,n},$ then $\dot{\mathrm{F}}_{n}^{r}(\dot{W}^{\bm{\lambda}})$ is spanned by the maps $\{\varphi_{\mathfrak{t}}\:|\:\mathfrak{t}\in \mathrm{Std}(\bm{\lambda})\},$ since $\mathcal{T}_{0}^{+}(\bm{\lambda}, \omega)=$ Std$(\bm{\lambda}).$ The map $\varphi_{\mathfrak{t}}\mapsto m_{\mathfrak{t}},$ for $\mathfrak{t}\in \mathrm{Std}(\bm{\lambda}),$ defines an isomorphism $\dot{\mathrm{F}}_{n}^{r}(\dot{W}^{\bm{\lambda}})\cong S^{\bm{\lambda}}$ of $\text{Y}_{r,n}$-modules. Therefore, $\mathrm{F}_{n}^{r}(W^{\bm{\lambda}})\cong S^{\bm{\lambda}}$ by Lemma \ref{Schur-functor-lemma4-2}.

By [HM2, Theorem 2.11], $\mathrm{F}_{n}^{r}(L^{\bm{\mu}})$ is an irreducible $\text{Y}_{r,n}$-module whenever it is nonzero. Using the fact that $\mathrm{F}_{n}^{r}(W^{\bm{\lambda}})\cong S^{\bm{\lambda}}$ and Corollary \ref{yokonu-cellubasis-corollar-3-11}, a straightforward argument by induction on the dominance ordering shows that $\mathrm{F}_{n}^{r}(L^{\bm{\mu}})\cong D^{\bm{\mu}}$ if $\bm{\mu}\in \mathcal{K}_{r,n}$ and that $\mathrm{F}_{n}^{r}(L^{\bm{\mu}})=0$ otherwise.
\end{proof}

Since $\mathrm{F}_{n}^{r}$ is exact, we obtain the promised relationship between the decomposition numbers of $\mathrm{YS}_{n}^{r}$ and $\text{Y}_{r,n}.$
\begin{corollary}\label{Schur-functor-corollar4-4}
Suppose that $\mathcal{R}=\mathbb{K}$ is a field and that $\bm{\lambda}\in \mathcal{P}_{r,n},$ $\bm{\mu}\in \mathcal{K}_{r,n}$. Then we have $[S^{\bm{\lambda}}: D^{\bm{\mu}}]=[W^{\bm{\lambda}}: L^{\bm{\mu}}].$
\end{corollary}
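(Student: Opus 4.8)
The plan is to deduce this directly from the exactness of $F_{n}^{r}$ together with the computation of $F_{n}^{r}$ on Weyl modules and simple modules already carried out in Proposition 4.4; this is the standard Schur functor argument. First I would fix a composition series
$$0=M_{0}\subset M_{1}\subset\cdots\subset M_{k}=W^{\lambda}$$
of $W^{\lambda}$ as a $\mathrm{YS}_{n}^{r}$-module, so that each subquotient $M_{i}/M_{i-1}\cong L^{\mu_{i}}$ for some $\mu_{i}\in\mathcal{P}_{r,n}$ and, by definition of composition multiplicity, $[W^{\lambda}:L^{\mu}]=\#\{i\mid\mu_{i}=\mu\}$. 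Since $F_{n}^{r}$ is exact (Proposition 4.4), applying it produces a chain
$$0=F_{n}^{r}(M_{0})\subseteq F_{n}^{r}(M_{1})\subseteq\cdots\subseteq F_{n}^{r}(M_{k})\cong S^{\lambda}$$
of $Y_{r,n}$-modules with $F_{n}^{r}(M_{i})/F_{n}^{r}(M_{i-1})\cong F_{n}^{r}(L^{\mu_{i}})$ for each $i$.

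Next I would invoke the explicit values from Proposition 4.4: $F_{n}^{r}(L^{\mu_{i}})\cong D^{\mu_{i}}$ when $\mu_{i}\in\mathcal{K}_{r,n}$ and $F_{n}^{r}(L^{\mu_{i}})=0$ otherwise. Deleting the repeated terms in the chain (those for which the subquotient vanishes) leaves a filtration of $S^{\lambda}$ all of whose subquotients have the form $D^{\mu_{i}}$ with $\mu_{i}\in\mathcal{K}_{r,n}$. This is in fact a composition series of $S^{\lambda}$, since $\{D^{\nu}\mid\nu\in\mathcal{K}_{r,n}\}$ is precisely a complete set of pairwise non-isomorphic irreducible $Y_{r,n}$-modules (as recalled after Theorem 2.5). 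By the Jordan--Hölder theorem, for a fixed $\mu\in\mathcal{K}_{r,n}$ the number of subquotients isomorphic to $D^{\mu}$ in this series is independent of the series and equals $[S^{\lambda}:D^{\mu}]$; but by construction it also equals $\#\{i\mid\mu_{i}=\mu\}=[W^{\lambda}:L^{\mu}]$. Hence $[S^{\lambda}:D^{\mu}]=[W^{\lambda}:L^{\mu}]$.

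There is essentially no deep obstacle remaining — all the content has been packaged into Proposition 4.4 — and what is left is routine bookkeeping. The one point deserving a word of care is that passing from the $F_{n}^{r}$-image of a composition series to an honest composition series of $S^{\lambda}$ uses two facts in tandem: exactness of $F_{n}^{r}$, so that subquotients are controlled, and the precise knowledge that $F_{n}^{r}$ annihilates exactly the simples $L^{\mu}$ with $\mu\notin\mathcal{K}_{r,n}$ while carrying the remaining $L^{\mu}$ to the distinct irreducibles $D^{\mu}$, so that after removing the zero subquotients the filtration cannot be refined further. It is also worth noting that the identity is asserted only for $\mu\in\mathcal{K}_{r,n}$, which is exactly the range where $F_{n}^{r}(L^{\mu})\neq 0$; for $\mu\notin\mathcal{K}_{r,n}$ the symbol $D^{\mu}$ does not denote a module, so no comparison is intended there.
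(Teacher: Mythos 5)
Your argument is correct and is exactly the one the paper intends: the corollary is stated as an immediate consequence of the exactness of $F_{n}^{r}$ together with the identifications $F_{n}^{r}(W^{\lambda})\cong S^{\lambda}$ and $F_{n}^{r}(L^{\mu})\cong D^{\mu}$ (or $0$) from Proposition 4.3, and you have simply written out the routine Jordan--H\"older bookkeeping that the paper leaves implicit. The only quibble is a reference slip (the computation you cite is Proposition 4.3 of the paper, not 4.4); the mathematics is fine.
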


\begin{lemma}\label{Schur-functor-lemma4-5} {\rm (A double centralizer property)} There are canonical isomorphisms of algebras such that $\dot{\mathrm{YS}_{n}^{r}}=\mathrm{End}_{\mathrm{Y}_{r,n}}(\dot{M}_{n}^{r})$ and $\mathrm{Y}_{r,n}=\mathrm{End}_{\dot{\mathrm{YS}_{n}^{r}}}(\dot{M}_{n}^{r}).$ In particular, the functor $\dot{\mathrm{F}}_{n}^{r}$ is fully faithful on projectives.
\end{lemma}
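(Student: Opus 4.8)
The identity $\dot{\mathrm{YS}_n^r} = \mathrm{End}_{Y_{r,n}}(\dot M_n^r)$ is just the definition of the extended Yokonuma--Schur algebra, so the substance of the lemma is the reverse centralizer identity $Y_{r,n} = \mathrm{End}_{\dot{\mathrm{YS}_n^r}}(\dot M_n^r)$, together with its consequence for $\dot F_n^r$. The plan is to run the standard double centralizer argument for $q$-Schur algebras (cf. [Ma1, $\S$4], [DJM, $\S$6], [HM2, $\S$2]). The one structural input that makes it go is that $\dot M_n^r = M_n^r \oplus M^\omega$ contains the regular right module $M^\omega = Y_{r,n}$ as a direct summand, so that $\dot M_n^r$ is a faithful generator for the category of right $Y_{r,n}$-modules. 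Viewing $\dot M_n^r$ as a $(\dot{\mathrm{YS}_n^r}, Y_{r,n})$-bimodule --- the two actions commuting by the very definition of $\dot{\mathrm{YS}_n^r}$ --- there is a canonical map $\rho\colon Y_{r,n}\to\mathrm{End}_{\dot{\mathrm{YS}_n^r}}(\dot M_n^r)$ induced by the right $Y_{r,n}$-action, $\rho(a)(m)=ma$, and I would show it is bijective. It is injective because $\rho(a)=0$ forces $a=\rho(a)(1)=0$ upon restricting to the summand $M^\omega$; equivalently, $\dot M_n^r$ is $Y_{r,n}$-faithful.

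The heart of the matter is surjectivity. Given $f\in\mathrm{End}_{\dot{\mathrm{YS}_n^r}}(\dot M_n^r)$, put $a:=f(1)$, where $1\in M^\omega=Y_{r,n}$. Since the projection $\varphi_\omega\in\dot{\mathrm{YS}_n^r}$ onto $M^\omega$ commutes with $f$, the map $f$ preserves $M^\omega$; and since, for each $b\in Y_{r,n}$, left multiplication by $b$ on $M^\omega$ (extended by $0$ on the remaining summands) is again an element of $\dot{\mathrm{YS}_n^r}$, commutation with $f$ and evaluation at $1$ give $f(b)=ba$. Hence $a\in M^\omega$ and $f$ agrees with $\rho(a)$ on $M^\omega$. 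To propagate this to all of $\dot M_n^r$, for each $\mu\in\mathcal{C}_{r,n}$ and each $m\in M^\mu$ I would use the insertion map $\theta_m\colon M^\omega=Y_{r,n}\to M^\mu$, $y\mapsto my$ (extended by $0$): it is right $Y_{r,n}$-linear, hence lies in $\dot{\mathrm{YS}_n^r}$, and commutation of $f$ with $\theta_m$ evaluated at $1$ gives $f(m)=\theta_m(a)=ma=\rho(a)(m)$. As $\dot M_n^r=M^\omega\oplus\bigoplus_{\mu\in\mathcal{C}_{r,n}}M^\mu$, this shows $f=\rho(a)$, so $\rho$ is an isomorphism.

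For the last clause, I would argue as follows. Since $\sum_{\mu\in\dot{\mathcal{C}}_{r,n}}\varphi_\mu$ is the identity of $\dot{\mathrm{YS}_n^r}$, every projective right $\dot{\mathrm{YS}_n^r}$-module is a summand of a finite direct sum of the modules $\varphi_\mu\dot{\mathrm{YS}_n^r}$, so by additivity it suffices to check that $\dot F_n^r$ is fully faithful between these. Here $\dot F_n^r(\varphi_\mu\dot{\mathrm{YS}_n^r})=\varphi_\mu\dot{\mathrm{YS}_n^r}\varphi_\omega\cong M^\mu$ as right $Y_{r,n}$-modules --- for $\mu=\omega$ this is the isomorphism $\varphi_\omega\dot{\mathrm{YS}_n^r}\varphi_\omega\cong Y_{r,n}$ noted above, and for $\mu\in\mathcal{C}_{r,n}$ it is the identification $\mathrm{Hom}_{Y_{r,n}}(M^\omega,M^\mu)\cong M^\mu$ already used in the proof of Proposition 4.1 --- while $\mathrm{Hom}_{\dot{\mathrm{YS}_n^r}}(\varphi_\mu\dot{\mathrm{YS}_n^r},\varphi_\nu\dot{\mathrm{YS}_n^r})\cong\varphi_\nu\dot{\mathrm{YS}_n^r}\varphi_\mu\cong\mathrm{Hom}_{Y_{r,n}}(M^\mu,M^\nu)$ by definition of the endomorphism algebra; one then checks that the map induced by $\dot F_n^r$ on Hom-spaces is precisely this identification, so $\dot F_n^r$ is full and faithful on projectives. (Alternatively, once $Y_{r,n}=\mathrm{End}_{\dot{\mathrm{YS}_n^r}}(\dot M_n^r)$ is in hand, this is the standard formal consequence of the double centralizer property, as in [HM2, $\S$2].) I expect the surjectivity step above to be the only point needing genuine care: one must recognize that the left multiplications on $M^\omega$ and the insertion maps $\theta_m$ are honest elements of $\dot{\mathrm{YS}_n^r}$, so that an arbitrary $\dot{\mathrm{YS}_n^r}$-endomorphism of $\dot M_n^r$ is forced to commute with them; keeping the left/right-module bookkeeping straight is then the main bit of vigilance, and the remaining verifications are routine.
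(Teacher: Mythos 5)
Your proof is correct and rests on the same key fact the paper uses, namely that $M^{\omega}=Y_{r,n}$ is a direct summand of $\dot{M}_{n}^{r}$, so that $\dot{M}_{n}^{r}\cong\varphi_{\omega}\dot{\mathrm{YS}_{n}^{r}}$; the paper simply compresses your hands-on surjectivity argument (via the insertion maps $\theta_{m}$, which are exactly the elements of $\varphi_{\omega}\dot{\mathrm{YS}_{n}^{r}}\varphi_{\mu}$) into the corner-ring isomorphism $\mathrm{End}_{\dot{\mathrm{YS}_{n}^{r}}}(\varphi_{\omega}\dot{\mathrm{YS}_{n}^{r}})\cong\varphi_{\omega}\dot{\mathrm{YS}_{n}^{r}}\varphi_{\omega}\cong Y_{r,n}$. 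Your explicit treatment of the ``fully faithful on projectives'' clause, which the paper leaves as a standard consequence, is also fine.
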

\begin{proof}
The first isomorphism is the definition of $\dot{\mathrm{YS}_{n}^{r}}$, whereas the second follows directly from the definition of $\dot{\mathrm{YS}_{n}^{r}}$ because $$\text{Y}_{r,n}\cong \mathrm{Hom}_{\text{Y}_{r,n}}(\text{Y}_{r,n}, \text{Y}_{r,n})\cong \varphi_{\omega}\dot{\mathrm{YS}_{n}^{r}}\varphi_{\omega}\cong \mathrm{End}_{\dot{\mathrm{YS}_{n}^{r}}}(\varphi_{\omega}\dot{\mathrm{YS}_{n}^{r}}),$$ and $\varphi_{\omega}\dot{\mathrm{YS}_{n}^{r}}\cong \dot{M}_{n}^{r}$ as a right $\dot{\mathrm{YS}_{n}^{r}}$-module.
\end{proof}

\begin{corollary}\label{Schur-functor-corollar4-6}
$\mathrm{YS}_{n}^{r}$ is a quasi-hereditary cover of $\mathrm{Y}_{r,n}$ in the sense of Rouquier $[$Ro, Definition 4.34$].$
\end{corollary}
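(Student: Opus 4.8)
The plan is to verify, point by point, the conditions in Rouquier's definition [R, Definition 4.34]: that $\mathrm{YS}_{n}^{r}$ is quasi-hereditary, that there is a projective right $\mathrm{YS}_{n}^{r}$-module $P$ with $\mathrm{End}_{\mathrm{YS}_{n}^{r}}(P)\cong Y_{r,n}$, and that the Schur functor $\mathrm{Hom}_{\mathrm{YS}_{n}^{r}}(P,-)\colon\mathrm{YS}_{n}^{r}\mathrm{-mod}\rightarrow Y_{r,n}\mathrm{-mod}$ is fully faithful on projective modules. The first is exactly Corollary 3.13, with Proposition 3.10 supplying the highest weight data $\{W^{\lambda}\}$, $\{L^{\lambda}\}$. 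So the real task is to recognize the functor $F_{n}^{r}$ of Proposition 4.4 as such a Schur functor and to read off fully-faithfulness on projectives from the double centralizer property already established.

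I would first argue for the extended algebra $\dot{\mathrm{YS}_{n}^{r}}$. By Proposition 4.1 it is quasi-hereditary, and $\dot{M}_{n}^{r}\cong\varphi_{\omega}\dot{\mathrm{YS}_{n}^{r}}$ is a projective right $\dot{\mathrm{YS}_{n}^{r}}$-module since $\varphi_{\omega}$ is an idempotent. Lemma 4.5 gives $\mathrm{End}_{\dot{\mathrm{YS}_{n}^{r}}}(\dot{M}_{n}^{r})\cong Y_{r,n}$, and under the standard identification $M\varphi_{\omega}\cong\mathrm{Hom}_{\dot{\mathrm{YS}_{n}^{r}}}(\varphi_{\omega}\dot{\mathrm{YS}_{n}^{r}},M)$ the functor $\dot{F}_{n}^{r}$ is precisely the Schur functor $\mathrm{Hom}_{\dot{\mathrm{YS}_{n}^{r}}}(\dot{M}_{n}^{r},-)$. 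Since Lemma 4.5 asserts that $\dot{F}_{n}^{r}$ is fully faithful on projectives, the pair $(\dot{\mathrm{YS}_{n}^{r}},\dot{M}_{n}^{r})$ is already a quasi-hereditary cover of $Y_{r,n}$.

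It then remains to transport this along the category equivalence of Lemma 4.2. Put $P=\dot{F}_{n}^{\omega}(\dot{M}_{n}^{r})=\dot{M}_{n}^{r}\varphi_{n}^{r}$. Because $\dot{F}_{n}^{\omega}$ and $\dot{G}_{n}^{\omega}$ are mutually inverse equivalences, $P$ is a projective $\mathrm{YS}_{n}^{r}$-module with $\mathrm{End}_{\mathrm{YS}_{n}^{r}}(P)\cong\mathrm{End}_{\dot{\mathrm{YS}_{n}^{r}}}(\dot{M}_{n}^{r})\cong Y_{r,n}$, and there is a natural isomorphism $\mathrm{Hom}_{\mathrm{YS}_{n}^{r}}(P,-)\cong\mathrm{Hom}_{\dot{\mathrm{YS}_{n}^{r}}}(\dot{M}_{n}^{r},\dot{G}_{n}^{\omega}(-))=\dot{F}_{n}^{r}\circ\dot{G}_{n}^{\omega}=F_{n}^{r}$, the functor of Proposition 4.4. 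Since $\dot{G}_{n}^{\omega}$ is an equivalence while $\dot{F}_{n}^{r}$ is fully faithful on projectives, so is $F_{n}^{r}$. Together with Corollary 3.13 this shows that $(\mathrm{YS}_{n}^{r},P)$ is a quasi-hereditary cover of $Y_{r,n}$ in the sense of [R, Definition 4.34], which is the claim.

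Essentially no hard step remains here: the substantive work---the cellular and quasi-hereditary structure of $\mathrm{YS}_{n}^{r}$, the equivalence between $\dot{\mathrm{YS}_{n}^{r}}\mathrm{-mod}$ and $\mathrm{YS}_{n}^{r}\mathrm{-mod}$, and, above all, the double centralizer property of Lemma 4.5---has already been done. The only point needing a little care is the bookkeeping: matching the module-side conventions so that $F_{n}^{r}$ really is $\mathrm{Hom}_{\mathrm{YS}_{n}^{r}}(P,-)$ for the correct bimodule $P=\dot{M}_{n}^{r}\varphi_{n}^{r}$, and checking that ``fully faithful on projectives'' survives the equivalence---both formal, since an equivalence of module categories commutes with $\mathrm{Hom}$ and preserves projectivity. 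Following [HM2] one could moreover record that $F_{n}^{r}$ carries Weyl modules to Specht modules (Proposition 4.4), which is a sharpening of the cover property but is not required by [R, Definition 4.34].
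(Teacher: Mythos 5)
Your proposal is correct and takes essentially the same route as the paper: both deduce the cover property by combining the quasi-heredity of $\mathrm{YS}_{n}^{r}$, the Morita equivalence of Lemma 4.2, and the double centralizer property of Lemma 4.5, which gives that $\dot{F}_{n}^{r}$ (and hence $F_{n}^{r}$, as its composite with an equivalence) is fully faithful on projectives. Only the internal numbering is slightly off: quasi-heredity is Corollary 3.12 (not 3.13) and the Schur functor $F_{n}^{r}$ is introduced in Proposition 4.3 (not 4.4).
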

\begin{proof}
Recall that $\dot{M}_{n}^{r}\cong\varphi_{\omega}\dot{\mathrm{YS}_{n}^{r}}$ is a projective $\dot{\mathrm{YS}_{n}^{r}}$-module. Using the Morita equivalence between $\dot{\mathrm{YS}_{n}^{r}}$ and $\mathrm{YS}_{n}^{r}$, we see that $M_{n}^{r}$ is a projective $\mathrm{YS}_{n}^{r}$-module. Because $\dot{\mathrm{F}}_{n}^{r}$ is fully faithful on projective modules by Lemma \ref{Schur-functor-lemma4-5} and $\mathrm{F}_{n}^{r}$ is the composition of $\dot{\mathrm{F}}_{n}^{r}$ with an equivalence of categories, so is $\mathrm{F}_{n}^{r}$. This implies that $\mathrm{YS}_{n}^{r}$ is a quasi-hereditary cover of $\text{Y}_{r,n}$ in the sense of Rouquier [Ro, Definition 4.34].
\end{proof}

\section{Tilting modules}

In this section, we introduce the tilting modules for $\mathrm{YS}_{n}^{r}$ and the closely related Young modules for $\text{Y}_{r,n}$ following [Ma2]. Throughout this section we assume that $\mathbb{K}$ is a field, which can be regarded as an $\mathcal{R}$-algebra.

\subsection{Young modules}

Recall from \eqref{schur-func-4-1} that every $\mathrm{YS}_{n}^{r}$-module has a weight space decomposition. Analogously, as a right $\mathrm{YS}_{n}^{r}$-module, the regular representation of $\mathrm{YS}_{n}^{r}$ has a decomposition into a direct sum of left weight spaces $$\mathrm{YS}_{n}^{r}=\bigoplus_{\bm{\mu}\in \mathcal{C}_{r,n}}Z^{\bm{\mu}},\quad \mathrm{where}~Z^{\bm{\mu}}=\varphi_{\bm{\mu}}\mathrm{YS}_{n}^{r}~\mathrm{for}~\bm{\mu}\in \mathcal{C}_{r,n}.$$ The next lemma gives some properties of the right $\mathrm{YS}_{n}^{r}$-module $Z^{\bm{\mu}}.$

\begin{lemma}\label{tiltmod-lemma5-1}
Assume that $\bm{\mu}\in \mathcal{C}_{r,n}.$ Then the following hold$:$

$\mathrm{(i)}$ $Z^{\bm{\mu}}$ is free as an $\mathcal{R}$-module with a basis $$\big\{\varphi_{\mathrm{S}\mathrm{T}}\:|\:\mathrm{S}\in \mathcal{T}_{0}^{+}(\bm{\lambda}, \bm{\mu}), \mathrm{T}\in \mathcal{T}_{0}^{+}(\bm{\lambda}, \bm{\nu})~for~some~\bm{\nu}\in \mathcal{C}_{r,n}~and~\bm{\lambda}\in \mathcal{P}_{r,n}\big\}.$$

$\mathrm{(ii)}$ Let $\mathcal{M}^{\bm{\mu}}=\mathrm{Hom}_{\mathrm{Y}_{r,n}}(M_{n}^{r}, M^{\bm{\mu}}).$ As right $\mathrm{YS}_{n}^{r}$-modules, we have $Z^{\bm{\mu}}\cong\mathcal{M}^{\bm{\mu}}.$

$\mathrm{(iii)}$ As $\mathrm{Y}_{r,n}$-modules, we have $\mathrm{F}_{n}^{r}(Z^{\bm{\mu}})\cong M^{\bm{\mu}}.$
\end{lemma}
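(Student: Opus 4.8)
The three parts will be handled in sequence; (i) and (ii) amount to bookkeeping with the cellular basis of $\mathrm{YS}_n^r$ and the idempotents $\varphi_\mu$, while (iii) is a chase through the functor $F_n^r$ of Section 4. For (i), I would compute the products $\varphi_\mu\varphi_{ST}$ for $\varphi_{ST}$ running over the cellular basis of $\mathrm{YS}_n^r$ given by Theorem 3.9. If $S\in\mathcal{T}_0^+(\lambda,\mu_1)$ and $T\in\mathcal{T}_0^+(\lambda,\mu_2)$, then $\varphi_{ST}$ lies in $\mathrm{Hom}_{Y_{r,n}}(M^{\mu_2},M^{\mu_1})$ (extended by $0$), and $\varphi_\mu$ is the idempotent projecting $M_n^r$ onto the summand $M^\mu$; with the multiplication convention fixed in the proof of Theorem 3.9(ii) this yields $\varphi_\mu\varphi_{ST}=\varphi_{ST}$ when $\mu_1=\mu$ and $\varphi_\mu\varphi_{ST}=0$ otherwise. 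Hence $Z^\mu=\varphi_\mu\mathrm{YS}_n^r$ is spanned by the set in the statement, and linear independence is inherited from the cellular basis of $\mathrm{YS}_n^r$, proving (i).

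For (ii), the quickest route is the standard identity $eA\cong\mathrm{Hom}_R(N,eN)$ valid for $A=\mathrm{End}_R(N)$ and any idempotent $e\in A$: taking $e=\varphi_\mu$ and $N=M_n^r$ (so that $eN=M^\mu$) gives $Z^\mu=\varphi_\mu\mathrm{YS}_n^r\cong\mathrm{Hom}_{Y_{r,n}}(M_n^r,M^\mu)=\mathcal{M}^\mu$, the map sending $\varphi_\mu g$ to itself viewed as a map into $M^\mu$; one checks at once that it intertwines the right $\mathrm{YS}_n^r$-actions, both being precomposition. Alternatively one may match $\mathcal{R}$-bases directly: since $M_n^r=\bigoplus_\nu M^\nu$, Corollary 3.5 and Proposition 3.7 exhibit $\mathcal{M}^\mu=\bigoplus_\nu\mathrm{Hom}_{Y_{r,n}}(M^\nu,M^\mu)$ with $\mathcal{R}$-basis $\{\varphi_{ST}\mid S\in\mathcal{T}_0^+(\lambda,\mu)\}$, which is exactly the basis of $Z^\mu$ obtained in (i).

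For (iii), I would insert $Z^\mu$ into the formula of Proposition 4.3, $F_n^r(Z^\mu)=(Z^\mu\otimes_{\mathrm{YS}_n^r}\varphi_n^r\dot{\mathrm{YS}_n^r})\varphi_\omega=Z^\mu\otimes_{\mathrm{YS}_n^r}(\varphi_n^r\dot{\mathrm{YS}_n^r}\varphi_\omega)$, and apply the $eAf$-form of the identity above inside $\dot{\mathrm{YS}_n^r}=\mathrm{End}_{Y_{r,n}}(\dot{M}_n^r)$ with $e=\varphi_n^r$, $f=\varphi_\omega$ to identify $\varphi_n^r\dot{\mathrm{YS}_n^r}\varphi_\omega\cong\mathrm{Hom}_{Y_{r,n}}(M^\omega,M_n^r)=\mathrm{Hom}_{Y_{r,n}}(Y_{r,n},M_n^r)\cong M_n^r$ as a $(\mathrm{YS}_n^r,Y_{r,n})$-bimodule; in other words $F_n^r\cong-\otimes_{\mathrm{YS}_n^r}M_n^r$. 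Then $F_n^r(Z^\mu)=\varphi_\mu\mathrm{YS}_n^r\otimes_{\mathrm{YS}_n^r}M_n^r\cong\varphi_\mu M_n^r=M^\mu$, using $eA\otimes_A X\cong eX$.

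The bookkeeping in (i) and the underlying-module bijections in (ii) are routine; the step needing genuine care is (iii), namely verifying that the identification $\varphi_n^r\dot{\mathrm{YS}_n^r}\varphi_\omega\cong M_n^r$ (equivalently, that $F_n^r$ is isomorphic to $-\otimes_{\mathrm{YS}_n^r}M_n^r$) respects the $(\mathrm{YS}_n^r,Y_{r,n})$-bimodule structure, so that the final isomorphism $F_n^r(Z^\mu)\cong M^\mu$ is one of $Y_{r,n}$-modules and not merely of $\mathcal{R}$-modules. One could also sidestep this by deducing (iii) from the already-established isomorphism $F_n^r(W^\lambda)\cong S^\lambda$ of Proposition 4.3 together with a Weyl-module filtration of the projective module $Z^\mu$ and a Specht-module filtration of $M^\mu$, matching multiplicities through the exactness of $F_n^r$; but that is more laborious, so I would present the direct tensor-product computation as the main argument.
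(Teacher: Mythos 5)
Your proposal is correct and takes essentially the same route as the paper: part (i) is the paper's appeal to Theorem 3.9 with the computation $\varphi_{\mu}\varphi_{ST}$ made explicit, part (ii) is the same basis/idempotent identification, and part (iii) is the paper's chain $F_{n}^{r}(Z^{\mu})\cong \dot{F}_{n}^{r}(\varphi_{\mu}\dot{\mathrm{YS}_{n}^{r}})\cong \mathrm{Hom}_{Y_{r,n}}(Y_{r,n},M^{\mu})\cong M^{\mu}$, merely reassociated so as to identify $F_{n}^{r}$ with $-\otimes_{\mathrm{YS}_{n}^{r}}M_{n}^{r}$ first. The bimodule compatibility you flag in (iii) is exactly the point implicit in the paper's final two isomorphisms, and it does go through.
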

\begin{proof}
(i) It follows from Theorem \ref{yokonu-cellu-basis-3-9}.

(ii) It follows from (i).

(iii) We have
\begin{align*}
\mathrm{F}_{n}^{r}(Z^{\bm{\mu}})&=\dot{\mathrm{F}}_{n}^{r}\circ \dot{\mathrm{G}}_{n}^{\omega}(\varphi_{\bm{\mu}}\mathrm{YS}_{n}^{r})=
\dot{\mathrm{F}}_{n}^{r}(\varphi_{\bm{\mu}}\mathrm{YS}_{n}^{r}\otimes_{\mathrm{YS}_{n}^{r}}\varphi_{n}^{r}\dot{\mathrm{YS}_{n}^{r}})\\&\cong \dot{\mathrm{F}}_{n}^{r}(\varphi_{\bm{\mu}}\dot{\mathrm{YS}_{n}^{r}})\cong \mathrm{Hom}_{\text{Y}_{r,n}}(\text{Y}_{r,n}, M^{\bm{\mu}})\cong M^{\bm{\mu}}.
\end{align*}
We are done.
\end{proof}

The next lemma claims that each $Z^{\bm{\mu}}$ has a Weyl filtration.
\begin{lemma}\label{tiltmod-lemma5-2}
Assume that $\bm{\mu}\in \mathcal{C}_{r,n}.$ Then $Z^{\bm{\mu}}$ has a Weyl filtration $$Z^{\bm{\mu}}=M_{1}\supset M_{2}\supset\cdots\supset M_{k}\supset M_{k+1}=0$$ such that for each $1\leq i\leq k$ there exists some $\bm{\lambda}_{i}\in \mathcal{P}_{r,n}$ with $\alpha(\bm{\lambda}_{i})=\alpha(\bm{\mu})$ satisfying $M_{i}/M_{i+1}\cong W^{\bm{\lambda}_{i}}.$ Moreover, $\sharp\{1\leq i\leq k\:|\:\bm{\lambda}_{i}=\bm{\lambda}\}=\sharp\mathcal{T}_{0}^{+}(\bm{\lambda}, \bm{\mu})$ for each $\bm{\lambda}\in \mathcal{P}_{r,n}$ with $\alpha(\bm{\lambda})=\alpha(\bm{\mu}).$
\end{lemma}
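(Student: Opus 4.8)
The plan is to exploit the cellular basis of $\mathrm{YS}_{n}^{r}$ from Theorem 3.9 together with the basis of $Z^{\mu}$ recorded in Lemma 5.1(i), and filter $Z^{\mu}$ by the dominance order on $\mathcal{P}_{r,n}$, picking off one Weyl module at a time. Fix a total refinement $\lambda^{(1)} \trianglerighteq \lambda^{(2)} \trianglerighteq \cdots$ of the dominance order on those $\lambda \in \mathcal{P}_{r,n}$ with $\alpha(\lambda) = \alpha(\mu)$ (all other $\lambda$ contribute nothing to $Z^{\mu}$, since $\varphi_{ST}$ with $S \in \mathcal{T}_{0}^{+}(\lambda,\mu)$ forces $\alpha(\lambda) = \alpha(\mu)$ by the definition of $\mathcal{T}_{0}^{+}$). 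For each $j$ let $N_{j}$ be the $\mathcal{R}$-span of those basis elements $\varphi_{ST} \in Z^{\mu}$ for which $T \in \mathcal{T}_{0}^{+}(\lambda,\nu)$ with $\lambda \trianglelefteq \lambda^{(j)}$; equivalently, using the two-sided cell ideals, $N_{j} = Z^{\mu} \cap (\bigoplus_{\lambda \trianglelefteq \lambda^{(j)}} \text{cell ideal})$. The cellularity axiom Theorem 3.9(ii) guarantees that each $N_{j}$ is a right $\mathrm{YS}_{n}^{r}$-submodule, and that the successive quotient $N_{j}/N_{j+1}$, where $\lambda = \lambda^{(j)}$, is isomorphic to a direct sum of copies of the cell (Weyl) module $W^{\lambda}$, one copy for each $S \in \mathcal{T}_{0}^{+}(\lambda,\mu)$ — indeed the images of $\{\varphi_{ST} \mid T \in \mathcal{T}_{0}^{+}(\lambda,\nu), \nu \in \mathcal{C}_{r,n}\}$ for fixed $S$ span a copy of $W^{\lambda}$, by comparison of the action formula in Theorem 3.9(ii) with the defining basis $\{\varphi_{T} \mid T \in \mathcal{T}_{0}^{+}(\lambda)\}$ of $W^{\lambda}$.

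Concretely, I would first record that $Z^{\mu} = \varphi_{\mu}\mathrm{YS}_{n}^{r}$ decomposes, as an $\mathcal{R}$-module, into row-blocks indexed by $S$: write $Z^{\mu} = \bigoplus_{\lambda}\bigoplus_{S \in \mathcal{T}_{0}^{+}(\lambda,\mu)} Z^{\mu}_{S}$ where $Z^{\mu}_{S}$ is the span of $\{\varphi_{ST}\}$ over all admissible $T$. This is not a module decomposition, but after intersecting with the cell filtration it becomes the associated graded. Then refine the naive filtration by cells into a filtration by single Weyl modules: inside $N_{j}/N_{j+1} \cong \bigoplus_{S} W^{\lambda^{(j)}}$ one simply chooses a linear order on $\mathcal{T}_{0}^{+}(\lambda^{(j)},\mu)$ and inserts the partial direct sums as intermediate submodules. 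Re-indexing the whole tower as $Z^{\mu} = M_{1} \supset M_{2} \supset \cdots \supset M_{k+1} = 0$ gives exactly the statement, with the $i$-th subquotient $\cong W^{\lambda_{i}}$ and $\lambda_{i} = \lambda^{(j)}$ appearing once for each element of $\mathcal{T}_{0}^{+}(\lambda^{(j)},\mu)$; hence $\sharp\{i \mid \lambda_{i} = \lambda\} = \sharp \mathcal{T}_{0}^{+}(\lambda,\mu)$, and $\alpha(\lambda_{i}) = \alpha(\mu)$ throughout by the initial remark. For the $\mathcal{R} = \mathbb{K}$ statement one does not even need $\mathbb{K}$ to be a field for the filtration itself, but restricting to a field is harmless and matches the running hypothesis of the section.

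The only genuine point requiring care — and the one I expect to be the main obstacle — is verifying that the subquotients are honestly $W^{\lambda}$ on the nose (not merely subquotients of, or extensions involving, $W^{\lambda}$), i.e. matching up the induced $\mathrm{YS}_{n}^{r}$-action on the span of $\{\overline{\varphi_{ST}} \mid T\}$ with the cell-module action on $W^{\lambda}$. This is exactly the content of Theorem 3.9(ii): the structure constants $r_{V}$ there depend on $V$, $T$ and $\varphi$ but \emph{not} on $S$, so the action on the $T$-index is uniform across all $S$, which is precisely what lets the row-block $Z^{\mu}_{S}$ (modulo higher cells) carry the $W^{\lambda}$-action and be isomorphic to $W^{\lambda}$ via $\overline{\varphi_{ST}} \mapsto \varphi_{T}$. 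Once that identification is in hand, everything else — that the $N_{j}$ are submodules, that the filtration can be refined, that the multiplicities come out right — is bookkeeping with the cellular basis, formally identical to the classical argument for $q$-Schur algebras (cf. the treatment of the modules $Z^{\mu}$ in [Ma2] and of cell-filtered permutation modules in [DJM]).
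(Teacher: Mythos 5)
Your strategy is the paper's own: filter $Z^{\mu}$ along a total order refining dominance using the cellular basis $\{\varphi_{ST}\}$ of Theorem 3.9, and identify each subquotient with $W^{\lambda}$ via $\overline{\varphi_{ST}}\mapsto\varphi_{T}$, the point being precisely the one you single out, namely that the structure constants $r_{V}$ in Theorem 3.9(ii) do not depend on $S$. The one concrete defect is that your filtration runs in the wrong direction. With $\lambda^{(1)}\trianglerighteq\lambda^{(2)}\trianglerighteq\cdots$ and $N_{j}$ the span of the $\varphi_{ST}$ whose shape is $\lambda^{(j)},\lambda^{(j+1)},\ldots$, the set of shapes indexing $N_{j}$ (for $j\geq 2$) is \emph{downward} closed under dominance, so $N_{j}$ is not a submodule: the cellularity axiom only controls $\varphi_{ST}\varphi$ modulo $\mathrm{YS}_{r,n}^{\rhd\lambda}$, i.e.\ modulo terms of \emph{more} dominant shape, and these can land in the part you have discarded (a term of shape $\lambda^{(1)}$ arising from $\varphi_{ST}\varphi$ with $S$ of shape $\lambda^{(2)}$ lies outside $N_{2}$). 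The fix is to put the most dominant shapes at the \emph{bottom} of the filtration, as the paper does: order the tableaux $S_{1},\ldots,S_{k}$ of $\bigcup_{\lambda}\mathcal{T}_{0}^{+}(\lambda,\mu)$ so that $i>j$ whenever the shape of $S_{i}$ strictly dominates that of $S_{j}$, and let $M_{i}$ be spanned by $\{\varphi_{S_{j}T}\:|\:j\geq i\}$; then the indexing shapes form an up-set, each $M_{i}$ is a submodule, $M_{i}\cap\mathrm{YS}_{r,n}^{\rhd\lambda_{i}}\subseteq M_{i+1}$, and your identification of $M_{i}/M_{i+1}$ with $W^{\lambda_{i}}$ goes through verbatim. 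Everything else in your write-up --- the restriction to $\alpha(\lambda)=\alpha(\mu)$ forced by the definition of $\mathcal{T}_{0}^{+}(\lambda,\mu)$, the refinement of each cell layer into single Weyl modules, and the multiplicity count $\sharp\mathcal{T}_{0}^{+}(\lambda,\mu)$ --- is correct and matches the paper; the paper additionally remarks that quasi-heredity makes $[Z^{\mu}:W^{\lambda}]$ independent of the chosen filtration, which you instead read off directly from the construction.
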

\begin{proof}
Choose a total ordering $\{\mathrm{S}_{1},\ldots,\mathrm{S}_{k}\}$ on the set $\cup_{\bm{\lambda}\in \mathcal{P}_{r,n}}\mathcal{T}_{0}^{+}(\bm{\lambda}, \bm{\mu})$ such that $i> j$ whenever $\bm{\lambda}_{i}\rhd \bm{\lambda}_{j},$ where $\mathrm{S}_{i}\in \mathcal{T}_{0}^{+}(\bm{\lambda}_{i}, \bm{\mu}),$ $\mathrm{S}_{j}\in \mathcal{T}_{0}^{+}(\bm{\lambda}_{j}, \bm{\mu}).$ For each $1\leq i\leq k,$ let $M_{i}$ be the $\mathcal{R}$-submodule of $Z^{\bm{\mu}}$ with basis $\{\varphi_{\mathrm{S}_{j}\mathrm{T}}\:|\:j\geq i~\mathrm{and}~\mathrm{T}\in \mathcal{T}_{0}^{+}(\bm{\lambda}_{j})\}.$ Then $M_{i}$ is a $\mathrm{YS}_{n}^{r}$-module by Theorem \ref{yokonu-cellu-basis-3-9}. Further, there is an isomorphism of $\mathrm{YS}_{n}^{r}$-modules $W^{\bm{\lambda}_{i}}\cong M_{i}/M_{i+1}$ given by $\varphi_{\mathrm{T}}\mapsto \varphi_{\mathrm{S}_{i}\mathrm{T}}+M_{i+1}$ for $\mathrm{T}\in \mathcal{T}_{0}^{+}(\bm{\lambda}_{i}),$ because $M_{i}\cap\mathrm{YS}_{r,n}^{\rhd \bm{\lambda}_{i}}\subseteq M_{i+1}.$ Since $\mathrm{YS}_{n}^{r}$ is quasi-hereditary, $[Z^{\bm{\mu}}: W^{\bm{\lambda}}]$ is independent of the choice of Weyl filtration.
\end{proof}

Applying the Schur functor $\mathrm{F}_{n}^{r},$ by Proposition \ref{Schur-functor-propo4-3} and Lemma \ref{tiltmod-lemma5-1}(iii), we also have the following result.
\begin{corollary}\label{tiltmod-corollar5-3}
Assume that $\bm{\mu}\in \mathcal{C}_{r,n}.$ Then $M^{\bm{\mu}}$ has a Specht filtration $$M^{\bm{\mu}}=M_{1}\supset M_{2}\supset\cdots\supset M_{k}\supset M_{k+1}=0$$ such that for each $1\leq i\leq k$ there exists some $\bm{\lambda}_{i}\in \mathcal{P}_{r,n}$ with $\alpha(\bm{\lambda}_{i})=\alpha(\bm{\mu})$ satisfying $M_{i}/M_{i+1}\cong S^{\bm{\lambda}_{i}}.$ Moreover, $\sharp\{1\leq i\leq k\:|\:\bm{\lambda}_{i}=\bm{\lambda}\}=\sharp\mathcal{T}_{0}^{+}(\bm{\lambda}, \bm{\mu})$ for each $\bm{\lambda}\in \mathcal{P}_{r,n}$ with $\alpha(\bm{\lambda})=\alpha(\bm{\mu}).$
\end{corollary}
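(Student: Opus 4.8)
The plan is to transport the Weyl filtration of $Z^{\mu}$ obtained in Lemma 5.3 through the exact Schur functor $F_{n}^{r}$, using the identifications $F_{n}^{r}(Z^{\mu})\cong M^{\mu}$ and $F_{n}^{r}(W^{\lambda})\cong S^{\lambda}$. First I would start from the chain of $\mathrm{YS}_{n}^{r}$-submodules
$$Z^{\mu}=M_{1}\supset M_{2}\supset\cdots\supset M_{k}\supset M_{k+1}=0$$
furnished by Lemma 5.3, in which $M_{i}/M_{i+1}\cong W^{\lambda_{i}}$ with $\alpha(\lambda_{i})=\alpha(\mu)$, and in which $\sharp\{1\leq i\leq k\:|\:\lambda_{i}=\lambda\}=\sharp\mathcal{T}_{0}^{+}(\lambda,\mu)$ for every $\lambda\in\mathcal{P}_{r,n}$ with $\alpha(\lambda)=\alpha(\mu)$.

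Next, I would apply $F_{n}^{r}$ to each short exact sequence $0\to M_{i+1}\to M_{i}\to W^{\lambda_{i}}\to 0$. Since $F_{n}^{r}$ is exact by Proposition 4.3, this yields short exact sequences $0\to F_{n}^{r}(M_{i+1})\to F_{n}^{r}(M_{i})\to F_{n}^{r}(W^{\lambda_{i}})\to 0$ of $Y_{r,n}$-modules, so the submodules $F_{n}^{r}(M_{i})$ form a filtration of $F_{n}^{r}(Z^{\mu})$ with successive quotients $F_{n}^{r}(W^{\lambda_{i}})$. I would then identify the endpoints and the quotients: Lemma 5.1(iii) gives $F_{n}^{r}(Z^{\mu})\cong M^{\mu}$, and Proposition 4.3 gives $F_{n}^{r}(W^{\lambda_{i}})\cong S^{\lambda_{i}}$. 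Renaming $M_{i}:=F_{n}^{r}(M_{i})$, this is precisely the asserted Specht filtration of $M^{\mu}$, and the multiplicity count $\sharp\{1\leq i\leq k\:|\:\lambda_{i}=\lambda\}=\sharp\mathcal{T}_{0}^{+}(\lambda,\mu)$ is inherited verbatim from Lemma 5.3, since neither the indexing set nor the sequence of partitions $\lambda_{i}$ changes.

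The only point requiring care — and the thing I would single out as the obstacle — is to confirm that the resulting filtration is genuinely strict, i.e.\ that no pair of consecutive terms collapses after applying $F_{n}^{r}$. This follows because each Specht module $S^{\lambda_{i}}$ is a nonzero $\mathcal{R}$-module, having $\mathcal{R}$-rank $\sharp\mathrm{Std}(\lambda_{i})\geq 1$, so $F_{n}^{r}(M_{i})\supsetneq F_{n}^{r}(M_{i+1})$ for each $i$ and the length of the filtration is preserved. Everything else is a formal consequence of exactness together with the identifications $F_{n}^{r}(Z^{\mu})\cong M^{\mu}$ and $F_{n}^{r}(W^{\lambda})\cong S^{\lambda}$ already established in Lemma 5.1(iii) and Proposition 4.3.
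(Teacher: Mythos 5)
Your proposal is correct and is essentially the paper's own argument: the paper likewise obtains this corollary by applying the exact Schur functor $F_{n}^{r}$ to the Weyl filtration of $Z^{\mu}$ (Lemma 5.2 in the paper's numbering) and invoking $F_{n}^{r}(Z^{\mu})\cong M^{\mu}$ and $F_{n}^{r}(W^{\lambda})\cong S^{\lambda}$ from Lemma 5.1(iii) and Proposition 4.3. Your additional check that the filtration stays strict because each $S^{\lambda_{i}}\neq 0$ is a sensible detail the paper leaves implicit.
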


Since $\varphi_{\bm{\mu}}$ is an idempotent in $\mathrm{YS}_{n}^{r},$ $Z^{\bm{\mu}}$ is a projective $\mathrm{YS}_{n}^{r}$-module. Notice that if $\mathcal{T}_{0}(\bm{\lambda}, \bm{\mu})\neq \emptyset,$ then $\bm{\lambda} \unrhd\bm{\mu}.$ Thus, $W^{\bm{\lambda}}$ appears in $Z^{\bm{\mu}}$ only if $\bm{\lambda} \unrhd\bm{\mu}.$ For each $\bm{\mu}\in \mathcal{P}_{r,n},$ let $P^{\bm{\mu}}$ be the projective cover of $L^{\bm{\mu}}.$ Then by [Ma1, Lemma 2.16], $P^{\bm{\mu}}$ has a filtration by Weyl modules in which $W^{\bm{\lambda}}$ appears with multiplicity $[P^{\bm{\mu}}: W^{\bm{\lambda}}]=[W^{\bm{\lambda}}: L^{\bm{\mu}}].$ From these facts, we can easily get the following lemma.

\begin{lemma}\label{tiltmod-lemma5-4}
Assume that $\bm{\mu}\in \mathcal{P}_{r,n}.$ Then $$Z^{\bm{\mu}}\cong P^{\bm{\mu}}\oplus \bigoplus_{\bm{\lambda}\rhd\bm{\mu}}c_{\bm{\lambda}\bm{\mu}}P^{\bm{\lambda}}$$ for some non-negative integer $c_{\bm{\lambda}\bm{\mu}}.$
\end{lemma}

Suppose that $\bm{\lambda}\in \mathcal{P}_{r,n}$ and $\mathrm{S}\in \mathcal{T}_{0}^{+}(\bm{\lambda}, \bm{\mu}),$ $\mathrm{T}\in \mathcal{T}_{0}^{+}(\bm{\lambda}, \bm{\nu})$. Recall that $\mathcal{M}^{\bm{\mu}}$ is defined as $\mathrm{Hom}_{\text{Y}_{r,n}}(M_{n}^{r}, M^{\bm{\mu}}).$ Thus, we can define a $\mathrm{YS}_{n}^{r}$-module homomorphism $\Phi_{\mathrm{S}\mathrm{T}}: \mathcal{M}^{\bm{\nu}}\rightarrow \mathcal{M}^{\bm{\mu}}$ by $\Phi_{\mathrm{S}\mathrm{T}}(f)=\varphi_{\mathrm{S}\mathrm{T}}f$ for all $f\in \mathcal{M}^{\bm{\nu}}.$ In fact, these maps give a basis of $\mathrm{Hom}_{\mathrm{YS}_{n}^{r}}(\mathcal{M}^{\bm{\nu}}, \mathcal{M}^{\bm{\mu}}).$

\begin{lemma}\label{tiltmod-lemma5-5}
Suppose that $\bm{\mu}, \bm{\nu}\in \mathcal{C}_{r,n}.$ Then $\mathrm{Hom}_{\mathrm{YS}_{n}^{r}}(\mathcal{M}^{\bm{\nu}}, \mathcal{M}^{\bm{\mu}})$ is free as an $\mathcal{R}$-module with basis $\{\Phi_{\mathrm{S}\mathrm{T}}\:|\:\mathrm{S}\in \mathcal{T}_{0}^{+}(\bm{\lambda}, \bm{\mu}), \mathrm{T}\in \mathcal{T}_{0}^{+}(\bm{\lambda}, \bm{\nu})~for~some~\bm{\lambda}\in \mathcal{P}_{r,n}\}.$
\end{lemma}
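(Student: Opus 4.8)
The plan is to prove this by transferring the known basis for $\mathrm{Hom}_{Y_{r,n}}(M^{\nu}, M^{\mu})$ through the identification $Z^{\mu}\cong\mathcal{M}^{\mu}$ of Lemma 5.1(ii), using the fact that $M_{n}^{r}$ is a projective $\mathrm{YS}_{n}^{r}$-module. First I would observe that since $\mathcal{M}^{\mu}=\mathrm{Hom}_{Y_{r,n}}(M_{n}^{r}, M^{\mu})=\varphi_{\mu}\mathrm{YS}_{n}^{r}=Z^{\mu}$ as a right $\mathrm{YS}_{n}^{r}$-module, and since $Z^{\mu}=\varphi_{\mu}\mathrm{YS}_{n}^{r}$ is a projective right $\mathrm{YS}_{n}^{r}$-module, one has the standard identification
\[
\mathrm{Hom}_{\mathrm{YS}_{n}^{r}}(\mathcal{M}^{\nu}, \mathcal{M}^{\mu})\cong\mathrm{Hom}_{\mathrm{YS}_{n}^{r}}(\varphi_{\nu}\mathrm{YS}_{n}^{r}, \varphi_{\mu}\mathrm{YS}_{n}^{r})\cong\varphi_{\mu}\mathrm{YS}_{n}^{r}\varphi_{\nu},
\]
where the last isomorphism sends $\psi\mapsto\psi(\varphi_{\nu})$ and has inverse given by left multiplication. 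So the right-hand module is free with an $\mathcal{R}$-basis consisting of those cellular basis elements $\varphi_{ST}$ of $\mathrm{YS}_{n}^{r}$ (from Theorem 3.9) that satisfy $\varphi_{\mu}\varphi_{ST}\varphi_{\nu}=\varphi_{ST}$, i.e. those $\varphi_{ST}$ with $S\in\mathcal{T}_{0}^{+}(\lambda,\mu)$ and $T\in\mathcal{T}_{0}^{+}(\lambda,\nu)$ for some $\lambda\in\mathcal{P}_{r,n}$.

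Next I would match the abstract basis $\varphi_{\mu}\varphi_{ST}\varphi_{\nu}=\varphi_{ST}$ with the concrete maps $\Phi_{ST}$ in the statement. By definition $\Phi_{ST}(f)=\varphi_{ST}f$ for $f\in\mathcal{M}^{\nu}$, which under the identification $\mathcal{M}^{\nu}\cong\varphi_{\nu}\mathrm{YS}_{n}^{r}$ is precisely left multiplication by $\varphi_{ST}$; composing with the isomorphism $\psi\mapsto\psi(\varphi_{\nu})$ sends $\Phi_{ST}$ to $\varphi_{ST}\varphi_{\nu}=\varphi_{ST}$ (the last equality because $\varphi_{ST}$ already factors through $M^{\nu}$ on the right). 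Hence $\{\Phi_{ST}\}$, with $S\in\mathcal{T}_{0}^{+}(\lambda,\mu)$ and $T\in\mathcal{T}_{0}^{+}(\lambda,\nu)$ for some $\lambda\in\mathcal{P}_{r,n}$, is exactly the image of the $\mathcal{R}$-basis of $\varphi_{\mu}\mathrm{YS}_{n}^{r}\varphi_{\nu}$, and therefore is itself a free $\mathcal{R}$-basis of $\mathrm{Hom}_{\mathrm{YS}_{n}^{r}}(\mathcal{M}^{\nu}, \mathcal{M}^{\mu})$.

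An alternative, slightly more hands-on route avoids invoking projectivity abstractly: one directly shows the $\Phi_{ST}$ are linearly independent (immediate, since $\Phi_{ST}(\varphi_{\nu})=\varphi_{ST}\varphi_{\nu}=\varphi_{ST}$ are distinct cellular basis elements of $\mathrm{YS}_{n}^{r}$, using Lemma 5.1(i) to see these lie in $\mathcal{M}^{\mu}=Z^{\mu}$), and then shows they span: any $\Psi\in\mathrm{Hom}_{\mathrm{YS}_{n}^{r}}(\mathcal{M}^{\nu},\mathcal{M}^{\mu})$ is determined by $\Psi(\varphi_{\nu})\in\varphi_{\mu}\mathrm{YS}_{n}^{r}$, which by Lemma 5.1(i) expands as $\sum r_{ST}\varphi_{ST}$ over the relevant $S,T$; the element $\Psi-\sum r_{ST}\Phi_{ST}$ then kills $\varphi_{\nu}$ and hence is zero since $\varphi_{\nu}$ generates $\mathcal{M}^{\nu}$ as a $\mathrm{YS}_{n}^{r}$-module.

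I do not expect a serious obstacle here; the result is the $\mathrm{YS}_{n}^{r}$-side mirror of Proposition 3.4, and the main point is simply to be careful about which of $\mathcal{M}^{\nu}\cong Z^{\nu}=\varphi_{\nu}\mathrm{YS}_{n}^{r}$ is a left vs.\ right weight space and to verify that $\varphi_{\nu}$ does generate $\mathcal{M}^{\nu}$ as a right $\mathrm{YS}_{n}^{r}$-module (which follows from Lemma 5.1(i)--(ii), since $\varphi_{\nu}=\varphi_{\mathrm{T}^{\nu}\mathrm{T}^{\nu}}$ when $\nu\in\mathcal{P}_{r,n}$ and more generally $\varphi_{\nu}$ is the identity of the summand $M^{\nu}$). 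The only mildly delicate bookkeeping is confirming that $\varphi_{\mu}\varphi_{ST}=\varphi_{ST}=\varphi_{ST}\varphi_{\nu}$ exactly when the types of $S$ and $T$ are $\mu$ and $\nu$ respectively, which is immediate from the definition of $\varphi_{ST}$ as a map that is zero outside $M^{\nu}$ with image in $M^{\mu}$.
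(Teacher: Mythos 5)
Your proof is correct, and it supplies an argument that the paper itself omits: Lemma 5.5 is stated there without proof, immediately after the definition of the maps $\Phi_{ST}$. Your route -- identifying $\mathcal{M}^{\mu}$ with $Z^{\mu}=\varphi_{\mu}\mathrm{YS}_{n}^{r}$ via Lemma 5.1, invoking $\mathrm{Hom}_{\mathrm{YS}_{n}^{r}}(\varphi_{\nu}\mathrm{YS}_{n}^{r},\varphi_{\mu}\mathrm{YS}_{n}^{r})\cong\varphi_{\mu}\mathrm{YS}_{n}^{r}\varphi_{\nu}$, and reading off the basis from the cellular basis of Theorem 3.9 -- is exactly the standard argument the paper implicitly intends, and your bookkeeping of which $\varphi_{ST}$ satisfy $\varphi_{\mu}\varphi_{ST}\varphi_{\nu}=\varphi_{ST}$ is accurate.
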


For each $\bm{\lambda}\in \mathcal{P}_{r,n},$ let $Y^{\bm{\lambda}}=\mathrm{F}_{n}^{r}(P^{\bm{\lambda}}),$ which we call a Young module of $\text{Y}_{r,n}.$

\begin{proposition}\label{tiltmod-propo5-6}
Suppose that $\mathcal{R}=\mathbb{K}$ is a field and that $\bm{\lambda}\in \mathcal{P}_{r,n}.$ Then the following hold$:$

$\mathrm{(i)}$ Each $Y^{\bm{\lambda}}$ is an indecomposable $\mathrm{Y}_{r,n}$-module.

$\mathrm{(ii)}$ If $\bm{\mu}$ is another $r$-partition of $n,$ then $Y^{\bm{\lambda}}\cong Y^{\bm{\mu}}$ if and only if $\bm{\lambda}=\bm{\mu}.$

$\mathrm{(iii)}$ We have $$M^{\bm{\lambda}}\cong Y^{\bm{\lambda}}\oplus \bigoplus_{\bm{\nu}\rhd\bm{\lambda}}c_{\bm{\nu}\bm{\lambda}}Y^{\bm{\nu}}.$$

$\mathrm{(iv)}$ The Young module $Y^{\bm{\lambda}}$ has a Specht filtration in which $S^{\bm{\mu}}$ appears with multiplicity $[Y^{\bm{\lambda}}: S^{\bm{\mu}}]=[W^{\bm{\mu}}: L^{\bm{\lambda}}].$
\end{proposition}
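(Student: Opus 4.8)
The plan is to derive all four statements from the fact that the Schur functor $F_{n}^{r}$ is exact and, by the proof of Corollary 4.7, fully faithful on projective $\mathrm{YS}_{n}^{r}$-modules, combined with the decomposition of $Z^{\mu}$ in Lemma 5.4, the isomorphism $F_{n}^{r}(Z^{\mu})\cong M^{\mu}$ of Lemma 5.1(iii), and the reciprocity $[P^{\lambda}:W^{\mu}]=[W^{\mu}:L^{\lambda}]$ recorded just before Lemma 5.4 (coming from [Ma1, Lemma 2.16]). Throughout, $P^{\lambda}$ is projective and indecomposable, being a projective cover of the simple module $L^{\lambda}$.

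For (i), I would argue that $\mathrm{End}_{\mathrm{YS}_{n}^{r}}(P^{\lambda})$ is local because $P^{\lambda}$ is indecomposable projective; since $F_{n}^{r}$ is fully faithful on projectives it induces an algebra isomorphism $\mathrm{End}_{\mathrm{YS}_{n}^{r}}(P^{\lambda})\cong\mathrm{End}_{Y_{r,n}}(Y^{\lambda})$, so $\mathrm{End}_{Y_{r,n}}(Y^{\lambda})$ is local and $Y^{\lambda}$ is indecomposable, once we check $Y^{\lambda}\neq 0.$ For the non-vanishing, recall that $P^{\lambda}$ has a Weyl filtration in which $W^{\lambda}$ occurs (with multiplicity $[W^{\lambda}:L^{\lambda}]=1$ by Corollary 3.11); applying the exact functor $F_{n}^{r}$ and using $F_{n}^{r}(W^{\lambda})\cong S^{\lambda}\neq 0$ from Proposition 4.3 forces $Y^{\lambda}\neq 0.$ For (ii), the ``if'' part is immediate; conversely, an isomorphism $Y^{\lambda}\cong Y^{\mu}$ together with its inverse lifts, by full faithfulness on projectives, to $\mathrm{YS}_{n}^{r}$-homomorphisms $P^{\lambda}\to P^{\mu}$ and $P^{\mu}\to P^{\lambda}$; since $F_{n}^{r}$ is faithful and sends the two composites to identity maps, the lifts are mutually inverse, so $P^{\lambda}\cong P^{\mu}$, whence $L^{\lambda}\cong L^{\mu}$ and $\lambda=\mu.$

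For (iii), I would apply $F_{n}^{r}$ to the decomposition $Z^{\lambda}\cong P^{\lambda}\oplus\bigoplus_{\nu\rhd\lambda}c_{\nu\lambda}P^{\nu}$ of Lemma 5.4, and combine $F_{n}^{r}(Z^{\lambda})\cong M^{\lambda}$ (Lemma 5.1(iii)) with $F_{n}^{r}(P^{\nu})=Y^{\nu}$ to obtain $M^{\lambda}\cong Y^{\lambda}\oplus\bigoplus_{\nu\rhd\lambda}c_{\nu\lambda}Y^{\nu}.$ For (iv), I would take a Weyl filtration of $P^{\lambda}$ in which $W^{\mu}$ appears $[P^{\lambda}:W^{\mu}]=[W^{\mu}:L^{\lambda}]$ times, apply the exact functor $F_{n}^{r}$, and use $F_{n}^{r}(W^{\mu})\cong S^{\mu}$ (Proposition 4.3) to get a filtration of $Y^{\lambda}=F_{n}^{r}(P^{\lambda})$ whose sections are various $S^{\mu}.$ Since $S^{\mu}\neq 0$ for every $\mu\in\mathcal{P}_{r,n}$, no filtration layer is killed by $F_{n}^{r}$, so the multiplicities are preserved and $[Y^{\lambda}:S^{\mu}]=[W^{\mu}:L^{\lambda}].$

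The argument is short once the machinery of Section 4 is available; the only steps needing genuine care are the lifting of the isomorphism in (ii) --- where one must invoke faithfulness, not merely fullness, to see that the lifted maps compose to the identity --- and the bookkeeping in (iv), where the key observation is simply that $S^{\mu}\neq 0$ for all $\mu$, so that the exact functor neither creates nor destroys layers of the Weyl filtration.
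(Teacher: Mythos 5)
Your proposal is correct and follows essentially the same route as the paper: full faithfulness of $F_{n}^{r}$ on projectives for (i) and (ii), applying $F_{n}^{r}$ to Lemma 5.4 together with Lemma 5.1(iii) for (iii), and pushing a Weyl filtration of $P^{\lambda}$ through the exact functor for (iv). The extra care you take --- verifying $Y^{\lambda}\neq 0$ via $F_{n}^{r}(W^{\lambda})\cong S^{\lambda}$ and checking that the lifted maps in (ii) are mutually inverse --- fills in details the paper leaves implicit, but does not change the argument.
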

\begin{proof}
(i) By Corollary \ref{Schur-functor-corollar4-6}, the functor $\mathrm{F}_{n}^{r}$ is fully faithful on projective modules, so $\mathrm{End}_{\text{Y}_{r,n}}(Y^{\bm{\lambda}})\cong \mathrm{End}_{\mathrm{YS}_{n}^{r}}(P^{\bm{\lambda}})$ is a local ring since $P^{\bm{\lambda}}$ is indecomposable.

(ii) If $Y^{\bm{\lambda}}\cong Y^{\bm{\mu}},$ then $\mathrm{Hom}_{\text{Y}_{r,n}}(Y^{\bm{\lambda}}, Y^{\bm{\mu}})$ contains an isomorphism and this lifts to give an isomorphism $P^{\bm{\mu}}\cong P^{\bm{\lambda}},$ so $\bm{\lambda}=\bm{\mu}.$

(iii) Applying the Schur functor $\mathrm{F}_{n}^{r},$ it follows from Lemma \ref{tiltmod-lemma5-1}(iii) and Lemma \ref{tiltmod-lemma5-4}.

(iv) Recall that $P^{\bm{\lambda}}$ has a Weyl filtration $P^{\bm{\lambda}}=P_{1}\supset P_{2}\supset \cdots P_{k}\supset P_{k+1}=0.$ Moreover, for each $\bm{\mu}\in \mathcal{P}_{r,n},$ $\sharp\{1\leq i\leq k\:|\:P_{i}/P_{i+1}\cong W^{\bm{\mu}}\}=[P^{\bm{\lambda}}: W^{\bm{\mu}}]=[W^{\bm{\mu}}: L^{\bm{\lambda}}].$ Setting $Y_{i}=\mathrm{F}_{n}^{r}(P_{i}),$ and using Proposition \ref{Schur-functor-propo4-3}, gives a filtration of $Y^{\bm{\lambda}}$ with the required properties.
\end{proof}

The next proposition identify the projective Young modules, and its proof is similar to that of [HM2, Proposition 5.9].

\begin{proposition}\label{tiltmod-propo5-7}
Suppose that $\bm{\mu}\in \mathcal{K}_{r,n}.$ Then $Y^{\bm{\mu}}$ is the projective cover of $D^{\bm{\mu}}.$
\end{proposition}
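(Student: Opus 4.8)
The plan is to show that $Y^{\mu}$ is projective and that it is the projective cover of $D^{\mu}$ whenever $\mu\in\mathcal{K}_{r,n}$. The first observation is that $Y^{\mu}=F_{n}^{r}(P^{\mu})$, and for $\mu\in\mathcal{K}_{r,n}$ the simple module $L^{\mu}$ is not killed by the Schur functor, since $F_{n}^{r}(L^{\mu})\cong D^{\mu}\neq 0$ by Proposition 4.3. I would first argue that $Y^{\mu}$ is projective as a $Y_{r,n}$-module. Recall from the proof of Corollary 4.6 that $M_{n}^{r}$ is a projective $\mathrm{YS}_{n}^{r}$-module (via the Morita equivalence $\dot{\mathrm{YS}_{n}^{r}}\sim\mathrm{YS}_{n}^{r}$ together with $\dot{M}_{n}^{r}\cong\varphi_{\omega}\dot{\mathrm{YS}_{n}^{r}}$ being projective). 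Hence each indecomposable projective $P^{\mu}$ is a direct summand of a direct sum of copies of $M_{n}^{r}$, and applying the exact additive functor $F_{n}^{r}$ shows that $Y^{\mu}=F_{n}^{r}(P^{\mu})$ is a direct summand of a direct sum of copies of $F_{n}^{r}(M_{n}^{r})$. By Lemma 5.1(iii), $F_{n}^{r}(Z^{\mu})\cong M^{\mu}$ for each $\mu\in\mathcal{C}_{r,n}$, so $F_{n}^{r}(M_{n}^{r})\cong\bigoplus_{\mu}M^{\mu}=M_{n}^{r}$ (the permutation module), which is projective over $Y_{r,n}$ because each $M^{\mu}=m_{\mu}Y_{r,n}$ is a right ideal generated by (a scalar multiple of) an idempotent — here one uses that $m_{\mu}=U_{\mu}x_{\mu}$ and, up to an invertible scalar coming from $x_{\mu}$ acting on the trivial-type part, $m_{\mu}$ is (conjugate to) an idempotent times a unit, exactly as in the Hecke-algebra case. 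Therefore $Y^{\mu}$ is projective.

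Next I would identify the head of $Y^{\mu}$. By Proposition 5.6(i), $Y^{\mu}$ is indecomposable, so it has a simple head; it remains to show that head is $D^{\mu}$. For this I would use exactness and fullness of $F_{n}^{r}$ on projectives (Corollary 4.6): any surjection $P^{\mu}\twoheadrightarrow L^{\mu}$ maps under $F_{n}^{r}$ to a surjection $Y^{\mu}\twoheadrightarrow F_{n}^{r}(L^{\mu})\cong D^{\mu}$, using that $F_{n}^{r}$ is exact and $F_{n}^{r}(L^{\mu})=D^{\mu}$ since $\mu\in\mathcal{K}_{r,n}$. Since $Y^{\mu}$ is indecomposable with $D^{\mu}$ as a quotient, and $Y^{\mu}$ is projective, it must be the projective cover of $D^{\mu}$ — a projective indecomposable module has simple head, and a projective module surjecting onto a simple $D^{\mu}$ with indecomposable total module is forced to be $P(D^{\mu})$.

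The main obstacle I anticipate is making the projectivity argument for $M^{\mu}$ (equivalently, showing $F_{n}^{r}(M_{n}^{r})$ is a projective $Y_{r,n}$-module) fully rigorous: one must check that $M^{\mu}=m_{\mu}Y_{r,n}$ is a projective right $Y_{r,n}$-module, which in the classical Hecke/$q$-Schur setting follows from $x_{\mu}$ being essentially an idempotent after normalization, but here $m_{\mu}=U_{\mu}x_{\mu}$ involves the extra idempotent-type factor $U_{\mu}=u_{\lambda}E_{A_{\lambda}}$, and one should verify that the relevant normalization still produces an idempotent (or use that $Y_{r,n}$ is quasi-Frobenius, as noted in Remark 3.6, to deduce projectivity of $M^{\mu}$ from its appearing as a summand of the regular module, via the double-centralizer property in Lemma 4.5 that makes $M_{n}^{r}$ a faithfully balanced $(Y_{r,n},\mathrm{YS}_{n}^{r})$-bimodule). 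An alternative, cleaner route that sidesteps this is to imitate the proof of \mbox{[HM2, Proposition 5.9]} directly: use that $F_{n}^{r}$ is fully faithful on projectives, so $\mathrm{Hom}_{Y_{r,n}}(Y^{\mu},-)\cong\mathrm{Hom}_{\mathrm{YS}_{n}^{r}}(P^{\mu},F_{n}^{r\,!}(-))$ with the appropriate adjoint, and deduce both projectivity and the identification of the head from the corresponding statements for $P^{\mu}$ over the quasi-hereditary algebra $\mathrm{YS}_{n}^{r}$ together with $F_{n}^{r}(L^{\mu})=D^{\mu}$.
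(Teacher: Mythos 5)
Your identification of the head is fine and matches the paper: exactness of $F_{n}^{r}$ applied to $P^{\mu}\twoheadrightarrow L^{\mu}$ gives $Y^{\mu}\twoheadrightarrow D^{\mu}$, and indecomposability comes from Proposition 5.6(i). The problem is the projectivity step, where your argument has a genuine gap. The claim that $M^{\mu}=m_{\mu}Y_{r,n}$ is a projective $Y_{r,n}$-module (because $m_{\mu}$ is ``an idempotent up to a unit'') is false in general: $x_{\mu}^{2}=\bigl(\sum_{w\in\mathfrak{S}_{\mu}}q^{2l(w)}\bigr)x_{\mu}$, and the Poincar\'e polynomial $\sum_{w\in\mathfrak{S}_{\mu}}q^{2l(w)}$ need not be invertible in $\mathbb{K}$ when $q$ is a root of unity. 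If $M^{\mu}$ were always projective, then by Proposition 5.6(iii) every Young module $Y^{\lambda}$ would be projective, which would make the hypothesis $\mu\in\mathcal{K}_{r,n}$ in the statement superfluous — a sign that this route cannot work. The quasi-Frobenius fallback does not repair this: a right ideal of a self-injective algebra need not be a direct summand of the regular module. There is also a computational slip: $F_{n}^{r}(M_{n}^{r})$ (with $M_{n}^{r}$ the projective $\mathrm{YS}_{n}^{r}$-module corresponding to $\varphi_{\omega}\dot{\mathrm{YS}_{n}^{r}}$) is isomorphic to $Y_{r,n}$, not to $\bigoplus_{\mu}M^{\mu}$; the latter is $F_{n}^{r}$ of the regular module $\mathrm{YS}_{n}^{r}=\bigoplus_{\mu}Z^{\mu}$ via Lemma 5.1(iii). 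Finally, your assertion that \emph{every} indecomposable projective $P^{\mu}$ is a summand of a direct sum of copies of $M_{n}^{r}$ is unjustified, and is in fact false for $\mu\notin\mathcal{K}_{r,n}$; for $\mu\in\mathcal{K}_{r,n}$ it is true but proving it is precisely the missing content.

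The paper's argument supplies exactly this missing content, and it is the heart of the proof. One works with $\dot{M}_{n}^{r}\cong\varphi_{\omega}\dot{\mathrm{YS}_{n}^{r}}$, which is projective over $\dot{\mathrm{YS}_{n}^{r}}$, and computes the multiplicity of $\dot{P}^{\lambda}$ as a summand:
$$\dim\mathrm{Hom}_{\dot{\mathrm{YS}_{n}^{r}}}(\varphi_{\omega}\dot{\mathrm{YS}_{n}^{r}},\dot{L}^{\lambda})=\dim\dot{L}^{\lambda}\varphi_{\omega}=\dim D^{\lambda},$$
so that $\dot{M}_{n}^{r}\cong\bigoplus_{\lambda\in\mathcal{K}_{r,n}}(\dim D^{\lambda})\dot{P}^{\lambda}$. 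Applying $\dot{F}_{n}^{r}$ then gives $Y_{r,n}\cong\bigoplus_{\lambda\in\mathcal{K}_{r,n}}(\dim D^{\lambda})Y^{\lambda}$ as right $Y_{r,n}$-modules, exhibiting $Y^{\mu}$ for $\mu\in\mathcal{K}_{r,n}$ as a direct summand of the regular module and hence projective. You should replace your projectivity argument with this multiplicity computation; the rest of your proof then goes through.
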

\begin{proof}
Recall that $P^{\bm{\mu}}$ is the projective cover of $L^{\bm{\mu}}$ and $\mathrm{F}_{n}^{r}(P^{\bm{\mu}})=Y^{\bm{\mu}},$ $\mathrm{F}_{n}^{r}(L^{\bm{\mu}})=D^{\bm{\mu}}$ if $\bm{\mu}\in \mathcal{K}_{r,n}.$ Since $\mathrm{F}_{n}^{r}$ is exact, there is a surjective map $Y^{\bm{\mu}}\mapsto D^{\bm{\mu}}.$ Therefore, it suffices to show that $Y^{\bm{\mu}}$ is projective since it is indecomposable by Proposition \ref{tiltmod-propo5-6}(i).

Recall that $\dot{\mathrm{YS}_{n}^{r}}=\mathrm{End}_{\text{Y}_{r,n}}(\dot{M}_{n}^{r}),$ where $\dot{M}_{n}^{r}=M_{n}^{r}\oplus \text{Y}_{r,n}.$ There is also a Schur functor $\dot{\mathrm{F}}_{n}^{r}$ from $\dot{\mathrm{YS}_{n}^{r}}\mathrm{-mod}$ to $\text{Y}_{r,n}\mathrm{-mod}$ given by $\dot{\mathrm{F}}_{n}^{r}(M)=M\varphi_{\omega}.$ In particular, $\dot{\mathrm{F}}_{n}^{r}(\dot{M}_{n}^{r})\cong \text{Y}_{r,n}$ as right $\text{Y}_{r,n}$-modules.

As a $\dot{\mathrm{YS}_{n}^{r}}$-module, $\dot{M}_{n}^{r}\cong\varphi_{\omega}\dot{\mathrm{YS}_{n}^{r}}.$ In particular, $\dot{M}_{n}^{r}$ is a projective $\dot{\mathrm{YS}_{n}^{r}}$-module. If $\bm{\lambda}\in \mathcal{P}_{r,n},$ let $\dot{P}^{\bm{\lambda}}$ be the projective cover of the irreducible $\dot{\mathrm{YS}_{n}^{r}}$-module $\dot{L}^{\bm{\lambda}}.$ The multiplicity of $\dot{P}^{\bm{\lambda}}$ as a summand of $\dot{M}_{n}^{r}$ is equal to $$\dim \mathrm{Hom}_{\dot{\mathrm{YS}_{n}^{r}}}(\dot{M}_{n}^{r}, \dot{L}^{\bm{\lambda}})=\dim \mathrm{Hom}_{\dot{\mathrm{YS}_{n}^{r}}}(\varphi_{\omega}\dot{\mathrm{YS}_{n}^{r}}, \dot{L}^{\bm{\lambda}})=\dim \dot{L}^{\bm{\lambda}}\varphi_{\omega}=\dim D^{\bm{\lambda}}.$$ Consequently, $\dot{M}_{n}^{r}\cong \bigoplus_{\bm{\lambda}\in \mathcal{K}_{r,n}}(\dim D^{\bm{\lambda}})\dot{P}^{\bm{\lambda}}$ as a $\dot{\mathrm{YS}_{n}^{r}}$-module. By definition, $Y^{\bm{\lambda}}=\mathrm{F}_{n}^{r}(P^{\bm{\lambda}})=\dot{\mathrm{F}}_{n}^{r}(\dot{P}^{\bm{\lambda}})$ for all $\bm{\lambda}\in \mathcal{P}_{r,n}.$ Therefore, $$\text{Y}_{r,n}\cong \dot{\mathrm{F}}_{n}^{r}(\dot{M}_{n}^{r})\cong\bigoplus_{\bm{\lambda}\in \mathcal{K}_{r,n}}(\dim D^{\bm{\lambda}})Y^{\bm{\lambda}}$$ as a right $\text{Y}_{r,n}$-module. The result follows.
\end{proof}

\subsection{Twisted Young modules}
Let $\mathcal{Z}=\mathbb{Z}[\frac{1}{r}][\dot{q},\dot{q}^{-1},\zeta],$ where $\dot{q}$ is an indeterminate, and let $\text{Y}_{r,n}^{\mathcal{Z}}$ be the Yokonuma-Hecke algebra over $\mathcal{Z}.$ It is easy to see that $\text{Y}_{r,n}^{\mathcal{Z}}$ has a $\mathbb{Z}$-algebra involution $'$ which is determined by $$g_{i}'=g_{i},\quad \dot{q}'=-\dot{q}^{-1},\quad \text{and }t_{j}'=t_{j},\quad \zeta_{j}'=\zeta_{r-j+1}$$ for $1\leq i\leq n-1$ and $1\leq j\leq n.$

For each $\bm{\mu}\in \mathcal{C}_{r,n},$ let $y_{\bm{\mu}}=(x_{\bm{\mu}})'=\sum_{w\in \mathfrak{S}_{\bm{\mu}}}(-\dot{q})^{-l(w)}g_{w},$ and set $n_{\bm{\mu}} :=(U_{\bm{\mu}}x_{\bm{\mu}})'=U_{\bm{\mu}}'y_{\bm{\mu}}.$ Suppose that $\bm{\lambda}\in \mathcal{P}_{r,n}$ and $\mathfrak{s}, \mathfrak{t}\in \mathrm{Std}(\bm{\lambda}).$ We define $n_{\mathfrak{s}\mathfrak{t}} :=g_{d(\mathfrak{s})}^{\ast}n_{\bm{\lambda}}g_{d(\mathfrak{t})}.$ Then by definition we have $n_{\mathfrak{s}\mathfrak{t}}=(m_{\mathfrak{s}\mathfrak{t}})'.$ Because $'$ is a $\mathbb{Z}$-algebra involution, $\{n_{\mathfrak{s}\mathfrak{t}}\}$ is a cellular basis of $\text{Y}_{r,n}^{\mathcal{Z}}$ by Theorem \ref{cellula-bases-2}. The ring $\mathcal{R}$ is naturally a $\mathcal{Z}$-module under specialization; that is, $\dot{q}$ acts on $\mathcal{R}$ as multiplication by $q.$ Because $\text{Y}_{r,n}$ is $\mathcal{R}$-free, this induces an isomorphism of $\mathcal{R}$-algebras $\text{Y}_{r,n}\cong \text{Y}_{r,n}^{\mathcal{Z}}\otimes_{\mathcal{Z}} \mathcal{R}$ via $g_{i}\mapsto g_{i}\otimes 1_{\mathcal{R}}$ ($1\leq i\leq n-1$) and $t_{j}\mapsto t_{j}\otimes 1_{\mathcal{R}}$ ($1\leq j\leq n$). Hereafter, we will identify the algebra $\text{Y}_{r,n}$ and $\text{Y}_{r,n}^{\mathcal{Z}}\otimes_{\mathcal{Z}} \mathcal{R}$ via the isomorphism above. Thus, we have the following result.\\

~~~~\emph{The Yokonuma-Hecke algebra $\mathrm{Y}_{r,n}$ is free as an $\mathcal{R}$-module with a cellular basis $\{n_{\mathfrak{s}\mathfrak{t}}\:|\:\mathfrak{s}, \mathfrak{t}\in \mathrm{Std}(\bm{\lambda})$~for~some~$\bm{\lambda} \in \mathcal{P}_{r,n}\}.$}\\

Now we can apply the general theory of cellular algebras. For each $\bm{\lambda}\in \mathcal{P}_{r,n}$, we define the dual Specht module $S_{\bm{\lambda}}$ to be the right $\text{Y}_{r,n}$-module $(n_{\bm{\lambda}}+Y^{r,n}_{\rhd \bm{\lambda}})\text{Y}_{r,n},$ where $Y^{r,n}_{\rhd \bm{\lambda}}=(\text{Y}_{r,n}^{\rhd \bm{\lambda}})'$ is the two-sided ideal of $\text{Y}_{r,n}$ with basis $n_{\mathfrak{u}\mathfrak{v}}$ with $\mathfrak{u}, \mathfrak{v}\in \mathrm{Std}(\bm{\nu})$ for various $\bm{\nu}\in \mathcal{P}_{r,n}$ such that $\bm{\nu}\rhd\bm{\lambda}.$ Then $S_{\bm{\lambda}}$ is $\mathcal{R}$-free with basis $\{n_{\mathfrak{t}}\:|\:\mathfrak{t}\in \mathrm{Std}(\bm{\lambda})\},$ where $n_{\mathfrak{t}}=n_{\mathfrak{t}^{\bm{\lambda}}\mathfrak{t}}+Y^{r,n}_{\rhd \bm{\lambda}}.$ Let $D_{\bm{\lambda}}=S_{\bm{\lambda}}/\mathrm{rad}\hspace{0.3mm}S_{\bm{\lambda}},$ where $\mathrm{rad}\hspace{0.3mm}S_{\bm{\lambda}}$ is the radical of the bilinear form on $S_{\bm{\lambda}}$ which is defined with respect to the cellular basis $\{n_{\mathfrak{s}\mathfrak{t}}\}.$

For each $\bm{\mu}\in \mathcal{C}_{r,n},$ let $N^{\bm{\mu}}=n_{\bm{\mu}}\text{Y}_{r,n}.$ If $\mathrm{S}\in \mathcal{T}_{0}^{+}(\bm{\lambda}, \bm{\mu})$ and $\mathfrak{t}\in \mathrm{Std}(\bm{\lambda}),$ we define $$n_{\mathrm{S}\mathfrak{t}}=\sum_{\substack{\mathfrak{s}\in \mathrm{Std}(\bm{\lambda})\\\bm{\mu}(\mathfrak{s})=\mathrm{S}}}(-q)^{-l(d(\mathfrak{s}))-l(d(\mathfrak{t}))}n_{\mathfrak{s}\mathfrak{t}}.$$ From the definition, we have $n_{\mathrm{S}\mathfrak{t}}=(m_{\mathrm{S}\mathfrak{t}})'.$ Therefore, Proposition \ref{Yoko-Schur-propo-3-3} and the usual specialization argument show that the following holds.
\begin{corollary}\label{tiltmod-corollar5-8}
Suppose that $\bm{\mu}\in \mathcal{C}_{r,n}.$ Then $N^{\bm{\mu}}$ is free as an $\mathcal{R}$-module with basis $\big\{n_{\mathrm{S}\mathfrak{t}}\:|\:\mathrm{S}\in \mathcal{T}_{0}^{+}(\bm{\lambda}, \bm{\mu})~\mathrm{and}~\mathfrak{t}\in \mathrm{Std}(\bm{\lambda})~for~some~\bm{\lambda}\in \mathcal{P}_{r,n}\big\}.$
\end{corollary}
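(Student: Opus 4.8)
The plan is to prove this as the $'$-twisted, $q$-specialized mirror of Proposition 3.3, by transcribing the proof of that proposition. The key point is that $'$ is a bijective $\mathbb{Z}$-algebra automorphism of $Y_{r,n}^{\mathcal{Z}}$ fixing each $g_i$, each $t_j$ and each $U_\mu$, and satisfying $(x_\mu)'=y_\mu$, hence $(m_\mu)'=n_\mu$, $(m_{\mathfrak{s}\mathfrak{t}})'=n_{\mathfrak{s}\mathfrak{t}}$ and $(m_{S\mathfrak{t}})'=n_{S\mathfrak{t}}$; moreover $'$ is semilinear over the ring automorphism $\dot q\mapsto-\dot q^{-1}$ of $\mathcal{Z}$, so it carries $\mathcal{Z}$-bases to $\mathcal{Z}$-bases and submodules to submodules. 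Applying $'$ to the $\mathcal{Z}$-versions of Lemma 3.1, Lemma 3.2 and Proposition 3.3 thus gives, over $\mathcal{Z}$: the right ideal $n_\mu Y_{r,n}^{\mathcal{Z}}$ is $\mathcal{Z}$-free with basis $\{n_\mu g_d\mid d\in\mathcal{D}_\mu\}$; each $n_{S\mathfrak{t}}$ lies in $n_\mu Y_{r,n}^{\mathcal{Z}}$; and $\{n_{S\mathfrak{t}}\}$ is a $\mathcal{Z}$-basis of $n_\mu Y_{r,n}^{\mathcal{Z}}$ of cardinality $\sharp\mathcal{D}_\mu$. (Alternatively, one may simply re-run the proofs of Lemmas 3.1 and 3.2 directly over $\mathcal{R}$, using $n_\mu t_i=\zeta_{p_{\mu}(i)}n_\mu$ and $n_\mu g_w=(-q)^{-l(w)}n_\mu$ for $w\in\mathfrak{S}_{\mu}$, which follow by applying $'$ to the corresponding identities for $m_\mu$.)

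Next I would specialize $\dot q\mapsto q$ and finish exactly as in the proof of Proposition 3.3. The assertion displayed just before the statement records that $\{n_{\mathfrak{s}\mathfrak{t}}\}$, equivalently $\{(-q)^{-l(d(\mathfrak{s}))-l(d(\mathfrak{t}))}n_{\mathfrak{s}\mathfrak{t}}\}$, is an $\mathcal{R}$-basis of $Y_{r,n}$. From the defining formula for $n_{S\mathfrak{t}}$ one sees that for distinct pairs $(S,\mathfrak{t})$ the sets of cellular-basis elements $n_{\mathfrak{s}\mathfrak{t}}$ occurring with nonzero coefficient in $n_{S\mathfrak{t}}$ are disjoint (because $\mathfrak{s}$ determines $S=\mu(\mathfrak{s})$ while the second index $\mathfrak{t}$ is left unchanged); hence the elements $n_{S\mathfrak{t}}\in N^\mu$ are $\mathcal{R}$-linearly independent. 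Since $N^\mu$ is $\mathcal{R}$-free of rank $\sharp\mathcal{D}_\mu$, which by Proposition 3.3 equals the number of pairs $(S,\mathfrak{t})$ with $S\in\mathcal{T}_{0}^{+}(\lambda,\mu)$ and $\mathfrak{t}\in\mathrm{Std}(\lambda)$, passing to $\mathbb{F}=\mathrm{Frac}(\mathcal{R})$ shows that the $n_{S\mathfrak{t}}$ form an $\mathbb{F}$-basis of $N^\mu\otimes_{\mathcal{R}}\mathbb{F}$. Therefore every $x\in N^\mu$ is an $\mathbb{F}$-linear combination of the $n_{S\mathfrak{t}}$, and expanding $x$ together with each $n_{S\mathfrak{t}}$ in the $\mathcal{R}$-basis $\{(-q)^{-l(d(\mathfrak{s}))-l(d(\mathfrak{t}))}n_{\mathfrak{s}\mathfrak{t}}\}$ and invoking the disjointness of supports forces all coefficients to lie in $\mathcal{R}$. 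Hence $\{n_{S\mathfrak{t}}\}$ is an $\mathcal{R}$-basis of $N^\mu$.

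I do not expect a genuine obstacle here: the whole argument is an essentially mechanical replay of the proofs of Lemma 3.1, Lemma 3.2 and Proposition 3.3 under the involution $'$ followed by the specialization $\dot q\mapsto q$. The one point that deserves care is bookkeeping the twist $q^{l(w)}\leftrightarrow(-q)^{-l(w)}$ (and correspondingly $x_\mu\leftrightarrow y_\mu$, $m_{\mathfrak{s}\mathfrak{t}}\leftrightarrow n_{\mathfrak{s}\mathfrak{t}}$, $m_{S\mathfrak{t}}\leftrightarrow n_{S\mathfrak{t}}$), so that the disjoint-support argument and the cardinality count transfer verbatim; since these changes only rescale the nonzero scalars attached to the cellular-basis elements, none of the structural reasoning is affected.
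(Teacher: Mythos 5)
Your argument is correct and is essentially the paper's own proof: the paper simply notes that $n_{S\mathfrak{t}}=(m_{S\mathfrak{t}})'$ and invokes Proposition 3.3 together with ``the usual specialization argument,'' which is exactly the transfer-under-$'$ plus $\dot q\mapsto q$ specialization you carry out. Your write-up just makes explicit the disjoint-support and rank-counting details that the paper leaves implicit.
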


Let $\bm{\mu}, \bm{\nu}\in \mathcal{C}_{r,n}$ and $\bm{\lambda}\in \mathcal{P}_{r,n}.$ Suppose that $\alpha(\bm{\mu})=\alpha(\bm{\nu})=\alpha(\bm{\lambda}).$ For $\mathrm{S}\in \mathcal{T}_{0}^{+}(\bm{\lambda}, \bm{\mu})$, $\mathrm{T}\in \mathcal{T}_{0}^{+}(\bm{\lambda}, \bm{\nu}),$ let
$$n_{\mathrm{S}\mathrm{T}}=\sum_{\substack{\mathfrak{s},\:\mathfrak{t}\in \mathrm{Std}(\bm{\lambda})\\\bm{\mu}(\mathfrak{s})=\mathrm{S}, \:\bm{\nu}(\mathfrak{t})=\mathrm{T}}}(-q)^{-l(d(\mathfrak{s}))-l(d(\mathfrak{t}))}n_{\mathfrak{s}\mathfrak{t}}.$$

We can now define the twisted Yokonuma-Schur algebra as $$\mathrm{YS}^{n}_{r}=\mathrm{End}_{\text{Y}_{r,n}}(N_{n}^{r}),$$ where $N_{n}^{r}=\bigoplus_{\bm{\mu}\in \mathcal{C}_{r,n}}N^{\bm{\mu}}.$ For $\mathrm{S}\in \mathcal{T}_{0}^{+}(\bm{\lambda}, \bm{\mu})$ and $\mathrm{T}\in \mathcal{T}_{0}^{+}(\bm{\lambda}, \bm{\nu}),$ we can also define the homomorphism $\varphi_{\mathrm{S}\mathrm{T}}'$ by $\varphi_{\mathrm{S}\mathrm{T}}'(n_{\bm{\alpha}}h)=\delta_{\bm{\alpha}\bm{\nu}}n_{\mathrm{S}\mathrm{T}}h$ for all $h\in \text{Y}_{r,n}$ and $\bm{\alpha}\in \mathcal{C}_{r,n}.$ Then $\varphi_{\mathrm{S}\mathrm{T}}'\in \mathrm{YS}^{n}_{r}.$ The proof of the next proposition is in exactly the same way as that of [Ma2, Proposition 4.3], and we skip the details.

\begin{proposition}\label{tiltmod-proposi5-9}
$\mathrm{(i)}$ The twisted Yokonuma-Schur algebra $\mathrm{YS}^{n}_{r}$ is free as an $\mathcal{R}$-module with a cellular basis $$\big\{\varphi_{\mathrm{S}\mathrm{T}}'\:|\:\mathrm{S}, \mathrm{T}\in \mathcal{T}_{0}^{+}(\bm{\lambda})~for~some~\bm{\mu}, \bm{\nu}\in\mathcal{C}_{r,n}~and~\bm{\lambda}\in \mathcal{P}_{r,n}\big\}.$$

$\mathrm{(ii)}$ The twisted Yokonuma-Schur algebra $\mathrm{YS}^{n}_{r}$ is quasi-hereditary.

$\mathrm{(iii)}$ The $\mathcal{R}$-algebras $\mathrm{YS}_{n}^{r}$ and $\mathrm{YS}^{n}_{r}$ are canonically isomorphic.
\end{proposition}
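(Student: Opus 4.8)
The plan is to prove the three assertions in turn: parts (i) and (ii) by transporting the entire construction of Section 3 through the involution ${}'$, and part (iii) by producing an honest $\mathcal{R}$-algebra involution of $Y_{r,n}$ that interchanges $M^{\mu}$ and $N^{\mu}$.

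For (i) I would first work over $\mathcal{Z}$. Since ${}'$ is a $\mathbb{Z}$-algebra involution of $Y_{r,n}^{\mathcal{Z}}$ which commutes with $\ast$, fixes each $g_{i}$ and each $t_{j}$, and satisfies $(m_{\mathfrak{s}\mathfrak{t}})'=n_{\mathfrak{s}\mathfrak{t}}$, $(m_{\mu})'=n_{\mu}$, $(m_{S\mathfrak{t}})'=n_{S\mathfrak{t}}$ and $(m_{ST})'=n_{ST}$, it carries the right ideal $M^{\mu,\mathcal{Z}}=m_{\mu}Y_{r,n}^{\mathcal{Z}}$ onto $N^{\mu,\mathcal{Z}}=n_{\mu}Y_{r,n}^{\mathcal{Z}}$ and hence induces, for all $\mu,\nu\in\mathcal{C}_{r,n}$, a bijection $\mathrm{Hom}_{Y_{r,n}^{\mathcal{Z}}}(M^{\nu,\mathcal{Z}},M^{\mu,\mathcal{Z}})\to\mathrm{Hom}_{Y_{r,n}^{\mathcal{Z}}}(N^{\nu,\mathcal{Z}},N^{\mu,\mathcal{Z}})$ sending $\varphi_{ST}$ to $\varphi_{ST}'$. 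Applying ${}'$ to the statements of Lemma 3.1, Propositions 3.3 (whose $n$-version is the already-established Corollary 5.9), 3.4 and 3.7, Corollary 3.5, and Theorem 3.9 gives their $n$-analogues verbatim; in particular $\{\varphi_{ST}'\}$ is a cellular basis of $\mathrm{End}_{Y_{r,n}^{\mathcal{Z}}}(\bigoplus_{\mu}N^{\mu,\mathcal{Z}})$ with the same cell poset $(\mathcal{P}_{r,n},\unrhd)$ and the same index sets $\mathcal{T}_{0}^{+}(\lambda)$, where one uses $n_{\mu}g_{w}=(-\dot{q})^{-l(w)}n_{\mu}$ for $w\in\mathfrak{S}_{\mu}$ (the ${}'$-image of $m_{\mu}g_{w}=\dot{q}^{\,l(w)}m_{\mu}$). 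Because ${}'$ maps $\mathcal{Z}$ onto itself the structure constants remain in $\mathcal{Z}$, and since all the Hom-spaces are $\mathcal{Z}$-free of the predicted rank the specialization $\dot{q}\mapsto q$ commutes with $\mathrm{Hom}$; hence $\mathrm{YS}^{n}_{r}=\mathrm{End}_{Y_{r,n}}(N_{n}^{r})$ is $\mathcal{R}$-free with the stated cellular basis, which is (i).

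Part (ii) then follows exactly as the quasi-heredity of $\mathrm{YS}_{n}^{r}$ was deduced from Theorem 3.9 and Proposition 3.10. Writing $W_{\lambda}$ for the cell module of $\mathrm{YS}^{n}_{r}$ indexed by $\lambda\in\mathcal{P}_{r,n}$, the element $\varphi_{\mathrm{T}^{\lambda}\mathrm{T}^{\lambda}}'$ restricts to the identity on $N^{\lambda}$ because $n_{\mathrm{T}^{\lambda}\mathrm{T}^{\lambda}}=n_{\mathfrak{t}^{\lambda}\mathfrak{t}^{\lambda}}=n_{\lambda}$, so $\varphi_{\mathrm{T}^{\lambda}\mathrm{T}^{\lambda}}'\varphi_{\mathrm{T}^{\lambda}\mathrm{T}^{\lambda}}'=\varphi_{\mathrm{T}^{\lambda}\mathrm{T}^{\lambda}}'$ forces the diagonal entry $\langle\varphi_{\mathrm{T}^{\lambda}}',\varphi_{\mathrm{T}^{\lambda}}'\rangle$ of the cell bilinear form to be $1$. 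Thus every cell module of $\mathrm{YS}^{n}_{r}$ has nonzero simple head, and [GL, (3.10)] shows $\mathrm{YS}^{n}_{r}$ is quasi-hereditary over the field $\mathbb{K}$.

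For (iii) I would not attempt to specialize ${}'$, which twists the parameter ($\dot{q}\mapsto-\dot{q}^{-1}$) and so does not descend to an $\mathcal{R}$-algebra map. Instead, observe that the assignment $g_{i}\mapsto-g_{i}^{-1}=-g_{i}+(q-q^{-1})e_{i}$, $t_{j}\mapsto t_{j}$ respects all the defining relations of $Y_{r,n}$: the quadratic relation becomes $(-g_{i}^{-1})^{2}=1+(q-q^{-1})e_{i}(-g_{i}^{-1})$, which one verifies from $g_{i}e_{i}=e_{i}g_{i}$ and $e_{i}^{2}=e_{i}$, while the braid and crossing relations are immediate. Hence this assignment extends to an $\mathcal{R}$-algebra involution $\theta$ of $Y_{r,n}$. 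Since $\theta$ fixes the $t_{j}$ it fixes each $U_{\mu}$, and $\theta(m_{\mu})$ obeys the same right $g_{i}$-eigenvalue relation $\theta(m_{\mu})g_{i}=-q^{-1}\theta(m_{\mu})$ for $s_{i}\in\mathfrak{S}_{\mu}$ and the same $t_{j}$-weights $\zeta_{p_{\mu}(j)}$ as $n_{\mu}$; by the basis theorem this forces $\theta(m_{\mu})$ to be an invertible scalar (a power of $q$) times $n_{\mu}$, so $\theta$ restricts to an isomorphism of right ideals $M^{\mu}\to N^{\mu}$. Therefore $\theta$ induces an auto-equivalence of the category of right $Y_{r,n}$-modules carrying $M_{n}^{r}$ to a module isomorphic to $N_{n}^{r}$, and passing to endomorphism algebras yields the canonical $\mathcal{R}$-algebra isomorphism $\mathrm{YS}_{n}^{r}\cong\mathrm{YS}^{n}_{r}$, $\varphi\mapsto\theta\varphi\theta^{-1}$, under which $\varphi_{ST}$ is sent to a unit multiple of $\varphi_{ST}'$. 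The \emph{main obstacle} is precisely this last step: checking that $\theta$ is well defined and that it sends each $m_{\mu}$ to an invertible scalar multiple of $n_{\mu}$ (equivalently, that the space of elements of $Y_{r,n}$ on which right multiplication by $g_{i}$ with $s_{i}\in\mathfrak{S}_{\mu}$ acts by $-q^{-1}$ and each $t_{j}$ acts by $\zeta_{p_{\mu}(j)}$ is $\mathcal{R}$-free of rank one), which is the Yokonuma analogue of the corresponding point in [Ma2, Proposition 4.3].
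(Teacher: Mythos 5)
Your parts (i) and (ii) are correct and are exactly the argument the paper intends: the paper's ``proof'' is a one-line reference to [Ma2, Proposition 4.3], and that argument is precisely the transport of Section 3 through the $\mathbb{Z}$-algebra involution ${}'$ over $\mathcal{Z}$ followed by specialization, together with the observation that $\varphi'_{\mathrm{T}^{\lambda}\mathrm{T}^{\lambda}}$ is an idempotent so every cell module of $\mathrm{YS}^{n}_{r}$ has nonzero head. (Minor point: the $n$-analogue of Proposition 3.3 is Corollary 5.8, not 5.9.) Your strategy for (iii) --- the $\mathcal{R}$-algebra involution $\theta\colon g_{i}\mapsto -g_{i}^{-1}$, $t_{j}\mapsto t_{j}$, which is the Yokonuma analogue of Mathas's $\#$ --- is also the right one, and your verification that $\theta$ respects the defining relations is complete and correct.

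However, the justification you offer for the key identity $\theta(m_{\mu})=(\mathrm{unit})\cdot n_{\mu}$ does not work. The space of $z\in Y_{r,n}$ with $zg_{i}=-q^{-1}z$ for all $s_{i}\in\mathfrak{S}_{\mu}$ and $zt_{j}=\zeta_{p_{\mu}(j)}z$ for all $j$ is \emph{not} free of rank one: it contains all of $Y_{r,n}n_{\mu}=N^{\mu\ast}$, since $hn_{\mu}g_{i}=-q^{-1}hn_{\mu}$ and $hn_{\mu}t_{j}=\zeta_{p_{\mu}(j)}hn_{\mu}$ for every $h$, and $N^{\mu\ast}$ has rank $|\mathcal{D}_{\mu}|$ times the relevant $t$-multiplicity (already for $r=1$, $\mu=(1,1)$, $n=2$ the conditions are vacuous and the space is all of $H_{2}$). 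So the eigenvalue conditions alone cannot pin down $\theta(m_{\mu})$. The correct way to close this step is a direct computation: for $s_{i}\in\mathfrak{S}_{\mu}$ one has $E_{A_{\mu}}e_{i}=E_{A_{\mu}}$ (as $i,i+1$ lie in a common block of $A_{\mu}$), and $U_{\mu}$ commutes with $g_{v}$ for $v\in\mathfrak{S}_{\mu}$ by [ER, Lemma 11(3)]; hence $U_{\mu}g_{i}^{-1}=U_{\mu}\bigl(g_{i}-(q-q^{-1})\bigr)$ and more generally the elements $U_{\mu}g_{v}$, $v\in\mathfrak{S}_{\mu}$, multiply exactly as in the Iwahori--Hecke algebra of $\mathfrak{S}_{\mu}$. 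The classical identity $x_{\mu}^{\#}=q^{2l(w_{\mu})}y_{\mu}$ (with $w_{\mu}$ the longest element of $\mathfrak{S}_{\mu}$; check: $(1+qT_{1})^{\#}=q^{2}(1-q^{-1}T_{1})$) then transports verbatim to give $\theta(m_{\mu})=U_{\mu}\theta(x_{\mu})=q^{2l(w_{\mu})}U_{\mu}y_{\mu}=q^{2l(w_{\mu})}n_{\mu}$. With that in hand, $\theta(M^{\mu})=N^{\mu}$ and your conjugation map $\varphi\mapsto\theta\varphi\theta^{-1}$ does yield the canonical $\mathcal{R}$-algebra isomorphism; the rest of your part (iii) is fine.
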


Let $W_{\bm{\lambda}}$ and $L_{\bm{\lambda}}$ ($\bm{\lambda}\in \mathcal{P}_{r,n}$) be the Weyl modules and simple modules of $\mathrm{YS}^{n}_{r},$ respectively; they are defined in exactly the same way as the corresponding modules for $\mathrm{YS}_{n}^{r}.$ As in Section 4, we can define an exact Schur functor $\mathrm{F}^{n}_{r}$ from $\mathrm{YS}^{n}_{r}\mathrm{-mod}$ to $\text{Y}_{r,n}\mathrm{-mod}.$ Moreover, we have $\mathrm{F}^{n}_{r}(W_{\bm{\lambda}})\cong S_{\bm{\lambda}}$, $\mathrm{F}^{n}_{r}(L_{\bm{\lambda}})\cong D_{\bm{\lambda}}$ and $[W_{\bm{\lambda}}: L_{\bm{\mu}}]=[S_{\bm{\lambda}}: D_{\bm{\mu}}]$ whenever $D_{\bm{\mu}}\neq 0.$

For each $\bm{\lambda}\in \mathcal{P}_{r,n},$ let $P_{\bm{\lambda}}$ be the projective cover of $L_{\bm{\lambda}}.$ Define $Y_{\bm{\lambda}}=\mathrm{F}^{n}_{r}(P_{\bm{\lambda}}),$ which is called a twisted Young module. The next proposition can be proved in exactly the same way as in Proposition \ref{tiltmod-propo5-6}.

\begin{proposition}\label{tiltmod-proposi5-10}
Suppose that $\mathcal{R}=\mathbb{K}$ is a field and that $\bm{\mu}\in \mathcal{P}_{r,n}.$ Then we have

$\mathrm{(i)}$ Each $Y_{\bm{\mu}}$ is an indecomposable $\text{Y}_{r,n}$-module.

$\mathrm{(ii)}$ If $\bm{\lambda}$ is another $r$-partition of $n,$ then $Y_{\bm{\lambda}}\cong Y_{\bm{\mu}}$ if and only if $\bm{\lambda}=\bm{\mu}.$

$\mathrm{(iii)}$ $$N^{\bm{\mu}}\cong Y_{\bm{\mu}}\oplus \bigoplus_{\bm{\lambda}\rhd\bm{\mu}}c_{\bm{\lambda}\bm{\mu}}Y_{\bm{\lambda}},$$ where the integers $c_{\bm{\lambda}\bm{\mu}}$ are the same as those appearing in Lemma \ref{tiltmod-lemma5-4}.

$\mathrm{(iv)}$ The twisted Young module $Y_{\bm{\mu}}$ has a dual Specht filtration in which the number of subquotients equal to $S_{\bm{\lambda}}$ is $[W_{\bm{\lambda}}: L_{\bm{\mu}}].$
\end{proposition}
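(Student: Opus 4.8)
The plan is to reproduce the proof of Proposition 5.6 line by line, with the untwisted data $(\mathrm{YS}_{n}^{r},F_{n}^{r},P^{\lambda},Y^{\lambda},W^{\lambda},L^{\lambda},M^{\mu},S^{\lambda})$ replaced throughout by the twisted data $(\mathrm{YS}^{n}_{r},F^{n}_{r},P_{\lambda},Y_{\lambda},W_{\lambda},L_{\lambda},N^{\mu},S_{\lambda})$. The only ingredient not already recorded before the proposition is that $F^{n}_{r}$ is fully faithful on projective $\mathrm{YS}^{n}_{r}$-modules, i.e. the twisted form of Corollary 4.6, so establishing this is the first step. Since the twisted cellular basis $\{n_{\mathfrak{s}\mathfrak{t}}\}$, the permutation modules $N^{\mu}=n_{\mu}Y_{r,n}$ and the algebra $\mathrm{YS}^{n}_{r}$ are obtained from their untwisted counterparts by applying the ring involution $'$, every structural statement of Sections 3 and 4 has a literal twisted analogue with the same proof. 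In particular, setting $\dot{N}_{n}^{r}=N_{n}^{r}\oplus Y_{r,n}$ and $\dot{\mathrm{YS}}^{n}_{r}=\mathrm{End}_{Y_{r,n}}(\dot{N}_{n}^{r})$, the proofs of Proposition 4.1, Lemma 4.2, Lemma 4.5 and Corollary 4.6 carry over word for word --- using Corollary 5.9 and Proposition 5.11(i) in place of Proposition 3.3 and Theorem 3.9 --- to give that $\dot{\mathrm{YS}}^{n}_{r}$ is a quasi-hereditary cellular algebra, that $Y_{r,n}\cong\mathrm{End}_{\dot{\mathrm{YS}}^{n}_{r}}(\dot{N}_{n}^{r})$, that $N_{n}^{r}$ is a projective $\mathrm{YS}^{n}_{r}$-module, and that the associated Schur functor $\dot{F}^{n}_{r}$ is fully faithful on projectives; hence $F^{n}_{r}$, being the composite of $\dot{F}^{n}_{r}$ with an equivalence of categories, is also fully faithful on projectives, and $\mathrm{YS}^{n}_{r}$ is a quasi-hereditary cover of $Y_{r,n}$.

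Granting this, (i) and (ii) are immediate copies of the corresponding parts of Proposition 5.6: $\mathrm{End}_{Y_{r,n}}(Y_{\mu})\cong\mathrm{End}_{\mathrm{YS}^{n}_{r}}(P_{\mu})$ is local because $P_{\mu}$ is the projective cover of the simple module $L_{\mu}$, so $Y_{\mu}$ is indecomposable; and an isomorphism $Y_{\lambda}\cong Y_{\mu}$ lifts, by full faithfulness, to $P_{\lambda}\cong P_{\mu}$, whence $L_{\lambda}\cong L_{\mu}$ and $\lambda=\mu$. For (iii), I would first obtain the twisted analogue of Lemma 5.4, namely $\varphi_{\mu}\mathrm{YS}^{n}_{r}\cong P_{\mu}\oplus\bigoplus_{\lambda\rhd\mu}c_{\lambda\mu}P_{\lambda}$; this follows either by transporting Lemma 5.4 along the canonical isomorphism $\mathrm{YS}^{n}_{r}\cong\mathrm{YS}_{n}^{r}$ of Proposition 5.11(iii), which matches cellular bases and hence $\varphi_{\mu}\leftrightarrow\varphi_{\mu}$, $L_{\lambda}\leftrightarrow L^{\lambda}$, $P_{\lambda}\leftrightarrow P^{\lambda}$, or by repeating the proof of Lemma 5.4 and noting that the decomposition numbers and the tableau counts $\sharp\mathcal{T}_{0}^{+}(\nu,\mu)$ that determine $c_{\lambda\mu}$ are literally the same on both sides. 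Applying the exact functor $F^{n}_{r}$, together with the twisted form of Lemma 5.1(iii) that $F^{n}_{r}(\varphi_{\mu}\mathrm{YS}^{n}_{r})\cong N^{\mu}$ (the verbatim twisted copy of the computation $F_{n}^{r}(Z^{\mu})\cong M^{\mu}$) and $F^{n}_{r}(P_{\lambda})=Y_{\lambda}$, then gives $N^{\mu}\cong Y_{\mu}\oplus\bigoplus_{\lambda\rhd\mu}c_{\lambda\mu}Y_{\lambda}$. For (iv), by [Ma1, Lemma 2.16] applied to the quasi-hereditary algebra $\mathrm{YS}^{n}_{r}$, $P_{\mu}$ has a Weyl filtration in which $W_{\lambda}$ occurs $[P_{\mu}:W_{\lambda}]=[W_{\lambda}:L_{\mu}]$ times; applying the exact functor $F^{n}_{r}$, which sends $W_{\lambda}$ to the nonzero module $S_{\lambda}$, yields the asserted dual Specht filtration of $Y_{\mu}=F^{n}_{r}(P_{\mu})$.

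The only real work is the first step: confirming that the extended twisted Yokonuma-Schur algebra and the double centralizer / full-faithfulness statements of Section 4 do carry over. Once one checks that $\{n_{\mathfrak{s}\mathfrak{t}}\}$ and the $N^{\mu}$ obey the same bookkeeping identities as $\{m_{\mathfrak{s}\mathfrak{t}}\}$ and the $M^{\mu}$ --- which they do, being images under a ring automorphism --- no argument in Sections 3--5 appeals to anything special about the untwisted side, so the entire chain (cellularity, quasi-heredity, the Schur functor and its action on Weyl and simple modules, the cover property, and finally the four parts of Proposition 5.6) runs through without change. I would therefore organise the write-up as: (a) the extended twisted Schur algebra and its double centralizer property; (b) the deduction that $F^{n}_{r}$ is fully faithful on projectives; (c) the verbatim twisted copy of the proof of Proposition 5.6.
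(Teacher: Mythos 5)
Your proposal is correct and matches the paper exactly: the paper's own proof of this proposition is the single remark that it ``can be proved in exactly the same way as in Proposition 5.6,'' relying on the twisted Schur functor $F^{n}_{r}$ and the twisted analogues of the Section 4 machinery, which is precisely the chain you spell out (your only slips are the internal reference numbers --- the basis of $N^{\mu}$ is Corollary 5.8 and the twisted Schur algebra result is Proposition 5.9 in the paper's numbering). Your write-up is in fact more detailed than the paper's, since you explicitly verify the twisted double centralizer property and full faithfulness on projectives rather than taking them for granted.
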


\subsection{Non-degenerate bilinear forms}

If $\sigma$ is a composition, its conjugate is the partition $\sigma'=(\sigma_{1}',\sigma_{2}',\ldots),$ where $\sigma_{i}'$ is the number of nodes in column $i$ of the diagram of $\sigma.$ If $\bm{\lambda}=(\lambda^{(1)}, \ldots,\lambda^{(r)})\in \mathcal{C}_{r,n},$ its conjugate $\bm{\lambda}'$ is the $r$-partition $\bm{\lambda}'=((\lambda^{(r)})', \ldots,(\lambda^{(1)})').$ Similarly, the conjugate of a standard $\bm{\lambda}$-tableau $\mathfrak{t}=(\mathfrak{t}^{(1)},\ldots,\mathfrak{t}^{(r)})$ is the standard $\bm{\lambda}'$-tableau $\mathfrak{t}'=((\mathfrak{t}^{(r)})',\ldots,(\mathfrak{t}^{(1)})'),$ where $(\mathfrak{t}^{(k)})'$ is the tableau obtained by interchanging the rows and columns of $\mathfrak{t}^{(k)}.$

If $\mathfrak{v}$ is a standard tableau, let $\mathfrak{v}_{\downarrow k}$ be the subtableau of $\mathfrak{v}$ which contains $1,2,\ldots,k,$ and let $\mathrm{shape}(\mathfrak{v}_{\downarrow k})$ be the associated $r$-composition. Let $\bm{\lambda}, \bm{\mu}\in \mathcal{P}_{r,n}.$ Suppose that $\mathfrak{s}$ is a standard $\bm{\lambda}$-tableau and that $\mathfrak{t}$ is a standard $\bm{\mu}$-tableau, we say that $\mathfrak{s}$ dominates $\mathfrak{t}$, and write $\mathfrak{s}\unrhd\mathfrak{t}$ if $\mathrm{shape}(\mathfrak{s}_{\downarrow k})\unrhd \mathrm{shape}(\mathfrak{t}_{\downarrow k})$ for all $1\leq k\leq n.$ If $\mathfrak{s}\unrhd\mathfrak{t}$ and $\mathfrak{s}\neq\mathfrak{t}$, then we write $\mathfrak{s}\rhd\mathfrak{t}.$ Note that $\mathfrak{s}\unrhd\mathfrak{t}$ if and only if $\mathfrak{t}'\unrhd\mathfrak{s}'.$ We extend the dominance order to pairs of standard tableaux by defining $(\mathfrak{s}, \mathfrak{t})\unrhd (\mathfrak{u}, \mathfrak{v})$ if $\mathfrak{s}\unrhd\mathfrak{u}$ and $\mathfrak{t}\unrhd\mathfrak{v}.$ We write $(\mathfrak{s}, \mathfrak{t})\rhd (\mathfrak{u}, \mathfrak{v})$ if $(\mathfrak{s}, \mathfrak{t})\unrhd (\mathfrak{u}, \mathfrak{v})$ and $(\mathfrak{s}, \mathfrak{t})\neq (\mathfrak{u}, \mathfrak{v}).$

For each $\bm{\lambda}\in \mathcal{P}_{r,n},$ let $\mathfrak{t}_{\bm{\lambda}}=(\mathfrak{t}^{\bm{\lambda}'})';$ that is, $\mathfrak{t}_{\bm{\lambda}}$ is the standard $\bm{\lambda}$-tableau with the numbers $1,2,\ldots,n$ entered in order first down the columns of $\mathfrak{t}_{\bm{\lambda}}^{(r)},$ and then the columns of $\mathfrak{t}_{\bm{\lambda}}^{(r-1)}$ and so on. If $\mathfrak{t}\in \mathrm{Std}(\bm{\lambda}),$ we define two elements $d(\mathfrak{t})$ and $d'(\mathfrak{t})$ in $\mathfrak{S}_{n}$ by $\mathfrak{t}=\mathfrak{t}^{\bm{\lambda}}d(\mathfrak{t})$ and $\mathfrak{t}=\mathfrak{t}_{\bm{\lambda}}d'(\mathfrak{t}).$ Conjugating either of the two equations shows that $d'(\mathfrak{t})=d(\mathfrak{t}').$ Let $w_{\bm{\lambda}}=d(\mathfrak{t}_{\bm{\lambda}}).$ In particular, we have $w_{\bm{\lambda}}=w_{\bm{\lambda}'}^{-1}.$ Moreover, it is easy to see that $w_{\bm{\lambda}}=d(\mathfrak{t})d'(\mathfrak{t})^{-1}$ and $l(w_{\bm{\lambda}})=l(d(\mathfrak{t}))+l(d'(\mathfrak{t}))$ for all $\mathfrak{t}\in \mathrm{Std}(\bm{\lambda}).$

Recall that there is a unique anti-automorphism $\ast$ on $\text{Y}_{r,n}$ such that $g_{i}^{\ast}=g_{i}$ for $1\leq i\leq n-1$ and $t_{j}^{\ast}=t_{j}$ for $1\leq j\leq n.$ Given a right $\text{Y}_{r,n}$-module $M,$ we define its contragredient dual $M^{\circledast}$ to be the dual module $\mathrm{Hom}_{\mathcal{R}}(M, \mathcal{R})$ equipped with the right $\text{Y}_{r,n}$-action $(\varphi h)(m)=\varphi(m h^{\ast})$ for all $\varphi\in M^{\circledast},$ $h\in \text{Y}_{r,n}$ and $m\in M.$ A module $M$ is self-dual if $M\cong M^{\circledast}.$ Equivalently, $M$ is self-dual if and only if $M$ possesses a non-degenerate associative bilinear form $\langle \cdot, \cdot\rangle$, where $\langle \cdot, \cdot\rangle$ is associative if $\langle xh , y\rangle=\langle x, yh^{\ast}\rangle$ for all $x,y\in M$ and $h\in \text{Y}_{r,n}.$

Recall that the following Jucys-Murphy elements $J_{i}$ $(1\leq i\leq n)$ in $\text{Y}_{r,n}$ have been introduced in [ChPA1] by induction \begin{equation}\label{JM-elements}
J_{1} :=1\text{ and } J_{i+1} :=g_{i}J_ig_i\quad\mbox{for}~i=1,\ldots,n-1.
\end{equation}
If $\bm{\lambda}\in \mathcal{P}_{r,n}$ and $\mathfrak{t}\in \mathrm{Std}(\bm{\lambda})$, for $1\leq k\leq n,$ we define the content of $k$ in $\mathfrak{t}$ as the element $\text{c}_{\mathfrak{t}}(k) :=q^{2(j-i)}$ if $k$ appears in row $i$ and column $j$ of some component $\mathfrak{t}^{(s)}$ of $\mathfrak{t}.$ The following proposition is proved in [ER].

\begin{proposition}\label{bilinearform-proposi5-11} {\rm (See [ER, Proposition 3].)}
Suppose that $\bm{\lambda}\in \mathcal{P}_{r,n}$ and $\mathfrak{s}, \mathfrak{t}$ are two standard $\bm{\lambda}$-tableaux. For each $1\leq k\leq n,$ there exist $r_{\mathfrak{u}\mathfrak{v}}\in \mathcal{R}$ such that
\begin{equation}\label{JM-ele-5-2}
m_{\mathfrak{s}\mathfrak{t}}J_{k}=\mathrm{res}_{\mathfrak{t}}(k)m_{\mathfrak{s}\mathfrak{t}}+\sum_{(\mathfrak{u}, \mathfrak{v})}r_{\mathfrak{u}\mathfrak{v}}m_{\mathfrak{u}\mathfrak{v}},
\end{equation}
where the sum is over the pair $(\mathfrak{u}, \mathfrak{v})\in \mathrm{Std}^{2}(\bm{\mu})=\mathrm{Std}(\bm{\mu})\times \mathrm{Std}(\bm{\mu})$ such that $(\mathfrak{u}, \mathfrak{v})\rhd (\mathfrak{s}, \mathfrak{t})$ and $\alpha(\bm{\mu})=\alpha(\bm{\lambda})$ with $\bm{\mu}\in \mathcal{P}_{r,n}.$ Moreover, we have
\begin{equation}\label{JM-ele-5-3}
m_{\mathfrak{s}\mathfrak{t}}\hspace{0.3mm}t_k=\zeta_{\emph{p}_{\mathfrak{t}}(k)}m_{\mathfrak{s}\mathfrak{t}}.
\end{equation}
\end{proposition}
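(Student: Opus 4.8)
The plan is to argue by induction on $k$ using the recursion $J_{k+1}=g_{k}J_{k}g_{k}$. The base case $k=1$ is immediate, since $J_{1}=1$ and $\mathrm{res}_{\mathfrak{t}}(1)=q^{0}=1$ (the entry $1$ of any standard tableau lies in a node of the form $(1,1)$). Before the induction I would settle the clause $\alpha(\mu)=\alpha(\lambda)$ separately. A short computation shows that $J_{k}$ commutes with every $t_{j}$: this is clear for $k=1$, and inductively $J_{k+1}t_{j}=g_{k}J_{k}g_{k}t_{j}=g_{k}J_{k}t_{s_{k}(j)}g_{k}=g_{k}t_{s_{k}(j)}J_{k}g_{k}=t_{j}g_{k}J_{k}g_{k}=t_{j}J_{k+1}$, using $g_{k}t_{l}=t_{s_{k}(l)}g_{k}$ and $s_{k}^{2}=1$. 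On the other hand, $m_{\mathfrak{s}\mathfrak{t}}=g_{d(\mathfrak{s})}^{\ast}m_{\lambda}g_{d(\mathfrak{t})}$ together with $m_{\lambda}t_{i}=\zeta_{p_{\lambda}(i)}m_{\lambda}$ (see [ER, Lemma 11(4)]) and $g_{w}t_{i}=t_{w(i)}g_{w}$ shows that each cellular basis element $m_{\mathfrak{u}\mathfrak{v}}$ is a joint eigenvector for all the $t_{j}$, with left eigenvalues determined by $p_{\mathfrak{u}}$ and right eigenvalues by $p_{\mathfrak{v}}$; since $m_{\mathfrak{s}\mathfrak{t}}J_{k}$ has the same $t_{j}$-eigenvalues as $m_{\mathfrak{s}\mathfrak{t}}$ (because $J_{k}$ commutes with the $t_{j}$), only those $m_{\mathfrak{u}\mathfrak{v}}$ with $p_{\mathfrak{u}}=p_{\mathfrak{s}}$ and $p_{\mathfrak{v}}=p_{\mathfrak{t}}$ can occur when we expand $m_{\mathfrak{s}\mathfrak{t}}J_{k}$ in the cellular basis, and for such a pair the common shape $\mu$ satisfies $\alpha(\mu)=\alpha(\lambda)$. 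It thus remains to establish the dominance triangularity $(\mathfrak{u},\mathfrak{v})\rhd(\mathfrak{s},\mathfrak{t})$.

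For this I would reduce to the Specht module. Writing $m_{\mathfrak{s}\mathfrak{t}}=g_{d(\mathfrak{s})}^{\ast}m_{\mathfrak{t}^{\lambda}\mathfrak{t}}$ and using that $Y_{r,n}^{\rhd\lambda}$ is a two-sided ideal, it is enough to prove the stronger congruence $m_{\mathfrak{t}^{\lambda}\mathfrak{t}}J_{k}\equiv\mathrm{res}_{\mathfrak{t}}(k)\,m_{\mathfrak{t}^{\lambda}\mathfrak{t}}+\sum_{\mathfrak{v}\rhd\mathfrak{t}}r_{\mathfrak{v}}\,m_{\mathfrak{t}^{\lambda}\mathfrak{v}}$, modulo $Y_{r,n}^{\rhd\lambda}$, for all $\mathfrak{t}\in\mathrm{Std}(\lambda)$, together with a simultaneous downward induction on the dominance order on $\mathcal{P}_{r,n}$ which absorbs the resulting element of $Y_{r,n}^{\rhd\lambda}$ (for the $\unrhd$-maximal shapes $Y_{r,n}^{\rhd\lambda}=0$ and the two statements coincide; for a general $\lambda$ the error is a combination of cellular basis elements of strictly larger shape, for which the Proposition already holds by induction, and one checks, again tracking left and right $t$-eigenvalues and the coset combinatorics, that the pairs produced still dominate $(\mathfrak{s},\mathfrak{t})$). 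Alternatively, one may invoke directly the abstract machinery of Jucys--Murphy elements for cellular algebras ([Ma3]; cf. its use for $Y_{r,n}^{d}(q)$ in [C1]): the conditions $J_{k}^{\ast}=J_{k}$, the commutativity of the $J_{k}$, and the Specht-module triangularity just displayed (with the content function $\mathfrak{t}\mapsto\mathrm{res}_{\mathfrak{t}}(k)$) are exactly what is needed to deduce whole-algebra triangularity formally.

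The Specht-module congruence is proved by the induction step $k\to k+1$ applied to $m_{\mathfrak{t}^{\lambda}\mathfrak{t}}g_{k}J_{k}g_{k}$: first rewrite $m_{\mathfrak{t}^{\lambda}\mathfrak{t}}g_{k}$ in the basis $\{m_{\mathfrak{t}^{\lambda}\mathfrak{v}}\}$ modulo $Y_{r,n}^{\rhd\lambda}$, where $\mathfrak{v}$ runs over $\mathfrak{t}$, over $\mathfrak{t}s_{k}$ (when standard), and over tableaux strictly above $\mathfrak{t}$ in dominance --- this is the same analysis as for the Iwahori--Hecke algebra of type $A$, since $m_{\lambda}$ absorbs $g_{w}$ for $w\in\mathfrak{S}_{\lambda}$ as $q^{l(w)}m_{\lambda}$ --- then apply the inductive hypothesis for $J_{k}$, then apply the outer $g_{k}$; the conjugation $g_{k}(\,\cdot\,)g_{k}$ is designed so that the diagonal coefficient telescopes from $\mathrm{res}_{\mathfrak{t}}(k)$ (or $\mathrm{res}_{\mathfrak{t}s_{k}}(k)$) to $\mathrm{res}_{\mathfrak{t}}(k+1)$, while every off-diagonal term is supported on tableaux strictly more dominant than $\mathfrak{t}$. \textbf{The main obstacle} is precisely the combinatorial bookkeeping in this last step: the quadratic relation $g_{k}^{2}=1+(q-q^{-1})e_{k}g_{k}$ introduces the idempotents $e_{k}$, which are the new feature compared with the type-$A$ Hecke algebra, and one must verify that the terms they contribute are either reabsorbed into $m_{\lambda}$ (when $k$ and $k+1$ lie in a common block of $A_{\lambda}$) or else fall into $Y_{r,n}^{\rhd\lambda}$ or into the span of strictly more dominant $m_{\mathfrak{t}^{\lambda}\mathfrak{v}}$, all while carrying the case analysis on the relative positions of $k$ and $k+1$ in $\mathfrak{t}$. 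Since the $r=1$ case is Murphy's classical theorem for type-$A$ Hecke algebras, one could in the end also obtain the statement by transporting it through the isomorphism of [JP] (reproved in [ER]) between $Y_{r,n}$ and a direct sum of matrix algebras over tensor products of type-$A$ Hecke algebras, under which each $J_{k}$ corresponds to a sum of classical Jucys--Murphy operators.
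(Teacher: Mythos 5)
The first thing to say is that the paper does not actually prove this proposition: it is imported verbatim from [ER, Proposition 3], and Remark 5.12 only records the two refinements being relied upon, namely that the triangularity holds for the stronger order $\unrhd$ (not just $\blacktriangleright$) and that the constraint $\alpha(\mu)=\alpha(\lambda)$ falls out because the computation reduces to the $r=1$ case. Measured against that, the parts of your argument that you actually complete are correct and even add value: the verification that $J_{k}$ commutes with every $t_{j}$ is right, and your joint $t_{j}$-eigenvalue argument forcing $p_{\mathfrak{u}}=p_{\mathfrak{s}}$ and $p_{\mathfrak{v}}=p_{\mathfrak{t}}$ (hence $\alpha(\mu)=\alpha(\lambda)$) is exactly the mechanism the paper itself deploys in the proof of Proposition 3.4; it gives a clean, self-contained justification of the clause that Remark 5.12 asserts without argument.

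The weakness is that the heart of the statement, the dominance triangularity, is only sketched. You reduce it to a Specht-module congruence $m_{\mathfrak{t}^{\lambda}\mathfrak{t}}J_{k}\equiv \mathrm{res}_{\mathfrak{t}}(k)m_{\mathfrak{t}^{\lambda}\mathfrak{t}}+\sum_{\mathfrak{v}\rhd\mathfrak{t}}r_{\mathfrak{v}}m_{\mathfrak{t}^{\lambda}\mathfrak{v}}$ plus an induction over shapes, and then explicitly defer the $g_{k}(\,\cdot\,)g_{k}$ computation --- in particular the fate of the $(q-q^{-1})e_{k}g_{k}$ terms --- as ``the main obstacle.'' But that bookkeeping (the point being that $E_{A_{\lambda}}$ either absorbs or annihilates the $e_{k}$, collapsing everything to Murphy's type-$A$ computation inside $\mathfrak{S}_{\alpha(\lambda)}$) is precisely the content of [ER, Proposition 3]; so as a self-contained proof your write-up stops exactly where the difficulty lives. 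As a reduction to the literature it is acceptable, and your closing suggestion of transporting the statement through the isomorphism of [JP]/[ER] with matrix algebras over tensor products of type-$A$ Hecke algebras is essentially the justification the paper itself offers in Remark 5.12. If you want a genuinely complete proof, either carry out the $e_{k}$ case analysis or make the appeal to [ER, Proposition 3] (together with your eigenvalue argument for the $\alpha$-clause and the upgrade from $\blacktriangleright$ to $\unrhd$) the official argument rather than an afterthought.
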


\begin{remark}
There are two ways to define the dominance order on $\mathrm{Std}^{2}(\mathcal{P}_{r,n})=\{(\mathfrak{s}, \mathfrak{t})\:|\:\mathfrak{s}, \mathfrak{t}\in \mathrm{Std}(\bm{\lambda})\mathrm{~for~some~}\bm{\lambda}\in \mathcal{P}_{r,n}\}.$ If $(\mathfrak{s}, \mathfrak{t})\in \mathrm{Std}^{2}(\bm{\lambda})$ and $(\mathfrak{u}, \mathfrak{v})\in \mathrm{Std}^{2}(\bm{\mu})$, then we define $$(\mathfrak{s}, \mathfrak{t})\Gedom (\mathfrak{u}, \mathfrak{v}) ~\mathrm{if}~ \bm{\lambda}\rhd\bm{\mu}, \mathrm{or}~\bm{\lambda}=\bm{\mu}~\mathrm{and}~\mathfrak{s}\unrhd\mathfrak{u}~\mathrm{and} ~\mathfrak{t}\unrhd\mathfrak{v}.$$
By definition, $(\mathfrak{s}, \mathfrak{t})\unrhd(\mathfrak{u}, \mathfrak{v})$ implies that $(\mathfrak{s}, \mathfrak{t})\Gedom(\mathfrak{u}, \mathfrak{v}),$ but the inverse is false in general. In fact, it is proved in [ER] that the equality \eqref{JM-ele-5-2} holds under the dominance order $\Gedom.$ But it is easy to see that the equality \eqref{JM-ele-5-2} still holds under the stronger dominance order $\unrhd.$ Besides, the proof of Proposition \ref{bilinearform-proposi5-11}  essentially reduces to the case of $r=1,$ from which we can easily get the second condition $\alpha(\bm{\mu})=\alpha(\bm{\lambda})$ in the summation of \eqref{JM-ele-5-2}. These facts are crucial to the following arguments.
\end{remark}

Let $\mathbb{K}=\mathbb{Q}(q, \zeta).$ We shall first consider the split semisimple $\mathbb{K}$-algebra $\text{Y}_{r,n}^{\mathbb{K}} :=\text{Y}_{r,n}^{\mathcal{Z}}\otimes_{\mathcal{Z}}\mathbb{K}.$ In particular, we can apply all the results in [Ma3, Section 3].

We shall follow the arguments of [Ma3, Section 3] to construct a ``seminormal" basis of $\text{Y}_{r,n}^{\mathbb{K}}$. For $1\leq k\leq n,$ we define the following two sets:
\[\mathcal{C}(k) :=\{\text{c}_{\mathfrak{t}}(k)\:|\:\mathfrak{t}\in \text{Std}(\bm{\lambda})\text{ for some }\bm{\lambda}\in \mathcal{P}_{r,n}\},\]
and
\[\overline{\mathcal{C}(k)} :=\{\zeta_{\text{p}_{\mathfrak{t}}(k)}\:|\:\mathfrak{t}\in \text{Std}(\bm{\lambda})\text{ for some }\bm{\lambda}\in \mathcal{P}_{r,n}\}.\]
\begin{definition}\label{definition-idempotent}
Suppose that $\bm{\lambda}\in \mathcal{P}_{r,n}$ and that $\mathfrak{s}, \mathfrak{t}\in \text{Std}(\bm{\lambda}).$

(i) Let
\begin{equation}\label{idempotent-ET}
F_{\mathfrak{t}}=\prod_{k=1}^{n}\bigg(\prod_{\substack{c\in \mathcal{C}(k)\\c\neq \text{c}_{\mathfrak{t}}(k)}}\frac{J_{k}-c}{\text{c}_{\mathfrak{t}}(k)-c}\cdot \prod_{\substack{\bar{c}\in \overline{\mathcal{C}(k)}\\\bar{c}\neq \zeta_{\text{p}_{\mathfrak{t}}(k)}}}\frac{t_{k}-\bar{c}}{\zeta_{\text{p}_{\mathfrak{t}}(k)}-\bar{c}}
\bigg).
\end{equation}

(ii) We set  $f_{\mathfrak{s}\mathfrak{t}} :=F_{\mathfrak{s}}\hspace{0.3mm}m_{\mathfrak{s}\mathfrak{t}}\hspace{0.3mm}F_{\mathfrak{t}}$ and $g_{\mathfrak{s}\mathfrak{t}} :=F_{\mathfrak{s}}\hspace{0.3mm}n_{\mathfrak{s}\mathfrak{t}}\hspace{0.3mm}F_{\mathfrak{t}}$
\end{definition}

By Proposition \ref{bilinearform-proposi5-11}, we can now apply the general theory developed in [Ma3, Section 3] to get the following results.

\begin{proposition}\label{idempotent-property-5-14}
$(\emph{i})$ Suppose that $\mathfrak{s}, \mathfrak{t}\in \emph{Std}(\bm{\lambda})\text{ for some }\bm{\lambda}\in \mathcal{P}_{r,n}$. In $\mathrm{Y}_{r,n}^{\mathbb{K}}$ we have
\begin{align}
m_{\mathfrak{s}\mathfrak{t}}=f_{\mathfrak{s}\mathfrak{t}}+\sum_{(\mathfrak{u}, \mathfrak{v})}r_{\mathfrak{u}\mathfrak{v}}f_{\mathfrak{u}\mathfrak{v}},\label{triangular-relation1}
\end{align}
where $r_{\mathfrak{u}\mathfrak{v}}\in \mathbb{K}$ and the sum is over the pair $(\mathfrak{u}, \mathfrak{v})\in \mathrm{Std}^{2}(\bm{\mu})$ $(\bm{\mu}\in \mathcal{P}_{r,n})$ such that $r_{\mathfrak{u}\mathfrak{v}}\neq 0$ only if $(\mathfrak{u}, \mathfrak{v})\rhd (\mathfrak{s}, \mathfrak{t})$ and $\alpha(\bm{\mu})=\alpha(\bm{\lambda});$
\begin{align}
n_{\mathfrak{s}\mathfrak{t}}=g_{\mathfrak{s}\mathfrak{t}}+\sum_{(\mathfrak{u}, \mathfrak{v})}s_{\mathfrak{u}\mathfrak{v}}g_{\mathfrak{u}\mathfrak{v}},\label{triangular-relation2}
\end{align}
where $s_{\mathfrak{u}\mathfrak{v}}\in \mathbb{K}$ and the sum is over the pair $(\mathfrak{u}, \mathfrak{v})\in \mathrm{Std}^{2}(\bm{\nu})$ $(\bm{\nu}\in \mathcal{P}_{r,n})$ such that $s_{\mathfrak{u}\mathfrak{v}}\neq 0$ only if $(\mathfrak{u}, \mathfrak{v})\rhd (\mathfrak{s}, \mathfrak{t})$ and $\alpha(\bm{\nu})=\alpha(\bm{\lambda}).$

$(\emph{ii})$ The set $\{f_{\mathfrak{s}\mathfrak{t}}\:|\:\mathfrak{s}, \mathfrak{t}\in \emph{Std}(\bm{\lambda})\text{ for some }\bm{\lambda}\in \mathcal{P}_{r,n}\}$ is a $\mathbb{K}$-basis of $\mathrm{Y}_{r,n}^{\mathbb{K}}.$

$(\emph{iii})$ For $\bm{\lambda}, \bm{\mu}\in \mathcal{P}_{r,n}$ and $\mathfrak{s}, \mathfrak{t}\in \emph{Std}(\bm{\lambda}),$ $\mathfrak{u}, \mathfrak{v}\in \emph{Std}(\bm{\mu}),$ we have
\begin{equation}\label{result-1}
f_{\mathfrak{s}\mathfrak{t}}\hspace{0.3mm}J_k=\emph{c}_{\mathfrak{t}}(k)f_{\mathfrak{s}\mathfrak{t}},\hspace{1cm}
f_{\mathfrak{s}\mathfrak{t}}\hspace{0.3mm}t_k=\zeta_{\emph{p}_{\mathfrak{t}}(k)}f_{\mathfrak{s}\mathfrak{t}},\hspace{1cm} f_{\mathfrak{s}\mathfrak{t}}\hspace{0.3mm}F_{\mathfrak{u}}=\delta_{\mathfrak{t}, \mathfrak{u}}f_{\mathfrak{s}\mathfrak{u}},
\end{equation}
and moreover, there exists a scalar $0\neq \gamma_{\mathfrak{t}}\in \mathbb{K}$ such that
\begin{align}\label{result-2}
f_{\mathfrak{s}\mathfrak{t}}\hspace{0.3mm}f_{\mathfrak{u}\mathfrak{v}}=
\begin{cases}
\gamma_{\mathfrak{t}}\hspace{0.3mm}f_{\mathfrak{s}\mathfrak{v}} & \text{if } \bm{\lambda}=\bm{\mu}\text{ and }\mathfrak{t}=\mathfrak{u};
\\
\hspace{0.25cm}0 & \text{otherwise.}
\end{cases}
\end{align}
In particular, $\gamma_{\mathfrak{t}}$ depends only on $\mathfrak{t}$ and the set $\{f_{\mathfrak{s}\mathfrak{t}}\:|\:\mathfrak{s}, \mathfrak{t}\in \emph{Std}(\bm{\lambda})\text{ and }\bm{\lambda}\in \mathcal{P}_{r,n}\}$ is a cellular basis of $\mathrm{Y}_{r,n}^{\mathbb{K}}.$

$(\emph{iv})$ For $\bm{\lambda}\in \mathcal{P}_{r,n}$ and $\mathfrak{t}\in \emph{Std}(\bm{\lambda}),$ we have $F_{\mathfrak{t}}=\frac{1}{\gamma_{\mathfrak{t}}}\hspace{0.3mm}f_{\mathfrak{t}\mathfrak{t}}.$ Moreover, these elements $\{F_{\mathfrak{t}}\:|\:\mathfrak{t}\in \emph{Std}(\bm{\lambda})\text{ for some }\bm{\lambda}\in \mathcal{P}_{r,n}\}\}$ give a complete set of pairwise orthogonal primitive idempotents for $\mathrm{Y}_{r,n}^{\mathbb{K}}.$

$(\emph{v})$ For $\bm{\lambda}\in \mathcal{P}_{r,n}$ and $\mathfrak{t}\in \emph{Std}(\bm{\lambda}),$ we have
\begin{equation}\label{result-3}
F_{\mathfrak{t}}\hspace{0.3mm}J_k=J_k\hspace{0.3mm}F_{\mathfrak{t}}=\emph{c}_{\mathfrak{t}}(k)F_{\mathfrak{t}},\hspace{1cm}
F_{\mathfrak{t}}\hspace{0.3mm}t_k=t_k\hspace{0.3mm}F_{\mathfrak{t}}=\zeta_{\emph{p}_{\mathfrak{t}}(k)}F_{\mathfrak{t}}.
\end{equation}

$(\emph{vi})$ The Jucys-Murphy elements $J_1,\ldots,J_n,$ $t_1,\ldots,t_n$ generate a maximal commutative subalgebra of $\mathrm{Y}_{r,n}^{\mathbb{K}}.$
\end{proposition}

From the definitions, we see that for $\bm{\lambda}\in \mathcal{P}_{r,n}$ and $\mathfrak{t}\in \text{Std}(\bm{\lambda}),$ we have  $(\text{c}_{\mathfrak{t}}(k))'=\text{c}_{\mathfrak{t}'}(k)$ and $\zeta_{\text{p}_{\mathfrak{t}}(k)}'=\zeta_{\text{p}_{\mathfrak{t}'}(k)}.$ This implies the following lemma.
\begin{lemma}\label{idempotent-lemma-5-15}
For $\bm{\lambda}\in \mathcal{P}_{r,n}$ and $\mathfrak{t}\in \mathrm{Std}(\bm{\lambda}),$ we have $F_{\mathfrak{t}}'=F_{\mathfrak{t}'},$ and hence $g_{\mathfrak{s}\mathfrak{t}}=f_{\mathfrak{s}\mathfrak{t}}'$ in $\mathrm{Y}_{r,n}^{\mathbb{K}}.$
\end{lemma}

By Proposition \ref{idempotent-property-5-14}(iv) and Lemma \ref{idempotent-lemma-5-15}, we can easily get the following result.
\begin{lemma}\label{idempotent-lemma-5-16}
Suppose that $\bm{\lambda}, \bm{\mu}\in \mathcal{P}_{r,n}$ and $\mathfrak{s}, \mathfrak{t}\in \mathrm{Std}(\bm{\lambda}),$ $\mathfrak{u}, \mathfrak{v}\in \mathrm{Std}(\bm{\mu}).$ Then in $\mathrm{Y}_{r,n}^{\mathbb{K}},$ we have $f_{\mathfrak{s}\mathfrak{t}}g_{\mathfrak{u}\mathfrak{v}}=0$ unless $\mathfrak{t}=\mathfrak{u}'.$
\end{lemma}

By Proposition \ref{idempotent-property-5-14}(i) and Lemma \ref{idempotent-lemma-5-16}, we can easily get the following lemma.

\begin{lemma}\label{idempotent-lemma-5-17}
Let $\bm{\lambda}, \bm{\mu}\in \mathcal{P}_{r,n}$. Suppose that $\mathfrak{s}$ and $\mathfrak{t}$ are standard $\bm{\lambda}$-tableaux and that $\mathfrak{u}$ and $\mathfrak{v}$ are standard $\bm{\mu}$-tableaux. If $m_{\mathfrak{s}\mathfrak{t}}n_{\mathfrak{u}\mathfrak{v}}\neq 0,$ then $\mathfrak{u}'\unrhd \mathfrak{t}$ and $\alpha(\bm{\mu}')=\alpha(\bm{\lambda}).$
\end{lemma}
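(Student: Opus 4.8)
The plan is to pass to the algebra $Y_{r,n}^{\mathcal{K}}$ over $\mathcal{K}=\mathbb{Q}(q,\zeta)$, where the identities of Proposition 5.13 hold, and to combine them with the conjugation bookkeeping for multitableaux recalled in Section 5.2. Since $Y_{r,n}$ is free as an $\mathcal{R}$-module and $\mathcal{R}\subseteq\mathcal{K}$, the natural map $Y_{r,n}\to Y_{r,n}^{\mathcal{K}}=Y_{r,n}\otimes_{\mathcal{R}}\mathcal{K}$ is injective; hence $m_{\mathfrak{s}\mathfrak{t}}n_{\mathfrak{u}\mathfrak{v}}\neq 0$ already holds in $Y_{r,n}^{\mathcal{K}}$, and it suffices to prove the two assertions there.

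First I would rewrite both factors in the seminormal-type bases. By Proposition 5.13(i), $m_{\mathfrak{s}\mathfrak{t}}=f_{\mathfrak{s}\mathfrak{t}}+\sum r_{\mathfrak{a}\mathfrak{b}}f_{\mathfrak{a}\mathfrak{b}}$, where the sum is over pairs $(\mathfrak{a},\mathfrak{b})\in\mathrm{Std}^{2}(\nu)$ with $(\mathfrak{a},\mathfrak{b})\rhd(\mathfrak{s},\mathfrak{t})$ and $\alpha(\nu)=\alpha(\lambda)$; by Proposition 5.13(ii), $n_{\mathfrak{u}\mathfrak{v}}=g_{\mathfrak{u}\mathfrak{v}}+\sum r_{\mathfrak{c}\mathfrak{d}}g_{\mathfrak{c}\mathfrak{d}}$, where the sum is over $(\mathfrak{c},\mathfrak{d})\in\mathrm{Std}^{2}(\rho)$ with $(\mathfrak{c},\mathfrak{d})\rhd(\mathfrak{u},\mathfrak{v})$ and $\alpha(\rho)=\alpha(\mu)$. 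Multiplying out, $m_{\mathfrak{s}\mathfrak{t}}n_{\mathfrak{u}\mathfrak{v}}$ becomes a $\mathcal{K}$-linear combination of products $f_{\mathfrak{a}\mathfrak{b}}g_{\mathfrak{c}\mathfrak{d}}$ in which $\mathfrak{a},\mathfrak{b}$ are standard $\nu$-tableaux and $\mathfrak{c},\mathfrak{d}$ are standard $\rho$-tableaux with $\mathfrak{b}\unrhd\mathfrak{t}$, $\mathfrak{c}\unrhd\mathfrak{u}$, $\alpha(\nu)=\alpha(\lambda)$ and $\alpha(\rho)=\alpha(\mu)$. Since the sum is nonzero, at least one summand must be nonzero — no linear-independence statement is needed here, because if every term vanished so would the sum — so I may fix such $\mathfrak{a},\mathfrak{b},\mathfrak{c},\mathfrak{d}$ with $f_{\mathfrak{a}\mathfrak{b}}g_{\mathfrak{c}\mathfrak{d}}\neq 0$.

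By Proposition 5.13(iii), $f_{\mathfrak{a}\mathfrak{b}}g_{\mathfrak{c}\mathfrak{d}}\neq 0$ forces $\mathfrak{b}=\mathfrak{c}'$, and comparing shapes gives $\nu=\rho'$. For the $\alpha$-statement, recall that for an $r$-composition $\sigma$ the conjugate $\sigma'$ reverses the sequence of component sizes, so $\alpha(\sigma')$ is the reversal of $\alpha(\sigma)$; applying this to $\rho$ and to $\mu$ and using $\alpha(\rho)=\alpha(\mu)$ yields $\alpha(\mu')=\alpha(\rho')=\alpha(\nu)=\alpha(\lambda)$. For the dominance statement, recall from Section 5.2 that conjugation reverses the dominance order on standard tableaux, so $\mathfrak{c}\unrhd\mathfrak{u}$ gives $\mathfrak{u}'\unrhd\mathfrak{c}'=\mathfrak{b}$; combining this with $\mathfrak{b}\unrhd\mathfrak{t}$ and transitivity gives $\mathfrak{u}'\unrhd\mathfrak{t}$, as required.

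The only delicate point is the one already hidden in the input: one must be sure that Proposition 5.13, and through it the expansions above, carries the two refinements of its error terms — that they range over pairs $\rhd(\mathfrak{s},\mathfrak{t})$ in the \emph{strong} dominance order $\unrhd$ (not merely in the weaker order $\blacktriangleright$) and over shapes $\nu,\rho$ with $\alpha(\nu)=\alpha(\lambda)$ and $\alpha(\rho)=\alpha(\mu)$. These are precisely the two facts recorded in Remark 5.12 for Proposition 5.11, and they are inherited by the seminormal basis via the arguments of [Ma3, Section 3]; granting them, the argument above is routine.
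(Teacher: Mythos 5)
Your proof is correct and follows exactly the route the paper intends: the paper derives this lemma directly from Proposition 5.13 (expanding $m_{\mathfrak{s}\mathfrak{t}}$ and $n_{\mathfrak{u}\mathfrak{v}}$ in the seminormal bases, applying the orthogonality $f_{\mathfrak{a}\mathfrak{b}}g_{\mathfrak{c}\mathfrak{d}}=0$ unless $\mathfrak{b}=\mathfrak{c}'$, and using that conjugation reverses dominance and reverses $\alpha$), and you have simply written out those details, including the correct observation that no linear independence is needed to extract a nonvanishing summand. The caveat you flag at the end about the strong order $\unrhd$ versus $\blacktriangleright$ and the $\alpha$-refinement is exactly the point the paper itself addresses in Remark 5.12 and Remark 5.18.
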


Recall that $\{t_{1}^{k_1}\cdots t_{n}^{k_n}g_{w}\:|\:0\leq k_1,\ldots,k_{n}\leq r-1~\mathrm{and}~w\in \mathfrak{S}_{n}\}$ is an $\mathcal{R}$-basis of $\text{Y}_{r,n}.$ We can define an $\mathcal{R}$-linear map $\tau: \text{Y}_{r,n}\rightarrow \mathcal{R}$ by
\begin{align}\label{trace-form}
\tau(t_{1}^{k_1}\cdots t_{n}^{k_n}g_{w})=\begin{cases}1& \hbox {if } k_{1}\equiv k_{2}\equiv\cdots\equiv 0~(\mathrm{mod}~r)~\mathrm{and}~w=1; \\0& \hbox {otherwise}.\end{cases}
\end{align}
This map $\tau$ was introduced in [ChPA1, Proposition 10] and was shown to be a trace form; that is, $\tau(ab)=\tau(ba)$ for all $a,b\in \text{Y}_{r,n}.$ Moreover, we have $$\tau(t_{1}^{k_1}\cdots t_{n}^{k_n}g_{w}g_{w'}t_{1}^{l_1}\cdots t_{n}^{l_n})=\begin{cases}1& \hbox {if } w^{-1}=w'~\mathrm{and}~k_{i}+l_{i}\equiv 0~(\mathrm{mod}~r)~\mathrm{for}~1\leq i\leq n; \\0& \hbox {otherwise}.\end{cases}$$ In particular, we get that $\tau(h^{\ast})=\tau(h)$ for all $h\in \text{Y}_{r,n}.$

We now define a symmetric associative bilinear form $\langle \cdot, \cdot\rangle$ on $\text{Y}_{r,n}$ by $\langle h_{1}, h_{2}\rangle=\tau(h_{1}h_{2}^{\ast}).$ We then have the following crucial result.

\begin{theorem}\label{idempotent-theorem-5-18}
Suppose that $\bm{\lambda}=(\lambda^{(1)},\ldots,\lambda^{(r)})\in \mathcal{P}_{r,n}$ and we choose all $1\leq i_{1}< i_{2}<\cdots < i_{p}\leq r$ such that $\lambda^{(i_{1})},$ $\lambda^{(i_{2})},\ldots,$ $\lambda^{(i_{p})}$ are the nonempty components of $[\bm{\lambda}]$. Define $c_{k} :=|\lambda^{(i_{k})}|$ for $1\leq k\leq p.$ Let $\bm{\mu}\in \mathcal{P}_{r,n}$. Suppose that $\mathfrak{s}, \mathfrak{t}$ are two standard $\bm{\lambda}$-tableaux and that $\mathfrak{u}, \mathfrak{v}$ are two standard $\bm{\mu}$-tableaux. Then we have
\begin{align}\label{bilinear-form-5-theorem}
\langle m_{\mathfrak{s}\mathfrak{t}}, n_{\mathfrak{u}\mathfrak{v}}\rangle=\begin{cases}\frac{r^{p}(\zeta_{i_1}\cdots \zeta_{i_{p}})^{-2}}{r^{\sum_{k=1}^{p}\binom{c_k}{2}}}   & \hbox {if } (\mathfrak{u}', \mathfrak{v}')=(\mathfrak{s}, \mathfrak{t}); \\\qquad 0& \hbox {if } (\mathfrak{u}', \mathfrak{v}')\ntrianglerighteq(\mathfrak{s}, \mathfrak{t}).\end{cases}
\end{align}
\end{theorem}
\begin{proof}
Suppose first that $\langle m_{\mathfrak{s}\mathfrak{t}}, n_{\mathfrak{u}\mathfrak{v}}\rangle\neq 0.$ By definition, $\langle m_{\mathfrak{s}\mathfrak{t}}, n_{\mathfrak{u}\mathfrak{v}}\rangle=\tau(m_{\mathfrak{s}\mathfrak{t}}n_{\mathfrak{v}\mathfrak{u}}),$ so $m_{\mathfrak{s}\mathfrak{t}}n_{\mathfrak{v}\mathfrak{u}}\neq 0;$ hence $\mathfrak{v}'\trianglerighteq \mathfrak{t}$ by Lemma \ref{idempotent-lemma-5-17}. Since $\tau$ is a trace form and $\tau(h^{\ast})=\tau(h)$ for all $h\in \text{Y}_{r,n},$ we also have $\tau(m_{\mathfrak{s}\mathfrak{t}}n_{\mathfrak{v}\mathfrak{u}})=\tau(n_{\mathfrak{v}\mathfrak{u}}m_{\mathfrak{s}\mathfrak{t}})=
\tau(m_{\mathfrak{t}\mathfrak{s}}n_{\mathfrak{u}\mathfrak{v}});$ hence $m_{\mathfrak{t}\mathfrak{s}}n_{\mathfrak{u}\mathfrak{v}}\neq 0$ and $\mathfrak{u}'\trianglerighteq \mathfrak{s}$ by Lemma \ref{idempotent-lemma-5-17}. Therefore, if $\langle m_{\mathfrak{s}\mathfrak{t}}, n_{\mathfrak{u}\mathfrak{v}}\rangle\neq 0,$ then $(\mathfrak{u}', \mathfrak{v}')\trianglerighteq(\mathfrak{s}, \mathfrak{t}).$

Now assume that $(\mathfrak{u}', \mathfrak{v}')=(\mathfrak{s}, \mathfrak{t}).$ Then $g_{w_{\bm{\lambda}}}=g_{d(\mathfrak{t})}g_{d(\mathfrak{t}')}^{\ast}=g_{d(\mathfrak{s})}g_{d(\mathfrak{s}')}^{\ast}.$ Therefore, we have
\begin{align}
\langle m_{\mathfrak{s}\mathfrak{t}}, n_{\mathfrak{s}'\mathfrak{t}'}\rangle&=
\tau(m_{\mathfrak{s}\mathfrak{t}}n_{\mathfrak{t}'\mathfrak{s}'})\notag\\&=
\tau(g_{d(\mathfrak{s})}^{\ast}m_{\bm{\lambda}}g_{d(\mathfrak{t})}g_{d(\mathfrak{t}')}^{\ast}n_{\bm{\lambda}'}g_{d(\mathfrak{s}')})\notag\\&
=\tau(g_{d(\mathfrak{s}')}g_{d(\mathfrak{s})}^{\ast}m_{\bm{\lambda}}g_{w_{\bm{\lambda}}}n_{\bm{\lambda}'})\notag\\&=
\tau(g_{w_{\bm{\lambda}}}^{\ast}m_{\bm{\lambda}}g_{w_{\bm{\lambda}}}n_{\bm{\lambda}'})\notag\\
&=\tau(m_{\bm{\lambda}}g_{w_{\bm{\lambda}}}n_{\bm{\lambda}'}g_{w_{\bm{\lambda}}^{-1}})\notag\\
&=\tau(u_{\bm{\lambda}}E_{A_{\bm{\lambda}}}x_{\bm{\lambda}}g_{w_{\bm{\lambda}}}U_{\bm{\lambda}'}'y_{\bm{\lambda}'}g_{w_{\bm{\lambda}'}})\notag\\
&=\tau(u_{\bm{\lambda}}E_{A_{\bm{\lambda}}}g_{w_{\bm{\lambda}}}
u'_{\bm{\lambda}'}E_{A_{\bm{\lambda}'}}y_{\bm{\lambda}'}g_{w_{\bm{\lambda}'}}x_{\bm{\lambda}})\label{important-identity}.
\end{align}

By definition, we have
\begin{equation}\label{w-lambda}
w_{\bm{\lambda}}=\left(
  \begin{array}{cccc}
    1 & 2 & \cdots & |\lambda^{(i_1)}|\\
    j_1 & j_2 &\cdots & j_{|\lambda^{(i_1)}|}\\
  \end{array}
\bigg|
  \begin{array}{cc}
    \cdots & \cdots \\
    \cdots &\cdots \\
  \end{array}
\bigg|
\begin{array}{cccc}
    n-|\lambda^{(i_{p})}|+1  & \cdots & n\\
    k_1  &\cdots & k_{|\lambda^{(i_{p})}|}\\
  \end{array}
\right),
\end{equation}
where $\{j_1,j_2,\ldots,j_{|\lambda^{(i_1)}|}\}=\{n-|\lambda^{(i_1)}|+1,\ldots,n\}$ and $\{k_1,\ldots,k_{|\lambda^{(i_{p})}|}\}=\{1,\ldots,|\lambda^{(i_{p})}|\}.$

By \eqref{w-lambda}, we have
\begin{equation}\label{w-lambda-inverse}
w_{\bm{\lambda}}^{-1}=\left(
  \begin{array}{cccc}
    1 & 2 & \cdots & |\lambda^{(i_{p})}|\\
    k_1 & k_2 &\cdots & k_{|\lambda^{(i_{p})}|}\\
  \end{array}
\bigg|
  \begin{array}{cc}
    \cdots & \cdots \\
    \cdots &\cdots \\
  \end{array}
\bigg|
\begin{array}{cccc}
    n-|\lambda^{(i_{1})}|+1  & \cdots & n\\
    j_1  &\cdots & k_{|\lambda^{(i_{1})}|}\\
  \end{array}
\right),
\end{equation}
where $\{k_1,k_2,\ldots,k_{|\lambda^{(i_{p})}|}\}=\{n-|\lambda^{(i_{p})}|+1,\ldots,n\}$ and $\{j_1,\ldots,j_{|\lambda^{(i_{1})}|}\}=\{1,\ldots,|\lambda^{(i_{1})}|\}.$

Define $A_{\bm{\lambda}}=\{I_{1},I_{2},\ldots,I_{p}\}$ as in Definition \ref{definition2-3-ulambda}. By assumption, we have $\bm{\lambda}'=((\lambda^{(r)})',\ldots,(\lambda^{(1)})')\in \mathcal{P}_{r,n}$ with $(\lambda^{(i_{p})})',$ $(\lambda^{(i_{p-1})})',\ldots,$ $(\lambda^{(i_{1})})'$ being all the nonempty components of $[\bm{\lambda}']$. Suppose that $A_{\bm{\lambda}'}=\{I_{p}',\ldots,I_{1}'\}$, where $I_{p}'=\{1,2,\ldots,|\lambda^{(i_{p})}|\},\ldots,$ and $I_{1}'=\{n-|\lambda^{(i_{1})}|+1,\ldots,n\}.$ By definition \ref{definition2-4-ulambda}, we have
\begin{align}\label{young-module-5-15}
u'_{\bm{\lambda}'}E_{A_{\bm{\lambda}'}}=\prod_{l\neq r+1-(r+1-i_{p})}(t_{|\lambda^{(i_{p})}|}-\zeta_{l})\cdots \prod_{l\neq r+1-(r+1-i_{1})}(t_{n}-\zeta_{l})\cdot E_{I_{p}'}E_{I_{p-1}'}\cdots E_{I_{1}'}.
\end{align}

By \eqref{young-module-5-15}, Lemma \ref{2-1-lemmaaa}, together with the fact that $g_{w}t_{k}=t_{(k)w^{-1}}g_{w},$ we get that
\begin{equation}\label{young-module-5-16}
g_{w_{\bm{\lambda}}}u'_{\bm{\lambda}'}E_{A_{\bm{\lambda}'}}=u_{\bm{\lambda}}E_{A_{\bm{\lambda}}}g_{w_{\bm{\lambda}}}.
\end{equation}

By \eqref{important-identity} and \eqref{young-module-5-16}, we have
\begin{equation}\label{young-module-5-17}
\langle m_{\mathfrak{s}\mathfrak{t}}, n_{\mathfrak{s}'\mathfrak{t}'}\rangle=\tau(E_{A_{\bm{\lambda}}}u_{\bm{\lambda}}U_{\bm{\lambda}}g_{w_{\bm{\lambda}}}y_{\bm{\lambda}'}g_{w_{\bm{\lambda}'}}x_{\bm{\lambda}}).
\end{equation}

By [ER, Lemma 10(49)], we have
\begin{equation}\label{young-module-5-18}
t_{i}U_{\bm{\lambda}}=\zeta_{i_{k}}U_{\bm{\lambda}} \text{ if }i\in I_{k} \text{ for some }1\leq k\leq p.
\end{equation}
Recall that $S=\{\zeta_{1},\zeta_{2},\ldots,\zeta_{r}\}.$ We also have, for any fixed $r$-th root of unity $\xi$,
\begin{equation}\label{young-module-5-19}\prod_{\xi\neq \alpha\in S}(\xi-\alpha)=r\xi^{-1}.
\end{equation}
By \eqref{young-module-5-18} and \eqref{young-module-5-19}, we get that
\begin{equation}\label{young-module-5-20}
u_{\bm{\lambda}}U_{\bm{\lambda}}=r^{p}(\zeta_{i_1}\cdots \zeta_{i_{p}})^{-1}U_{\bm{\lambda}}.
\end{equation}

Combining \eqref{young-module-5-17} and \eqref{young-module-5-20}, we get that
\begin{equation}\label{young-module-5-21}
\langle m_{\mathfrak{s}\mathfrak{t}}, n_{\mathfrak{s}'\mathfrak{t}'}\rangle=r^{p}(\zeta_{i_1}\cdots \zeta_{i_{p}})^{-1}\tau(E_{A_{\bm{\lambda}}}u_{\bm{\lambda}}g_{w_{\bm{\lambda}}}y_{\bm{\lambda}'}g_{w_{\bm{\lambda}'}}x_{\bm{\lambda}}).
\end{equation}

By \eqref{trace-form} and $\zeta_{1}\cdots \zeta_{r}=(-1)^{r-1}$, we have
\begin{align}\label{young-module-5-22}
\tau(E_{A_{\bm{\lambda}}}u_{\bm{\lambda}})&=\prod_{k=1}^{p}\frac{1}{r^{|\lambda^{(i_{k})}|\choose 2 }}\cdot \prod_{k=1}^{p}\Big(\prod_{l\neq i_{k}}(-1)^{r-1}\zeta_{l}\Big)\notag\\
&=(\zeta_{i_1}\cdots \zeta_{i_{p}})^{-1}\frac{1}{r^{\sum_{k=1}^{p}\binom{c_k}{2}}}.
\end{align}

By \eqref{young-module-5-21} and \eqref{young-module-5-22}, we have
\begin{equation}\label{young-module-5-23}
\langle m_{\mathfrak{s}\mathfrak{t}}, n_{\mathfrak{s}'\mathfrak{t}'}\rangle=\frac{r^{p}(\zeta_{i_1}\cdots \zeta_{i_{p}})^{-2}}{r^{\sum_{k=1}^{p}\binom{c_k}{2}}}\tau(g_{w_{\bm{\lambda}}}y_{\bm{\lambda}'}g_{w_{\bm{\lambda}'}}x_{\bm{\lambda}}).
\end{equation}
Since $\mathfrak{S}_{\bm{\lambda}}\cap {}^{w_{\bm{\lambda}}}\!\hspace{0.8mm}\mathfrak{S}_{\bm{\lambda}'}=\{1\}$ and $w_{\bm{\lambda}}=w_{\bm{\lambda}'}^{-1}$ is a distinguished $(\mathfrak{S}_{\bm{\lambda}}, \mathfrak{S}_{\bm{\lambda}'})$-double coset representative, we have
\begin{align*}
g_{w_{\bm{\lambda}}}y_{\bm{\lambda}'}g_{w_{\bm{\lambda}'}}x_{\bm{\lambda}}&=\sum_{\substack{u\in \mathfrak{S}_{\bm{\lambda}'}\\v\in \mathfrak{S}_{\bm{\lambda}}}}g_{w_{\bm{\lambda}}}\cdot(-q)^{-l(u)}g_{u}\cdot g_{w_{\bm{\lambda}}^{-1}}\cdot q^{l(v)}g_{v}
\\&
=\sum_{\substack{u\in \mathfrak{S}_{\bm{\lambda}'}\\v\in \mathfrak{S}_{\bm{\lambda}}}}(-1)^{l(u)}q^{l(v)-l(u)}g_{w_{\bm{\lambda}}}g_{uw_{\bm{\lambda}}^{-1}v}.
\end{align*}
Thus, we get $\tau(g_{w_{\bm{\lambda}}}y_{\bm{\lambda}'}g_{w_{\bm{\lambda}'}}x_{\bm{\lambda}})=1.$ Therefore, by \eqref{young-module-5-23}, we have
\begin{equation*}
\langle m_{\mathfrak{s}\mathfrak{t}}, n_{\mathfrak{s}'\mathfrak{t}'}\rangle=\frac{r^{p}(\zeta_{i_1}\cdots \zeta_{i_{p}})^{-2}}{r^{\sum_{k=1}^{p}\binom{c_k}{2}}}.
\end{equation*}
We have proved the theorem.
\end{proof}

\begin{remark}
Theorem \ref{idempotent-theorem-5-18} implies that $\tau$ is non-degenerate. Consequently, $\text{Y}_{r,n}$ is a symmetric algebra.
\end{remark}
The next corollary can be proved in exactly the same way as in [Ma2, Corollary 5.7] using Theorem \ref{idempotent-theorem-5-18}, which justify the term dual Specht module.

\begin{corollary}\label{idempotent-corollary-5-20}
Suppose that $\bm{\lambda}\in \mathcal{P}_{r,n}.$ Then $S^{\bm{\lambda}'}\cong S_{\bm{\lambda}}^{\circledast}.$
\end{corollary}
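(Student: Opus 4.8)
The plan is to construct an explicit isomorphism $\Theta\colon S^{\lambda'}\xrightarrow{\ \sim\ }S_{\lambda}^{\circledast}$ of right $Y_{r,n}$-modules, following the argument of [Ma2, Corollary 5.7] with Theorem 5.15 in place of the analogous unitriangularity statement used there. Since a right module $M$ satisfies $M\cong M^{\circledast}$ exactly when it carries a non-degenerate bilinear form $\langle\,\cdot\,,\,\cdot\,\rangle$ with $\langle xh,y\rangle=\langle x,yh^{\ast}\rangle$, it suffices to produce such a non-degenerate $\ast$-associative $\mathcal{R}$-bilinear pairing between $S^{\lambda'}$ and $S_{\lambda}$; the symmetric associative trace form $\langle h_{1},h_{2}\rangle=\tau(h_{1}h_{2}^{\ast})$ on $Y_{r,n}$ together with Theorem 5.15 will deliver it.

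First I would fix convenient realizations of the two cell modules. Keep $S_{\lambda}$ as above, the submodule of $Y_{r,n}/Y^{r,n}_{\rhd\lambda}$ generated by $\overline{n_{\mathfrak{t}^{\lambda}\mathfrak{t}^{\lambda}}}$, with $\mathcal{R}$-basis $\{\overline{n_{\mathfrak{t}^{\lambda}\mathfrak{w}}}\mid\mathfrak{w}\in\mathrm{Std}(\lambda)\}$. Realize $S^{\lambda'}$ instead via the cellular basis element $m_{\mathfrak{t}_{\lambda'}\mathfrak{t}_{\lambda'}}=g_{w_{\lambda'}}^{\ast}m_{\lambda'}g_{w_{\lambda'}}$, i.e. as the right submodule of $Y_{r,n}/Y_{r,n}^{\rhd\lambda'}$ generated by $\overline{m_{\mathfrak{t}_{\lambda'}\mathfrak{t}_{\lambda'}}}$, with $\mathcal{R}$-basis $\{\overline{m_{\mathfrak{t}_{\lambda'}\mathfrak{t}}}\mid\mathfrak{t}\in\mathrm{Std}(\lambda')\}$; by the cellular property (Theorem 2.5(ii)) this is canonically isomorphic to the Specht module $S^{\lambda'}$, and the point of this choice is that its distinguished first index $\mathfrak{t}_{\lambda'}=(\mathfrak{t}^{\lambda})'$ is the conjugate of the distinguished first index $\mathfrak{t}^{\lambda}$ of $S_{\lambda}$. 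Now set $\langle\overline{a},\overline{b}\rangle:=\tau(ab^{\ast})$ for representatives $a,b\in Y_{r,n}$. To see this is well defined, note that a representative of an element of $S^{\lambda'}$ is determined modulo $Y_{r,n}^{\rhd\lambda'}$ and one of $S_{\lambda}$ modulo $Y^{r,n}_{\rhd\lambda}$; by Theorem 5.15, $\langle m_{\mathfrak{u}\mathfrak{v}},n_{\mathfrak{c}\mathfrak{d}}\rangle=0$ whenever $m_{\mathfrak{u}\mathfrak{v}}$ has shape $\nu$ with $\nu\rhd\lambda'$ and $n_{\mathfrak{c}\mathfrak{d}}$ has shape $\lambda$ (since then the shape $\lambda'$ of $\mathfrak{c}'$ fails to dominate $\nu$), and symmetrically $\langle m_{\mathfrak{u}\mathfrak{v}},n_{\mathfrak{c}\mathfrak{d}}\rangle=0$ when $m_{\mathfrak{u}\mathfrak{v}}$ has shape $\lambda'$ and $n_{\mathfrak{c}\mathfrak{d}}$ has shape $\sigma\rhd\lambda$; here one uses the combinatorial fact $\nu\rhd\sigma\iff\sigma'\rhd\nu'$. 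The $\ast$-associativity $\langle xh,y\rangle=\langle x,yh^{\ast}\rangle$ is inherited directly from the trace form together with $\tau(h^{\ast})=\tau(h)$, so $x\mapsto\langle x,\,\cdot\,\rangle$ defines a homomorphism $\Theta\colon S^{\lambda'}\to S_{\lambda}^{\circledast}$ of right $Y_{r,n}$-modules.

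It then remains to check that $\Theta$ is bijective. Both $S^{\lambda'}$ and $S_{\lambda}^{\circledast}$ are free $\mathcal{R}$-modules of rank $|\mathrm{Std}(\lambda')|=|\mathrm{Std}(\lambda)|$, conjugation of tableaux being a bijection $\mathrm{Std}(\lambda')\to\mathrm{Std}(\lambda)$, so it suffices to show the Gram matrix of the pairing, with rows indexed by $\mathfrak{t}\in\mathrm{Std}(\lambda')$ and columns by $\mathfrak{w}\in\mathrm{Std}(\lambda)$, is invertible over $\mathcal{R}$. Using $\mathfrak{t}_{\lambda'}=(\mathfrak{t}^{\lambda})'$ and Theorem 5.15, the $(\mathfrak{t},\mathfrak{w})$-entry $\langle\overline{m_{\mathfrak{t}_{\lambda'}\mathfrak{t}}},\overline{n_{\mathfrak{t}^{\lambda}\mathfrak{w}}}\rangle$ equals $1$ when $\mathfrak{w}=\mathfrak{t}'$ and vanishes unless $\mathfrak{w}'\unrhd\mathfrak{t}$; since conjugation reverses the dominance order, matching the bases through $\mathfrak{t}\leftrightarrow\mathfrak{t}'$ and ordering $\mathrm{Std}(\lambda')$ and $\mathrm{Std}(\lambda)$ compatibly with dominance exhibits this matrix as unitriangular with $1$'s on the diagonal, hence of determinant $\pm1$. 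Therefore $\Theta$ is an isomorphism, so $S^{\lambda'}\cong S_{\lambda}^{\circledast}$.

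The main obstacle is the bookkeeping in the middle steps: one must pick generators of the two cell modules so that their distinguished indices are conjugate (equivalently, identify the bases through $\mathfrak{t}\mapsto\mathfrak{t}'$), since the naive pairing between the plain realizations of $S^{\lambda'}$ and $S_{\lambda}$ turns out to be degenerate, and one must then check that Theorem 5.15 genuinely forces the vanishing needed both for well-definedness of the pairing and for the triangular shape of the Gram matrix. These points all rest on feeding Theorem 5.15 the correct conjugation/dominance compatibilities recorded in Subsection 5.2 (and Remark 5.12); by contrast the module-theoretic steps — $\ast$-associativity of the pairing, the rank count, and passing from a unitriangular Gram matrix to an isomorphism — are routine.
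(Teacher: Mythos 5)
Your proposal is correct and follows essentially the same route as the paper, which proves Corollary 5.16 ``in exactly the same way as [Ma2, Corollary 5.7] using Theorem 5.15'': you pair $S^{\lambda'}$ (realized with the conjugate distinguished row index $\mathfrak{t}_{\lambda'}=(\mathfrak{t}^{\lambda})'$) against $S_{\lambda}$ via the trace form, use the vanishing in Theorem 5.15 together with the reversal of dominance under conjugation to get well-definedness and a unitriangular Gram matrix, and conclude. The details you supply — in particular the choice of generator making the distinguished indices conjugate, which is exactly what makes the diagonal entries equal to $1$ — are the ones the paper leaves implicit, and they check out.
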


If $\mathrm{S}=(S^{(1)},\ldots, S^{(r)})$ is a $\bm{\lambda}$-tableau of type $\bm{\mu}$, we define the conjugate of $\mathrm{S}$ by $\mathrm{S}'=((S^{(r)})',\ldots, (S^{(1)})')$ which is a $\bm{\lambda}'$-tableau of type $\bm{\mu},$ where $(S^{(j)})'$ is the tableau obtained by interchanging the rows and columns of $S^{(j)}$ for each $j.$ A $\bm{\lambda}$-tableau $\mathrm{S}$ is called column semistandard if $\mathrm{S}'$ is semistandard. For $\bm{\lambda}\in \mathcal{P}_{r,n}$ and $\bm{\mu}\in \mathcal{C}_{r,n},$ let $\mathcal{T}^{\mathrm{cs}}(\bm{\lambda}, \bm{\mu})=\{\mathrm{S}\:|\:\mathrm{S}'\in \mathcal{T}_{0}^{+}(\bm{\lambda}', \bm{\mu})\}.$

The proof of the next lemma is in exactly the same way as that of [Ma2, Lemma 5.8] by making use of Lemma \ref{idempotent-lemma-5-17}. We skip the details.
\begin{lemma}\label{idempotent-lemma-5-21}
Suppose that $\bm{\mu}\in \mathcal{C}_{r,n}, \bm{\lambda}\in \mathcal{P}_{r,n}$ and that $m_{\bm{\mu}}n_{\mathfrak{u}\mathfrak{v}}\neq 0$ or $n_{\bm{\mu}}m_{\mathfrak{u}\mathfrak{v}}\neq 0$ for some standard $\bm{\lambda}$-tableaux $\mathfrak{u}$ and $\mathfrak{v}.$ Then $\bm{\mu}(\mathfrak{u})$ is column semistandard and $\alpha(\bm{\lambda}')=\alpha(\bm{\mu});$ that is, $\bm{\mu}(\mathfrak{u})\in \mathcal{T}^{\mathrm{cs}}(\bm{\lambda}, \bm{\mu}).$
\end{lemma}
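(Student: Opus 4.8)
The plan is to follow the proof of [Ma2, Lemma 5.8], the key algebraic input being Lemma 5.14. I would first reduce to a single case. Since $m_{\mu}'=n_{\mu}$ and $m_{\mathfrak{u}\mathfrak{v}}'=n_{\mathfrak{u}\mathfrak{v}}$ (as $U_{\mu}'=U_{\mu}$ and $x_{\mu}'=y_{\mu}$), we have $(n_{\mu}m_{\mathfrak{u}\mathfrak{v}})'=m_{\mu}n_{\mathfrak{u}\mathfrak{v}}$; because $'$ is a bijection and the conclusion mentions neither $q$ nor the distinction between the $m$- and $n$-bases, the usual specialization argument reduces the statement to the case $m_{\mu}n_{\mathfrak{u}\mathfrak{v}}\neq 0$.

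Next I would expand $m_{\mu}$ in the cellular basis $\{m_{\mathfrak{s}\mathfrak{t}}\}$ of $Y_{r,n}$, say $m_{\mu}=\sum r_{\mathfrak{s}\mathfrak{t}}\,m_{\mathfrak{s}\mathfrak{t}}$. Since $m_{\mu}^{\ast}=m_{\mu}$ the array $(r_{\mathfrak{s}\mathfrak{t}})$ is symmetric, and since $m_{\mu}\in M^{\mu}$ while each $m_{S\mathfrak{t}}$ is a linear combination of $m_{\mathfrak{s}\mathfrak{t}}$'s with $\mu(\mathfrak{s})=S$, Proposition 3.3 shows that $r_{\mathfrak{s}\mathfrak{t}}\neq 0$ forces $\mathfrak{s}$ and $\mathfrak{t}$ to share a shape $\rho\in\mathcal{P}_{r,n}$ with $\alpha(\rho)=\alpha(\mu)$ and both $\mu(\mathfrak{s}),\mu(\mathfrak{t})\in\mathcal{T}_{0}^{+}(\rho,\mu)$; in particular $\mu(\mathfrak{t})$ is semistandard.

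Then I would apply Lemma 5.14. If $m_{\mu}n_{\mathfrak{u}\mathfrak{v}}\neq 0$, at least one summand $m_{\mathfrak{s}\mathfrak{t}}n_{\mathfrak{u}\mathfrak{v}}$ with $r_{\mathfrak{s}\mathfrak{t}}\neq 0$ is nonzero; applying Lemma 5.14 to $m_{\mathfrak{s}\mathfrak{t}}$ (of shape $\rho$) and $n_{\mathfrak{u}\mathfrak{v}}$ (of shape $\lambda$) then gives $\alpha(\lambda')=\alpha(\rho)=\alpha(\mu)$, which is the required equality of types, and also $\mathfrak{u}'\unrhd\mathfrak{t}$.

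It remains to show that $\mu(\mathfrak{u})$ is column semistandard, i.e. that $\mu(\mathfrak{u})'=\mu(\mathfrak{u}')$ is a semistandard $\lambda'$-tableau of type $\mu$. Because the components of $\mathfrak{u}'$ and of $\mathfrak{t}$ have entry-sets fixed by $\alpha(\mu)$, this reduces componentwise to the case $r=1$, where it is the combinatorial fact that if $\mathfrak{b}\unlhd\mathfrak{a}$ are standard tableaux whose shapes have the same part sizes as $\mu$ and $\mu(\mathfrak{b})$ is semistandard, then $\mu(\mathfrak{a})$ is semistandard; since $\mu(\mathfrak{a})$ is automatically row semistandard, only column strictness has to be checked, which is done exactly as in [Ma2, Lemma 5.8] by tracking positions under the dominance order. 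I expect this final combinatorial step to be the main obstacle: one must be careful to work with the stronger order $\unrhd$ and the extra condition $\alpha(\mu)=\alpha(\lambda)$ of Remark 5.12 (rather than $\blacktriangleright$), so that the types remain aligned; once this is in place the remaining arguments merely recombine the cited results.
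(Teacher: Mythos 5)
Your proposal is correct and follows exactly the route the paper indicates: the paper supplies no details here, saying only that the lemma is proved ``in exactly the same way as [Ma2, Lemma 5.8] by making use of Lemma 5.14,'' and your argument is a faithful implementation of that plan. The one detail you supply that the citation glosses over --- expanding $m_{\mu}$ in the cellular basis via Proposition 3.7 so that Lemma 5.14 (which is stated only for standard tableaux of multipartitions, unlike its row-standard analogue in [Ma2]) becomes applicable, and then obtaining column semistandardness of $\mu(\mathfrak{u})$ from $\mathfrak{u}'\unrhd\mathfrak{t}\unrhd\mathfrak{t}^{\mu}$ by transitivity together with the classical equivalence between $\mathfrak{a}\unrhd\mathfrak{t}^{\mu}$ and semistandardness of $\mu(\mathfrak{a})$ --- is exactly what is needed to make the cited strategy go through.
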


\begin{remark}
As mentioned in [HM1, p. 15], it is unfortunate that Mathas confused the two partial orders $\trianglerighteq$ and $\Gedom$ on $\mathrm{Std}^{2}(\mathcal{P}_{r,n})$ in [Ma2] and [Ma3]. Anyhow, we can adapt the approach in [Ma3, Section 3] to get Proposition \ref{idempotent-property-5-14} and then Lemmas \ref{idempotent-lemma-5-17} and \ref{idempotent-lemma-5-21}. We leave the details to the reader; see also [HM1, Section 2] for details.
\end{remark}

If $\mathrm{S}\in \mathcal{T}^{\mathrm{cs}}(\bm{\lambda}, \bm{\mu}),$ let $\dot{\mathrm{S}}$ be the unique standard $\bm{\lambda}$-tableau such that $\bm{\mu}(\dot{\mathrm{S}})=\mathrm{S}$ and $d(\dot{\mathrm{S}})$ is a distinguished $(\mathfrak{S}_{\bm{\lambda}}, \mathfrak{S}_{\bm{\mu}})$-double coset representative; that is, $d(\dot{\mathrm{S}})$ is the unique element of minimal length in its double coset.

\begin{proposition}\label{idempotent-proposition-5-23}
Suppose that $\bm{\mu}\in \mathcal{C}_{r,n}.$ Then $M^{\bm{\mu}}$ is free as an $\mathcal{R}$-module with basis $\{m_{\bm{\mu}}n_{\dot{\mathrm{S}}\mathfrak{t}}\:|\:\mathrm{S}\in \mathcal{T}^{\mathrm{cs}}(\bm{\lambda}, \bm{\mu})$ and $\mathfrak{t}\in \mathrm{Std}(\bm{\lambda})$ for some $\bm{\lambda}\in \mathcal{P}_{r,n}\}$ and $N^{\bm{\mu}}$ is free as an $\mathcal{R}$-module with basis $\{n_{\bm{\mu}}m_{\dot{\mathrm{S}}\mathfrak{t}}\:|\:\mathrm{S}\in \mathcal{T}^{\mathrm{cs}}(\bm{\lambda}, \bm{\mu})$ and $\mathfrak{t}\in \mathrm{Std}(\bm{\lambda})$ for some $\bm{\lambda}\in \mathcal{P}_{r,n}\}.$
\end{proposition}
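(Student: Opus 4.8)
The plan is to prove the assertion for $M^{\mu}$ and then obtain the one for $N^{\mu}$ by applying the involution $'$, which interchanges $m_{\mu}$ with $n_{\mu}$ and $n_{\dot{S}\mathfrak{t}}$ with $m_{\dot{S}\mathfrak{t}}$, carries $M^{\mu}$ onto $N^{\mu}$, and hence sends $m_{\mu}n_{\dot{S}\mathfrak{t}}$ to $n_{\mu}m_{\dot{S}\mathfrak{t}}$ (equivalently, one repeats the argument below with $m$ and $n$, and $x_{\mu}$ and $y_{\mu}$, interchanged, using $n_{\mu}g_{w}=(-q)^{-l(w)}n_{\mu}$ for $w\in\mathfrak{S}_{\mu}$ in place of $m_{\mu}g_{w}=q^{l(w)}m_{\mu}$). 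Each $m_{\mu}n_{\dot{S}\mathfrak{t}}$ visibly lies in $m_{\mu}Y_{r,n}=M^{\mu}$, so the first step is a counting step: conjugation gives bijections $\mathcal{T}^{cs}(\lambda,\mu)\xrightarrow{\sim}\mathcal{T}_{0}^{+}(\lambda',\mu)$ (this is the definition of $\mathcal{T}^{cs}$) and $\mathrm{Std}(\lambda)\xrightarrow{\sim}\mathrm{Std}(\lambda')$, so summing over $\lambda\in\mathcal{P}_{r,n}$ and re-indexing via $\nu=\lambda'$ shows that the proposed set has exactly $\sum_{\nu\in\mathcal{P}_{r,n}}\sharp\mathcal{T}_{0}^{+}(\nu,\mu)\cdot\sharp\mathrm{Std}(\nu)$ elements, which by Proposition 3.3 equals the rank of the free $\mathcal{R}$-module $M^{\mu}$. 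Since a spanning set of a free module over the commutative ring $\mathcal{R}$ whose cardinality equals the rank is automatically a basis, it then suffices to prove that $\{m_{\mu}n_{\dot{S}\mathfrak{t}}\}$ spans $M^{\mu}$.

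For the spanning step I would expand elements of $M^{\mu}=m_{\mu}Y_{r,n}$ using the cellular basis $\{n_{\mathfrak{a}\mathfrak{b}}\}$ of $Y_{r,n}$: thus $M^{\mu}$ is spanned by all $m_{\mu}n_{\mathfrak{a}\mathfrak{b}}$ with $\mathfrak{a},\mathfrak{b}\in\mathrm{Std}(\nu)$ and $\nu\in\mathcal{P}_{r,n}$, and I want to show each of these is a scalar multiple of some $m_{\mu}n_{\dot{S}\mathfrak{t}}$ (or zero). If $\mu(\mathfrak{a})\notin\mathcal{T}^{cs}(\nu,\mu)$, then $m_{\mu}n_{\mathfrak{a}\mathfrak{b}}=0$ by Lemma 5.17. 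Otherwise put $S=\mu(\mathfrak{a})\in\mathcal{T}^{cs}(\nu,\mu)$. Since $\mu(\mathfrak{a})=\mu(\dot{S})$, the elements $d(\mathfrak{a})$ and $d(\dot{S})$ lie in the same $(\mathfrak{S}_{\nu},\mathfrak{S}_{\mu})$-double coset of $\mathfrak{S}_{n}$; as both are minimal-length representatives of their left $\mathfrak{S}_{\nu}$-cosets (being attached to row-standard tableaux) and $d(\dot{S})$ is moreover minimal in the whole double coset, the standard combinatorics of parabolic double cosets yields $d(\mathfrak{a})=d(\dot{S})z$ with $z\in\mathfrak{S}_{\mu}$ and $l(d(\mathfrak{a}))=l(d(\dot{S}))+l(z)$. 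Consequently $g_{d(\mathfrak{a})}^{\ast}=g_{z}^{\ast}g_{d(\dot{S})}^{\ast}=g_{z^{-1}}g_{d(\dot{S})}^{\ast}$, and using $m_{\mu}g_{w}=q^{l(w)}m_{\mu}$ for $w\in\mathfrak{S}_{\mu}$ (as recorded in the proof of Lemma 3.1) we get
$$m_{\mu}n_{\mathfrak{a}\mathfrak{b}}=m_{\mu}g_{d(\mathfrak{a})}^{\ast}n_{\nu}g_{d(\mathfrak{b})}=q^{l(z)}\,m_{\mu}g_{d(\dot{S})}^{\ast}n_{\nu}g_{d(\mathfrak{b})}=q^{l(z)}\,m_{\mu}n_{\dot{S}\mathfrak{b}},$$
which is the desired scalar multiple of a claimed basis element (indeed $S\in\mathcal{T}^{cs}(\nu,\mu)$ and $\mathfrak{b}\in\mathrm{Std}(\nu)$). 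Hence $\{m_{\mu}n_{\dot{S}\mathfrak{t}}\}$ spans $M^{\mu}$, and with the count above it is an $\mathcal{R}$-basis; applying $'$ gives the statement for $N^{\mu}$.

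The part I expect to require the most care is the double-coset bookkeeping in the middle step: verifying that $\mu(\mathfrak{a})=\mu(\dot{S})$ really pins down the $(\mathfrak{S}_{\nu},\mathfrak{S}_{\mu})$-double coset of $d(\mathfrak{a})$, and that the row-standard representatives inside that double coset are precisely the $d(\dot{S})z$ with $z\in\mathfrak{S}_{\mu}$ and lengths adding. These are routine facts about Young subgroups of $\mathfrak{S}_{n}$ — already implicit in the discussion around $(3.1)$ and in the definition of $\dot{S}$ — but they must be applied in exactly the right form and combined with the Yokonuma-specific identity $m_{\mu}g_{w}=q^{l(w)}m_{\mu}$, whose proof depends on the idempotent $E_{A_{\mu}}$ in $m_{\mu}=u_{\mu}E_{A_{\mu}}x_{\mu}$ absorbing the extra $e$-terms generated by $g_{i}^{2}=1+(q-q^{-1})e_{i}g_{i}$; all of this goes exactly as in the proof of Lemma 3.1. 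The remainder of the argument is formal.
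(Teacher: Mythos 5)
Your proof is correct and follows essentially the same route as the paper: expand $M^{\mu}=m_{\mu}Y_{r,n}$ in the cellular basis $\{n_{\mathfrak{a}\mathfrak{b}}\}$, kill the terms with $\mu(\mathfrak{a})\notin\mathcal{T}^{cs}(\nu,\mu)$ via Lemma 5.17, use the minimality of $d(\dot{S})$ in its $(\mathfrak{S}_{\nu},\mathfrak{S}_{\mu})$-double coset to replace $d(\mathfrak{a})$ by $d(\dot{S})$ up to a power of $q$, and then compare the cardinality of the resulting spanning set with the rank of $M^{\mu}$ from Proposition 3.3. Your factorization $d(\mathfrak{a})=d(\dot{S})z$ with $z\in\mathfrak{S}_{\mu}$ and lengths adding, giving the explicit scalar $q^{l(z)}$, is a slightly more detailed version of the paper's ``$\pm q^{a}$'' step, and the treatment of $N^{\mu}$ by the involution $'$ (or by symmetry) matches the paper's ``proved in a similar way.''
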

\begin{proof}
We only prove the claim for $M^{\bm{\mu}}.$ Recall that $\{n_{\mathfrak{s}\mathfrak{t}}\}$ is an $\mathcal{R}$-basis of $\text{Y}_{r,n},$ so $M^{\bm{\mu}}$ is spanned by the elements $m_{\bm{\mu}}n_{\mathfrak{s}\mathfrak{t}},$ where $(\mathfrak{s}, \mathfrak{t})\in \mathrm{Std}^{2}(\mathcal{P}_{r,n}).$ Furthermore, if $m_{\bm{\mu}}n_{\mathfrak{s}\mathfrak{t}}\neq 0$ then $\bm{\mu}(\mathfrak{s})$ is column semistandard and $\alpha(\bm{\lambda}')=\alpha(\bm{\mu})$ if $(\mathfrak{s}, \mathfrak{t})\in \mathrm{Std}^{2}(\bm{\lambda})$ by Lemma \ref{idempotent-lemma-5-21}. Hence, $M^{\bm{\mu}}$ is spanned by the elements $m_{\bm{\mu}}n_{\mathfrak{s}\mathfrak{t}},$ where $\bm{\mu}(\mathfrak{s})$ is column semistandard and $(\mathfrak{s}, \mathfrak{t})\in \mathrm{Std}^{2}(\bm{\lambda})$ for various $\bm{\lambda}\in \mathcal{P}_{r,n}$ with $\alpha(\bm{\lambda}')=\alpha(\bm{\mu}).$

For each such element $m_{\bm{\mu}}n_{\mathfrak{s}\mathfrak{t}},$ where $(\mathfrak{s}, \mathfrak{t})\in \mathrm{Std}^{2}(\bm{\nu})$ with $\alpha(\bm{\nu}')=\alpha(\bm{\mu}).$ Since $\bm{\mu}(\mathfrak{s})=\mathrm{S}$ is column semistandard, we choose $\dot{\mathrm{S}}\in \mathrm{Std}(\bm{\nu})$ as above and get that $\bm{\mu}(\mathfrak{t}^{\bm{\nu}}d(\mathfrak{s}))=\bm{\mu}(\mathfrak{t}^{\bm{\nu}}d(\dot{\mathrm{S}})).$ Thus, $d(\mathfrak{s})$ and $d(\dot{\mathrm{S}})$ lie in the same $(\mathfrak{S}_{\bm{\nu}}, \mathfrak{S}_{\bm{\mu}})$-double coset. By definition $d(\dot{\mathrm{S}})$ is the unique element of minimal length in its double coset, therefore we get $m_{\bm{\mu}}g_{d(\mathfrak{s})}^{\ast}n_{\bm{\nu}}=\pm q^{a}m_{\bm{\mu}}g_{d(\dot{\mathrm{S}})}^{\ast}n_{\bm{\nu}}$ for some integer $a.$ Because $M^{\bm{\mu}}$ is $\mathcal{R}$-free and the number of elements in our spanning set is exactly the rank of $M^{\bm{\mu}},$ thus we have proved the first claim. The second statement can be proved in a similar way.
\end{proof}

Combining Lemma \ref{idempotent-lemma-5-21} and Proposition \ref{idempotent-proposition-5-23}, we get the next result.

\begin{corollary}\label{idempotent-corollary-5-24}
Suppose that $\bm{\mu}\in \mathcal{C}_{r,n}, \bm{\lambda}\in \mathcal{P}_{r,n}$ and that $\mathfrak{u}$ and $\mathfrak{v}$ are two standard $\bm{\lambda}$-tableaux. Then $m_{\bm{\mu}}n_{\mathfrak{u}\mathfrak{v}}\neq 0$ if and only if $\bm{\mu}(\mathfrak{u})$ is column semistandard and $\alpha(\bm{\lambda}')=\alpha(\bm{\mu}).$ Similarly, $n_{\bm{\mu}}m_{\mathfrak{u}\mathfrak{v}}\neq 0$ if and only if $\bm{\mu}(\mathfrak{u})$ is column semistandard and $\alpha(\bm{\lambda}')=\alpha(\bm{\mu}).$
\end{corollary}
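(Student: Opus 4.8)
The plan is to handle the two directions separately, the forward one being immediate from the machinery already in place. First I would observe that if $m_{\mu}n_{\mathfrak{u}\mathfrak{v}}\neq 0$ (resp. $n_{\mu}m_{\mathfrak{u}\mathfrak{v}}\neq 0$), then Lemma 5.17 applied with these particular $\mathfrak{u}$ and $\mathfrak{v}$ yields at once that $\mu(\mathfrak{u})$ is column semistandard and $\alpha(\lambda')=\alpha(\mu)$, i.e. $\mu(\mathfrak{u})\in \mathcal{T}^{cs}(\lambda, \mu)$; so nothing needs to be done for this implication.

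For the converse I would reduce an arbitrary candidate to a basis element supplied by Proposition 5.19. Assume $S:=\mu(\mathfrak{u})$ is column semistandard and $\alpha(\lambda')=\alpha(\mu)$, so that $S\in \mathcal{T}^{cs}(\lambda, \mu)$ and the standard $\lambda$-tableau $\dot{S}$ is defined. Since $\mu(\mathfrak{u})=\mu(\dot{S})$, the elements $d(\mathfrak{u})$ and $d(\dot{S})$ lie in the same $(\mathfrak{S}_{\lambda}, \mathfrak{S}_{\mu})$-double coset, and $d(\dot{S})$ is, by construction, the unique element of minimal length in that double coset. Exactly the computation carried out in the proof of Proposition 5.19 --- in which $m_{\mu}$ absorbs the $\mathfrak{S}_{\mu}$-component of $d(\mathfrak{u})$ up to a power of $q$, and $n_{\lambda}$ absorbs the $\mathfrak{S}_{\lambda}$-component up to a sign and a power of $q$ --- then shows that $m_{\mu}g_{d(\mathfrak{u})}^{\ast}n_{\lambda}=\pm q^{a}\,m_{\mu}g_{d(\dot{S})}^{\ast}n_{\lambda}$ for some integer $a$. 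Right-multiplying by $g_{d(\mathfrak{v})}$ gives $m_{\mu}n_{\mathfrak{u}\mathfrak{v}}=\pm q^{a}\,m_{\mu}n_{\dot{S}\mathfrak{v}}$. By Proposition 5.19 the element $m_{\mu}n_{\dot{S}\mathfrak{v}}$ belongs to an $\mathcal{R}$-basis of $M^{\mu}$ (here one only uses that $\mathfrak{v}\in \mathrm{Std}(\lambda)$, which is part of the hypothesis), so it is nonzero; since $\pm q^{a}$ is a unit of $\mathcal{R}$, it follows that $m_{\mu}n_{\mathfrak{u}\mathfrak{v}}\neq 0$.

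The second statement, concerning $n_{\mu}m_{\mathfrak{u}\mathfrak{v}}$, would be proved in exactly the same fashion: the forward implication is the second half of Lemma 5.17, and for the converse one uses the $\mathcal{R}$-basis $\{n_{\mu}m_{\dot{S}\mathfrak{t}}\}$ of $N^{\mu}$ from Proposition 5.19 together with the analogous reduction $n_{\mu}g_{d(\mathfrak{u})}^{\ast}m_{\lambda}=\pm q^{b}\,n_{\mu}g_{d(\dot{S})}^{\ast}m_{\lambda}$. The only point demanding attention is this reduction step --- one must be sure that the sign and the power of $q$ are genuine, and that no cancellation occurs when the $\mathfrak{S}_{\mu}$- and $\mathfrak{S}_{\lambda}$-components of $d(\mathfrak{u})$ are absorbed --- but this is precisely the book-keeping with minimal-length double coset representatives and the eigenvalue relations $m_{\mu}g_{w}=q^{l(w)}m_{\mu}$ for $w\in\mathfrak{S}_{\mu}$ (and its counterpart for $n_{\lambda}$) already settled inside the proof of Proposition 5.19, so no new argument is required.
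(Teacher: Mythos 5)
Your proposal is correct and follows exactly the route the paper intends: the paper's proof of this corollary is literally the single sentence ``Combining Lemma 5.17 and Proposition 5.19,'' and you have simply spelled out the implicit details --- the forward implication from Lemma 5.17, and the converse by the double-coset reduction $m_{\mu}g_{d(\mathfrak{u})}^{\ast}n_{\lambda}=\pm q^{a}m_{\mu}g_{d(\dot{S})}^{\ast}n_{\lambda}$ already carried out in the proof of Proposition 5.19, which exhibits $m_{\mu}n_{\mathfrak{u}\mathfrak{v}}$ as a unit multiple of a basis element of $M^{\mu}$.
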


Using Proposition \ref{idempotent-proposition-5-23} we can get the following result by repeating the argument of Lemma \ref{tiltmod-lemma5-2}.

\begin{corollary}\label{idempotent-corollary-5-25}
Suppose that $\bm{\mu}\in \mathcal{C}_{r,n}.$ Then there exist filtrations $$M^{\bm{\mu}}=H^{1}\supset H^{2}\supset \cdots\supset H^{k}\supset H^{k+1}=0~and~ N^{\bm{\mu}}=H_{1}\supset H_{2}\supset \cdots\supset H_{k}\supset H_{k+1}=0$$ of $M^{\bm{\mu}}$ and $N^{\bm{\mu}},$ respectively, and $r$-partitions $\bm{\lambda}_{1},\ldots,\bm{\lambda}_{k}$ such that $\bm{\mu}'\trianglerighteq \bm{\lambda}_{i},$ $H^{i}/H^{i+1}\cong S_{\bm{\lambda}_{i}}$ and $H_{i}/H_{i+1}\cong S^{\bm{\lambda}_{i}}$ for $1\leq i\leq k.$ Moreover, for any $\bm{\lambda}\in \mathcal{P}_{r,n}$ we have $\sharp\{1\leq i\leq k\:|\:\bm{\lambda}_{i}=\bm{\lambda}\}=\sharp\mathcal{T}^{\mathrm{cs}}(\bm{\lambda}, \bm{\mu}).$
\end{corollary}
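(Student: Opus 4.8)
The plan is to run the proof of Lemma 5.2 again, this time with the bases of $M^{\mu}$ and $N^{\mu}$ furnished by Proposition 5.19 in place of the cellular basis of $\mathrm{YS}_{n}^{r}$. First I would fix a listing $S_{1},\ldots,S_{k}$ of $\bigcup_{\lambda\in\mathcal{P}_{r,n}}\mathcal{T}^{cs}(\lambda,\mu)$, with $S_{i}\in\mathcal{T}^{cs}(\lambda_{i},\mu)$, chosen so that $i>j$ whenever $\lambda_{i}\rhd\lambda_{j}$ (so more dominant shapes get larger indices). Since $\mathcal{T}^{cs}(\lambda_{i},\mu)\neq\emptyset$ forces $\mathcal{T}_{0}(\lambda_{i}',\mu)\neq\emptyset$, we get $\lambda_{i}'\unrhd\mu$ and hence $\mu'\unrhd\lambda_{i}$ (conjugation reverses the dominance order on compositions), so the condition $\mu'\trianglerighteq\lambda_{i}$ is automatic; and by the very choice of the listing, $\sharp\{i:\lambda_{i}=\lambda\}=\sharp\mathcal{T}^{cs}(\lambda,\mu)$ for every $\lambda\in\mathcal{P}_{r,n}$. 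For $1\leq i\leq k+1$ I would let $H^{i}$ be the $\mathcal{R}$-span of $\{m_{\mu}n_{\dot{S}_{j}\mathfrak{t}}\:|\:j\geq i,\ \mathfrak{t}\in\mathrm{Std}(\lambda_{j})\}$ and $H_{i}$ the $\mathcal{R}$-span of $\{n_{\mu}m_{\dot{S}_{j}\mathfrak{t}}\:|\:j\geq i,\ \mathfrak{t}\in\mathrm{Std}(\lambda_{j})\}$; by Proposition 5.19 these are $\mathcal{R}$-free, with $H^{1}=M^{\mu}$, $H_{1}=N^{\mu}$, and $H^{k+1}=H_{k+1}=0$.

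The crux is that each $H^{i}$ is a $Y_{r,n}$-submodule with $H^{i}/H^{i+1}\cong S_{\lambda_{i}}$. Given $h\in Y_{r,n}$, write $m_{\mu}n_{\dot{S}_{j}\mathfrak{t}}h=m_{\mu}(n_{\dot{S}_{j}\mathfrak{t}}h)$ and apply the cellular straightening relation for the basis $\{n_{\mathfrak{s}\mathfrak{t}}\}$ (Theorem 2.5 transported through the algebra involution $'$): $n_{\dot{S}_{j}\mathfrak{t}}h\equiv\sum_{\mathfrak{v}\in\mathrm{Std}(\lambda_{j})}r_{\mathfrak{v}}\,n_{\dot{S}_{j}\mathfrak{v}}\pmod{Y^{r,n}_{\rhd\lambda_{j}}}$, where the scalars $r_{\mathfrak{v}}\in\mathcal{R}$ depend on $\mathfrak{t}$ and $h$ but not on the first index. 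Multiplying on the left by $m_{\mu}$, the leading terms $m_{\mu}n_{\dot{S}_{j}\mathfrak{v}}$ lie in $H^{i}$ because $j\geq i$, while the remainder lies in $m_{\mu}Y^{r,n}_{\rhd\lambda_{j}}$; here I would invoke Corollary 5.20 together with the minimal-length double-coset reduction from the proof of Proposition 5.19 to see that each $m_{\mu}n_{\mathfrak{u}\mathfrak{v}}$ with $(\mathfrak{u},\mathfrak{v})\in\mathrm{Std}^{2}(\nu)$, $\nu\rhd\lambda_{j}$, either vanishes or equals $\pm q^{a}m_{\mu}n_{\dot{\mu(\mathfrak{u})}\mathfrak{v}}$ with $\mu(\mathfrak{u})\in\mathcal{T}^{cs}(\nu,\mu)$, hence lies in the span of basis elements of shape strictly dominating $\lambda_{j}$, i.e.\ in $H^{j+1}\subseteq H^{i+1}$. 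This shows $H^{i}$ is a submodule, and passing to $H^{i}/H^{i+1}$ the same computation shows that $h$ acts on the basis $\{m_{\mu}n_{\dot{S}_{i}\mathfrak{t}}+H^{i+1}\}$ by $m_{\mu}n_{\dot{S}_{i}\mathfrak{t}}+H^{i+1}\mapsto\sum_{\mathfrak{v}}r_{\mathfrak{v}}(m_{\mu}n_{\dot{S}_{i}\mathfrak{v}}+H^{i+1})$, which is exactly the action of $h$ on the basis $\{n_{\mathfrak{t}}=n_{\mathfrak{t}^{\lambda_{i}}\mathfrak{t}}+Y^{r,n}_{\rhd\lambda_{i}}\}$ of the dual Specht module $S_{\lambda_{i}}$ (again because the straightening scalars do not see the first index). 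Therefore $n_{\mathfrak{t}}\mapsto m_{\mu}n_{\dot{S}_{i}\mathfrak{t}}+H^{i+1}$ is the desired isomorphism $S_{\lambda_{i}}\cong H^{i}/H^{i+1}$.

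The filtration of $N^{\mu}$ would be obtained by the mirror argument, exchanging the two cellular bases: working with the basis $\{n_{\mu}m_{\dot{S}\mathfrak{t}}\}$ of $N^{\mu}$ from Proposition 5.19, using the straightening relation for $\{m_{\mathfrak{s}\mathfrak{t}}\}$ (Theorem 2.5 directly) and the second assertion of Corollary 5.20, one shows that each $H_{i}$ is a $Y_{r,n}$-submodule and that the basis element $m_{\mathfrak{t}}$ of $S^{\lambda_{i}}$ maps to $n_{\mu}m_{\dot{S}_{i}\mathfrak{t}}+H_{i+1}$, defining an isomorphism $S^{\lambda_{i}}\cong H_{i}/H_{i+1}$. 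Since the index set $\bigcup_{\lambda}\mathcal{T}^{cs}(\lambda,\mu)$, and hence the list $\lambda_{1},\ldots,\lambda_{k}$, is common to $M^{\mu}$ and $N^{\mu}$, the two filtrations have the same length $k$ and the same sequence of subquotient shapes, and the multiplicity count follows as in the first paragraph.

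The step I expect to be the real obstacle is the bookkeeping in the second paragraph: one must be sure that the error terms produced by the cellular straightening genuinely fall into $H^{i+1}$, which is precisely where Corollary 5.20 (nonvanishing of $m_{\mu}n_{\mathfrak{u}\mathfrak{v}}$ forcing column-semistandardness of $\mu(\mathfrak{u})$ and $\alpha(\nu')=\alpha(\mu)$) and the uniqueness of minimal-length double-coset representatives enter, and one must confirm that the straightening scalars are unaffected by moving the first tableau index from $\mathfrak{t}^{\lambda_{i}}$ to $\dot{S}_{i}$. Both facts are already available in the excerpt, so the work is entirely in assembling them correctly rather than in proving anything new.
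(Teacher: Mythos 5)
Your proposal is correct and follows exactly the route the paper intends: the paper's entire proof of this corollary is the one-line remark that one repeats the argument of Lemma 5.2 using the bases of Proposition 5.19, and your write-up is a faithful (and more detailed) execution of that plan, with the dominance-ordered listing of $\mathcal{T}^{cs}(\lambda,\mu)$, the layer-by-layer spans, and the use of Corollary 5.20 together with the double-coset reduction to control the cellular error terms. Nothing further is needed.
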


Now we can define a bilinear form $\langle\cdot, \cdot\rangle_{\bm{\mu}}$ on $M^{\bm{\mu}}$ by $$\langle m_{\mathrm{S}\mathfrak{t}},m_{\bm{\mu}}n_{\dot{\mathrm{U}}\mathfrak{v}}\rangle_{\bm{\mu}}=\langle m_{\mathrm{S}\mathfrak{t}}, n_{\dot{\mathrm{U}}\mathfrak{v}}\rangle,$$ where $m_{\mathrm{S}\mathfrak{t}}$ and $m_{\bm{\mu}}n_{\dot{\mathrm{U}}\mathfrak{v}}$ rum over the bases of Propositions \ref{Yoko-Schur-propo-3-3} and \ref{idempotent-proposition-5-23}, respectively.

The next proposition can be proved in exactly the same way as in [Ma2, Proposition 5.13]. We omit the details and leave them to the reader.

\begin{proposition}\label{idempotent-propositi-5-26}
Suppose that $\bm{\mu}\in \mathcal{C}_{r,n}.$ Then $\langle\cdot, \cdot\rangle_{\bm{\mu}}$ is a non-degenerate associative bilinear form on $M^{\bm{\mu}}$. In particular, $M^{\bm{\mu}}$ is self-dual. Similarly, $N^{\bm{\mu}}$ is self-dual.
\end{proposition}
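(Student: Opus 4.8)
The plan is to imitate the proof of [Ma2, Proposition 5.13] step by step; the only genuinely new ingredient is the bookkeeping of the types $\alpha(\cdot),$ which is already isolated in Theorem 5.15, Lemma 5.17 and Remark 5.12. First I would rewrite $\langle\cdot,\cdot\rangle_{\mu}$ intrinsically. By Proposition 3.3 the set $\{m_{S\mathfrak{t}}\},$ with $S\in \mathcal{T}_{0}^{+}(\lambda,\mu)$ and $\mathfrak{t}\in \mathrm{Std}(\lambda),$ is an $\mathcal{R}$-basis of $M^{\mu},$ and by Proposition 5.19 so is $\{m_{\mu}n_{\dot{S}\mathfrak{t}}\},$ with $S\in \mathcal{T}^{cs}(\lambda,\mu)$ and $\mathfrak{t}\in \mathrm{Std}(\lambda);$ by construction $\langle\cdot,\cdot\rangle_{\mu}$ is the $\mathcal{R}$-bilinear form whose matrix in these two bases is the mixed Gram matrix $G=\bigl(\langle m_{S\mathfrak{t}},n_{\dot{U}\mathfrak{v}}\rangle\bigr),$ where on the right $\langle h_{1},h_{2}\rangle=\tau(h_{1}h_{2}^{\ast})$ is the ambient form on $Y_{r,n}.$ Using $m_{\mu}^{\ast}=m_{\mu},$ that $M^{\mu}=m_{\mu}Y_{r,n}$ is a right ideal, and that $\tau$ is a symmetric trace form with $\tau(h^{\ast})=\tau(h),$ one checks that $\langle a,m_{\mu}n\rangle_{\mu}=\langle a,n\rangle$ for all $a\in M^{\mu}$ and $n\in Y_{r,n}.$ Equivalently, since $m_{\mu}^{2}=c\,\bigl(\sum_{w\in \mathfrak{S}_{\mu}}q^{2l(w)}\bigr)m_{\mu}$ for some $c\in\mathcal{R}^{\times},$ the form $\langle\cdot,\cdot\rangle_{\mu}$ is the restriction to $M^{\mu}$ of the ambient form rescaled by the inverse of that (non-invertible) scalar; this rescaling is exactly why one cannot simply restrict the ambient form.

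Granting this, associativity is immediate: for $a,b\in M^{\mu}$ write $b=m_{\mu}n;$ then $\langle ah,b\rangle_{\mu}=\langle ah,n\rangle=\langle a,nh^{\ast}\rangle=\langle a,m_{\mu}nh^{\ast}\rangle_{\mu}=\langle a,bh^{\ast}\rangle_{\mu},$ using only associativity of the ambient form and the identity above, all quantities lying in $\mathcal{R}$ by the definition of $\langle\cdot,\cdot\rangle_{\mu}$ so that the non-unit scalar is harmless. It then remains to prove non-degeneracy, which comes down to showing $\det G\in \mathcal{R}^{\times}$: since $\{m_{S\mathfrak{t}}\}$ and $\{m_{\mu}n_{\dot{U}\mathfrak{v}}\}$ are both $\mathcal{R}$-bases of $M^{\mu},$ the change-of-basis matrix relating them is invertible over $\mathcal{R},$ whence $\det G\in \mathcal{R}^{\times}$ is equivalent to invertibility over $\mathcal{R}$ of the Gram matrix of $\langle\cdot,\cdot\rangle_{\mu}$ in the single basis $\{m_{S\mathfrak{t}}\},$ i.e. to $M^{\mu}\cong (M^{\mu})^{\circledast}.$

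To see $\det G\in\mathcal{R}^{\times}$ I would expand $m_{S\mathfrak{t}}=\sum_{\mathfrak{s}:\,\mu(\mathfrak{s})=S}q^{l(d(\mathfrak{s}))+l(d(\mathfrak{t}))}m_{\mathfrak{s}\mathfrak{t}},$ write $n_{\dot{U}\mathfrak{v}}=n_{\mathfrak{u}\mathfrak{v}}$ with $\mathfrak{u}=\dot{U},$ and apply Theorem 5.15 term by term: $\langle m_{\mathfrak{s}\mathfrak{t}},n_{\mathfrak{u}\mathfrak{v}}\rangle=1$ when $(\mathfrak{u}',\mathfrak{v}')=(\mathfrak{s},\mathfrak{t})$ and $=0$ unless $(\mathfrak{u}',\mathfrak{v}')\trianglerighteq(\mathfrak{s},\mathfrak{t}),$ the matching of the associated types being forced (cf. Remark 5.12). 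Conjugation of tableaux gives bijections $U\mapsto U'$ from $\mathcal{T}^{cs}(\lambda,\mu)$ onto $\mathcal{T}_{0}^{+}(\lambda',\mu)$ and $\mathfrak{v}\mapsto\mathfrak{v}'$ from $\mathrm{Std}(\lambda)$ onto $\mathrm{Std}(\lambda'),$ so that reindexing the columns of $G$ by $(U',\mathfrak{v}')$ makes its row and column index sets coincide (both become the set of pairs $(S,\mathfrak{t})$ with $S\in \mathcal{T}_{0}^{+}(\nu,\mu),$ $\mathfrak{t}\in \mathrm{Std}(\nu)$ and $\alpha(\nu)=\alpha(\mu)$). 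Ordering this set by dominance of the $r$-partitions $\nu,$ refined by dominance of tableaux when shapes agree, the reindexed matrix becomes triangular with diagonal entries powers of $q$: the triangularity is the vanishing clause of Theorem 5.15 together with the identity $m_{\mu}g_{d(\mathfrak{s})}^{\ast}n_{\nu}=\pm q^{a}m_{\mu}g_{d(\dot{S})}^{\ast}n_{\nu}$ ($S=\mu(\mathfrak{s})$) established inside the proof of Proposition 5.19, and the diagonal entries are units because every power of $q$ is a unit of $\mathcal{R}.$ Hence $\det G\in \mathcal{R}^{\times}$ and $M^{\mu}$ is self-dual. For $N^{\mu}$ one either repeats the argument verbatim with $m$ and $n$ interchanged --- Theorem 5.15 and the relevant bases (Corollary 5.8 and the second half of Proposition 5.19) are symmetric in $m$ and $n$ --- or, faster, transports the form through the $\mathbb{Z}$-algebra involution of Section 5.1, which swaps $m_{\mathfrak{s}\mathfrak{t}}$ with $n_{\mathfrak{s}\mathfrak{t}},$ hence $M^{\mu}$ with $N^{\mu},$ and is compatible with $\tau$ and $\ast.$

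The main obstacle is the triangularity of the reindexed $G$: one must carefully match $\mathcal{T}^{cs}(\lambda,\mu)\times\mathrm{Std}(\lambda)$ with $\mathcal{T}_{0}^{+}(\nu,\mu)\times\mathrm{Std}(\nu)$ under conjugation, confirm that the distinguished double-coset representative $\dot{U}$ produces the expected leading term $\pm q^{a}$ on the diagonal, and check that every off-diagonal entry lies strictly lower in the dominance order while the constraint $\alpha(\nu)=\alpha(\mu)$ is respected throughout --- a length-and-conjugation bookkeeping that runs parallel to the symmetric-group computation in [Ma2], the only addition being the type datum $\alpha$ already handled by Theorem 5.15, Lemma 5.17 and Remark 5.12. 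The rest (associativity, the reduction to $\det G,$ and the passage from $M^{\mu}$ to $N^{\mu}$) is formal.
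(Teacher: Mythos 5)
Your proposal is correct and takes the route the paper intends: the paper omits this proof entirely, deferring to [Ma2, Proposition 5.13], and your argument (intrinsic description of $\langle\cdot,\cdot\rangle_{\mu},$ formal associativity, and unitriangularity of the mixed Gram matrix via Theorem 5.15 after conjugating the column indices) is precisely that argument transported to $Y_{r,n}.$ The one step you flag without verifying --- that the diagonal entries are exactly $\pm q^{a}$ --- does go through, because $\dot{U}'$ is the unique $\unlhd$-minimal element of $\{\mathfrak{s}\:|\:\mu(\mathfrak{s})=U'\}$ (conjugation reverses the dominance order and $\dot{U}$ is maximal of its type), so the vanishing clause of Theorem 5.15 annihilates every other summand of $m_{S\mathfrak{t}}$ paired against $n_{\dot{U}\mathfrak{v}}.$
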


By induction and using Propositions \ref{tiltmod-propo5-6} and \ref{tiltmod-proposi5-10} we can get the next result.

\begin{corollary}\label{idempotent-corollary-5-27}
Let $\bm{\lambda}\in \mathcal{P}_{r,n}.$ Then the Young module $Y^{\bm{\lambda}}$ and twisted Young module $Y_{\bm{\lambda}}$ are both self-dual.
\end{corollary}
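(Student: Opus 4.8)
The plan is a downward induction on the dominance order $\unrhd$ on $\mathcal{P}_{r,n}$, bootstrapping from the self-duality of the permutation modules. Throughout, $\circledast$ denotes the contragredient duality on $Y_{r,n}\mathrm{-mod}$; since $\mathcal{R}=\mathbb{K}$ is a field and $Y_{r,n}$ is finite-dimensional, $\circledast$ is a contravariant self-equivalence that commutes with finite direct sums and sends indecomposable modules to indecomposable modules, and the Krull--Schmidt theorem holds in $Y_{r,n}\mathrm{-mod}$.

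I will treat $Y^{\lambda}$ first. If $\lambda$ is maximal in $\mathcal{P}_{r,n}$ under $\unrhd$, then there is no $\nu\rhd\lambda$, so Proposition 5.6(iii) gives $M^{\lambda}\cong Y^{\lambda}$, which is self-dual by Proposition 5.22; this is the base case. For the inductive step, assume $(Y^{\nu})^{\circledast}\cong Y^{\nu}$ for every $\nu\in\mathcal{P}_{r,n}$ with $\nu\rhd\lambda$. Applying $\circledast$ to the isomorphism $M^{\lambda}\cong Y^{\lambda}\oplus\bigoplus_{\nu\rhd\lambda}c_{\nu\lambda}Y^{\nu}$ of Proposition 5.6(iii), using that $M^{\lambda}$ is self-dual (Proposition 5.22) and invoking the inductive hypothesis, yields
$$(Y^{\lambda})^{\circledast}\oplus\bigoplus_{\nu\rhd\lambda}c_{\nu\lambda}Y^{\nu}\cong M^{\lambda}\cong Y^{\lambda}\oplus\bigoplus_{\nu\rhd\lambda}c_{\nu\lambda}Y^{\nu}.$$
By Proposition 5.6(ii), $Y^{\lambda}$ is not isomorphic to $Y^{\nu}$ for any $\nu\rhd\lambda$, so $Y^{\lambda}$ occurs with multiplicity exactly $1$ as an indecomposable summand of the right-hand side. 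Comparing the multiplicity of the indecomposable module $Y^{\lambda}$ on the two sides — noting that $(Y^{\lambda})^{\circledast}$ is indecomposable by Proposition 5.6(i) — the Krull--Schmidt theorem forces $(Y^{\lambda})^{\circledast}\cong Y^{\lambda}$. This completes the induction, so $Y^{\lambda}$ is self-dual for all $\lambda\in\mathcal{P}_{r,n}$.

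The argument for the twisted Young module $Y_{\lambda}$ is entirely analogous, with $M^{\mu}$ replaced by $N^{\mu}$, Proposition 5.6 replaced by Proposition 5.10, and the self-duality of $N^{\mu}$ again supplied by Proposition 5.22; since the multiplicities $c_{\lambda\mu}$ occurring in Proposition 5.10(iii) coincide with those in Proposition 5.6(iii), the bookkeeping is unchanged. I do not anticipate a genuine obstacle here: the content is a routine Krull--Schmidt cancellation. The only points needing a modicum of care are confirming that $\circledast$ preserves indecomposability and commutes with direct sums, so that it permutes the terms of a Krull--Schmidt decomposition, and identifying the base of the downward induction correctly (the dominance-maximal $r$-partitions, where the correction terms $c_{\nu\lambda}Y^{\nu}$ vanish).
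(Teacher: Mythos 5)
Your argument is correct and is exactly the paper's intended proof: the paper disposes of this corollary with the single line ``By induction and using Proposition 5.6 and 5.10,'' meaning precisely the downward induction on dominance via the decompositions $M^{\lambda}\cong Y^{\lambda}\oplus\bigoplus_{\nu\rhd\lambda}c_{\nu\lambda}Y^{\nu}$ and $N^{\mu}\cong Y_{\mu}\oplus\bigoplus_{\lambda\rhd\mu}c_{\lambda\mu}Y_{\lambda}$, the self-duality of $M^{\mu}$ and $N^{\mu}$ from Proposition 5.22, and Krull--Schmidt cancellation. You have in fact written out the details (indecomposability, pairwise non-isomorphism, the base case of dominance-maximal partitions) more carefully than the paper does.
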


\subsection{Tilting modules}

Recall that a $\mathrm{YS}_{n}^{r}$-module $T$ is a tilting module if it has both a filtration by Weyl modules $W^{\bm{\lambda}}$ ($\bm{\lambda}\in \mathcal{P}_{r,n}$) and a filtration by dual Weyl modules. Since $\mathrm{YS}_{n}^{r}$ is quasi-hereditary, by [Ri], for each $\bm{\lambda}\in \mathcal{P}_{r,n}$ there exists a unique indecomposable tilting module $T^{\bm{\lambda}}$ such that $[T^{\bm{\lambda}}: W^{\bm{\lambda}}]=1$ and $[T^{\bm{\lambda}}: W^{\bm{\mu}}]\neq 0$ only if $\bm{\lambda}\trianglerighteq\bm{\mu}.$ Moreover, any tilting module $T$ can be written as a direct sum of these $T^{\bm{\lambda}}$'s. The $T^{\bm{\lambda}}$ are the partial tilting modules of $\mathrm{YS}_{n}^{r}.$ A full tilting module for $\mathrm{YS}_{n}^{r}$ is any tilting module which contain every $T^{\bm{\lambda}}$ ($\bm{\lambda}\in \mathcal{P}_{r,n}$) as a direct summand.

For each $\bm{\nu}\in \mathcal{C}_{r,n},$ let $\theta_{\bm{\nu}}\in \mathrm{Hom}_{\text{Y}_{r,n}}(\text{Y}_{r,n}, N^{\bm{\nu}})$ be the map, which is defined by $\theta_{\bm{\nu}}(h)=n_{\bm{\nu}}h$ for all $h\in \text{Y}_{r,n}.$ We define $$E^{\bm{\nu}} :=\dot{\mathrm{F}}_{n}^{\omega}(\theta_{\bm{\nu}}\dot{\mathrm{YS}_{n}^{r}}).$$
Since $E^{\bm{\nu}},$ by definition, is the set of maps from $M_{n}^{r}$ to $N^{\bm{\nu}}$ which factor through $\theta_{\bm{\nu}},$ we get that $E^{\bm{\nu}}$ is a right $\mathrm{YS}_{n}^{r}$-module.

\begin{definition}\label{tilting-mod-5-28}
Suppose that $\bm{\lambda}\in \mathcal{P}_{r,n}$ and $\bm{\mu}, \bm{\nu}\in \mathcal{C}_{r,n}.$ For $\mathrm{S}\in \mathcal{T}^{\mathrm{cs}}(\bm{\lambda}, \bm{\nu})$ and $\mathrm{T}\in \mathcal{T}_{0}^{+}(\bm{\lambda}, \bm{\mu})$ let $\theta_{\mathrm{S}\mathrm{T}}$ be the homomorphism in $E^{\bm{\nu}}$ determined by $\theta_{\mathrm{S}\mathrm{T}}(m_{\bm{\alpha}}h)=\delta_{\bm{\alpha}\bm{\mu}}n_{\bm{\nu}}m_{\dot{\mathrm{S}}\mathrm{T}}h$ for all $h\in \text{Y}_{r,n}$ and all $\bm{\alpha}\in \mathcal{C}_{r,n}.$
\end{definition}

\begin{proposition}\label{tilting-mod-5-29}
Let $\bm{\nu}\in \mathcal{C}_{r,n}.$ Then $E^{\bm{\nu}}$ is free as an $\mathcal{R}$-module with basis $$\{\theta_{\mathrm{S}\mathrm{T}}\:|\:\mathrm{S}\in \mathcal{T}^{\mathrm{cs}}(\bm{\lambda}, \bm{\nu})~and~\mathrm{T}\in \mathcal{T}_{0}^{+}(\bm{\lambda})~for~some~\bm{\lambda}\in \mathcal{P}_{r,n}\}.$$
\end{proposition}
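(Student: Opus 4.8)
The plan is to follow the pattern of the proofs of Proposition 3.3, Proposition 5.19 and Theorem 3.9, reducing everything to the distinguished basis of $N^{\nu}$ from Proposition 5.19 together with the double-coset bookkeeping already carried out there. First I would unwind the definition. As in the proof of Lemma 4.2 the functor $\dot{F}_{n}^{\omega}$ is just projection onto the non-$\omega$ weight spaces, while $\theta_{\nu}$ sits in the block $\mathrm{End}_{Y_{r,n}}(M^{\omega})$ of $\dot{\mathrm{YS}_{n}^{r}}$ with image inside $N^{\nu}$; hence $E^{\nu}=\dot{F}_{n}^{\omega}(\theta_{\nu}\dot{\mathrm{YS}_{n}^{r}})$ is canonically the $\mathcal{R}$-module of those $Y_{r,n}$-homomorphisms $M_{n}^{r}\rightarrow N^{\nu}$ which factor through $\theta_{\nu}$, equivalently the image of the $\mathcal{R}$-linear map $\theta_{\nu}\circ(-)\colon\mathrm{Hom}_{Y_{r,n}}(M_{n}^{r},M^{\omega})\rightarrow\mathrm{Hom}_{Y_{r,n}}(M_{n}^{r},N^{\nu})$. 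Since a homomorphism out of $M^{\mu}=m_{\mu}Y_{r,n}$ is determined by the image of $m_{\mu}$, the assignment $f\mapsto(f(m_{\mu}))_{\mu\in\mathcal{C}_{r,n}}$ embeds $E^{\nu}$ into $\bigoplus_{\mu}N^{\nu}$, and feeding in the basis $\{\varphi_{\mathfrak{s}T}\}$ of $\mathrm{Hom}_{Y_{r,n}}(M_{n}^{r},M^{\omega})$ from the proof of Proposition 4.1 shows that $E^{\nu}$ is spanned by the maps $\theta_{\nu}\varphi_{\mathfrak{s}T}$ (with $\mathfrak{s}\in\mathrm{Std}(\lambda)$, $T\in\mathcal{T}_{0}^{+}(\lambda,\mu)$), which send $m_{\mu}h\mapsto n_{\nu}m_{\mathfrak{s}T}h$ and vanish on $M^{\kappa}$ for $\kappa\neq\mu$. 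In particular $\theta_{\nu}\varphi_{\dot{S}T}=\theta_{ST}$ whenever $S=\nu(\mathfrak{s})$ is column semistandard, so the $\theta_{ST}$ of Definition 5.26 indeed lie in $E^{\nu}$.

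Next I would cut the spanning set down to $\{\theta_{ST}\}$. As $m_{\mathfrak{s}T}$ is an $\mathcal{R}$-combination of the $m_{\mathfrak{s}\mathfrak{s}'}$ sharing the first index $\mathfrak{s}$, Corollary 5.20 gives $n_{\nu}m_{\mathfrak{s}T}=0$ unless $\nu(\mathfrak{s})$ is column semistandard. When $\nu(\mathfrak{s})=S$ is column semistandard, $\dot{S}$ is defined and, by the double-coset argument in the proof of Proposition 5.19, $d(\mathfrak{s})$ and $d(\dot{S})$ lie in the same $(\mathfrak{S}_{\lambda},\mathfrak{S}_{\nu})$-double coset; since $d(\mathfrak{s})$ is a distinguished right coset representative of $\mathfrak{S}_{\lambda}$ and $d(\dot{S})$ has minimal length in that double coset, one may write $d(\mathfrak{s})=d(\dot{S})v$ with $v\in\mathfrak{S}_{\nu}$ and $l(d(\mathfrak{s}))=l(d(\dot{S}))+l(v)$, whence $g_{d(\mathfrak{s})}^{\ast}=g_{v^{-1}}g_{d(\dot{S})}^{\ast}$. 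Applying the $\mathbb{Z}$-algebra involution $'$ to the identity $m_{\nu}g_{w}=q^{l(w)}m_{\nu}$ ($w\in\mathfrak{S}_{\nu}$) gives $n_{\nu}g_{v^{-1}}=(-q^{-1})^{l(v)}n_{\nu}$, and inserting this into $m_{\mathfrak{s}\mathfrak{s}'}=g_{d(\mathfrak{s})}^{\ast}m_{\lambda}g_{d(\mathfrak{s}')}$ yields $n_{\nu}m_{\mathfrak{s}\mathfrak{s}'}=(-q^{-1})^{l(v)}n_{\nu}m_{\dot{S}\mathfrak{s}'}$; summing over the $\mathfrak{s}'$ with $\mu(\mathfrak{s}')=T$ (the powers of $q$ then cancel) gives $n_{\nu}m_{\mathfrak{s}T}=(-1)^{l(v)}n_{\nu}m_{\dot{S}T}$, i.e. $\theta_{\nu}\varphi_{\mathfrak{s}T}=(-1)^{l(v)}\theta_{ST}$. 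Hence $E^{\nu}$ is spanned by $\{\theta_{ST}\mid S\in\mathcal{T}^{cs}(\lambda,\nu),\ T\in\mathcal{T}_{0}^{+}(\lambda),\ \lambda\in\mathcal{P}_{r,n}\}$. This step is the main obstacle: one must keep careful track of which coset representatives are distinguished and use that $\dot{S}$ is standard, so that the relevant lengths add and no stray $\mathfrak{S}_{\lambda}$-factor intervenes.

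Finally I would prove that the $\theta_{ST}$ are $\mathcal{R}$-linearly independent, which upgrades ``spanned'' to ``$\mathcal{R}$-free with this basis''. Evaluating at $m_{\mu}$, $\theta_{ST}(m_{\mu})=n_{\nu}m_{\dot{S}T}=\sum_{\mathfrak{s}'\colon\mu(\mathfrak{s}')=T}q^{l(d(\mathfrak{s}'))+l(d(\dot{S}))}\,n_{\nu}m_{\dot{S}\mathfrak{s}'}$, and by Proposition 5.19 the elements $n_{\nu}m_{\dot{S}\mathfrak{s}'}$ (for column-semistandard $S$ of type $\nu$ and standard $\mathfrak{s}'$ of the same shape) are members of the distinguished $\mathcal{R}$-basis of $N^{\nu}$. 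Distinct pairs $(S,T)$ produce $\theta_{ST}(m_{\mu})$ supported on disjoint subsets of that basis --- for a fixed type the fibres $\{\mathfrak{s}'\mid\mu(\mathfrak{s}')=T\}$ partition the relevant standard tableaux, and for fixed $T$ distinct $S$ give distinct $\dot{S}$ --- and each $\theta_{ST}$ is supported on the single weight space $M^{\mu}$ with $\mu$ the type of $T$, so linear independence of the $\theta_{ST}$ in $E^{\nu}$ follows at once. Together with the spanning statement this shows $E^{\nu}$ is $\mathcal{R}$-free with basis $\{\theta_{ST}\}$; all of this is routine once Proposition 5.19 and Corollary 5.20 are in hand.
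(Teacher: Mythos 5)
Your proof is correct and follows essentially the same route as the paper's: span $E^{\nu}$ by the maps $\theta_{\nu}\varphi_{\mathfrak{s}T}$ coming from the basis of Proposition 4.1, kill those with $\nu(\mathfrak{s})$ not column semistandard via Corollary 5.20, identify the survivors with $\pm q^{a}\theta_{ST}$ by the double-coset argument, and deduce linear independence from Proposition 5.19. The only differences are cosmetic (you project away the $\omega$-weight space at the start rather than at the end, and you make the scalar $(-1)^{l(v)}$ and the disjoint-support argument explicit where the paper leaves them implicit).
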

\begin{proof}
Let $\dot{E}^{\bm{\nu}}=\theta_{\bm{\nu}}\dot{\mathrm{YS}_{n}^{r}}.$ Then $\dot{E}^{\bm{\nu}}$ is a right $\dot{\mathrm{YS}_{n}^{r}}$-module and $E^{\bm{\nu}}=\dot{\mathrm{F}}_{n}^{\omega}(\dot{E}^{\bm{\nu}}).$ By Proposition \ref{Schur-functor-prop4-1}, $\dot{E}^{\bm{\nu}}$ is spanned by the maps $\theta_{\bm{\nu}}\varphi_{\mathrm{S}\mathrm{T}},$ where $\mathrm{S}\in \mathcal{T}_{0}^{+}(\bm{\lambda}, \bm{\sigma})$ and $\mathrm{T}\in \mathcal{T}_{0}^{+}(\bm{\lambda}, \bm{\mu})$ for various $\bm{\lambda}\in \mathcal{P}_{r,n}$ and $\bm{\sigma}, \bm{\mu}\in \dot{\mathcal{C}}_{r,n}.$ By definition, $\theta_{\bm{\nu}}\varphi_{\mathrm{S}\mathrm{T}}=0$ unless $\bm{\sigma}=\omega,$ that is, $\mathrm{S}$ is a standard $\bm{\lambda}$-tableau; so $\dot{E}^{\bm{\nu}}$ is spanned by the elements $\theta_{\bm{\nu}}\varphi_{\mathfrak{s}\mathrm{T}}$ with $\mathfrak{s}\in \mathrm{Std}(\bm{\lambda})$ and $\mathrm{T}\in \mathcal{T}_{0}^{+}(\bm{\lambda}, \bm{\mu})$ for $\bm{\lambda}\in \mathcal{P}_{r,n}$ and $\bm{\mu}\in \dot{\mathcal{C}}_{r,n}.$ Furthermore, $\theta_{\bm{\nu}}\varphi_{\mathfrak{s}\mathrm{T}}(m_{\bm{\mu}}h)=n_{\bm{\nu}}m_{\mathfrak{s}\mathrm{T}}h$ for all $h\in \text{Y}_{r,n}.$ Thus, $\theta_{\bm{\nu}}\varphi_{\mathfrak{s}\mathrm{T}}\neq 0$ if and only if $\bm{\nu}(\mathfrak{s})$ is column semistandard and $\alpha(\bm{\lambda}')=\alpha(\bm{\nu})$ by Corollary \ref{idempotent-corollary-5-24}, and in this case $\theta_{\bm{\nu}}\varphi_{\mathfrak{s}\mathrm{T}}=\pm q^{a}\theta_{\mathrm{S}\mathrm{T}}$ for some $a\in \mathbb{Z}$ and $\mathrm{S}=\bm{\nu}(\mathfrak{s}).$ Hence these elements $\{\theta_{\mathrm{S}\mathrm{T}}\:|\:\mathrm{S}\in \mathcal{T}^{\mathrm{cs}}(\bm{\lambda}, \bm{\nu})~\mathrm{and}~\mathrm{T}\in \dot{\mathcal{T}}_{0}^{+}(\bm{\lambda})~\mathrm{for~some}~\bm{\lambda}\in \mathcal{P}_{r,n}\}$ span $\dot{E}^{\bm{\nu}}.$

On the other hand, the elements $\{\theta_{\mathrm{S}\mathrm{T}}\}$ are linearly independent by Proposition \ref{idempotent-proposition-5-23}, so they are a basis of $\dot{E}^{\bm{\nu}}.$ Since the functor $\dot{\mathrm{F}}_{n}^{\omega}$ removes the $\omega$-weight space of $\dot{E}^{\bm{\nu}},$ therefore $\dot{\mathrm{F}}_{n}^{\omega}$ maps the basis $\{\theta_{\mathrm{S}\mathrm{T}}\}$ of $\dot{E}^{\bm{\nu}}$ to the elements stated in the proposition, or to zero if $\bm{\mu}=\omega.$ Hence, $\{\theta_{\mathrm{S}\mathrm{T}}\:|\:\mathrm{S}\in \mathcal{T}^{\mathrm{cs}}(\bm{\lambda}, \bm{\nu})~\mathrm{and}~\mathrm{T}\in \mathcal{T}_{0}^{+}(\bm{\lambda})~\mathrm{for~some}~\bm{\lambda}\in \mathcal{P}_{r,n}\}$ is an $\mathcal{R}$-basis of $E^{\bm{\nu}}.$
\end{proof}

The next proposition can be proved in exactly the same way as in [Ma2, Theorem 6.5] by using Proposition \ref{tilting-mod-5-29}. We skip the details.

\begin{proposition}\label{tilting-mod-5-30}
Let $\bm{\nu}\in \mathcal{C}_{r,n}.$ Then $E^{\bm{\nu}}$ admits a $\mathrm{YS}_{n}^{r}$-module filtration $E^{\bm{\nu}}=E_{1}\supset E_{2}\supset \cdots\supset E_{k}\supset E_{k+1}=0$ such that $E_{i}/E_{i+1}\cong W^{\bm{\lambda}_{i}}$ for some $\bm{\lambda}_{1},\ldots,\bm{\lambda}_{k}\in \mathcal{P}_{r,n}$ and $\bm{\nu}'\trianglerighteq \bm{\lambda}_{i}$ for all $1\leq i\leq k.$ Moreover, if $\bm{\lambda}\in \mathcal{P}_{r,n},$ then $\sharp\{1\leq i\leq k\:|\:\bm{\lambda}_{i}=\bm{\lambda}\}=\sharp\mathcal{T}^{\mathrm{cs}}(\bm{\lambda}, \bm{\nu}).$
\end{proposition}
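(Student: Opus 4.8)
The plan is to follow the proof of [Ma2, Theorem 2.5]: we build a filtration of $E^{\nu}$ whose terms are cut out by the cellular order on $\mathcal{P}_{r,n}$ and then identify the successive quotients with Weyl modules, using the basis of $E^{\nu}$ from Proposition 5.25. First I would fix a total order $S_{1},S_{2},\ldots,S_{k}$ on the set $\bigcup_{\lambda\in\mathcal{P}_{r,n}}\mathcal{T}^{cs}(\lambda,\nu)$, with $S_{i}\in\mathcal{T}^{cs}(\lambda_{i},\nu)$, chosen so that $i>j$ whenever $\lambda_{i}\rhd\lambda_{j}$; this is possible exactly as in the proof of Lemma 5.2. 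For $1\le i\le k+1$ let $E_{i}$ be the $\mathcal{R}$-submodule of $E^{\nu}$ spanned by $\{\theta_{S_{j}T}\mid j\ge i\text{ and }T\in\mathcal{T}_{0}^{+}(\lambda_{j})\}$; by Proposition 5.25 each $E_{i}$ is $\mathcal{R}$-free and $E^{\nu}=E_{1}\supset E_{2}\supset\cdots\supset E_{k}\supset E_{k+1}=0$.

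The crux is to prove that each $E_{i}$ is a $\mathrm{YS}_{n}^{r}$-submodule, and more precisely the straightening congruence: for all $\varphi\in\mathrm{YS}_{n}^{r}$, all $\lambda\in\mathcal{P}_{r,n}$, and all $S\in\mathcal{T}^{cs}(\lambda,\nu)$, $T\in\mathcal{T}_{0}^{+}(\lambda,\mu)$,
$$\theta_{ST}\varphi\ \equiv\ \sum_{V\in\mathcal{T}_{0}^{+}(\lambda)}r_{V}\,\theta_{SV}\pmod{E^{\nu,\rhd\lambda}},$$
where $E^{\nu,\rhd\lambda}$ denotes the $\mathcal{R}$-span of the $\theta_{S'T'}$ with $S'\in\mathcal{T}^{cs}(\lambda',\nu)$ for some $\lambda'\rhd\lambda$, and the $r_{V}\in\mathcal{R}$ do not depend on $S$. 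Recall that the $\mathrm{YS}_{n}^{r}$-action on $E^{\nu}=\dot{F}_{n}^{\omega}(\theta_{\nu}\dot{\mathrm{YS}_{n}^{r}})$ is by right multiplication, and, writing $\dot{S}\in\mathrm{Std}(\lambda)$ for the distinguished standard tableau with $\nu(\dot{S})=S$ as in Proposition 5.25, that $\theta_{ST}=\theta_{\nu}\varphi_{\dot{S}T}$ in $\dot{\mathrm{YS}_{n}^{r}}$. Applying the cellular straightening property of the extended algebra $\dot{\mathrm{YS}_{n}^{r}}$ (Proposition 4.1 together with Theorem 3.9(ii)) to the product $\varphi_{\dot{S}T}\varphi$ gives $\varphi_{\dot{S}T}\varphi\equiv\sum_{V}r_{V}\varphi_{\dot{S}V}$ modulo cellular basis elements of strictly dominant shape; composing on the left with $\theta_{\nu}$, noting that the $\omega$-weight terms are removed by $\dot{F}_{n}^{\omega}$, and using the vanishing criteria of Lemma 5.17 and Corollary 5.20 so that among the higher-shape terms only those indexed by a standard tableau with column semistandard $\nu$-shape survive, one obtains the displayed congruence with the same coefficients $r_{V}$. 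Since the chosen ordering forces $E^{\nu,\rhd\lambda_{i}}\subseteq E_{i+1}$, this shows $E_{i}\varphi\subseteq E_{i}$ and that $E_{i}/E_{i+1}$ is $\mathcal{R}$-free with basis $\{\theta_{S_{i}T}+E_{i+1}\mid T\in\mathcal{T}_{0}^{+}(\lambda_{i})\}$.

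It remains to identify the layers, count multiplicities, and check the dominance bound. The assignment $\varphi_{T}\mapsto\theta_{S_{i}T}+E_{i+1}$, for $T\in\mathcal{T}_{0}^{+}(\lambda_{i})$, is a $\mathrm{YS}_{n}^{r}$-module homomorphism by the straightening congruence, because the coefficients $r_{V}$ there are exactly the structure constants describing the $\mathrm{YS}_{n}^{r}$-action on the Weyl module $W^{\lambda_{i}}$ (whose basis $\{\varphi_{T}\}$ is governed by Theorem 3.9(ii)), and it is an isomorphism $W^{\lambda_{i}}\cong E_{i}/E_{i+1}$ because it carries an $\mathcal{R}$-basis to an $\mathcal{R}$-basis. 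Hence $\sharp\{1\le i\le k\mid\lambda_{i}=\lambda\}$ is the number of $S\in\mathcal{T}^{cs}(\lambda,\nu)$, that is $\sharp\mathcal{T}^{cs}(\lambda,\nu)$. Finally, if $\mathcal{T}^{cs}(\lambda_{i},\nu)\ne\emptyset$ then $\mathcal{T}_{0}^{+}(\lambda_{i}',\nu)\ne\emptyset$ by definition of $\mathcal{T}^{cs}$, hence $\mathcal{T}_{0}(\lambda_{i}',\nu)\ne\emptyset$, so $\lambda_{i}'\unrhd\nu$, and conjugating gives $\nu'\trianglerighteq\lambda_{i}$.

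I expect the main obstacle to be the straightening congruence, specifically controlling how right multiplication by an element of $\dot{\mathrm{YS}_{n}^{r}}$ interacts with the left factor $\theta_{\nu}$ (equivalently, with $n_{\nu}$): one must verify, via Lemma 5.17 and Corollary 5.20, that no column-semistandard index other than $S$ appears among the surviving terms and that the powers of $q$ and signs cancel, so that the remaining coefficients are literally the cellular structure constants of Theorem 3.9(ii). Once this bookkeeping is carried out, the identification of the layers with the $W^{\lambda_{i}}$ and the multiplicity and dominance assertions are formal.
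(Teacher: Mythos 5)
Your argument is correct and is precisely the proof the paper has in mind: the paper merely cites [Ma2, Theorem 2.5] together with Proposition 5.25, and you fill in those details by the same mechanism used for Lemma 5.2 — ordering the tableaux in $\bigcup_{\lambda}\mathcal{T}^{cs}(\lambda,\nu)$ compatibly with dominance, establishing the straightening congruence via $\theta_{ST}=\theta_{\nu}\varphi_{\dot{S}T}$, the cellularity of $\dot{\mathrm{YS}}_{n}^{r}$, and the vanishing criteria of Lemma 5.17 and Corollary 5.20, and then identifying the layers with Weyl modules. The multiplicity count and the dominance bound $\nu'\trianglerighteq\lambda_{i}$ are handled exactly as intended.
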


From Proposition \ref{tilting-mod-5-30} we can easily get the next corollary.

\begin{corollary}\label{tilting-mod-5-31}
Suppose that $\bm{\lambda}, \bm{\mu}\in \mathcal{P}_{r,n}.$ Then $[E^{\bm{\lambda}}: W^{\bm{\lambda}'}]=1$ and $[E^{\bm{\lambda}}: W^{\bm{\mu}}]\neq 0$ only if $\bm{\lambda}'\trianglerighteq\bm{\mu}.$
\end{corollary}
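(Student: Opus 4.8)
The plan is to deduce both assertions directly from Proposition 5.27 by taking $\nu=\lambda$. Since $\lambda\in\mathcal{P}_{r,n}$, Proposition 5.27 endows $E^{\lambda}$ with a filtration by Weyl modules in which each subquotient $W^{\lambda_i}$ satisfies $\lambda'\trianglerighteq\lambda_i$, and in which $\sharp\{i\::\:\lambda_i=\mu\}=\sharp\mathcal{T}^{cs}(\mu,\lambda)$; as $\mathrm{YS}_{n}^{r}$ is quasi-hereditary these multiplicities are independent of the filtration and equal $[E^{\lambda}:W^{\mu}]$. The second assertion of the corollary is then immediate: if $[E^{\lambda}:W^{\mu}]\neq 0$ then $W^{\mu}$ occurs among the $W^{\lambda_i}$, whence $\lambda'\trianglerighteq\mu$.

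For the first assertion I would show $\sharp\mathcal{T}^{cs}(\lambda',\lambda)=1$. Unwinding the definition of $\mathcal{T}^{cs}$ recalled before Lemma 5.18, a tableau $S$ lies in $\mathcal{T}^{cs}(\lambda',\lambda)$ precisely when $S'\in\mathcal{T}_{0}^{+}((\lambda')',\lambda)=\mathcal{T}_{0}^{+}(\lambda,\lambda)$. But it was noted in Section 3 that $\mathrm{T}^{\lambda}$ is the unique semistandard $\lambda$-tableau of type $\lambda$, that is, $\mathcal{T}_{0}^{+}(\lambda,\lambda)=\{\mathrm{T}^{\lambda}\}$; hence $\mathcal{T}^{cs}(\lambda',\lambda)=\{(\mathrm{T}^{\lambda})'\}$ is a singleton and $[E^{\lambda}:W^{\lambda'}]=\sharp\mathcal{T}^{cs}(\lambda',\lambda)=1$.

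There is essentially no obstacle here; the corollary is a short bookkeeping exercise once Proposition 5.27 is available. The only point requiring care is that the convention for $\mathcal{T}^{cs}(\cdot,\cdot)$ conjugates its first argument, and that conjugation reverses the dominance order, which is why the ``maximal'' Weyl module occurring in $E^{\lambda}$ is $W^{\lambda'}$ rather than $W^{\lambda}$. Keeping track of this reversal is all that is needed to match the two displayed conclusions.
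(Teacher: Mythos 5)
Your argument is correct and is exactly the route the paper intends (it gives no explicit proof, only the remark that the corollary follows from the preceding filtration result): take $\nu=\lambda$ in Proposition 5.26, use quasi-heredity to identify the filtration multiplicities with $[E^{\lambda}:W^{\mu}]=\sharp\mathcal{T}^{cs}(\mu,\lambda)$, and observe that $\mathcal{T}^{cs}(\lambda',\lambda)=\{(\mathrm{T}^{\lambda})'\}$ because $\mathcal{T}_{0}^{+}(\lambda,\lambda)=\{\mathrm{T}^{\lambda}\}$. The only blemish is an off-by-one in your internal references (the filtration result is Proposition 5.26, not 5.27, and the definition of $\mathcal{T}^{cs}$ precedes Lemma 5.17), which does not affect the mathematics.
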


\begin{definition}\label{tilting-mod-5-32}
Suppose that $\bm{\lambda}\in \mathcal{P}_{r,n}$ and $\bm{\mu}, \bm{\nu}\in \mathcal{C}_{r,n}.$ For $\mathrm{A}\in \mathcal{T}_{0}^{+}(\bm{\lambda}, \bm{\nu})$ and $\mathrm{B}\in \mathcal{T}^{\mathrm{cs}}(\bm{\lambda}, \bm{\mu})$ let $\theta_{\mathrm{A}\mathrm{B}}'$ be the homomorphism determined by $\theta_{\mathrm{A}\mathrm{B}}'(m_{\bm{\alpha}}h)=\delta_{\bm{\alpha}\bm{\mu}}n_{\mathrm{A}\dot{\mathrm{B}}}m_{\bm{\mu}}h$ for all $h\in \text{Y}_{r,n}$ and all $\bm{\alpha}\in \mathcal{C}_{r,n}.$
\end{definition}

\begin{proposition}\label{tilting-mod-5-33}
Let $\bm{\nu}\in \mathcal{C}_{r,n}.$ Then $E^{\bm{\nu}}$ is free as an $\mathcal{R}$-module with basis $$\{\theta_{\mathrm{A}\mathrm{B}}'\:|\:\mathrm{A}\in \mathcal{T}_{0}^{+}(\bm{\lambda}, \bm{\nu})~and~\mathrm{B}\in \mathcal{T}^{\mathrm{cs}}(\bm{\lambda}, \bm{\mu})~for~some~\bm{\lambda}\in \mathcal{P}_{r,n}~and~\bm{\mu}\in \mathcal{C}_{r,n}\}.$$
\end{proposition}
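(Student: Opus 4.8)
The plan is to deduce this from Proposition 5.25 by a rank count, working this time on the ``column-semistandard'' side. Proposition 5.25 already tells us that $E^{\nu}$ is a free $\mathcal{R}$-module of rank $\sum_{\lambda\in\mathcal{P}_{r,n}}\sharp\mathcal{T}^{cs}(\lambda,\nu)\cdot\sharp\mathcal{T}_{0}^{+}(\lambda)$, so it is enough to check: (a) each $\theta_{AB}'$ lies in $E^{\nu}$; (b) the $\theta_{AB}'$ span $E^{\nu}$ over $\mathcal{R}$; and (c) there are exactly $\operatorname{rank}_{\mathcal{R}}E^{\nu}$ of them --- a spanning set of that cardinality in a free module of that rank is automatically a basis. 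Step (a) is quick: $m_{\mu}h\mapsto n_{A\dot{B}}m_{\mu}h$ is a homomorphism of right $Y_{r,n}$-modules $M^{\mu}\to Y_{r,n}$, and $n_{A\dot{B}}$ is one of the basis elements of $N^{\nu}=n_{\nu}Y_{r,n}$ listed in Corollary 5.8 (the standard tableau $\dot{B}$ sits in the second slot), so writing $n_{A\dot{B}}=n_{\nu}z$ we get $\theta_{AB}'=\theta_{\nu}\circ\psi_{AB}$ for the evident $\psi_{AB}\in\dot{\mathrm{YS}_{n}^{r}}$ (left multiplication by $z$ on $M^{\mu}$, zero on the other summands of $\dot{M}_{n}^{r}$); hence $\theta_{AB}'\in\dot{F}_{n}^{\omega}(\theta_{\nu}\dot{\mathrm{YS}_{n}^{r}})=E^{\nu}$.

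The heart of the argument is (b). First I would identify the $\mu$-weight space $E^{\nu}\varphi_{\mu}$ with $N^{\nu}m_{\mu}$: an element of $E^{\nu}\varphi_{\mu}$ is a $Y_{r,n}$-map $M^{\mu}\to N^{\nu}$ factoring through $\theta_{\nu}$, hence of the form $m_{\mu}h\mapsto n_{\nu}ym_{\mu}h$ for some $y\in Y_{r,n}$, using the isomorphism $\operatorname{Hom}_{Y_{r,n}}(M^{\mu},M^{\omega})\cong M^{\mu\ast}$ from the proof of Proposition 4.1 (recall $M^{\omega}=Y_{r,n}$); under this identification $\theta_{AB}'$ corresponds to $n_{A\dot{B}}m_{\mu}$. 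By Corollary 5.8, $N^{\nu}$ has $\mathcal{R}$-basis $\{n_{A\mathfrak{t}}\mid A\in\mathcal{T}_{0}^{+}(\lambda,\nu),\ \mathfrak{t}\in\mathrm{Std}(\lambda),\ \lambda\in\mathcal{P}_{r,n}\}$, so $N^{\nu}m_{\mu}$ is $\mathcal{R}$-spanned by the elements $n_{A\mathfrak{t}}m_{\mu}$. Since $(n_{A\mathfrak{t}}m_{\mu})^{\ast}$ is an $\mathcal{R}$-combination of elements $m_{\mu}n_{\mathfrak{t}\mathfrak{a}}$ with $\mathfrak{a},\mathfrak{t}$ standard $\lambda$-tableaux, Corollary 5.20 forces $n_{A\mathfrak{t}}m_{\mu}=0$ unless $\mu(\mathfrak{t})$ is column semistandard and $\alpha(\lambda')=\alpha(\mu)$; and when $B:=\mu(\mathfrak{t})$ is column semistandard, the double-coset manipulation from the proof of Proposition 5.19 (the element $m_{\mu}$ absorbs $g_{v}$ for $v\in\mathfrak{S}_{\mu}$, and $d(\dot{B})$ is the minimal-length element in its $(\mathfrak{S}_{\lambda},\mathfrak{S}_{\mu})$-double coset) gives $n_{A\mathfrak{t}}m_{\mu}=\pm q^{a}n_{A\dot{B}}m_{\mu}$ for some $a\in\mathbb{Z}$. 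Hence $N^{\nu}m_{\mu}$ is spanned by $\{n_{A\dot{B}}m_{\mu}\mid A\in\mathcal{T}_{0}^{+}(\lambda,\nu),\ B\in\mathcal{T}^{cs}(\lambda,\mu),\ \lambda\in\mathcal{P}_{r,n}\}$, and summing over $\mu\in\mathcal{C}_{r,n}$ shows that $\{\theta_{AB}'\}$ spans $E^{\nu}$.

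Finally (c): the number of admissible quadruples $(A,B,\lambda,\mu)$ is $\sum_{\lambda,\mu}\sharp\mathcal{T}_{0}^{+}(\lambda,\nu)\cdot\sharp\mathcal{T}^{cs}(\lambda,\mu)=\sum_{\lambda}\sharp\mathcal{T}_{0}^{+}(\lambda,\nu)\cdot\sharp\mathcal{T}^{cs}(\lambda)$, where $\mathcal{T}^{cs}(\lambda)=\bigcup_{\mu}\mathcal{T}^{cs}(\lambda,\mu)$. The definition $\mathcal{T}^{cs}(\kappa,\sigma)=\{S\mid S'\in\mathcal{T}_{0}^{+}(\kappa',\sigma)\}$ gives bijections $\mathcal{T}^{cs}(\kappa,\sigma)\leftrightarrow\mathcal{T}_{0}^{+}(\kappa',\sigma)$, so reindexing by $\lambda=\kappa'$ turns the above sum into $\sum_{\kappa}\sharp\mathcal{T}^{cs}(\kappa,\nu)\cdot\sharp\mathcal{T}_{0}^{+}(\kappa)=\operatorname{rank}_{\mathcal{R}}E^{\nu}$ by Proposition 5.25, and we are done. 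I expect the main obstacle to be the precise justification in step (b) of the identification $E^{\nu}\varphi_{\mu}\cong N^{\nu}m_{\mu}$ together with the bookkeeping of the scalars $\pm q^{a}$ in the reduction of $n_{A\mathfrak{t}}m_{\mu}$ to $n_{A\dot{B}}m_{\mu}$: both are exact analogues of steps already carried out in the proofs of Proposition 5.19 and Proposition 5.25 (and, on the classical side, in [Ma2]), resting on the multiplication formulas of [ER, Lemma 11] and on $Y_{r,n}$ being quasi-Frobenius, but this is where the genuine content lies; everything else is a transcription with the roles of the ``row'' index $\nu$ and the ``column'' index $\mu$ interchanged.
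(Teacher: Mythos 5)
Your argument is correct, and it shares the paper's overall skeleton (membership of $\theta_{AB}'$ in $E^{\nu}$ via Corollary 5.8, then a cardinality comparison against the rank of $E^{\nu}$ from Proposition 5.25), but it completes the complementary half in the opposite direction. The paper does not prove spanning at all: it observes that the $\theta_{AB}'$ are \emph{linearly independent} by applying $\ast$ to Proposition 5.19 --- distinct pairs $(A,B)$ involve disjoint subsets of the basis $\{n_{\mathfrak{t}\dot{S}}m_{\mu}\}$ of $M^{\mu\ast}$, and maps with different $\mu$ have disjoint supports --- and then concludes by counting dimensions, since over the field $\mathbb{K}$ assumed throughout Section 5 a linearly independent set of the right cardinality is a basis. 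That makes the paper's middle step a one-liner. You instead prove \emph{spanning}, which is the harder direction: it requires the identification $E^{\nu}\varphi_{\mu}\cong N^{\nu}m_{\mu}$ via $\mathrm{Hom}_{Y_{r,n}}(M^{\mu},M^{\omega})\cong M^{\mu\ast}$, the vanishing criterion of Corollary 5.20, and the double-coset reduction $n_{A\mathfrak{t}}m_{\mu}=\pm q^{a}n_{A\dot{B}}m_{\mu}$ transported by $\ast$ from the proof of Proposition 5.19. All of these steps check out (including the reindexing $\kappa=\lambda'$ in your count, which matches the rank in Proposition 5.25), so your proof is valid; it simply redoes, weight space by weight space, work that Proposition 5.19 already packages, in exchange for a more explicit description of each $E^{\nu}\varphi_{\mu}$. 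One small remark: your step (c) as phrased (a spanning set of cardinality equal to the rank of a free module is a basis) holds over any commutative ring, so your route would survive the integral setting, whereas the paper's independence-plus-dimension-count argument genuinely uses that $\mathbb{K}$ is a field.
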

\begin{proof}
We first show that $\theta_{\mathrm{A}\mathrm{B}}'\in E^{\bm{\nu}}.$ By Corollary \ref{tiltmod-corollar5-8}, $n_{\mathrm{A}\dot{\mathrm{B}}}=n_{\bm{\nu}}x$ for some $x\in \text{Y}_{r,n}.$ Therefore, we have $$\theta_{\mathrm{A}\mathrm{B}}'(m_{\bm{\mu}}h)=n_{\mathrm{A}\dot{\mathrm{B}}}m_{\bm{\mu}}h=n_{\bm{\nu}}xm_{\bm{\mu}}h=\theta_{\bm{\nu}}(xm_{\bm{\mu}}h).$$ That is, $\theta_{\mathrm{A}\mathrm{B}}'$ factors through $\theta_{\bm{\nu}}$ so that $\theta_{\mathrm{A}\mathrm{B}}'\in E^{\bm{\nu}}$ as claimed. Moreover, the elements stated in the proposition are linearly independent by applying $\ast$ to Proposition \ref{idempotent-proposition-5-23}. Therefore, the elements $\{\theta_{\mathrm{A}\mathrm{B}}'\}$ give a basis of $E^{\bm{\nu}}$ by counting dimensions using Proposition \ref{tilting-mod-5-29}.
\end{proof}

The contragredient dual $E^{\circledast}$ of a $\mathrm{YS}_{n}^{r}$-module $E$ can be defined in exactly the same way as that of $\text{Y}_{r,n}$-modules. Again, we say that $E$ is self-dual if $E\cong E^{\circledast}.$

Using the two bases $\{\theta_{\mathrm{S}\mathrm{T}}\}$ and $\{\theta_{\mathrm{A}\mathrm{B}}'\}$ in Propositions \ref{tilting-mod-5-29} and \ref{tilting-mod-5-33}, we now define a bilinear form $\{\cdot, \cdot\}_{\bm{\nu}}$ on $E^{\bm{\nu}}$ by $$\{ \theta_{\mathrm{S}\mathrm{T}}, \theta_{\mathrm{A}\mathrm{B}}'\}_{\bm{\nu}}=\begin{cases}\langle m_{\dot{\mathrm{S}}\mathrm{T}}, n_{\mathrm{A}\dot{\mathrm{B}}}\rangle& \hbox {if } \mathrm{Type}(\mathrm{T})=\mathrm{Type}(\mathrm{B}); \\0& \hbox {otherwise}.\end{cases}$$

The next theorem can be proved in exactly the same way as in [Ma2, Theorem 6.17]. We skip the details.

\begin{theorem}\label{tilting-mod-5-34}
Suppose that $\bm{\nu}\in \mathcal{C}_{r,n}.$ Then $\{\cdot, \cdot\}_{\bm{\nu}}$ defines a non-degenerate associative bilinear form on $E^{\bm{\nu}};$ that is, $E^{\bm{\nu}}$ is self-dual.
\end{theorem}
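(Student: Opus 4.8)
The plan is to follow the argument of [Ma2, Theorem 6.17] closely. By the definition of the contragredient dual, it suffices to prove two things: first, that $\{\cdot,\cdot\}_{\nu}$ is \emph{associative}, i.e. $\{\theta a,\psi\}_{\nu}=\{\theta,\psi a^{\ast}\}_{\nu}$ for all $\theta,\psi\in E^{\nu}$ and all $a\in \mathrm{YS}_{n}^{r}$, where $\ast$ is the cellular anti-automorphism of $\mathrm{YS}_{n}^{r}$ from Theorem 3.9(i); and second, that the Gram matrix of $\{\cdot,\cdot\}_{\nu}$ relative to the two $\mathcal{R}$-bases $\{\theta_{ST}\}$ of Proposition 5.25 and $\{\theta_{AB}'\}$ of Proposition 5.29 of $E^{\nu}$ is invertible over $\mathcal{R}$. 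Since $E^{\nu}$ is $\mathcal{R}$-free of finite rank, these two facts together produce an isomorphism $E^{\nu}\cong (E^{\nu})^{\circledast}$ of right $\mathrm{YS}_{n}^{r}$-modules, which is the assertion.

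For associativity I would first record, directly from Definition 5.24 and Definition 5.28, explicit formulas for the right action of the cellular basis elements $\varphi_{UV}$ of $\mathrm{YS}_{n}^{r}$ on the elements $\theta_{ST}$ and $\theta_{AB}'$, and then rewrite the value $\langle\theta_{ST},\theta_{AB}'\rangle_{\nu}=\langle m_{\dot{S}T},n_{A\dot{B}}\rangle=\tau(m_{\dot{S}T}n_{A\dot{B}}^{\ast})$ purely in terms of the trace form $\tau$ on $Y_{r,n}$, applied to a product built from $\theta(m_{\mu})$ and $\psi(m_{\mu})$. Since the $\mathrm{YS}_{n}^{r}$-action on $E^{\nu}$ is realised on these images by right multiplication in $Y_{r,n}$, associativity of $\{\cdot,\cdot\}_{\nu}$ then reduces to $\tau(xy)=\tau(yx)$ and $\tau(h^{\ast})=\tau(h)$, together with the compatibility of $\ast$ on $\mathrm{YS}_{n}^{r}$ with $\ast$ on $Y_{r,n}$ on the level of the matrix units $\theta_{ST}$ --- exactly as in [Ma2].

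The substance of the proof is the non-degeneracy, which I would get by showing the Gram matrix is unitriangular. The key computation is that, for $S\in \mathcal{T}^{cs}(\lambda,\nu)$, $T\in \mathcal{T}_{0}^{+}(\lambda,\mu)$ and $A\in \mathcal{T}_{0}^{+}(\kappa,\nu)$, $B\in \mathcal{T}^{cs}(\kappa,\mu)$, the pairing $\langle\theta_{ST},\theta_{AB}'\rangle_{\nu}=\langle m_{\dot{S}T},n_{A\dot{B}}\rangle$ is controlled by Theorem 5.15: expanding $m_{\dot{S}T}$ and $n_{A\dot{B}}$ in the cellular bases $\{m_{\mathfrak{s}\mathfrak{t}}\}$ and $\{n_{\mathfrak{u}\mathfrak{v}}\}$, every nonzero contribution forces, via Theorem 5.15 together with the type restriction $\alpha(\kappa')=\alpha(\lambda)$ of Lemma 5.14, that $\kappa\unrhd\lambda'$, with $\kappa=\lambda'$ only at the dominance-extremal end, where the matched partner of $(S,T)$ is the conjugate-dual pair obtained by the conjugation bijections between semistandard and column-semistandard tableaux of conjugate shapes. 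For this extremal pair one isolates the leading term of Theorem 5.15 and shows, as in the computation $\tau(g_{w_{\lambda}}^{\ast}m_{\lambda}g_{w_{\lambda}}n_{\lambda'})=1$ from its proof, that the corresponding pairing is a unit of $\mathcal{R}$ (a signed power of $q$), using crucially that the tableaux $\dot{S}$ and $\dot{B}$ appearing here are the minimal-length double-coset representatives. Choosing total orders on $\{\theta_{ST}\}$ and on $\{\theta_{AB}'\}$ that refine the dominance order on multipartitions (identifying $\lambda$ with $\lambda'$) and then the dominance order on tableaux, these statements say exactly that the Gram matrix is triangular with invertible diagonal entries, hence invertible.

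The main obstacle I anticipate is precisely the bookkeeping in the last step: one must match the two chosen total orders across the conjugation $\lambda\leftrightarrow\lambda'$ and verify that, once the extremal contribution is removed, all remaining pairings are strictly ``above'' in those orders. This requires tracking how the minimal-length representative $\dot{S}$ and the standard tableaux $\mathfrak{s}$ with $\nu(\mathfrak{s})=S$ interact with dominance, together with the semistandard versus column-semistandard constraints supplied by Lemma 5.17 and Corollary 5.20. Conceptually nothing new is needed beyond [Ma2, Theorem 6.17]; the only genuine extra ingredient is the presence of the idempotent factors $E_{A_{\lambda}}$ --- equivalently the type conditions $\alpha(\mu)=\alpha(\nu)$ --- which, as explained in Remark 5.12, cause no difficulty since the entire argument decomposes, block by block in the type $\alpha$, into the $r=1$ case. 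Granting unitriangularity and associativity, the induced map $E^{\nu}\to (E^{\nu})^{\circledast}$ is an isomorphism of $\mathrm{YS}_{n}^{r}$-modules, so $E^{\nu}$ is self-dual.
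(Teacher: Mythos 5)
Your proposal takes exactly the route the paper intends: the paper's ``proof'' of this theorem is nothing more than the citation to [Ma2, Theorem 6.17], and your outline (associativity reduced to the trace form $\tau$, non-degeneracy via unitriangularity of the Gram matrix between the bases $\{\theta_{ST}\}$ of Proposition 5.25 and $\{\theta'_{AB}\}$ of Proposition 5.29, with Theorem 5.15 and Lemma 5.14 giving the triangularity and the $\pm q^{a}$ diagonal entries) is precisely the adaptation of Mathas's argument being invoked. The one slip is the direction of the dominance relation forced by a nonzero pairing (Theorem 5.15 gives $\kappa'\unrhd\lambda$, i.e.\ $\lambda'\unrhd\kappa$, rather than $\kappa\unrhd\lambda'$), but this is immaterial to the triangularity of the Gram matrix.
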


Using Corollary \ref{tilting-mod-5-31} and Theorem \ref{tilting-mod-5-34} we can easily get the next result.

\begin{corollary}\label{tilting-mod-5-35}
Let $\bm{\lambda}\in \mathcal{P}_{r,n}.$ Then we have

$\mathrm{(i)}$ $E^{\bm{\lambda}}$ is a tilting module. Moreover, $E^{\bm{\lambda}}\cong T^{\bm{\lambda}'}\oplus \bigoplus_{\bm{\lambda}'\rhd \bm{\mu}}e_{\bm{\lambda}\bm{\mu}}T^{\bm{\mu}}$ for some non-negative integers $e_{\bm{\lambda}\bm{\mu}}.$

$\mathrm{(ii)}$ $T^{\bm{\lambda}}$ is self-dual. Moreover, the tilting modules of $\mathrm{YS}_{n}^{r}$ are the indecomposable direct summands of the modules $\{E^{\bm{\lambda}}\:|\:\bm{\lambda}\in \mathcal{P}_{r,n}\}.$
\end{corollary}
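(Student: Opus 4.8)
The plan is to prove part (i) first. Recall that a tilting module for $\mathrm{YS}_{n}^{r}$ is by definition a module admitting both a filtration by Weyl modules $W^{\mu}$ and a filtration by dual Weyl modules. Proposition 5.26 already exhibits a Weyl filtration of $E^{\lambda}$, so it only remains to produce a dual Weyl filtration. For this I would use Theorem 5.31: $E^{\lambda}$ is self-dual, that is $E^{\lambda}\cong(E^{\lambda})^{\circledast}$. Since the contragredient duality $\circledast$ is exact, contravariant and involutive, and since the dual $(W^{\kappa})^{\circledast}$ of a Weyl (cell) module of $\mathrm{YS}_{n}^{r}$ is, by the cellular structure, precisely the corresponding dual Weyl module, applying $\circledast$ to the Weyl filtration of $E^{\lambda}$ converts it into a filtration of $(E^{\lambda})^{\circledast}\cong E^{\lambda}$ by dual Weyl modules. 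Hence $E^{\lambda}$ is a tilting module.

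For the decomposition I would invoke Ringel's theory over the quasi-hereditary algebra $\mathrm{YS}_{n}^{r}$, recalled just before Definition 5.25 following [Ri]: every tilting module is a direct sum of the indecomposable tilting modules $T^{\mu}$, so $E^{\lambda}\cong\bigoplus_{\mu}e_{\lambda\mu}T^{\mu}$ for non-negative integers $e_{\lambda\mu}$. Whenever $e_{\lambda\mu}\neq 0$, the Weyl module $W^{\mu}$ occurs in a Weyl filtration of $T^{\mu}$, hence in one of $E^{\lambda}$, so Corollary 5.27 forces $\lambda'\trianglerighteq\mu$. Moreover $W^{\lambda'}$ occurs with multiplicity $1$ in $T^{\lambda'}$ and does not occur in $T^{\mu}$ for any $\mu\lhd\lambda'$, so $[E^{\lambda}:W^{\lambda'}]=e_{\lambda\lambda'}$, which equals $1$ by Corollary 5.27. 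This gives exactly $E^{\lambda}\cong T^{\lambda'}\oplus\bigoplus_{\lambda'\rhd\mu}e_{\lambda\mu}T^{\mu}$.

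For part (ii) I would prove that $T^{\lambda}$ is self-dual by induction up the dominance order on $\mathcal{P}_{r,n}$. Replacing $\lambda$ by $\lambda'$ in part (i) yields $E^{\lambda'}\cong T^{\lambda}\oplus\bigoplus_{\lambda\rhd\mu}e_{\lambda'\mu}T^{\mu}$; when $\lambda$ is minimal the sum is empty, so $E^{\lambda'}\cong T^{\lambda}$ is self-dual directly by Theorem 5.31. In general, applying $\circledast$ to this decomposition and using $E^{\lambda'}\cong(E^{\lambda'})^{\circledast}$ exhibits the indecomposable module $(T^{\lambda})^{\circledast}$ as a direct summand of $E^{\lambda'}$; by the Krull--Schmidt theorem $(T^{\lambda})^{\circledast}\cong T^{\mu_{0}}$ for some $\mu_{0}$ occurring above, so either $\mu_{0}=\lambda$ or $\mu_{0}\lhd\lambda$. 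If $\mu_{0}\lhd\lambda$, the inductive hypothesis makes $T^{\mu_{0}}$ self-dual, whence $T^{\lambda}\cong(T^{\mu_{0}})^{\circledast}\cong T^{\mu_{0}}$, contradicting the distinctness of the indecomposable tilting modules; hence $\mu_{0}=\lambda$ and $T^{\lambda}$ is self-dual. The ``moreover'' then follows formally: by (i) each $T^{\mu}$ is a summand of $E^{\mu'}$, while every indecomposable summand of any $E^{\lambda}$ is some $T^{\mu}$ because $E^{\lambda}$ is tilting, so the indecomposable summands of the family $\{E^{\lambda}\:|\:\lambda\in\mathcal{P}_{r,n}\}$ are precisely the $T^{\mu}$, $\mu\in\mathcal{P}_{r,n}$, and the tilting modules are exactly their direct sums. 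The one step that requires genuine care is the compatibility in the first paragraph between $\circledast$ and the two types of filtration, that is, identifying $(W^{\kappa})^{\circledast}$ with the dual Weyl module of $\mathrm{YS}_{n}^{r}$; once that is settled, everything else is bookkeeping with Krull--Schmidt and the general tilting machinery of [Ri].
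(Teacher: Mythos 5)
Your argument is correct and is essentially the proof the paper intends: the paper only remarks that the corollary follows from the self-duality of $E^{\nu}$ together with Corollary 5.27, and your write-up (Weyl filtration from Proposition 5.26, dual Weyl filtration via $\circledast$ and self-duality, then Ringel's classification and Krull--Schmidt for the decomposition and the self-duality of $T^{\lambda}$) fills in exactly those steps. The only slip is a label: the self-duality of $E^{\lambda}$ is Theorem 5.30, not ``Theorem 5.31''.
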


Recall that the Schur functor $\mathrm{F}_{n}^{r}: \mathrm{YS}_{n}^{r}\mathrm{-mod}\rightarrow \text{Y}_{r,n}\mathrm{-mod}$ defined in Proposition \ref{Schur-functor-propo4-3}.

\begin{lemma}\label{tilting-mod-5-36}
Suppose that $\bm{\mu}\in \mathcal{C}_{r,n}.$ Then $\mathrm{F}_{n}^{r}(E^{\bm{\mu}})\cong N^{\bm{\mu}}$ as $\mathrm{Y}_{r,n}$-modules.
\end{lemma}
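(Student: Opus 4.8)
The plan is to reduce $F_{n}^{r}(E^{\mu})$ to the computation of $\dot{F}_{n}^{r}(\theta_{\mu}\dot{\mathrm{YS}_{n}^{r}})$, along the lines of the calculation of $F_{n}^{r}(Z^{\mu})\cong M^{\mu}$ in the proof of Lemma 5.1(iii). Recall from the proof of Proposition 4.3 that $F_{n}^{r}=\dot{F}_{n}^{r}\circ\dot{G}_{n}^{\omega}$, and from Lemma 4.2 that $\dot{F}_{n}^{\omega}$ and $\dot{G}_{n}^{\omega}$ are mutually inverse equivalences, so that $\dot{G}_{n}^{\omega}\circ\dot{F}_{n}^{\omega}\cong\mathrm{Id}$. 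Since by definition $E^{\mu}=\dot{F}_{n}^{\omega}(\theta_{\mu}\dot{\mathrm{YS}_{n}^{r}})$, applying $\dot{G}_{n}^{\omega}$ gives $\dot{G}_{n}^{\omega}(E^{\mu})\cong\theta_{\mu}\dot{\mathrm{YS}_{n}^{r}}$ as right $\dot{\mathrm{YS}_{n}^{r}}$-modules, whence
$$F_{n}^{r}(E^{\mu})=\dot{F}_{n}^{r}\bigl(\dot{G}_{n}^{\omega}(E^{\mu})\bigr)\cong\dot{F}_{n}^{r}\bigl(\theta_{\mu}\dot{\mathrm{YS}_{n}^{r}}\bigr)=\theta_{\mu}\dot{\mathrm{YS}_{n}^{r}}\varphi_{\omega},$$
using that $\dot{F}_{n}^{r}$ is right multiplication by the idempotent $\varphi_{\omega}$, that is, projection onto the $\omega$-weight space.

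Next I would unwind $\theta_{\mu}\dot{\mathrm{YS}_{n}^{r}}\varphi_{\omega}$. Writing $\dot{\mathrm{YS}_{n}^{r}}=\mathrm{End}_{Y_{r,n}}(\dot{M}_{n}^{r})$ and using $M^{\omega}=Y_{r,n}$, one identifies $\dot{\mathrm{YS}_{n}^{r}}\varphi_{\omega}$ with $\mathrm{Hom}_{Y_{r,n}}(Y_{r,n},\dot{M}_{n}^{r})$; since $\theta_{\mu}=\varphi_{\omega}\theta_{\mu}\varphi_{\omega}$ lies in $\varphi_{\omega}\dot{\mathrm{YS}_{n}^{r}}\varphi_{\omega}$, the space $\theta_{\mu}\dot{\mathrm{YS}_{n}^{r}}\varphi_{\omega}$ consists precisely of the $Y_{r,n}$-module homomorphisms $Y_{r,n}\to N^{\mu}$ that factor through $\theta_{\mu}$. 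But any $Y_{r,n}$-module homomorphism $f\colon Y_{r,n}\to N^{\mu}$ satisfies $f(h)=f(1)h$, and writing $f(1)=n_{\mu}x$ for some $x\in Y_{r,n}$ gives $f=\theta_{\mu}\circ\psi_{x}$, where $\psi_{x}\in\mathrm{End}_{Y_{r,n}}(Y_{r,n})$ is left multiplication by $x$; thus every such $f$ already factors through $\theta_{\mu}$. Hence $\theta_{\mu}\dot{\mathrm{YS}_{n}^{r}}\varphi_{\omega}=\mathrm{Hom}_{Y_{r,n}}(Y_{r,n},N^{\mu})$, which is isomorphic to $N^{\mu}$ via evaluation at $1$.

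The one point requiring care --- really a bookkeeping check rather than a genuine obstacle --- is that this last map is an isomorphism of right $Y_{r,n}$-modules. The right $Y_{r,n}$-action on $\dot{F}_{n}^{r}(\theta_{\mu}\dot{\mathrm{YS}_{n}^{r}})$ comes through the algebra isomorphism $\varphi_{\omega}\dot{\mathrm{YS}_{n}^{r}}\varphi_{\omega}\cong\mathrm{End}_{Y_{r,n}}(M^{\omega})\cong Y_{r,n}$ of Lemma 4.5, under which $y\in Y_{r,n}$ acts on $\theta_{\mu}\circ\psi_{x}$ by precomposition with $\psi_{y}$, sending it to $\theta_{\mu}\circ\psi_{xy}$; under evaluation at $1$ this reads $n_{\mu}x\mapsto n_{\mu}xy$, which is exactly right multiplication by $y$ on $N^{\mu}$. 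As a cross-check one may instead argue on $\mathcal{R}$-bases: tracing the basis $\{\theta_{ST}\}$ of $E^{\mu}$ from Proposition 5.25 through $\dot{F}_{n}^{r}$ --- which retains exactly those $\theta_{ST}$ with $T\in\mathrm{Std}(\lambda)$ --- and evaluating at $1$ produces precisely the basis $\{n_{\mu}m_{\dot{S}\mathfrak{t}}\}$ of $N^{\mu}$ from Proposition 5.19. Combining these steps yields $F_{n}^{r}(E^{\mu})\cong N^{\mu}$ as $Y_{r,n}$-modules.
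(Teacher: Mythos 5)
Your proposal is correct and follows essentially the same route as the paper: the paper's one-line proof is exactly the chain $F_{n}^{r}(E^{\mu})=F_{n}^{r}(\dot{F}_{n}^{\omega}(\theta_{\mu}\dot{\mathrm{YS}_{n}^{r}}))\cong\dot{F}_{n}^{r}(\theta_{\mu}\dot{\mathrm{YS}_{n}^{r}})=\theta_{\mu}\dot{\mathrm{YS}_{n}^{r}}\varphi_{\omega}\cong\mathrm{Hom}_{Y_{r,n}}(Y_{r,n},N^{\mu})\cong N^{\mu}$, citing Lemma 4.2 and Proposition 4.3. You have simply made explicit the surjectivity of the factorization through $\theta_{\mu}$ and the compatibility of the right $Y_{r,n}$-action, which the paper leaves implicit.
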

\begin{proof}
By Lemma \ref{Schur-functor-lemma4-2} and Proposition \ref{Schur-functor-propo4-3} we have
\begin{align*}
\mathrm{F}_{n}^{r}(E^{\bm{\mu}})&=\mathrm{F}_{n}^{r}(\dot{\mathrm{F}}_{n}^{\omega}(\theta_{\bm{\mu}}\dot{\mathrm{YS}_{n}^{r}}))
=\dot{\mathrm{F}}_{n}^{r}(\theta_{\bm{\mu}}\dot{\mathrm{YS}_{n}^{r}})\\
&=\theta_{\bm{\mu}}\dot{\mathrm{YS}_{n}^{r}}\varphi_{\omega}\cong \mathrm{Hom}_{\text{Y}_{r,n}}(\text{Y}_{r,n}, N^{\bm{\mu}})\\
&\cong N^{\bm{\mu}}
\end{align*}
as required.
\end{proof}

Let $\bm{\mu}, \bm{\nu}\in \mathcal{C}_{r,n}.$ Recall that for each $\mathrm{S}\in \mathcal{T}_{0}^{+}(\bm{\lambda}, \bm{\mu})$ and $\mathrm{T}\in \mathcal{T}_{0}^{+}(\bm{\lambda}, \bm{\nu})$ there is a $\text{Y}_{r,n}$-module homomorphism $\varphi_{\mathrm{S}\mathrm{T}}': N^{\bm{\nu}}\rightarrow N^{\bm{\mu}};$ this induces a $\mathrm{YS}_{n}^{r}$-module homomorphism $\Phi_{\mathrm{S}\mathrm{T}}': E^{\bm{\nu}}\rightarrow E^{\bm{\mu}}$ defined by $\Phi_{\mathrm{S}\mathrm{T}}'(\theta)=\varphi_{\mathrm{S}\mathrm{T}}'\theta$ for $\theta\in E^{\bm{\nu}}.$ The next proposition, which can be proved in exactly the same way as in [Ma2, Proposition 7.1], shows that these maps $\{\Phi_{\mathrm{S}\mathrm{T}}'\}$ give a basis of all the $\mathrm{YS}_{n}^{r}$-module homomorphisms from $E^{\bm{\nu}}$ to $E^{\bm{\mu}}.$

\begin{proposition}\label{tilting-mod-5-37}
Suppose that $\bm{\mu}, \bm{\nu}\in \mathcal{C}_{r,n}.$ Then $\mathrm{Hom}_{\mathrm{YS}_{n}^{r}}(E^{\bm{\nu}}, E^{\bm{\mu}})$ is free as an $\mathcal{R}$-module with basis $$\{\Phi_{\mathrm{S}\mathrm{T}}'\:|\:\mathrm{S}\in \mathcal{T}_{0}^{+}(\bm{\lambda}, \bm{\mu}) ~and ~\mathrm{T}\in \mathcal{T}_{0}^{+}(\bm{\lambda}, \bm{\nu}) ~for~some~\bm{\lambda}\in \mathcal{P}_{r,n}\}.$$
\end{proposition}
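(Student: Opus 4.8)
The plan is to follow [Ma2, Proposition 7.1], pushing everything through the Schur functor $F_{n}^{r}$. Since the homomorphisms $\Phi_{ST}'\in \mathrm{Hom}_{\mathrm{YS}_{n}^{r}}(E^{\nu}, E^{\mu})$ have already been defined, all that is left is to prove they form an $\mathcal{R}$-basis. First I would extract from the cellular basis of $\mathrm{YS}^{n}_{r}$ in Proposition 5.10(i) — restricted to the $\mathcal{R}$-module summand $\mathrm{Hom}_{Y_{r,n}}(N^{\nu}, N^{\mu})$ of $\mathrm{YS}^{n}_{r}$, which is the $n$-analogue of Corollary 3.5 and Proposition 3.7 — that $\{\varphi_{ST}'\:|\:S\in \mathcal{T}_{0}^{+}(\lambda, \mu),\,T\in \mathcal{T}_{0}^{+}(\lambda, \nu)\text{ for some }\lambda\in \mathcal{P}_{r,n}\}$ is an $\mathcal{R}$-basis of $\mathrm{Hom}_{Y_{r,n}}(N^{\nu}, N^{\mu})$. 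It then suffices to show that $F_{n}^{r}$ induces an injective $\mathcal{R}$-linear map $\mathrm{Hom}_{\mathrm{YS}_{n}^{r}}(E^{\nu}, E^{\mu})\hookrightarrow \mathrm{Hom}_{Y_{r,n}}(N^{\nu}, N^{\mu})$ carrying $\Phi_{ST}'$ to $\varphi_{ST}'$ (regarded as a map $N^{\nu}\to N^{\mu}$): for then this map is injective with image containing the basis $\{\varphi_{ST}'\}$, hence bijective, so $\{\Phi_{ST}'\}$ is the preimage of a basis and is itself a basis. (If $\alpha(\mu)\neq \alpha(\nu)$ there are no such $S, T$; then $\mathrm{Hom}_{Y_{r,n}}(N^{\nu}, N^{\mu})=0$ by Proposition 5.10(i), hence $\mathrm{Hom}_{\mathrm{YS}_{n}^{r}}(E^{\nu}, E^{\mu})=0$ by the injectivity below, and the statement is trivial; so assume $\alpha(\mu)=\alpha(\nu)$.)

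For the injectivity I would argue as follows. Recall $E^{\nu}=\dot{F}_{n}^{\omega}(\dot{E}^{\nu})$ with $\dot{E}^{\nu}=\theta_{\nu}\dot{\mathrm{YS}_{n}^{r}}$, a cyclic $\dot{\mathrm{YS}_{n}^{r}}$-module generated by $\theta_{\nu}$. As $\dot{G}_{n}^{\omega}$ is an equivalence (Lemma 4.2) and $F_{n}^{r}=\dot{F}_{n}^{r}\circ\dot{G}_{n}^{\omega}$, it is enough that $\dot{F}_{n}^{r}$ be faithful on $\mathrm{Hom}_{\dot{\mathrm{YS}_{n}^{r}}}(\dot{E}^{\nu}, \dot{E}^{\mu})$. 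The crucial point is that $\theta_{\nu}$ already lies in the $\omega$-weight space of $\dot{E}^{\nu}$, because $\theta_{\nu}\varphi_{\omega}=\theta_{\nu}$; since $\dot{F}_{n}^{r}$ is exactly restriction to the $\omega$-weight space, $\dot{F}_{n}^{r}(\Phi)=0$ forces $\Phi(\theta_{\nu})=0$, whence $\Phi=0$ because $\theta_{\nu}$ generates $\dot{E}^{\nu}$.

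To identify the image, I would unwind the isomorphisms behind Lemma 5.33, namely $F_{n}^{r}(E^{\nu})=\dot{F}_{n}^{r}(\dot{E}^{\nu})=\theta_{\nu}\dot{\mathrm{YS}_{n}^{r}}\varphi_{\omega}\cong \mathrm{Hom}_{Y_{r,n}}(Y_{r,n}, N^{\nu})\cong N^{\nu}$ (evaluation at $1$), and likewise for $\mu$. Since $\Phi_{ST}'$ is left-composition with $\varphi_{ST}'$ on maps landing in $N^{\nu}$, and $\varphi_{ST}'$ sends $n_{\nu}h\mapsto n_{ST}h$ with $n_{ST}\in N^{\nu\ast}\cap N^{\mu}$ (by the $n$-analogue of Proposition 3.7, so indeed $n_{ST}\in N^{\mu}$), tracing through these identifications shows that $F_{n}^{r}(\Phi_{ST}')$ is the map $N^{\nu}\to N^{\mu}$, $n_{\nu}h\mapsto n_{ST}h$, i.e. $\varphi_{ST}'$ restricted to $N^{\nu}$. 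Together with the two previous paragraphs this completes the proof. (Alternatively one could match ranks directly: $E^{\nu}$ and $E^{\mu}$ are tilting $\mathrm{YS}_{n}^{r}$-modules — they carry Weyl filtrations by Proposition 5.26 and are self-dual by Theorem 5.31 — so the tilting $\mathrm{Hom}$ formula gives $\mathrm{rank}_{\mathcal{R}}\mathrm{Hom}_{\mathrm{YS}_{n}^{r}}(E^{\nu}, E^{\mu})=\sum_{\lambda}[E^{\nu}:W^{\lambda}][E^{\mu}:W^{\lambda}]=\sum_{\lambda}\sharp\mathcal{T}^{cs}(\lambda,\nu)\,\sharp\mathcal{T}^{cs}(\lambda,\mu)=\sum_{\lambda}\sharp\mathcal{T}_{0}^{+}(\lambda',\nu)\,\sharp\mathcal{T}_{0}^{+}(\lambda',\mu)$, which equals $\sharp\{\Phi_{ST}'\}$ after reindexing $\lambda\mapsto\lambda'$.)

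The main obstacle is the bookkeeping in the previous paragraph: one must check that under $\dot{G}_{n}^{\omega}$ and the identification $\dot{G}_{n}^{\omega}\dot{F}_{n}^{\omega}\cong \mathrm{id}$ the homomorphism $\Phi_{ST}'$ really corresponds to left-composition with $\varphi_{ST}'$ on $\dot{E}^{\nu}$ rather than to a $q$-power twist of it, and that this is compatible with the evaluation-at-$1$ isomorphism $\mathrm{Hom}_{Y_{r,n}}(Y_{r,n}, N^{\bullet})\cong N^{\bullet}$. This is the same kind of routine verification carried out in [Ma2, Proposition 7.1] and in the proof of Lemma 5.33, and presents no essential difficulty.
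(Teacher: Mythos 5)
Your argument is correct and is essentially the paper's own route: the paper simply defers to [Ma2, Proposition 7.1], and your reconstruction (faithfulness of the Schur functor on $\mathrm{Hom}(E^{\nu},E^{\mu})$ via the generator $\theta_{\nu}$ lying in the $\omega$-weight space, plus identification of the image with the basis $\{\varphi_{ST}'\}$ of $\mathrm{Hom}_{Y_{r,n}}(N^{\nu},N^{\mu})$) is exactly that argument, inverse to the ``natural map'' the paper invokes later in Corollary 5.35. Only fix the citation: the cellular basis of $\mathrm{YS}^{n}_{r}$ you use is Proposition 5.9(i), not 5.10(i).
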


By definition $E_{n}^{r}=\bigoplus_{\bm{\mu}\in \mathcal{C}_{r,n}}E^{\bm{\mu}}$ is a full tilting module for $\mathrm{YS}_{n}^{r}.$ Define the Ringel dual of $\mathrm{YS}_{n}^{r}$ to be the algebra $\mathrm{End}_{\mathrm{YS}_{n}^{r}}(E_{n}^{r}).$ If $A$ is an algebra, let $A^{\mathrm{op}}$ be the opposite algebra in which the order of multiplication is reserved. The following corollary gives a description of the Ringel dual of $\mathrm{YS}_{n}^{r}.$

\begin{corollary}\label{tilting-mod-5-38}
There exist canonical isomorphisms of $\mathcal{R}$-algebras $$\mathrm{End}_{\mathrm{YS}_{n}^{r}}(E_{n}^{r})\cong (\mathrm{YS}_{r}^{n})^{\mathrm{op}}.$$
\end{corollary}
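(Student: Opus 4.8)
The plan is to identify $\mathrm{End}_{\mathrm{YS}_{n}^{r}}(E_{n}^{r})$ with $(\mathrm{YS}_{r}^{n})^{\mathrm{op}}$ by matching explicit bases and then checking that composition of endomorphisms corresponds to the reversed multiplication of $\mathrm{YS}_{r}^{n}$, exactly along the lines of [Ma2, $\S$7].

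First I would record the two basis descriptions. By definition $\mathrm{End}_{\mathrm{YS}_{n}^{r}}(E_{n}^{r})=\bigoplus_{\mu,\nu\in\mathcal{C}_{r,n}}\mathrm{Hom}_{\mathrm{YS}_{n}^{r}}(E^{\nu},E^{\mu})$, which by Proposition 5.34 is $\mathcal{R}$-free on the set $\{\Phi_{ST}'\}$ with $S\in\mathcal{T}_{0}^{+}(\lambda,\mu)$ and $T\in\mathcal{T}_{0}^{+}(\lambda,\nu)$ for various $\lambda\in\mathcal{P}_{r,n}$, $\mu,\nu\in\mathcal{C}_{r,n}$. Likewise $\mathrm{YS}_{r}^{n}=\mathrm{End}_{Y_{r,n}}(N_{n}^{r})=\bigoplus_{\mu,\nu}\mathrm{Hom}_{Y_{r,n}}(N^{\nu},N^{\mu})$ is $\mathcal{R}$-free on the set $\{\varphi_{ST}'\}$ indexed by the very same data; this is the twisted counterpart of Proposition 3.4 and Corollary 3.5 (obtained by applying the involution $'$ and the specialization argument of Section 5.1), and it is part of the cellular basis of $\mathrm{YS}_{r}^{n}$ from Proposition 5.10. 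In particular the two algebras have equal $\mathcal{R}$-rank and the index sets match block by block: $\Phi_{ST}'$ lies in $\mathrm{Hom}(E^{\nu},E^{\mu})$ precisely when $\varphi_{ST}'$ lies in $\mathrm{Hom}(N^{\nu},N^{\mu})$.

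Next I would produce the comparison morphism using the Schur functor $F_{n}^{r}$ and Lemma 5.33. Since $F_{n}^{r}(E^{\mu})\cong N^{\mu}$ for all $\mu$, we have $F_{n}^{r}(E_{n}^{r})\cong N_{n}^{r}$, so $F_{n}^{r}$ yields $\mathcal{R}$-linear maps $\mathrm{Hom}_{\mathrm{YS}_{n}^{r}}(E^{\nu},E^{\mu})\to\mathrm{Hom}_{Y_{r,n}}(N^{\nu},N^{\mu})$. Tracing through the isomorphism $F_{n}^{r}(E^{\mu})=\theta_{\mu}\dot{\mathrm{YS}_{n}^{r}}\varphi_{\omega}\cong N^{\mu}$ from the proof of Lemma 5.33, together with the defining formula $\Phi_{ST}'(\theta)=\varphi_{ST}'\theta$ (post-composition by $\varphi_{ST}'$), one checks that $F_{n}^{r}(\Phi_{ST}')$ is carried onto $\varphi_{ST}'$. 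As $F_{n}^{r}$ is a covariant functor it respects composition, so on the whole algebra it induces an $\mathcal{R}$-algebra morphism carrying the basis $\{\Phi_{ST}'\}$ bijectively onto $\{\varphi_{ST}'\}$; bijectivity then follows from the two basis theorems and a rank count. The reason the target is $(\mathrm{YS}_{r}^{n})^{\mathrm{op}}$ rather than $\mathrm{YS}_{r}^{n}$ is the usual reversal in Ringel duality: each $E^{\nu}$ is a \emph{right} $\mathrm{YS}_{n}^{r}$-module on which $\mathrm{YS}_{r}^{n}$ acts naturally on the \emph{left} by post-composition, so identifying $\mathrm{End}_{\mathrm{YS}_{n}^{r}}(E_{n}^{r})$ with this left-action copy of $\mathrm{YS}_{r}^{n}$ turns composition into the opposite product. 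Equivalently, one first obtains an algebra isomorphism $\mathrm{End}_{\mathrm{YS}_{n}^{r}}(E_{n}^{r})\cong\mathrm{YS}_{r}^{n}$ and then composes with the cellular anti-automorphism $\ast\colon\varphi_{ST}'\mapsto\varphi_{TS}'$ of $\mathrm{YS}_{r}^{n}$ supplied by Proposition 5.10.

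I expect the main obstacle to be precisely this bookkeeping step: pinning down, from the definitions of $\Phi_{ST}'$, of $F_{n}^{r}$, and of the isomorphism $F_{n}^{r}(E^{\mu})\cong N^{\mu}$, that $F_{n}^{r}(\Phi_{ST}')$ really is $\varphi_{ST}'$ and not some twisted variant, while keeping the left/right module conventions consistent so that the opposite algebra appears correctly. Everything else — surjectivity, injectivity, multiplicativity — is then formal from functoriality of $F_{n}^{r}$ together with the anti-automorphism $\ast$. As a by-product this confirms that the Ringel dual of $\mathrm{YS}_{n}^{r}$ is again a cellular, quasi-hereditary Yokonuma-Schur-type algebra, consistent with Corollary 5.32.
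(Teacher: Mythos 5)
Your proposal is correct and follows essentially the route the paper intends: the corollary is stated without proof immediately after Proposition 5.33, and the intended argument is exactly your identification of the basis $\{\Phi_{ST}'\}$ of $\mathrm{End}_{\mathrm{YS}_{n}^{r}}(E_{n}^{r})$ with the basis $\{\varphi_{ST}'\}$ of $\mathrm{YS}_{r}^{n}$ (cf.\ the proof of Corollary 5.35, where the paper itself observes that $\varphi_{ST}'\mapsto\Phi_{ST}'$ is an isomorphism on each Hom-space), with the opposite algebra arising from the post-composition convention as in [Ma2, \S 7]. The only blemishes are harmless numbering slips (the cellular basis of $\mathrm{YS}_{r}^{n}$ is Proposition 5.9, not 5.10, and the Hom-basis is Proposition 5.33, not 5.34).
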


\begin{corollary}\label{tilting-mod-5-39}
Suppose that $\bm{\lambda}\in \mathcal{P}_{r,n}.$ Then $\mathrm{F}_{n}^{r}(T^{\bm{\lambda}'})\cong Y_{\bm{\lambda}}$ as $\mathrm{Y}_{r,n}$-modules.
\end{corollary}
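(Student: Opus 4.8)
The plan is to show that the Schur functor $F_{n}^{r}$ is fully faithful on the additive subcategory $\mathrm{add}(E_{n}^{r})$ generated by the full tilting module $E_{n}^{r}=\bigoplus_{\mu\in\mathcal{C}_{r,n}}E^{\mu}$; this forces $F_{n}^{r}$ to carry the indecomposable tilting modules $T^{\mu}$ bijectively onto the indecomposable twisted Young modules, after which a dominance induction identifies the labels.

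First I would record that $F_{n}^{r}(E_{n}^{r})\cong N_{n}^{r}:=\bigoplus_{\mu\in\mathcal{C}_{r,n}}N^{\mu}$, which is immediate from Lemma 5.32 since an additive functor commutes with finite direct sums. The crux is to prove that, for all $\mu,\nu\in\mathcal{C}_{r,n}$, the functor $F_{n}^{r}$ induces an isomorphism $\mathrm{Hom}_{\mathrm{YS}_{n}^{r}}(E^{\nu},E^{\mu})\cong\mathrm{Hom}_{Y_{r,n}}(N^{\nu},N^{\mu})$. Unwinding the definitions of $\dot{F}_{n}^{\omega}$ and $\dot{F}_{n}^{r}$ together with the identifications $E^{\mu}=\dot{F}_{n}^{\omega}(\theta_{\mu}\dot{\mathrm{YS}_{n}^{r}})$ and $F_{n}^{r}(E^{\mu})\cong N^{\mu}$, one checks that $F_{n}^{r}$ sends $\Phi_{ST}'$ to $\varphi_{ST}'$. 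By Proposition 5.33 the $\{\Phi_{ST}'\}$ form an $\mathcal{R}$-basis of $\mathrm{Hom}_{\mathrm{YS}_{n}^{r}}(E^{\nu},E^{\mu})$, while by Proposition 5.9(i) the cellular basis $\{\varphi_{ST}'\}$ of $\mathrm{YS}^{n}_{r}=\mathrm{End}_{Y_{r,n}}(N_{n}^{r})$ restricts to an $\mathcal{R}$-basis of $\mathrm{Hom}_{Y_{r,n}}(N^{\nu},N^{\mu})$. Since $F_{n}^{r}$ carries the first basis to the second, the induced map is an isomorphism; summing over $\mu,\nu\in\mathcal{C}_{r,n}$ shows $F_{n}^{r}$ is fully faithful on $\mathrm{add}(E_{n}^{r})$. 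In effect this says that $F_{n}^{r}$ realises on morphism spaces the Ringel-duality isomorphism of Corollary 5.34; alternatively, the step can be deduced from [Ri] and [Ma2, $\S$7].

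Since $\mathrm{YS}_{n}^{r}\mathrm{-mod}$ is Krull--Schmidt, a fully faithful additive functor out of $\mathrm{add}(E_{n}^{r})$ preserves indecomposability, reflects isomorphisms, and (idempotents splitting) has essential image closed under direct summands. Hence $F_{n}^{r}$ gives a bijection between the isomorphism classes of indecomposable summands of $E_{n}^{r}$---which are exactly the $T^{\mu}$ ($\mu\in\mathcal{P}_{r,n}$) by Corollary 5.31(ii)---and those of $N_{n}^{r}$. Because each $Y_{\nu}$ ($\nu\in\mathcal{P}_{r,n}$) occurs as a direct summand of $N^{\nu}$, hence of $N_{n}^{r}$, by Proposition 5.10(iii), and the $Y_{\nu}$ are pairwise non-isomorphic by Proposition 5.10(ii), counting forces the indecomposable summands of $N_{n}^{r}$ to be precisely the $Y_{\nu}$. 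Therefore $F_{n}^{r}(T^{\lambda'})\cong Y_{\sigma(\lambda)}$ for some permutation $\sigma$ of $\mathcal{P}_{r,n}$.

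It remains to prove $\sigma=\mathrm{id}$, which I would do by downward induction on the dominance order on $\mathcal{P}_{r,n}$. If $\lambda$ is maximal then $\lambda'$ is minimal, so Corollary 5.31(i) together with Corollary 5.27 (which forces the multiplicity of $T^{\lambda'}$ in $E^{\lambda}$ to be $1$) gives $E^{\lambda}\cong T^{\lambda'}$; hence $Y_{\sigma(\lambda)}\cong F_{n}^{r}(T^{\lambda'})\cong N^{\lambda}$, and since $Y_{\lambda}$ is a summand of $N^{\lambda}$ by Proposition 5.10(iii) we get $\sigma(\lambda)=\lambda$. For the inductive step, assume $\sigma(\nu)=\nu$ whenever $\nu\rhd\lambda$. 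By Corollary 5.31(i), $E^{\lambda}\cong T^{\lambda'}\oplus\bigoplus_{\lambda'\rhd\mu}e_{\lambda\mu}T^{\mu}$; writing $\mu=\nu'$ and using that conjugation reverses the dominance order (so $\lambda'\rhd\mu$ is equivalent to $\nu\rhd\lambda$), applying $F_{n}^{r}$ and the inductive hypothesis gives $N^{\lambda}\cong Y_{\sigma(\lambda)}\oplus\bigoplus_{\nu\rhd\lambda}e_{\lambda\nu'}Y_{\nu}$. As $Y_{\lambda}$ is a summand of $N^{\lambda}$ by Proposition 5.10(iii) and $Y_{\lambda}\not\cong Y_{\nu}$ for $\nu\rhd\lambda$, we conclude $\sigma(\lambda)=\lambda$, that is, $F_{n}^{r}(T^{\lambda'})\cong Y_{\lambda}$. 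The main obstacle is the crux step, namely verifying that $F_{n}^{r}$ really carries $\{\Phi_{ST}'\}$ onto $\{\varphi_{ST}'\}$---equivalently, that $F_{n}^{r}$ is fully faithful on tilting modules; once this is established the Krull--Schmidt bookkeeping and the dominance induction (with its conjugation of multipartitions) are routine.
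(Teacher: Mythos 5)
Your proposal is correct and follows essentially the same route as the paper: both establish that $F_{n}^{r}$ identifies $\mathrm{Hom}_{\mathrm{YS}_{n}^{r}}(E^{\nu},E^{\mu})$ with $\mathrm{Hom}_{Y_{r,n}}(N^{\nu},N^{\mu})$ via Lemma 5.32, Proposition 5.9(i) and Proposition 5.33, deduce that indecomposable summands of $E^{\lambda}$ map to indecomposable summands of $N^{\lambda}$, and then match $T^{\lambda'}$ with $Y_{\lambda}$ by induction on the dominance order using Corollary 5.31(i) and Proposition 5.10(iii). Your write-up merely makes explicit some details the paper leaves implicit (the basis-to-basis check and the reversal of dominance under conjugation in the inductive step).
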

\begin{proof}
By Lemma \ref{tilting-mod-5-36} the natural map $\mathrm{Hom}_{\text{Y}_{r,n}}(N^{\bm{\nu}}, N^{\bm{\mu}})\rightarrow \mathrm{Hom}_{\mathrm{YS}_{n}^{r}}(E^{\bm{\nu}}, E^{\bm{\mu}})$ is injective; by Proposition \ref{tiltmod-proposi5-9}(i) and Proposition \ref{tilting-mod-5-37} this is an isomorphism. Consequently, if an indecomposable tilting module $T^{\bm{\nu}}$ is a direct summand of $E^{\bm{\lambda}}$ then $\mathrm{F}_{n}^{r}(T^{\bm{\nu}})$ is an indecomposable direct summand of $N^{\bm{\lambda}}.$ Now, $E^{\bm{\lambda}}\cong T^{\bm{\lambda}'}\oplus \bigoplus_{\bm{\lambda}'\rhd \bm{\mu}}e_{\bm{\lambda}\bm{\mu}}T^{\bm{\mu}}$ by Corollary \ref{tilting-mod-5-35}(i) and $\mathrm{F}_{n}^{r}(E^{\bm{\lambda}})\cong N^{\bm{\lambda}}\cong Y_{\bm{\lambda}}\oplus \bigoplus_{\bm{\nu}\rhd\bm{\lambda}}c_{\bm{\nu}\bm{\lambda}}Y_{\bm{\nu}}$ by Proposition \ref{tiltmod-proposi5-10}(iii). Hence, the result follows by induction on the dominance order.
\end{proof}

\begin{corollary}\label{tilting-mod-5-40}
Let $\bm{\lambda}, \bm{\mu}\in \mathcal{P}_{r,n}.$ Then $[T^{\bm{\lambda}}: (W^{\bm{\mu}})^{\circledast}]=[W^{\bm{\mu}'}: L^{\bm{\lambda}'}].$
\end{corollary}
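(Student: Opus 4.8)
The plan is to transport a dual Weyl filtration of the indecomposable tilting module $T^{\lambda'}$ through the Schur functor $F_{n}^{r}$ and to match the resulting filtration against the dual Specht filtration of the twisted Young module $Y_{\lambda}$ provided by Proposition 5.10(iv), using the identification $F_{n}^{r}(T^{\lambda'})\cong Y_{\lambda}$ of Corollary 5.35. The first ingredient I would record is the behaviour of $F_{n}^{r}$ on a dual Weyl module. Writing $F_{n}^{r}=\dot{F}_{n}^{r}\circ\dot{G}_{n}^{\omega}$ and noting that the idempotents $\varphi_{\omega}$ and $\varphi_{n}^{r}$ defining $\dot{F}_{n}^{r}$ and the equivalence $\dot{G}_{n}^{\omega}$ are fixed by the anti-involution $\ast$ of $\dot{\mathrm{YS}_{n}^{r}}$, the functor $F_{n}^{r}$ commutes with contragredient duality (this is the standard fact that truncation by a $\ast$-fixed idempotent commutes with $\circledast$). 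Combining this with $F_{n}^{r}(W^{\rho})\cong S^{\rho}$ (Proposition 4.3) and $S^{\rho}\cong S_{\rho'}^{\circledast}$ (the reformulation $\lambda=\rho'$ of Corollary 5.16) gives $F_{n}^{r}\bigl((W^{\rho})^{\circledast}\bigr)\cong(S^{\rho})^{\circledast}\cong S_{\rho'}$ for every $\rho\in\mathcal{P}_{r,n}$; in particular these are never zero.

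Next I would fix a dual Weyl filtration of $T^{\lambda'}$, in which $(W^{\mu'})^{\circledast}$ occurs with the multiplicity $[T^{\lambda'}:(W^{\mu'})^{\circledast}]$ (well-defined, since $\mathrm{YS}_{n}^{r}$ is quasi-hereditary). Applying the exact functor $F_{n}^{r}$ and using Corollary 5.35, the module $Y_{\lambda}\cong F_{n}^{r}(T^{\lambda'})$ inherits a filtration all of whose sections are nonzero dual Specht modules, the section $S_{\mu}$ occurring exactly $[T^{\lambda'}:(W^{\mu'})^{\circledast}]$ times. Proposition 5.10(iv) equips $Y_{\lambda}$ with a second filtration, a dual Specht filtration in which $S_{\mu}$ occurs $[W_{\mu}:L_{\lambda}]$ times, and $[W_{\mu}:L_{\lambda}]=[W^{\mu}:L^{\lambda}]$ via the canonical isomorphism $\mathrm{YS}^{n}_{r}\cong\mathrm{YS}_{n}^{r}$ of Proposition 5.9(iii). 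Granting that the two filtration multiplicities agree, I get $[T^{\lambda'}:(W^{\mu'})^{\circledast}]=[W^{\mu}:L^{\lambda}]$ for all $\lambda,\mu\in\mathcal{P}_{r,n}$, and replacing $\lambda,\mu$ by their conjugates yields exactly $[T^{\lambda}:(W^{\mu})^{\circledast}]=[W^{\mu'}:L^{\lambda'}]$. As a consistency check, $T^{\lambda}$ is self-dual by Corollary 5.31(ii), so this number also equals $[T^{\lambda}:W^{\mu}]$, whose support $\{\mu:\lambda\trianglerighteq\mu\}$ matches that of $[W^{\mu'}:L^{\lambda'}]$ under conjugation, by Corollary 3.11.

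The one genuinely delicate point is the comparison of the two filtrations of $Y_{\lambda}$: a priori the multiplicity of a dual Specht module in a dual Specht filtration of a $Y_{r,n}$-module need not be well-defined, since $Y_{r,n}$ is not quasi-hereditary. I would resolve this exactly as in [Ma2], by transferring the count to the quasi-hereditary Ringel dual $\mathrm{End}_{\mathrm{YS}_{n}^{r}}(E_{n}^{r})\cong(\mathrm{YS}^{n}_{r})^{\mathrm{op}}$ of Corollary 5.34 via the functor $\mathsf{R}=\mathrm{Hom}_{\mathrm{YS}_{n}^{r}}(E_{n}^{r},-)$. Since $F_{n}^{r}$ is fully faithful on tilting modules (Lemma 5.32 and Proposition 5.33, as in the proof of Corollary 5.35), on tilting modules $F_{n}^{r}$ agrees with $\mathsf{R}$ up to the equivalence induced by $\mathrm{YS}^{n}_{r}\cong\mathrm{YS}_{n}^{r}$; under $\mathsf{R}$ the dual Weyl filtration of $T^{\lambda'}$ becomes a Weyl filtration of a projective $\mathrm{YS}^{n}_{r}$-module, whose multiplicities are unambiguous and, by the reciprocity $[P:W]=[W:L]$ valid for the quasi-hereditary algebra $\mathrm{YS}^{n}_{r}$, equal the decomposition numbers in question. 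Keeping careful track of the relabelling of the cell data under $\mathsf{R}$, so that it produces precisely the conjugation in the statement, is the only real work; the rest is formal and can be copied line by line from [Ma2].
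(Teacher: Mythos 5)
Your proposal follows essentially the same route as the paper: apply the exact, duality-commuting functor $F_{n}^{r}$ to a dual Weyl filtration of the tilting module, compute $F_{n}^{r}((W^{\mu})^{\circledast})\cong S_{\mu'}$ via Proposition 4.3 and Corollary 5.16, identify the image of the tilting module with a twisted Young module via Corollary 5.35, and read off the multiplicity from the dual Specht filtration of Proposition 5.10(iv) together with the identification $[W_{\mu'}:L_{\lambda'}]=[W^{\mu'}:L^{\lambda'}]$ from Proposition 5.9(iii). The paper states this as a terse chain of equalities, omitting your (legitimate and careful) discussion of why the dual Specht filtration multiplicity $[Y_{\lambda'}:S_{\mu'}]$ is well-defined, but the argument is the same.
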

\begin{proof}
Recall that $\mathrm{F}_{n}^{r}(W^{\bm{\mu}})\cong S^{\bm{\mu}}$ by Proposition \ref{Schur-functor-propo4-3}. By definition, it is easy to see that the functor $\mathrm{F}_{n}^{r}$ commutes with duality. Then we have $\mathrm{F}_{n}^{r}((W^{\bm{\mu}})^{\circledast})\cong (\mathrm{F}_{n}^{r}(W^{\bm{\mu}}))^{\circledast}\cong S_{\bm{\mu}'}$ by Corollary \ref{idempotent-corollary-5-20}. Thus we have
\begin{align*}
[T^{\bm{\lambda}}: (W^{\bm{\mu}})^{\circledast}]&=[\mathrm{F}_{n}^{r}(T^{\bm{\lambda}}): \mathrm{F}_{n}^{r}((W^{\bm{\mu}})^{\circledast})]=[Y_{\bm{\lambda}'}: S_{\bm{\mu}'}]\\&=[W_{\bm{\mu}'}: L_{\bm{\lambda}'}]=[W^{\bm{\mu}'}: L^{\bm{\lambda}'}],
\end{align*}
where the last equality follows from Proposition \ref{tiltmod-proposi5-9}(iii).
\end{proof}

\section{Appendix. Cyclotomic Yokonuma-Schur algebras}

In this appendix, we will generalize the results above to define and study the cyclotomic Yokonuma-Schur algebra by using the cellular basis of the cyclotomic Yokonuma-Hecke algebra $\text{Y}_{r,n}^{d}$ constructed in [C1]. Since this approach is very similar, we only mention the main results and skip all the details of the proofs.

We first recall the definition of $\text{Y}_{r,n}^{d}$ and the construction of a cellular basis of it.

By definition, The affine Yokonuma-Hecke algebra $\widehat{\mathrm{Y}}_{r,n}=\widehat{\mathrm{Y}}_{r,n}(q)$ is an $\mathcal{R}$-associative algebra generated by the elements $t_{1},\ldots,t_{n},g_{1},\ldots,g_{n-1},X_{1}^{\pm 1},$ in which the generators $t_{1},\ldots,t_{n},g_{1},$ $\ldots,g_{n-1}$ satisfy the following relations:
\begin{equation}\label{rel-def-Y1}\begin{array}{rclcl}
g_ig_j\hspace*{-7pt}&=&\hspace*{-7pt}g_jg_i && \mbox{for all $i,j=1,\ldots,n-1$ such that $\vert i-j\vert \geq 2$;}\\[0.1em]
g_ig_{i+1}g_i\hspace*{-7pt}&=&\hspace*{-7pt}g_{i+1}g_ig_{i+1} && \mbox{for all $i=1,\ldots,n-2$;}\\[0.1em]
t_it_j\hspace*{-7pt}&=&\hspace*{-7pt}t_jt_i &&  \mbox{for all $i,j=1,\ldots,n$;}\\[0.1em]
g_it_j\hspace*{-7pt}&=&\hspace*{-7pt}t_{j s_i}g_i && \mbox{for all $i=1,\ldots,n-1$ and $j=1,\ldots,n$;}\\[0.1em]
t_i^r\hspace*{-7pt}&=&\hspace*{-7pt}1 && \mbox{for all $i=1,\ldots,n$;}\\[0.2em]
g_{i}^{2}\hspace*{-7pt}&=&\hspace*{-7pt}1+(q-q^{-1})e_{i}g_{i} && \mbox{for all $i=1,\ldots,n-1$,}
\end{array}
\end{equation}
where $s_{i}$ is the transposition $(i,i+1)$, and for each $1\leq i\leq n-1$,
$$e_{i} :=\frac{1}{r}\sum\limits_{s=0}^{r-1}t_{i}^{s}t_{i+1}^{-s},$$
together with the following relations concerning the generators $X_{1}^{\pm1}$:
\begin{equation}\label{rel-def-Y2}\begin{array}{rclcl}
X_{1}X_{1}^{-1}\hspace*{-7pt}&=&\hspace*{-7pt}X_{1}^{-1}X_{1}=1;&&\\[0.1em]
g_{1}X_{1}g_{1}X_{1}\hspace*{-7pt}&=&\hspace*{-7pt}X_{1}g_{1}X_{1}g_{1};\\[0.1em]
g_{i}X_{1}\hspace*{-7pt}&=&\hspace*{-7pt}X_{1}g_{i} && \mbox{for all $i=2,\ldots,n-1$;}\\[0.1em]
t_{j}X_{1}\hspace*{-7pt}&=&\hspace*{-7pt}X_{1}t_{j} && \mbox{for all $j=1,\ldots,n$.}
\end{array}
\end{equation}

We define inductively the following elements in $\widehat{\mathrm{Y}}_{r,n}$:
\begin{equation}\label{JM-elements}
X_{i+1} :=g_{i}X_ig_i\quad\mbox{for}~i=1,\ldots,n-1.
\end{equation}

Let $d\geq 1$ and $v_1,\ldots,v_d$ be some invertible indeterminates. Set $f_{1} :=(X_{1}-v_{1})\cdots (X_{1}-v_{d}).$ Let $\mathcal{J}_{d}$ denote the two-sided ideal of $\widehat{\mathrm{Y}}_{r,n}$ generated by $f_{1},$ and define the cyclotomic Yokonuma-Hecke algebra $\mathrm{Y}_{r,n}^{d}=\mathrm{Y}_{r,n}^{d}(q)$ to be the quotient $$\mathrm{Y}_{r,n}^{d}=\widehat{\mathrm{Y}}_{r,n}/\mathcal{J}_{d}.$$

It has been proved in [C1] (see also [ChPA2, Theorem 4.15]) that the set of the following elements $$\{t_{1}^{\beta_1}\cdots t_{n}^{\beta_n}X_{1}^{\alpha_1}\cdots X_{n}^{\alpha_n}g_{w}\:|\:0\leq\alpha_{1},\ldots,\alpha_{n}\leq d-1, ~0\leq \beta_{1},\ldots,\beta_{n}\leq r-1, ~w\in \mathfrak{S}_{n}\}$$
forms an $\mathcal{R}$-basis of $\text{Y}_{r,n}^{d}.$

Let $d\in \mathbb{Z}_{\geq 1}.$ Following [ChPA2, Section 3.1], the combinatorial objects appearing in the representation theory of the cyclotomic Yokonuma-Hecke algebra $\text{Y}_{r,n}^{d}$ will be $m$-compositions (resp. $m$-partitions) with $m=rd,$ which can also be identified with $r$-tuples of $d$-compositions (resp. $d$-partitions). We will call such an object an $(r,d)$-composition (resp. $(r,d)$-partition). By definition, an $(r,d)$-composition (resp. $(r,d)$-partition) of $n$ is an ordered $r$-tuple $\underline{\bm{\lambda}}=(\bm{\lambda}^{(1)},\ldots,\bm{\lambda}^{(r)})=((\lambda_{1}^{(1)},\ldots,\lambda_{d}^{(1)}),\ldots,(\lambda_{1}^{(r)},\ldots,\lambda_{d}^{(r)}))$ of $d$-compositions (resp. $d$-partitions) $(\lambda_{1}^{(k)},\ldots,\lambda_{d}^{(k)})$ ($1\leq k\leq r$) such that $\sum_{k=1}^{r}\sum_{j=1}^{d}|\lambda_{j}^{(k)}|=n.$ We denote by $\mathcal{C}_{r,n}^{d}$ (resp. $\mathcal{P}_{r,n}^{d}$) the set of $(r,d)$-compositions (resp. $(r,d)$-partitions) of $n.$ We will say that the $l$-th composition (resp. partition) of the $k$-th $r$-tuple has position $(k,l).$

A triplet $\bm{\theta}=(\theta, k, l)$ consisting of a node $\theta,$ an integer $k\in \{1,\ldots,r\},$ and an integer $l\in \{1,\ldots,d\}$ is called an $(r,d)$-node. We shall say that the $(r,d)$-node $\bm{\theta}$ has position $(k,l).$ We shall denote by $[\underline{\bm{\lambda}}]$ the set of $(r,d)$-nodes such that the subset consisting of the $(r,d)$-nodes having position $(k,l)$ forms a usual composition (resp. partition) $\lambda_{l}^{(k)}$, for any $k\in \{1,\ldots,r\}$ and $l\in \{1,\ldots,d\}.$

Let $\underline{\bm{\lambda}}=((\lambda_{1}^{(1)},\ldots,\lambda_{d}^{(1)}),\ldots,(\lambda_{1}^{(r)},\ldots,\lambda_{d}^{(r)}))$ be an $(r,d)$-composition of $n.$ An $(r,d)$-tableau $\mathfrak{t}=((\mathfrak{t}_{1}^{(1)},\ldots,\mathfrak{t}_{d}^{(1)}),\ldots,(\mathfrak{t}_{1}^{(r)},\ldots,\mathfrak{t}_{d}^{(r)}))$ of shape $\underline{\bm{\lambda}}$ is obtained by placing each $(r,d)$-node of $[\underline{\bm{\lambda}}]$ by one of the integers $1,2,\ldots,n,$ allowing no repeats. We will call the number $n$ the size of $\mathfrak{t}$ and the $\mathfrak{t}_{l}^{(k)}$'s the components of $\mathfrak{t}.$ Each $(r,d)$-node $\bm{\theta}$ of $\mathfrak{t}$ is labelled by $((a, b), k, l)$ if it lies in row $a$ and column $b$ of the component $\mathfrak{t}_{l}^{(k)}$ of $\mathfrak{t}.$

For each $\underline{\bm{\mu}}\in \mathcal{C}_{r,n}^{d},$ an $(r,d)$-tableau of shape $\underline{\bm{\mu}}$ is called row standard if the numbers increase along any row (from left to right) of each diagram in $[\underline{\bm{\mu}}].$ For each $\underline{\bm{\lambda}}\in \mathcal{P}_{r,n}^{d},$ an $(r,d)$-tableau of shape $\underline{\bm{\lambda}}$ is called standard if the numbers increase along any row (from left to right) and down any column (from top to bottom) of each diagram in $[\underline{\bm{\lambda}}].$ From now on, we denote by $\text{Std}(\underline{\bm{\lambda}})$ the set of all standard $(r,d)$-tableaux of size $n$ and of shape $\underline{\bm{\lambda}},$ which is endowed with an action of $\mathfrak{S}_{n}$ from the right by permuting the entries in each $(r,d)$-tableau.

For each $\underline{\bm{\lambda}}\in \mathcal{C}_{r,n}^{d},$ we denote by $\mathfrak{t}^{\underline{\bm{\lambda}}}$ the standard $(r,d)$-tableau of shape $\underline{\bm{\lambda}}$ in which $1,2,\ldots,n$ appear in increasing order from left to right along the rows of the first diagram, and then along the rows of the second diagram, and so on.

For each $\underline{\bm{\lambda}}=((\lambda_{1}^{(1)},\ldots,\lambda_{d}^{(1)}),\ldots,(\lambda_{1}^{(r)},\ldots,\lambda_{d}^{(r)}))\in \mathcal{C}_{r,n}^{d},$ we have a Young subgroup
$$\mathfrak{S}_{\underline{\bm{\lambda}}} :=\mathfrak{S}_{\lambda_{1}^{(1)}}\times\cdots\times\mathfrak{S}_{\lambda_{d}^{(1)}}\times\cdots\times
\mathfrak{S}_{\lambda_{1}^{(r)}}\times\cdots\times\mathfrak{S}_{\lambda_{d}^{(r)}},$$
which is exactly the row stabilizer of $\mathfrak{t}^{\underline{\bm{\lambda}}}.$

For each $\underline{\bm{\lambda}}\in \mathcal{C}_{r,n}^{d}$ and a row standard $(r,d)$-tableau $\mathfrak{s}$ of shape $\underline{\bm{\lambda}},$ let $d(\mathfrak{s})$ be the element of $\mathfrak{S}_{n}$ such that $\mathfrak{s}=\mathfrak{t}^{\underline{\bm{\lambda}}}d(\mathfrak{s}).$ Then $d(\mathfrak{s})$ is a distinguished right coset representative of $\mathfrak{S}_{\underline{\bm{\lambda}}}$ in $\mathfrak{S}_{n},$ that is, $l(wd(\mathfrak{s}))=l(w)+l(d(\mathfrak{s}))$ for any $w\in \mathfrak{S}_{\underline{\bm{\lambda}}}.$ In this way, we obtain a correspondence between the set of row standard $(r,d)$-tableaux of shape $\underline{\bm{\lambda}}$ and the set of distinguished right coset representatives of $\mathfrak{S}_{\underline{\bm{\lambda}}}$ in $\mathfrak{S}_{n}.$

We now define a partial order on the set of $(r,d)$-compositions.

\begin{definition}\label{Def-def}
Let $\underline{\bm{\lambda}}=((\lambda_{1}^{(1)},\ldots,\lambda_{d}^{(1)}),\ldots,(\lambda_{1}^{(r)},\ldots,\lambda_{d}^{(r)}))$ and $\underline{\bm{\mu}}=((\mu_{1}^{(1)},\ldots,\mu_{d}^{(1)}),$ $\ldots,(\mu_{1}^{(r)},\ldots,\mu_{d}^{(r)}))$ be two $(r,d)$-compositions of $n.$ We say that $\underline{\bm{\lambda}}$ dominates $\underline{\bm{\mu}},$ and we write $\underline{\bm{\lambda}}\unrhd \underline{\bm{\mu}}$ if and only if $$\sum_{i=1}^{k-1}\sum_{j=1}^{d}|\lambda_{j}^{(i)}|+\sum_{j=1}^{l-1}|\lambda_{j}^{(k)}|+\sum_{i=1}^{p}\lambda_{l,i}^{(k)}\geq \sum_{i=1}^{k-1}\sum_{j=1}^{d}|\mu_{j}^{(i)}|+\sum_{j=1}^{l-1}|\mu_{j}^{(k)}|+\sum_{i=1}^{p}\mu_{l,i}^{(k)}$$
for all $k,$ $l$ and $p$ with $1\leq k\leq r,$ $1\leq l\leq d$ and $p\geq 0.$ If $\underline{\bm{\lambda}}\unrhd\underline{\bm{\mu}}$ and $\underline{\bm{\lambda}}\neq \underline{\bm{\mu}},$ we write $\underline{\bm{\lambda}}\rhd \underline{\bm{\mu}}.$
\end{definition}

\begin{definition}\label{definition-1}
Let $\underline{\bm{\lambda}}=((\lambda_{1}^{(1)},\ldots,\lambda_{d}^{(1)}),\ldots,(\lambda_{1}^{(r)},\ldots,\lambda_{d}^{(r)}))\in \mathcal{C}_{r,n}^{d}.$ Suppose that we choose all $1\leq i_{1}< i_{2}<\cdots < i_{p}\leq r$ such that $(\lambda_{1}^{(i_1)},\ldots,\lambda_{d}^{(i_1)}),$ $(\lambda_{1}^{(i_2)},\ldots,\lambda_{d}^{(i_2)}),\ldots,$$(\lambda_{1}^{(i_p)},$\\$\ldots,\lambda_{d}^{(i_p)})$ are nonempty. Define $a_{k} :=\sum_{j=1}^{k}|\bm{\lambda}^{(i_{j})}|$ for $1\leq k\leq p,$ where $|\bm{\lambda}^{(i_{j})}|=\sum_{l=1}^{d}|\lambda_{l}^{(i_{j})}|.$ Then the set partition $A_{\underline{\bm{\lambda}}}$ associated with $\underline{\bm{\lambda}}$ is defined as $$A_{\underline{\bm{\lambda}}} :=\{\{1,\ldots,a_{1}\},\{a_{1}+1,\ldots,a_{2}\},\ldots,\{a_{p-1}+1,\ldots,n\}\},$$ which may be written as $A_{\underline{\bm{\lambda}}}=\{I_{1},I_{2},\ldots,I_{p}\},$ and is referred to the blocks of $A_{\underline{\bm{\lambda}}}$ in the order given above.
\end{definition}

\begin{definition}\label{definition-2}
Let $\underline{\bm{\lambda}}=((\lambda_{1}^{(1)},\ldots,\lambda_{d}^{(1)}),\ldots,(\lambda_{1}^{(r)},\ldots,\lambda_{d}^{(r)}))\in \mathcal{C}_{r,n}^{d},$ and let $a_{k} :=\sum_{j=1}^{k}|\bm{\lambda}^{(i_{j})}|$ $(1\leq k\leq p)$ be defined as above. Then we define $$u_{\underline{\bm{\lambda}}} :=u_{a_{1},i_{1}}u_{a_{2},i_{2}}\cdots u_{a_{p},i_{p}}.$$
\end{definition}

\begin{definition}\label{definition-3}
Let $\underline{\bm{\lambda}}=((\lambda_{1}^{(1)},\ldots,\lambda_{d}^{(1)}),\ldots,(\lambda_{1}^{(r)},\ldots,\lambda_{d}^{(r)}))\in \mathcal{C}_{r,n}^{d}.$ Associated with $\underline{\bm{\lambda}}$ we can define the following elements $a_{l}^{k}$ and $b_{k}$:$$a_{l}^{k} :=\sum_{m=1}^{l-1}|\lambda_{m}^{(k)}|,~~~~b_{k} :=\sum_{j=1}^{k-1}\sum_{i=1}^{d}|\lambda_{i}^{(j)}|~~~~\mathrm{for}~1\leq k\leq r~\mathrm{and}~1\leq l\leq d.$$ Associated with these elements we can define an element $u_{\mathbf{a}}^{+} :=u_{\mathbf{a}, 1}u_{\mathbf{a}, 2}\cdots u_{\mathbf{a}, r},$ where $$u_{\mathbf{a}, k} :=\prod_{l=1}^{d}\prod_{j=1}^{a_{l}^{k}}(X_{b_{k}+j}-v_{l}).$$
\end{definition}

We can now define the key ingredient of the cellular basis for $Y_{r,n}^{d}.$

\begin{definition}\label{definition-4}
Let $\underline{\bm{\lambda}}\in \mathcal{C}_{r,n}^{d}$ and define $u_{\mathbf{a}}^{+}$ as above. Let $x_{\underline{\bm{\lambda}}}=\sum_{w\in \mathfrak{S}_{\underline{\bm{\lambda}}}}q^{l(w)}g_{w}.$ Then we define the element $m_{\underline{\bm{\lambda}}}$ of $Y_{r,n}^{d}$ as follows:
\begin{equation}\label{mUlam-lambda}
m_{\underline{\bm{\lambda}}} :=U_{\underline{\bm{\lambda}}}u_{\mathbf{a}}^{+}x_{\underline{\bm{\lambda}}}=
u_{\underline{\bm{\lambda}}}E_{A_{\underline{\bm{\lambda}}}}u_{\mathbf{a}}^{+}x_{\underline{\bm{\lambda}}}.
\end{equation}
\end{definition}

Let $\ast$ denote the $\mathcal{R}$-linear anti-automorphism of $\text{Y}_{r,n}^{d}$, which is determined by
$$g_{i}^{\ast}=g_{i},\quad t_{j}^{\ast}=t_{j},\quad X_{j}^{\ast}=X_{j}\quad \mathrm{for}~1\leq i\leq n-1~\mathrm{and}~1\leq j\leq n.$$

\begin{definition}
Let $\underline{\bm{\lambda}}\in \mathcal{C}_{r,n}^{d},$ and let $\mathfrak{s}$ and $\mathfrak{t}$ be two row standard $(r,d)$-tableaux of shape $\underline{\bm{\lambda}}.$ We then define $m_{\mathfrak{s}\mathfrak{t}}=g_{d(\mathfrak{s})}^{\ast}m_{\underline{\bm{\lambda}}}g_{d(\mathfrak{t})}.$
\end{definition}

For each $\underline{\bm{\lambda}}\in \mathcal{P}_{r,n}^{d},$ let $\text{Y}_{r,n}^{d, \rhd \underline{\bm{\lambda}}}$ be the $\mathcal{R}$-submodule of $\text{Y}_{r,n}^{d}$ spanned by $m_{\mathfrak{u}\mathfrak{v}}$ with $\mathfrak{u}, \mathfrak{v}\in \mathrm{Std}(\underline{\bm{\mu}})$ for various $\underline{\bm{\mu}}\in \mathcal{P}_{r,n}^{d}$ such that $\underline{\bm{\mu}}\rhd\underline{\bm{\lambda}}.$

\begin{theorem}\label{cycl-yokoschur-theo-6-7}
{\rm (See [C1, Theorem 6.18].)} The algebra $\mathrm{Y}_{r,n}^{d}$ is a free $\mathcal{R}$-module with a cellular basis $$\mathcal{B}_{r,n}^{d}=\{m_{\mathfrak{s}\mathfrak{t}}\:|\:\mathfrak{s}, \mathfrak{t}\in \mathrm{Std}(\underline{\bm{\lambda}})~for~some~(r,d)\mathrm{-}partition~\underline{\bm{\lambda}}~of~n\},$$ that is, the following properties hold$:$

$\mathrm{(i)}$ The $\mathcal{R}$-linear map determined by $m_{\mathfrak{s}\mathfrak{t}}\mapsto m_{\mathfrak{t}\mathfrak{s}}$ $(m_{\mathfrak{s}\mathfrak{t}}\in \mathcal{B}_{r,n}^{d})$ is an anti-automorphism on $\mathrm{Y}_{r,n}^{d}$.

$\mathrm{(ii)}$ For a given $h\in \mathrm{Y}_{r,n}^{d},$ $\underline{\bm{\mu}}\in \mathcal{P}_{r,n}^{d}$ and $\mathfrak{t}\in \mathrm{Std}(\underline{\bm{\mu}}),$ there exist $r_{\mathfrak{v}\mathfrak{t}}(h)\in \mathcal{R}$ such that for all $\mathfrak{s}\in \mathrm{Std}(\underline{\bm{\mu}})$, we have $$m_{\mathfrak{s}\mathfrak{t}}h\equiv\sum_{\mathfrak{v}\in \mathrm{Std}(\underline{\bm{\mu}})}r_{\mathfrak{v}\mathfrak{t}}(h)m_{\mathfrak{s}\mathfrak{v}}~~~~\mathrm{mod}~\text{Y}_{r,n}^{d, \rhd \underline{\bm{\mu}}},$$
where $r_{\mathfrak{v}\mathfrak{t}}(h)$ may depend on $\mathfrak{v}, \mathfrak{t}$ and $h,$ but not on $\mathfrak{s}.$
\end{theorem}

For $\underline{\bm{\lambda}}\in \mathcal{C}_{r,n}^{d},$ a $\underline{\bm{\lambda}}$-tableau $\mathrm{S}=((S_{1}^{(1)},\ldots, S_{d}^{(1)}),\ldots,(S_{1}^{(r)},\ldots, S_{d}^{(r)}))$ is a map $\mathrm{S}: [\underline{\bm{\lambda}}]\rightarrow \{1,\ldots,n\}\times \{1,\ldots,d\}\times \{1,\ldots,r\},$ which can be regarded as the diagram $[\underline{\bm{\lambda}}],$ together with an ordered triple $(i, j, k)$ ($1\leq i\leq n,$ $1\leq j\leq d,$ $1\leq k\leq r$) attached to each node. Given $\underline{\bm{\lambda}}\in \mathcal{P}_{r,n}^{d}$ and $\underline{\bm{\mu}}\in \mathcal{C}_{r,n}^{d}$, a $\underline{\bm{\lambda}}$-tableau $\mathrm{S}$ is said to be of type $\underline{\bm{\mu}}$ if the number of $(i,j,k)$ in the entry of $\mathrm{S}$ is equal to $\mu_{j, i}^{(k)}.$ Given $\mathfrak{s}\in \mathrm{Std}(\underline{\bm{\lambda}})$, $\underline{\bm{\mu}}(\mathfrak{s})$, a $\underline{\bm{\lambda}}$-tableau of type $\underline{\bm{\mu}},$ is defined by replacing each entry $m$ in $\mathfrak{s}$ by $(i,j,k)$ if $m$ is in the $i$-th row of the $(k,j)$-th component of $\mathfrak{t}^{\underline{\bm{\mu}}}.$

We define a total order on the set of triples $(i,j,k)$ by $(i_{1}, j_{1}, k_{1})< (i_{2}, j_{2}, k_{2})$ if $k_{1}< k_{2},$ or $k_{1}=k_{2}$ and $j_{1}<j_{2},$ or $k_{1}=k_{2},$ $j_{1}=j_{2},$ and $i_{1}<i_{2},$ Let $\underline{\bm{\lambda}}\in \mathcal{P}_{r,n}^{d}$ and $\underline{\bm{\mu}}\in \mathcal{C}_{r,n}^{d}$. Suppose that $\mathrm{S}=((S_{1}^{(1)},\ldots, S_{d}^{(1)}),\ldots,(S_{1}^{(r)},\ldots, S_{d}^{(r)}))$ is a $\underline{\bm{\lambda}}$-tableau of type $\underline{\bm{\mu}}.$ $\mathrm{S}$ is said to be semistandard if each component $S^{(k)}_{j}$ is non-decreasing in rows, strictly increasing in columns, and all entries of $S^{(k)}_{j}$ are of the form $(i,h,l)$ with $h\geq j$ and $l\geq k.$ We denote by $\mathcal{T}_{0}(\underline{\bm{\lambda}}, \underline{\bm{\mu}})$ the set of semistandard $\underline{\bm{\lambda}}$-tableaux of type $\underline{\bm{\mu}}.$

For any $\underline{\bm{\kappa}}\in \mathcal{C}_{r,n}^{d},$ we define its type $\alpha(\underline{\bm{\kappa}})$ by $\alpha(\underline{\bm{\kappa}})=(n_{1},\ldots, n_{r})$ with $n_{i}=|\bm{\kappa}^{(i)}|.$ Assume that $\underline{\bm{\lambda}}\in \mathcal{P}_{r,n}^{d}$ and $\underline{\bm{\mu}}\in \mathcal{C}_{r,n}^{d}.$ We define a subset $\mathcal{T}_{0}^{+}(\underline{\bm{\lambda}}, \underline{\bm{\mu}})$ of $\mathcal{T}_{0}(\underline{\bm{\lambda}}, \underline{\bm{\mu}})$ by $$\mathcal{T}_{0}^{+}(\underline{\bm{\lambda}}, \underline{\bm{\mu}})=\{\mathrm{S}\in \mathcal{T}_{0}(\underline{\bm{\lambda}}, \underline{\bm{\mu}})\:|\:\alpha(\underline{\bm{\lambda}})=\alpha(\underline{\bm{\mu}})\}.$$

For each $\underline{\bm{\mu}}\in \mathcal{C}_{r,n}^{d},$ let $M^{\underline{\bm{\mu}}}=m_{\underline{\bm{\mu}}}\text{Y}_{r,n}^{d}.$ We now construct a basis of $M^{\underline{\bm{\mu}}}$ related to the cellular basis $\{m_{\mathfrak{s}\mathfrak{t}}\}$ in Theorem \ref{cycl-yokoschur-theo-6-7}. For $\mathrm{S}\in \mathcal{T}_{0}^{+}(\underline{\bm{\lambda}}, \underline{\bm{\mu}})$ and $\mathfrak{t}\in \mathrm{Std}(\underline{\bm{\lambda}}),$ we define $$m_{\mathrm{S}\mathfrak{t}}=\sum_{\substack{\mathfrak{s}\in \mathrm{Std}(\underline{\bm{\lambda}})\\\underline{\bm{\mu}}(\mathfrak{s})=\mathrm{S}}}q^{l(d(\mathfrak{s}))+l(d(\mathfrak{t}))}m_{\mathfrak{s}\mathfrak{t}}.$$
The following theorem can be proved in exactly the same way as in [DJM, Theorem 4.14] by combining [DJM] with [C1].

\begin{theorem}\label{cycl-yokoschur-theo-6-8}
Let $\mathrm{S}\in \mathcal{T}_{0}^{+}(\underline{\bm{\lambda}}, \underline{\bm{\mu}})$ and $\mathfrak{t}\in \mathrm{Std}(\underline{\bm{\lambda}})$ for some $\underline{\bm{\lambda}}\in \mathcal{P}_{r,n}^{d}$ and $\underline{\bm{\mu}}\in \mathcal{C}_{r,n}^{d}.$ Then $m_{\mathrm{S}\mathfrak{t}}\in M^{\underline{\bm{\mu}}}.$ Moreover, $M^{\underline{\bm{\mu}}}$ is free with an $\mathcal{R}$-basis
$$\big\{m_{\mathrm{S}\mathfrak{t}}\:|\:\mathrm{S}\in \mathcal{T}_{0}^{+}(\underline{\bm{\lambda}}, \underline{\bm{\mu}})~and~\mathfrak{t}\in \mathrm{Std}(\underline{\bm{\lambda}})~for~some~\underline{\bm{\lambda}}\in \mathcal{P}_{r,n}^{d}\big\}.$$
\end{theorem}

Let $\underline{\bm{\mu}}, \underline{\bm{\nu}}\in \mathcal{C}_{r,n}^{d}$ and $\underline{\bm{\lambda}}\in \mathcal{P}_{r,n}^{d}.$ We assume that $\alpha(\underline{\bm{\mu}})=\alpha(\underline{\bm{\nu}})=\alpha(\underline{\bm{\lambda}}).$ For $\mathrm{S}\in \mathcal{T}_{0}^{+}(\underline{\bm{\lambda}}, \underline{\bm{\mu}})$, $\mathrm{T}\in \mathcal{T}_{0}^{+}(\underline{\bm{\lambda}}, \underline{\bm{\nu}}),$ put $$m_{\mathrm{S}\mathrm{T}}=\sum_{\mathfrak{s}, \mathfrak{t}}q^{l(d(\mathfrak{s}))+l(d(\mathfrak{t}))}m_{\mathfrak{s}\mathfrak{t}},$$ where the sum is taken over all $\mathfrak{s}, \mathfrak{t}\in \mathrm{Std}(\underline{\bm{\lambda}})$ such that $\underline{\bm{\mu}}(\mathfrak{s})=\mathrm{S}$ and $\underline{\bm{\nu}}(\mathfrak{t})=\mathrm{T}.$ We then have the next proposition by making use of Theorem \ref{cycl-yokoschur-theo-6-8}.

\begin{proposition}\label{cycl-yokoschur-theo-6-9}
Suppose that $\underline{\bm{\mu}}, \underline{\bm{\nu}}\in \mathcal{C}_{r,n}^{d}$ with $\alpha(\underline{\bm{\mu}})=\alpha(\underline{\bm{\nu}})$. Then the set $$\{m_{\mathrm{S}\mathrm{T}}\:|\:\mathrm{S}\in \mathcal{T}_{0}^{+}(\underline{\bm{\lambda}}, \underline{\bm{\mu}})~and~\mathrm{T}\in \mathcal{T}_{0}^{+}(\underline{\bm{\lambda}}, \underline{\bm{\nu}})~for~some~\underline{\bm{\lambda}}\in \mathcal{P}_{r,n}^{d}\}$$ is an $\mathcal{R}$-basis of $M^{\underline{\bm{\nu}}\ast}\cap M^{\underline{\bm{\mu}}}.$
\end{proposition}

\begin{definition}
Suppose that $M_{n}^{r,d}=\bigoplus_{\underline{\bm{\mu}}\in \mathcal{C}_{r,n}^{d}}M^{\underline{\bm{\mu}}}.$ We define the cyclotomic Yokonuma-Schur algebra YS$_{n}^{r,d}$ as the endomorphism algebra $$\mathrm{YS}_{n}^{r,d}=\mathrm{End}_{\text{Y}_{r,n}^{d}}(M_{n}^{r,d}),$$ which is isomorphic to $\bigoplus_{\underline{\bm{\mu}}, \underline{\bm{\nu}}\in \mathcal{C}_{r,n}^{d}}\mathrm{Hom}_{\text{Y}_{r,n}^{d}}(M^{\underline{\bm{\nu}}}, M^{\underline{\bm{\mu}}}).$
\end{definition}

Let $\mathrm{S}\in \mathcal{T}_{0}^{+}(\underline{\bm{\lambda}}, \underline{\bm{\mu}})$ and $\mathrm{T}\in \mathcal{T}_{0}^{+}(\underline{\bm{\lambda}}, \underline{\bm{\nu}}).$ In view of Proposition \ref{cycl-yokoschur-theo-6-9}, we can define $\varphi_{\mathrm{S}\mathrm{T}}\in \mathrm{Hom}_{\text{Y}_{r,n}^{d}}(M^{\underline{\bm{\nu}}}, M^{\underline{\bm{\mu}}})$ by $$\varphi_{\mathrm{S}\mathrm{T}}(m_{\bm{\nu}}h)=m_{\mathrm{S}\mathrm{T}}h$$ for all $h\in \text{Y}_{r,n}^{d}.$ We extend $\varphi_{\mathrm{S}\mathrm{T}}$ to an element of YS$_{n}^{r,d}$ by defining $\varphi_{\mathrm{S}\mathrm{T}}$ to be zero on $M^{\underline{\bm{\kappa}}}$ for any $\underline{\bm{\nu}}\neq \underline{\bm{\kappa}}\in \mathcal{C}_{r,n}^{d}.$ For each $\underline{\bm{\lambda}}\in \mathcal{P}_{r,n}^{d}$, let $\mathcal{T}_{0}^{+}(\underline{\bm{\lambda}})=\bigcup_{\underline{\bm{\mu}}\in \mathcal{C}_{r,n}^{d}}\mathcal{T}_{0}^{+}(\underline{\bm{\lambda}}, \underline{\bm{\mu}}).$ We denote by YS$_{r,n}^{d,\rhd \underline{\bm{\lambda}}}$ the $\mathcal{R}$-submodule of YS$_{n}^{r,d}$ spanned by $\varphi_{\mathrm{S}\mathrm{T}}$ such that $\mathrm{S}, \mathrm{T}\in \mathcal{T}_{0}^{+}(\underline{\bm{\alpha}})$ with $\underline{\bm{\alpha}}\rhd \underline{\bm{\lambda}}.$ Then we can prove the following theorem by a similar argument as in [DJM, Theorem 6.6].

\begin{theorem}\label{cycl-yokoschur-theo-6-11}
The Yokonuma-Schur algebra $\mathrm{YS}_{n}^{r,d}$ is free as an $\mathcal{R}$-module with a basis $$\big\{\varphi_{\mathrm{S}\mathrm{T}}\:|\:\mathrm{S}, \mathrm{T}\in \mathcal{T}_{0}^{+}(\underline{\bm{\lambda}})~for~some~\underline{\bm{\lambda}}\in \mathcal{P}_{r,n}^{d}\big\}.$$ Moreover, this basis satisfies the following properties.

$\mathrm{(i)}$ The $\mathcal{R}$-linear map $\ast: \mathrm{YS}_{n}^{r,d}\rightarrow \mathrm{YS}_{n}^{r,d}$ determined by $\varphi_{\mathrm{S}\mathrm{T}}^{\ast}=\varphi_{\mathrm{T}\mathrm{S}},$ for all $\mathrm{S}, \mathrm{T}\in \mathcal{T}_{0}^{+}(\underline{\bm{\lambda}})$ and all $\underline{\bm{\lambda}}\in \mathcal{P}_{r,n}^{d},$ is an anti-automorphism of $\mathrm{YS}_{n}^{r,d}.$

$\mathrm{(ii)}$ Let $\mathrm{T}\in \mathcal{T}_{0}^{+}(\underline{\bm{\lambda}})$ and $\varphi\in \mathrm{YS}_{n}^{r,d}.$ Then for each $\mathrm{V}\in \mathcal{T}_{0}^{+}(\underline{\bm{\lambda}})$, there exists $r_{\mathrm{V}}=r_{\mathrm{V},\mathrm{T},\varphi}\in \mathcal{R}$ such that for all $\mathrm{S}\in \mathcal{T}_{0}^{+}(\underline{\bm{\lambda}}),$ we have $$\varphi_{\mathrm{S}\mathrm{T}}\varphi\equiv\sum_{\mathrm{V}\in \mathcal{T}_{0}^{+}(\underline{\bm{\lambda}})}r_{\mathrm{V}}\varphi_{\mathrm{S}\mathrm{V}}~~~~\mathrm{mod}~\mathrm{YS}_{r,n}^{d,\rhd \underline{\bm{\lambda}}}.$$
In particular, this basis $\{\varphi_{\mathrm{S}\mathrm{T}}\}$ is a cellular basis of $\mathrm{YS}_{n}^{r,d}.$
\end{theorem}

Now we can apply the general theory of cellular algebras in view of Theorem \ref{cycl-yokoschur-theo-6-11}. For example, we can easily give a complete set of non-isomorphic irreducible $\mathrm{YS}_{n}^{r,d}$-modules over an arbitrary field, and further prove that $\mathrm{YS}_{n}^{r,d}$ is a quasi-hereditary algebra. For the cyclotomic Yokonuma-Schur algebra $\mathrm{YS}_{n}^{r,d},$ we can also define the Schur functor from the category of $\mathrm{YS}_{n}^{r,d}$-modules to the category of $\text{Y}_{r,n}^{d}$-modules and the tilting modules for it in exactly the same way as in Sections 4 and 5, we skip all the details and leave them to the reader.



\end{document}